\theoremstyle{plain}
 \newtheorem{thm}{Theorem}[section]
 \newtheorem*{thm*}{Theorem}
 \newtheorem{prop}{Proposition}[section]
 \newtheorem{lem}{Lemma}[section]
 \newtheorem{ques}{Question}
\theoremstyle{definition}
 \newtheorem{defn}{Definition}[section]
 \newtheorem{exmp}{Example}[section]
 \newtheorem{fct}{Fact}[section]
\theoremstyle{remark}
 \newtheorem{rem}{Remark}[section]
\numberwithin{equation}{section}
\newtheoremstyle{citing}
  {3pt}
  {3pt}
  {\itshape}
  {}
  {\bfseries}
  {.}
  {.5em}
  {\thmnote{#3}}
\theoremstyle{citing}
 \newtheorem*{cthm}{}
\DeclareMathOperator\card{card}
\DeclareMathOperator\dist{dist}
\DeclareMathOperator\dst{d}
\DeclareMathOperator\hdist{\mathrm{d}_H}
\DeclareMathOperator\hdim{\dim_H}
\DeclareMathOperator\sgp{sgp}
\let\xb\mathbf
\let\xc\mathcal
\let\xs\mathscr
\let\eps\varepsilon
\newcommand\Q{\mathbb Q}
\newcommand\N{\mathbb N}
\newcommand\R{\mathbb R}
\newcommand\Z{\mathbb Z}
\newcommand\ms{\mathcal S}
\newcommand\mt{\mathcal T}
\newcommand\TDC{\mathrm{TDC}}
\newcommand\OEC{\mathrm{OSC^E_1}}
\newcommand\TOEC{\TDC\cap\OEC}
\newcommand\LCN{h_{\mathrm{L}}}
\newcommand\PI{\mathrm{PI}}
\begin{document}

\title[Lipschitz Equivalence Class, Ideal Class and Class number Problem]%
{Lipschitz Equivalence Class, Ideal Class and the Gauss Class Number Problem}

\author{Li-Feng Xi}

\address{Institute of Mathematics, Zhejiang Wanli University, Ningbo,
Zhejiang, 315100, P.~R. China}

\email{xilifengningbo@yahoo.com}

\author{Ying Xiong}

\address{Department of Mathematics, South China University of Technology,
Guangzhou, 510641, P.~R. China}

\email{xiongyng@gmail.com}

\subjclass[2000]{Primary 28A80, Secondary 11R29}

\keywords{Lipschitz equivalence, self-similar set, ideal class, class number}

\thanks{Ying Xiong is the corresponding author.}

\thanks{Supported by National Natural Science Foundation of China (Grant Nos
11101159, 11071224, 11071082), NCET, Fundamental Research Funds for the
Central Universities, SCUT (2012ZZ0073), and Morningside Center of
Mathematics.}

\begin{abstract}
  Classifying fractals under bi-Lipschitz mappings in fractal geometry just
  as important as classifying topology spaces under homeomorphisms in
  topology. This paper concerns the Lipschitz equivalence of totally
  disconnected self-similar sets in~$\R^d$ satisfying the OSC and with
  commensurable ratios. We obtain the complete Lipschitz invariants for
  such self-similar sets. The key invariant we found is an ideal related to
  the IFS. This discovery establishes a one-to-one correspondence between
  the Lipschitz equivalence classes of self-similar sets and the ideal
  classes in a related ring. Accordingly, two self-similar sets $A$ and $B$
  with the same dimension and ratio root are Lipschitz equivalent if and
  only if their ideals $I_A$ and $I_B$ are equivalent, i.e., $aI_A=bI_B$
  for some elements $a$ and $b$ in the related ring~$R$. This result
  reveals an interesting relationship between the Lipschitz class number
  problem and the Gauss class number one problem for real quadratic fields,
  which was proposed by Gauss in~1801 but still remains a open question
  today. Our result implies that the development on the Lipschitz class
  number problem may lead to deeper understanding of the Gauss class number
  problem.

  By the Jordan-Zassenhaus Theorem in algebraic number theory on the
  finiteness of ideal classes, we further prove a finiteness result about
  the Lipschitz equivalence classes under the commensurable condition. This
  result says that the geometrical structures of such self-similar sets are
  essentially finite in view of Lipschitz equivalence, although the OSC
  allows small copies of the self-similar sets touch in infinite geometric
  manners. In other words, the above finiteness result describe the open
  set condition in terms of Lipschitz equivalence.

  By contrast, we also study the non-commensurable case. It turns out that
  the difference between the commensurable case and the non-commensurable
  case is essential. In fact, we consider the family of self-similar sets
  under the same restrictions only dropping the commensurable condition,
  and then find that there may exist infinitely many Lipschitz equivalent
  classes.

  The simplest case of our result is that the related ring is a principal
  ideal domain. Then the class number is one, there is only one Lipschitz
  equivalence class, all self-similar sets in this class are Lipschitz
  equivalent to a symbolic metric space. For example, the ring $\Z[1/N]$ is
  a principal ideal domain for positive integer $N\ge 2$, then the above
  result implies: suppose $A$ and $B$ are totally disconnected self-similar
  sets satisfying the open set condition, if both $A$ and $B$ are generated
  by $N$ contracting similarities with the same ratio $r$, then $A$ and $B$
  are Lipschitz equivalent. This very special corollary of our main result
  generalizes many known results on the Lipschitz equivalence of
  self-similar sets.
\end{abstract}

\maketitle

\newpage

\tableofcontents

\newpage

\section{Introduction}\label{sec:intro}

\subsection{Background}

Two metric spaces $(X_1,\dst_1)$ and $(X_2,\dst_2)$ are said to be
\emph{Lipschitz equivalent}, denoted by $X_1\simeq X_2$, if there is a
bijection $f\colon X_1\to X_2$ which is \emph{bi-Lipschitz}, i.e., there
exists a constant $L\ge1$ such that
\[L^{-1}\dst_1(x,y)\le\dst_2\bigl(f(x),f(y)\bigr)\le L\dst_1(x,y)\quad
\text{for all $x,y\in X_1$}.
\]
Roughly speaking, the spaces $X_1$ and $X_2$ are ``almost the same'' from the
viewpoint of metric.

Two Lipschitz equivalent fractals can be considered as having the same
geometrical structure since many important geometrical properties are
invariant under the bi-Lipschitz mappings, such as
\begin{itemize}
  \item fractal dimensions: Hausdorff dimension, packing dimension, etc;
  \item properties of measures: doubling, Ahlfors-David regularity, etc;
  \item metric properties:  uniform perfectness, uniform disconnectedness,
      etc.
\end{itemize}
By contrast, Gromov pointed out in~\cite{Gromo07} that ``isometry'' leads to
a poor and rather boring category and ``continuity'' takes us out of geometry
to the realm of pure topology. So Lipschitz equivalence is suitable for the
study of fractal geometry of sets. Classifying fractals under bi-Lipschitz
mappings in fractal geometry just as important as classifying topology spaces
under homeomorphisms in topology.

Another interesting motivation of studying Lipschitz equivalence of fractals
comes from geometry group theory (see~\cite{Bonk06,FarMo98}). For example,
Farb and Mosher~\cite{FarMo98} established a quasi-isometry (in Gromov's
sense) from the group
\begin{equation*}
  \xb{BS}(1,n)=\langle a,b\,|\, aba=b^{n}\rangle
\end{equation*}
to some space with its upper boundary $C_n$ being a self-similar fractal.
Then they proved that $\xb{BS}(1,n)$ and $\xb{BS}(1,m)$ are quasi-isometric
if and only if two self-similar fractals $C_{n}$ and $C_{m}$ are Lipschitz
equivalent. In the appendix of~\cite{FarMo98}, Cooper obtained that $C_n
\simeq C_m$ if and only if $\log m/\log n\in \Q$.

In general it is very difficult to determine whether two fractals are
Lipschitz equivalent or not. Indeed, there is \emph{little} known about the
Lipschitz equivalence of fractals, even for the most familiar fractals---self
similar sets in Euclidean spaces.

This paper concerns the Lipschitz equivalence of totally disconnected
self-similar sets in~$\R^d$ satisfying the OSC and with commensurable ratios
(Definition~\ref{d:cms}). We obtain the \emph{complete} Lipschitz invariants
for such self-similar sets (Theorem~\ref{t:TOCElip}). This is the first
\emph{general} result on the Lipschitz equivalence of self-similar sets with
overlaps. The key invariant we found is an ideal related to the IFS
(Definition~\ref{d:ideal}). This discovery establishes the connection between
Lipschitz equivalence class of self-similar sets and \emph{ideal class} in
algebraic number theory. (See, e.g., \cite{Lang94,Neuki99} for detailed
introduction of algebraic number theory).
\begin{defn}[ideal class]\label{d:class}
  Two nonzero ideals $I$ and $J$ of an integral domain~$R$ is said to be in
  the same class if $aI=bJ$ for some $a,b\in R$. The corresponding
  equivalence classes is called the \emph{ideal classes} of~$R$. The
  \emph{class number} of~$R$, denoted by $h(R)$, is defined to be the number
  of ideal classes.
\end{defn}

Historically, ideal theory was developed in the investigation of Fermat's
Last Theorem. In 1844, Kummer proved Fermat's Last Theorem for every odd
prime number $p\le 19$, based on the fact that the ring $\Z[e^{2\pi i/p}]$ is
a unique factorization domain for such~$p$. This is equivalent to $\Z[e^{2\pi
i/p}]$ has class number $h_{p}=1$. But $h_{p}>1$ for every odd prime number
$p\ge 23$, and so this proof failed for such cases. To settle this problem,
in 1847, Kummer introduced ``ideal numbers'' to recover a form of unique
factorization for the ring~$\Z[e^{2\pi i/p}]$. As a result, Kummer can prove
Fermat's Last Theorem for all regular prime numbers $p$, which are the prime
numbers such that $p\nmid h_{p}$ (all odd prime numbers less than $100$ are
regular except for $37,59,67$). Kummer's idea of ``ideal number'' was further
developed by Dedekind, who established the modern theory of ideal in Algebra.

The study of ideal classes goes back to Lagrange and Gauss, before Kummer and
Dedekind's work on ideal. In 1773, Lagrange developed a general theory to
handle the problem of when an integer~$m$ is representable by a given binary
quadratic form
\[m=ax^2+bxy+cy^2,\]
where $a$, $b$ and $c$ are fixed integers with $\gcd(a,b,c)=1$. Some special
cases of this problem had been studied by Fermat and Euler. Lagrange defined
two quadratic forms $ax^2+bxy+cy^2$ and $AX^2+BXY+CY^2$ to be equivalent if
there exists an invertible integral linear change of variables
\[\begin{pmatrix}
    x \\
    y \\
  \end{pmatrix}=
  \begin{pmatrix}
    \alpha & \beta \\
    \gamma & \delta \\
  \end{pmatrix}
  \begin{pmatrix}
    X \\
    Y \\
  \end{pmatrix},\quad
  \text{where $\alpha,\beta,\gamma,\delta\in\Z$ with}\
  \begin{vmatrix}
    \alpha & \beta \\
    \gamma & \delta \\
  \end{vmatrix}=\pm1,
\]
that transforms $ax^2+bxy+cy^2$ to $AX^2+BXY+CY^2$. Note that equivalent
forms have the same discriminant~$D=b^2-4ac$, and more important that
equivalent forms represent the same set of integers. Let $h(D)$ denote the
number of equivalence classes of binary quadratic forms with discriminant
$D$. In \emph{Disquisitiones Arithmeticae} published in~1801, Gauss further
studied this problem and proved that $h(D)$ is finite for every value~$D$.
Moreover, Gauss put forward his famous class number problems (see
Section~\ref{ssec:clsnmb}). The relationship between the equivalence class of
quadratic forms and ideal class is that, for every negative square free
integer~$D$, the equivalence classes of binary quadratic forms with
discriminant~$D$ correspond \emph{one-to-one} to the ideal classes of the
ring~$\xc O_D$, which is the ring of all algebraic integers
in~$\mathbb{Q}(\sqrt D)$. In other words, $h(D)$ equals the class number
of~$\xc O_D$ for every negative square free integer~$D$ (see, e.g.,
\cite{Goldf85,Stark07}).

Just like the binary quadratic forms, for some appropriate families of
self-similar sets, we can establish a \emph{one-to-one} correspondence
between the Lipschitz equivalence classes and the ideal classes of a related
ring (Theorem~\ref{t:TOCEpr} and~\ref{t:TOCELCN}).

\subsection{Main results}

For convenience, we recall some basic notions of self-similar sets,
see~\cite{Falco97,Falco03,Hutch81} for more details. Let
$\ms=\{S_1,S_2,\dots,S_N\}$ be an \emph{iterated functions system} (IFS) on a
complete metric space~$(X,\dst)$ where each $S_i$ is a contracting similarity
of ratio~$r_i\in(0,1)$, i.e., $\dst\bigl(S_i(x),S_i(y)\bigr)=r_i\dst(x,y)$.
The self-similar set generated by the IFS~$\ms$ is the unique nonempty
compact set $E_\ms\subset X$ such that $E_\ms=\bigcup_{i=1}^N S_i(E_\ms)$. We
say that the IFS~$\ms$ satisfies the \emph{strong separation condition} (SSC)
if the sets $\{S_i(E_\ms)\}$ are pairwise disjoint. The IFS~$\ms$ satisfies
the \emph{open set condition} (OSC) if there exists a nonempty bounded open
set~$O$ such that the sets $S_i(O)$ are disjoint and contained in~$O$. If
furthermore $O\cap E_\ms\ne\emptyset$, we say that the IFS~$\ms$ satisfies
the \emph{strong open set condition} (SOSC). Obviously, we have
\[\text{SSC}\Rightarrow\text{SOSC}\Rightarrow\text{OSC}.\]
For IFSs on Euclidean spaces, Hutchinson~\cite{Hutch81}, Bandt and
Graf~\cite{BanGr92} and Schief~\cite{Schie94} proved that $\hdim E_\ms$
equals the \emph{similarity dimension}~$s$ (the unique positive solution of
$\sum_{i=1}^N r_i^s=1$) if $\ms$ satisfies the OSC, and that
\[\text{SOSC}\Leftrightarrow\text{OSC}\Leftrightarrow\xc H^s(E_\ms)>0.\]
Here $\xc H^s$ denotes the $s$-dimensional Hausdorff measure.

\begin{defn}[commensurable]\label{d:cms}
  The ratios~$r_1,\dots,r_N$ of the IFS~$\ms$ are said to be
  \emph{commensurable} if the multiplicative group generated by
  $\{r_1,\dots,r_N\}$ can be generated by a single number $r_\ms\in(0,1)$. In
  other words, $r_i=r_\ms^{\lambda_i}$ with $\lambda_i\in\N$ for each $i$ and
  $\gcd(\lambda_1,\dots,\lambda_N)=1$. We call $r_\ms$ the \emph{ratio root}
  of~$\ms$.
\end{defn}
\begin{rem}
  The ratios $r_1,\dots,r_N$ are commensurable if and only if $\log r_i/\log
  r_j\in\Q$ for $1\le i,j\le N$.
\end{rem}
We say the IFS~$\ms$ satisfies the TDC, denoted by $\ms\in\TDC$, if the
corresponding self-similar set~$E_\ms$ is totally disconnected. Write
\begin{multline*}
  \OEC=\bigl\{\ms\colon\text{$\ms$ is an IFS on a Euclidean space}\\
  \text{satisfying the OSC and with commensurable ratios}\bigr\}.
\end{multline*}

In this paper, we introduce an ideal related to~$\ms\in\TOEC$ which turns out
to be a very important Lipschitz invariant. For an IFS
$\ms=\{S_1,\dots,S_N\}$ satisfying the OSC, the \emph{natural
measure}~$\mu_\ms$ of~$\ms$ is defined to be the normalized $s$-dimensional
Hausdorff measure restricted to~$E_\ms$, where $s=\hdim E_\ms$, i.e.,
$\mu_\ms=\xc H^s|_{E_\ms}/\xc H^s(E_\ms)$. Note that $\mu_\ms$ is the unique
Borel probability measure such that
\[\mu_\ms(A)=\sum_{i=1}^Nr_i^s\mu_\ms\bigl(S_i^{-1}(A)\bigr)\quad
\text{for all Borel sets~$A$}.
\]
For $\ms\in\OEC$, we call $p_\ms=r_\ms^s$ the \emph{measure root} of~$\ms$
(see Remark~\ref{r:muB}). It follows from $\sum_{i=1}^Nr_i^s=1$ and $\log
r_i/\log r_\ms$ is a positive integer that $p_\ms^{-1}$ is an algebraic
integer. Let
\[\Z[p_\ms]=\bigl\{P(p_\ms)\colon\text{$P$ is a polynomial with integer
coefficients}\bigr\}\]
be the ring generated by $p_\ms$ over the integer set~$\Z$.

\begin{defn}[interior separated set]
  Suppose that $\ms\in\TOEC$. A compact set $F\subset E_\ms$ is called an
  \emph{interior separated set} of~$E_\ms$ if $E_\ms\setminus F$ is also
  compact and $F\subset O$ for some open set $O$~satisfying the OSC.
\end{defn}

We remark that $\mu_\ms(F)\in\Z[p_\ms]$ for every interior separated set~$F$.

\begin{defn}[ideal of IFS]\label{d:ideal}
  Suppose that $\ms\in\TOEC$. The ideal of~$\ms$, denoted by~$I_\ms$, is
  defined to be the ideal of~$\Z[p_\ms]$ generated by
  \[\bigl\{\mu_\ms(F)\colon\text{$F$ is an interior separated set
  of~$E_\ms$}\bigr\}.\]
\end{defn}
\begin{rem}
  It is worth noting that the ideal~$I_\ms$ depends not only on the algebraic
  properties of ratios, but also on the geometrical structure of the
  self-similar set~$E_\ms$.
\end{rem}
\begin{rem}
  At first sight it seems that we need find all open sets which satisfy the
  SOSC if we want to determine the ideal of an IFS. Fortunately, one such
  open set is enough, see Remark~\ref{r:ideal} in Section~\ref{ssec:BDnum}.
\end{rem}
\begin{exmp}\label{e:idSSC}
  We have $I_\ms=\Z[p_\ms]$ when $\ms$ satisfies the SSC. Indeed, the open
  set $O=\{x\colon\dist(x,E_\ms)<\eps\}$ satisfies the OSC for $\eps$ small
  enough. And so $E_\ms\subset O$ is an interior separated set. Therefore
  $1=\mu_\ms(E_\ms)\in I_\ms$ and $I_\ms=\Z[p_\ms]$.
\end{exmp}

Our main result gives the complete Lipschitz invariants of self-similar sets
generated by IFSs in $\TOEC$. This is the first \emph{general} result on the
Lipschitz equivalence of self-similar sets with overlaps.

\begin{thm}\label{t:TOCElip}
  Suppose that $\ms,\mt\in\TOEC$, then $E_\ms\simeq
  E_\mt$ if and only if
  \begin{enumerate}[\upshape(i)]
    \item $\hdim E_\ms=\hdim E_\mt$;
    \item $\log r_\ms/\log r_\mt\in\Q$;
    \item $I_\ms=aI_\mt$ for some $a\in\R$.
  \end{enumerate}
\end{thm}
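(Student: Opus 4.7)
The plan is to split the proof into the two standard directions. For necessity, assume $f\colon E_\ms\to E_\mt$ is a bi-Lipschitz bijection and derive each of (i)--(iii). For sufficiency, use (i) and (ii) to pass to iterates of $\ms$ and $\mt$ whose ratio roots coincide, so that $\Z[p_\ms]=\Z[p_\mt]$ becomes a common ring, and then use the ideal identification (iii) to build a bi-Lipschitz bijection by matching hierarchies of cylinders.

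For necessity, (i) is immediate since Hausdorff dimension is a bi-Lipschitz invariant. For (ii), cylinder sets $S_{i_1}\circ\cdots\circ S_{i_n}(E_\ms)$ have diameters in the geometric sequence $\{r_\ms^n\}$, and $f$ sends these to sets of comparable diameter in $E_\mt$, whose diameters lie (up to bounded multiplicative error) in $\{r_\mt^m\}$; comparing the two geometric progressions forces $\log r_\ms/\log r_\mt\in\Q$. The crux is (iii). Here I would prove first that $f$ maps interior separated sets of $E_\ms$ to interior separated sets of $E_\mt$, up to replacing $F$ by a finite union of small cylinders that straddle the image; the Ahlfors regularity of $\mu_\ms$ and $\mu_\mt$ (both being normalized Hausdorff measure) then forces $\mu_\mt(f(F))=c\,\mu_\ms(F)$ for a constant $c$ determined by the ratio of total Hausdorff measures. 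Running this both ways and keeping track of generators then yields $I_\ms=aI_\mt$ with $a=c$.

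For sufficiency, conditions (i) and (ii) permit reducing to the case where some iterates $\ms^p$ and $\mt^q$ share a common ratio root $r$ and a common similarity dimension $s$, so both ideals live in the single ring $R=\Z[r^s]$. Condition (iii) now says $I_\ms=aI_\mt$ in $R$. The construction of a bi-Lipschitz map then proceeds by combinatorially matching ``block structures'' of cylinders: the equality of ideals gives, for any interior separated piece on one side, a family of interior separated pieces on the other side of equal total $\mu$-mass (up to the constant $a$). Iterating this matching down a cylinder tree — at each scale pairing pieces of equal natural measure — produces a bijection that is bi-Lipschitz because in both fractals the diameters of matched pieces are uniformly comparable (both being controlled by $r^n$) and gaps between pieces are bounded below by constants times their diameters, via total disconnectedness plus the OSC.

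The main obstacle is (iii) together with the matching step in sufficiency. On the necessity side, the difficulty is controlling how a bi-Lipschitz image of an interior separated set fails to be genuinely interior separated in the target, and showing this failure contributes only terms already in $aI_\mt$; this will require decomposing $f(F)$ along the cylinder structure of $E_\mt$ at a sufficiently fine scale and invoking the characterization that the ideal can be computed using a single OSC-witnessing open set. On the sufficiency side, the delicate point is realizing the algebraic equation $aI_\ms=bI_\mt$ as a \emph{geometric} bijection: one must upgrade mere equality of ideals to a coherent, scale-by-scale pairing of pieces that is both measure-preserving up to the factor $a$ and respects the metric enough to be bi-Lipschitz, and this is where an auxiliary graph-directed / symbolic model for each $E_\ms$ is likely indispensable.
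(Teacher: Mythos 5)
Your necessity argument for (iii) rests on a claim that fails: you assert that Ahlfors regularity of $\mu_\ms$ and $\mu_\mt$ forces $\mu_\mt(f(F))=c\,\mu_\ms(F)$ for a single constant $c$ working for all interior separated sets $F$. This is false. Bi-Lipschitz maps between Ahlfors-regular sets only give two-sided \emph{comparability} of measures, not proportionality, and the paper explicitly constructs a counterexample (Example~\ref{e:NML}): there is \emph{no} bi-Lipschitz map from $E_{1,3,5}$ onto $E_{1,-4,5}$ that is measure linear on the whole set, even though the two sets are Lipschitz equivalent. What Cooper--Pignataro actually prove (Lemma~\ref{l:ml}) is that some bi-Lipschitz map restricted to some \emph{small copy} $E_{\bm i}$ is measure linear; the constant $a$ in (iii) is then defined as $\mu_\ms(A_f)/\nu_\mt(f(A_f))$ for that single copy, and the rest of the necessity argument (showing $p^l P(p)\in aI_\mt$ for all interior blocks) has to be done entirely inside $A_f$, not globally. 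Your argument for (ii) is also too weak: comparing the two geometric progressions of diameters only shows the sequences $\{r_\ms^n\}$ and $\{r_\mt^m\}$ interleave up to the bi-Lipschitz constant $L$, which is trivially true for \emph{any} two ratios and does not force $\log r_\ms/\log r_\mt\in\Q$. The paper instead derives (ii) from measure linearity plus the finiteness of measure polynomials (Proposition~\ref{p:finiteCP}): by pigeonhole one finds two nested interior blocks $A_2\subsetneq A_1$ with the same measure polynomial whose images have the same polynomial sum, and the measure-linear equality then forces $p^{k_2-k_1}=q^{l_2-l_1}$ for some integer exponents with $k_1\ne k_2$.

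On sufficiency your instinct (match hierarchies by equal natural measure, then invoke uniform separation from TDC+OSC) is aligned with the paper's Lemma~\ref{l:cylin}, but you underestimate the obstacle you named. Because measure linearity is only local, one cannot build a single measure-proportional cylinder matching of the whole sets; the paper overcomes this with two devices you did not anticipate: the \emph{suitable decomposition} (Lemma~\ref{l:suitde}), which realizes equalities of the form $\sum P_A(p)=\sum Q_B(q)$ as actual partitions of block families deep enough in the hierarchy, and the \emph{dense island structure} (Lemma~\ref{l:djt}), which glues together local measure-linear matchings along a dense family of finite unions of interior blocks and handles the residual point. Also, your closing suggestion that a graph-directed symbolic model is ``likely indispensable'' runs opposite to the paper's strategy: the authors deliberately abandon graph-directed systems (which they explain cannot give a general result) in favor of the blocks decomposition of Section~\ref{sec:BD}, whose finiteness and counting lemmas (\ref{p:finiteCP}, \ref{l:C(k)}, \ref{l:CP(k)}, \ref{l:CB(k)}) are what make the matching of cylinder/island structures possible.
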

\begin{rem}
  We emphasize that, in Theorem~\ref{t:TOCElip}, the two IFSs~$\ms$ and~$\mt$
  are allowed to be defined on Euclidean spaces of different dimensions. For
  example, the IFS~$\ms$ in Example~\ref{e:NPI} is defined on~$\R^1$ and the
  IFS~$\ms$ in Example~\ref{e:ideal} is defined on~$\R^2$. By
  Theorem~\ref{t:TOCElip}, the two corresponding self-similar sets are
  Lipschitz equivalent, see Figure~\ref{fig:NPI} and~\ref{fig:ideal}.
\end{rem}

Theorem~\ref{t:TOCElip} offers much deep insight into the geometrical
structure of self-similar sets generated by IFSs in~$\TOEC$. To make this
more clear, we shall consider a family of self-similar sets with the same
ratio root to eliminate the influence of ratios. Let $\OEC(p,r)$ denote the
set consisting of all IFS~$\ms\in\OEC$ with $p_\ms=p$ and $r_\ms=r$. Given
$\ms,\mt\in\TOEC(p,r)$, we have $\hdim E_\ms=\hdim E_\mt=\log p/\log r$ and
$r_\ms=r_\mt=r$. Therefore, Conditions~(i) and~(ii) in
Theorem~\ref{t:TOCElip} are fulfilled and the ideals $I_\ms$ and~$I_\mt$
belong to the same ring~$\Z[p]$. Consequently, we have
\begin{thm}\label{t:TOCEpr}
  Suppose that $\ms,\mt\in\TOEC(p,r)$, then the two self-similar sets $E_\ms$
  and $E_\mt$ are Lipschitz equivalent if and only if their ideals $I_\ms$
  and $I_\mt$ belong to the same ideal class of~$\Z[p]$.
\end{thm}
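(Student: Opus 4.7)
The plan is to derive Theorem~\ref{t:TOCEpr} directly from Theorem~\ref{t:TOCElip} by reinterpreting the real-scalar condition ``$I_\ms=aI_\mt$ for some $a\in\R$'' as the ring-theoretic ideal equivalence of Definition~\ref{d:class} inside~$\Z[p]$. Once Theorem~\ref{t:TOCElip} is in hand, what remains is a short algebraic bookkeeping.

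First, I would check that the hypothesis $\ms,\mt\in\TOEC(p,r)$ already encodes conditions (i) and (ii) of Theorem~\ref{t:TOCElip}: since $p_\ms=p_\mt=p$ and $r_\ms=r_\mt=r$, we have
\[\hdim E_\ms=\frac{\log p}{\log r}=\hdim E_\mt,\qquad \log r_\ms/\log r_\mt=1\in\Q.\]
Moreover $\Z[p_\ms]=\Z[p]=\Z[p_\mt]$, so $I_\ms$ and $I_\mt$ are ideals of the \emph{same} ring~$\Z[p]$. Both are nonzero: using SOSC, pick any $x\in O\cap E_\ms$ for some OSC open set~$O$, and exploit the total disconnectedness of $E_\ms$ to choose a clopen neighborhood $F\subset O$ of~$x$ in~$E_\ms$; then $F$ is an interior separated set with $\mu_\ms(F)>0$, so $I_\ms\neq\{0\}$. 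Consequently Theorem~\ref{t:TOCElip} specializes to: $E_\ms\simeq E_\mt$ iff there exists $a\in\R$ with $I_\ms=aI_\mt$.

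Second, I would verify that for two nonzero ideals of $\Z[p]\subset\R$, this real-multiplier condition coincides with the ideal-class equivalence. One direction is immediate: if $\alpha I_\ms=\beta I_\mt$ for nonzero $\alpha,\beta\in\Z[p]$, then $I_\ms=(\beta/\alpha)I_\mt$ with $\beta/\alpha\in\R$. For the converse, suppose $I_\ms=cI_\mt$ for some $c\in\R$, and pick any nonzero $x\in I_\mt$. Then $cx\in I_\ms\subset\Z[p]$, so setting $\alpha=x$ and $\beta=cx$, both in~$\Z[p]$, yields
\[\alpha I_\ms=x\cdot cI_\mt=cx\cdot I_\mt=\beta I_\mt,\]
i.e., $I_\ms$ and $I_\mt$ represent the same ideal class of~$\Z[p]$.

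The substantive obstacle is entirely packaged inside Theorem~\ref{t:TOCElip}; once that is granted, there is essentially nothing hard here — the proof reduces to recognising that multiplying an ideal of $\Z[p]$ by a real number $c$ and still landing in $\Z[p]$ forces $c$ to be a ratio of elements of~$\Z[p]$.
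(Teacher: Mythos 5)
Your proposal is correct and follows exactly the same route as the paper: the paper obtains Theorem~\ref{t:TOCEpr} directly from Theorem~\ref{t:TOCElip} by observing that membership in $\TOEC(p,r)$ makes conditions (i) and (ii) automatic and that the ideals live in the common ring $\Z[p]$, treating the remaining reinterpretation of ``$I_\ms=aI_\mt$ for some $a\in\R$'' as ideal-class equivalence as immediate. You have merely spelled out the two small algebraic steps (nonvanishing of $I_\ms$, and the equivalence $I_\ms=cI_\mt$ with $c\in\R$ $\Leftrightarrow$ $\alpha I_\ms=\beta I_\mt$ with $\alpha,\beta\in\Z[p]$ nonzero) that the paper leaves implicit.
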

Roughly speaking, Theorem~\ref{t:TOCEpr} tells us that different Lipschitz
equivalence classes correspond to different ideal classes, see
Example~\ref{e:NPI}. It is natural to ask whether the correspondence induced
by Theorem~\ref{t:TOCEpr} is one-to-one. Our next result gives an affirm
answer to this question. We define the number of Lipschitz equivalence
classes of self-similar sets generated by IFSs in~$\TOEC(p,r)$ to be the
\emph{Lipschitz class number} of~$\TOEC(p,r)$, denoted by $\LCN(p,r)$.

\begin{thm}\label{t:TOCELCN}
  Suppose that $\TOEC(p,r)\ne\emptyset$. Then the Lipschitz equivalent
  classes of self-similar sets generated by IFSs in~$\TOEC(p,r)$ correspond
  one-to-one to the ideal classes of~$\Z[p]$. This means
  $\LCN(p,r)=h(\Z[p])$. Moveover, the Lipschitz class number $\LCN(p,r)$ is
  finite for every pair $p,r$.
\end{thm}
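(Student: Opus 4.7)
\emph{Proof plan.} The argument splits naturally into three parts. First, Theorem~\ref{t:TOCEpr} tells us that $E_\ms\simeq E_\mt$ iff $I_\ms$ and $I_\mt$ lie in the same ideal class of~$\Z[p]$, so the assignment $\ms\mapsto[I_\ms]$ descends to a well-defined \emph{injection}
\[\Phi\colon\bigl\{\text{Lipschitz classes in }\TOEC(p,r)\bigr\}\hookrightarrow\bigl\{\text{ideal classes of }\Z[p]\bigr\}.\]
To conclude $\LCN(p,r)=h(\Z[p])$ it remains to prove that $\Phi$ is surjective, after which the finiteness statement is a purely algebraic claim about~$\Z[p]$.

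The surjectivity step is where all the geometric content sits and will be the main obstacle. Fix a reference IFS $\ms_0\in\TOEC(p,r)$, which exists by hypothesis. Given any nonzero ideal $J=(a_1,\dots,a_k)\subset\Z[p]$, I plan to construct a new IFS $\ms\in\TOEC(p,r)$ whose ideal $I_\ms$ is equivalent to~$J$. The idea is to pass to a sufficiently high iterate of~$\ms_0$ and then insert, remove, or replace finitely many cylinder pieces so that: (i)~the resulting system still satisfies $\TDC$, $\OEC$, and has ratio root exactly~$r$ (this forces us to keep a sub-collection of cylinders whose exponents have gcd~$1$, which is the subtle point); and (ii)~the measures $\mu_\ms(F)$ of the resulting interior separated sets generate precisely the ideal~$aJ$ for some $a\in\Z[p]\setminus\{0\}$. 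For~(ii), I would exploit the ``one SOSC open set suffices'' observation of Remark~\ref{r:ideal}, which reduces the computation of~$I_\ms$ to a concrete finite list of cylinder measures, each an explicit element of~$\Z[p]$ built from the generators~$a_j$.

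Finally, for finiteness, the relation $\sum_{i=1}^{N}p^{\lambda_i}=1$ with $\lambda_i\in\N$ (derived in the paper from $\sum r_i^s=1$) yields a monic integer polynomial vanishing at~$p^{-1}$, so $p^{-1}$ is an algebraic integer and $\Z[p^{-1}]$ is an order in the number field~$\Q(p)$. By the Jordan--Zassenhaus theorem, $h(\Z[p^{-1}])<\infty$. The same polynomial relation forces $p^{-1}\in\Z[p]$, whence $\Z[p]$ is the localization of~$\Z[p^{-1}]$ obtained by inverting the single element~$p^{-1}$. Since localization induces a surjection on ideal class groups, $h(\Z[p])\le h(\Z[p^{-1}])<\infty$. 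Combined with the bijection established in the first two steps, this gives $\LCN(p,r)=h(\Z[p])<\infty$, as required.
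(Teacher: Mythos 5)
Your overall structure mirrors the paper's: injectivity of $\ms\mapsto[I_\ms]$ from Theorem~\ref{t:TOCEpr}, surjectivity by constructing an IFS realizing each ideal class, and finiteness via Jordan--Zassenhaus applied to $\Z[p^{-1}]$. The finiteness step is fine: your localization argument (extend ideals along $\Z[p^{-1}]\hookrightarrow\Z[p]=\Z[p^{-1}][p]$, observe this is surjective on ideals and descends to classes) is equivalent to the paper's Lemma~\ref{l:Z[p]}(g), which phrases the same fact as the contraction $I\mapsto I\cap\Z[p^{-1}]$ being class-injective; either version works, though ``localization induces a surjection on ideal class groups'' is folklore for Dedekind domains and should really be checked by hand for a non-maximal order as you implicitly do.

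The genuine gap is surjectivity. You correctly identify that one must show: for every nonzero ideal $J\subset\Z[p]$ there is some $\ms\in\TOEC(p,r)$ with $I_\ms$ in the class of $J$. But what you offer is a plan, not a proof, and the plan as stated is unlikely to close. Starting from an arbitrary reference IFS $\ms_0$ and ``inserting, removing, or replacing finitely many cylinder pieces'' gives you very little handle on what $I_\ms$ becomes: the ideal is generated by measures of \emph{all} interior separated sets, and an ad-hoc local modification of $\ms_0$ does not let you pin down that generating set, nor does Remark~\ref{r:ideal} alone reduce it to a finite list you control (it says one SOSC open set suffices, not that the list is finite or explicit). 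The paper instead proves a standalone Theorem~\ref{t:ideal}: it builds the IFS \emph{from scratch} out of the algebraic data of $J$. Concretely, it writes $J=(a_1,\dots,a_m)$, uses Lemma~\ref{l:Z[p]}(c)--(f) to express $1-p^\ell=\sum a_ib_i$ with $a_i,b_i$ positive sums of powers of $p$, and then assembles a graph-directed system $E_0,\dots,E_m$ in $\R^d$ (with $E_i$ a union of rotated copies of $E_0$ of relative measure $a_i$) whose directed structure is explicit enough that Theorem~\ref{t:gdid} computes $I_\ms$ exactly as $(a_1,\dots,a_m)=J$. The graph-directed scaffolding is what makes the ideal computable; without something playing that role, your step~(ii) is an assertion rather than an argument. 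To make your proposal a proof you would essentially need to reprove Theorem~\ref{t:ideal}, and the modification-of-$\ms_0$ route does not obviously provide the needed control.
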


The most significance of Theorem~\ref{t:TOCELCN} is the one-to-one
correspondence between the Lipschitz equivalence classes and the ideal
classes. We will further discuss this point in next subsection.

It is also worth noting that the finiteness result about the Lipschitz
equivalence classes gives some interesting information about the OSC. For
self-similar fractals, the OSC is a generally accepted separation condition,
but it is too complicated to describe completely in geometry. For example,
the OSC allows the small copies of self-similar set touch in infinitely many
geometric manners. However, the finiteness result in Theorem~\ref{t:TOCELCN}
says that the touching manners is essentially finite in view of Lipschitz
equivalence. In other words, this finiteness result describes the open set
condition in term of Lipschitz equivalence.

We present two examples to illustrate Theorem~\ref{t:TOCELCN}. For
positive square free integer~$D$, let $\xc O_D$ be the ring of all algebraic
integers in the field~$\Q(\sqrt D)$. We know from algebraic number theory
that
\[\xc O_D=\begin{cases}
  \Z[\sqrt D], &\text{if $D\equiv 2$ or $3\pmod4$};\\
  \Z[\frac{1+\sqrt D}2], &\text{if $D\equiv1\pmod4$};
\end{cases}\]
and
\begin{align*}
  h(\xc O_D)=1\quad & \text{for $D=2,3,5,6,7,11,13,14,17,19,21,22,23,29,\dots$}; \\
  h(\xc O_D)=2\quad & \text{for $D=10,15,26,30,34,35,39,42,51,55,58,65,\dots$}.
\end{align*}
For more square free $D>0$ with $h(\xc O_D)=1$ or $2$,
see~\cite{MolWi91,MolWi92}. Using the above facts about the class number
of~$\xc O_D$, we give the following two examples.

\begin{exmp}
  Let $p=(\sqrt5-1)/2$, then $p^4+p^3+p=1$ and $p^3+2p^2=1$. Suppose that
  $\ms,\mt\in\TOEC(p,r)$ with ratios $r^4,r^3,r$ and $r^3,r^2,r^2$,
  respectively, then $p_\ms=p_\mt=p$. Since $\Z[p]=\Z[p+1]=\xc O_5$ has
  class number one, we have $E_\ms\simeq E_\mt$. In this example, the
  relative positions of the small copies of self-similar sets $E_\ms$ and
  $E_\mt$ do \emph{not} affect the Lipschitz equivalence.
\end{exmp}

The following example involves the ring~$\xc O_{10}$, which has class
number~$2$. We consider the IFS family $\TOEC(p,r)$ with $p=\sqrt{10}-3$ and
$r=1/10$. Theorem~\ref{t:TOCELCN} says that there are two Lipschitz
equivalence classes since $\Z[p]=\Z[\sqrt{10}]=\xc O_{10}$.

\begin{figure}[h]
  \centering
  \includegraphics{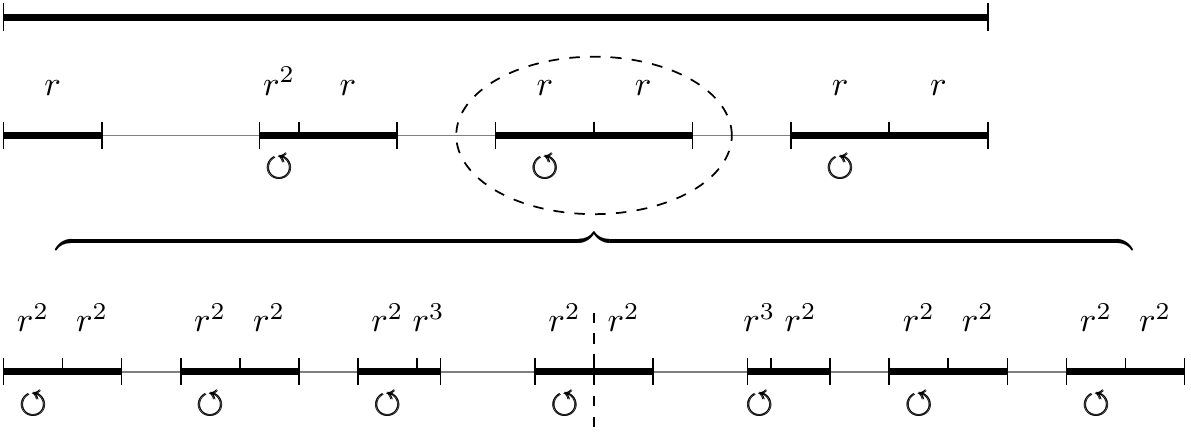}\\
  \caption{The structure of~$E_\ms$ in Example~\ref{e:NPI}}
  \label{fig:NPI}
\end{figure}

\begin{exmp}\label{e:NPI}
  Let $\ms=\{S_1,S_2,\dots,S_7\}\in\TOEC(p,r)$, where $p=\sqrt{10}-3$,
  $r=1/10$ and
  \begin{gather*}
    S_1\colon x\mapsto rx,\quad S_2\colon x\mapsto -r^2x+3r,\quad
    S_3\colon x\mapsto rx+3r,\\
    S_4\colon x\mapsto -rx+6r,\quad S_5\colon x\mapsto rx+6r,\quad
    S_6\colon x\mapsto -rx+9r,\quad S_7\colon x\mapsto rx+9r.
  \end{gather*}
  See Figure~\ref{fig:NPI}. (In Figure~\ref{fig:NPI}, the
  symbol~$\bm\circlearrowleft$ means that there is a minus sign in the
  contraction coefficient of the corresponding similarity. In geometry, this
  means a rotation by the angle~$\pi$.) Then $p_\ms=p=\sqrt{10}-3$ satisfying
  the equation $p^2+6p=1$, and Example~\ref{e:gdid} shows
  \[I_\ms=(2,p+1)=(2,\sqrt{10})=\bigl\{2a+b\sqrt{10}\colon a,b\in\Z\bigr\}.\]
  One can check that $I_\ms$ is not a principal ideal of the ring
  $\Z[p_\ms]=\xc O_{10}$.

  Let $\mt\in\TOEC(p,r)$ satisfying the SSC with ratios $r^2,r,r,r,r,r,r$.
  Then $p_\mt=p=\sqrt{10}-3$, and by Example~\ref{e:idSSC},
  $I_\mt=\Z[p_\mt]=\xc O_{10}$ is a principal ideal.

  Theorem~\ref{t:TOCELCN} implies that $E_\ms\not\simeq E_\mt$ and that
  either $E\simeq E_\ms$ or $E\simeq E_\mt$ for every self-similar set~$E$
  generated by IFS in $\TOEC(p,r)$. In this example, the relative positions
  of the small copies of self-similar sets do affect the Lipschitz
  equivalence.
\end{exmp}

We close this subsection with a necessary and sufficient condition for the
IFS family $\TOEC(p,r)\ne\emptyset$.
\begin{prop}\label{p:p,r}
  The IFS family $\TOEC(p,r)\ne\emptyset$ if and only if $p,r\in(0,1)$ and
  there exist positive integers $\lambda_1,\lambda_2,\dots,\lambda_N$ with
  $\gcd(\lambda_1,\dots,\lambda_N)=1$ such that
  \[p^{\lambda_1}+p^{\lambda_2}+\dots+p^{\lambda_N}=1.\]
  Here we allow that $\lambda_i=\lambda_j$ for $i\ne j$.
\end{prop}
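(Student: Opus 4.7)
The plan is to verify both directions essentially from the definitions, the only real content being an explicit construction of a realizing IFS for the reverse direction.

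For necessity, I would start with $\ms=\{S_1,\dots,S_N\}\in\TOEC(p,r)$ and unfold definitions. By commensurability with ratio root $r_\ms=r$, each contraction ratio has the form $r_i=r^{\lambda_i}$ for positive integers $\lambda_i$ with $\gcd(\lambda_1,\dots,\lambda_N)=1$. Since $\ms$ satisfies the OSC, the Hutchinson--Moran formula gives $\hdim E_\ms=s$, where $s$ is the unique positive solution of $\sum r_i^s=1$; substituting and using $p=p_\ms=r_\ms^s$ yields the desired relation $\sum p^{\lambda_i}=1$. The bounds $p,r\in(0,1)$ are automatic since $r\in(0,1)$ and $s>0$.

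For sufficiency, assume $p,r\in(0,1)$ and positive integers $\lambda_1,\dots,\lambda_N$ with $\gcd=1$ and $\sum p^{\lambda_i}=1$ are given. I would construct an $\ms\in\TOEC(p,r)$ realizing them. Set $s=\log p/\log r>0$, so that $\sum r^{\lambda_i s}=\sum p^{\lambda_i}=1$. Pick the ambient dimension $d$ large enough that $N$ closed cubes of side lengths $r^{\lambda_1},\dots,r^{\lambda_N}$ can be packed pairwise disjointly inside $[0,1]^d$; concretely, set $\eps=\max_i r^{\lambda_i}<1$ and choose $d$ with $\lfloor 1/\eps\rfloor^d\ge N$, then assign each cube to a separate cell of an $\eps$-grid. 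Define similarities $S_i(x)=r^{\lambda_i}x+b_i$ on $\R^d$, with translations chosen so that the images $S_i([0,1]^d)$ are exactly these disjoint cubes. The resulting IFS $\ms$ satisfies the SSC, hence the OSC; its attractor is totally disconnected because the SSC iterates to a nested family of disjoint compact pieces of vanishing diameter; it is commensurable with ratio root exactly $r$ since $\gcd(\lambda_1,\dots,\lambda_N)=1$; and its similarity dimension is $s$ by construction, whence $p_\ms=r^s=p$. This places $\ms\in\TOEC(p,r)$.

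The only step that is not completely mechanical is the geometric packing, and I expect it to present no real difficulty because $d$ is at our disposal: in sufficiently high dimension, any finite collection of subcubes of fixed sizes less than one fits disjointly into $[0,1]^d$, and this is consistent with the identity $\sum p^{\lambda_i}=1$ because for any $d>s$ one automatically has the volumetric slack $\sum r^{\lambda_i d}<1$, leaving ample room.
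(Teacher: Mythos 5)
Your forward direction is correct and is essentially the same unwinding of definitions as the paper. The backward direction, however, contains a genuine gap. You propose to take the given exponents $\lambda_1,\dots,\lambda_N$ at face value, set $\eps=\max_i r^{\lambda_i}$, and pack $N$ cubes of side lengths $r^{\lambda_i}$ into an $\eps$-grid of $[0,1]^d$ with $\lfloor 1/\eps\rfloor^d\ge N$ cells. But nothing in the hypotheses forces $\eps<1/2$: for instance $\lambda_1=\lambda_2=1$, $p=1/2$, $r=0.9$ satisfies all the conditions, yet $\lfloor 1/\eps\rfloor=\lfloor 1/0.9\rfloor=1$ for every $d$, and indeed two axis-parallel cubes of side $0.9$ can never be placed disjointly inside $[0,1]^d$ in any dimension (in every coordinate, two subintervals of $[0,1]$ of length $0.9$ must overlap). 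Your appeal to ``volumetric slack'' $\sum r^{\lambda_i d}<1$ is a necessary condition for packing, not a sufficient one, so it does not rescue the argument; the blanket claim that ``in sufficiently high dimension, any finite collection of subcubes of fixed sizes less than one fits disjointly into $[0,1]^d$'' is simply false.

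The missing idea is that the realizing IFS in $\TOEC(p,r)$ need not have the ratios $r^{\lambda_i}$; only its ratio root and measure root must be $r$ and $p$. The paper exploits this freedom: it picks $\ell$ large enough that $r^\ell<1/2$ and $1-p^\ell-p^{\ell+1}>0$, then invokes Lemma~\ref{l:Z[p]}(e) (a Perron--Frobenius argument) to rewrite $p^{-\ell}(1-p^\ell-p^{\ell+1})$ as $p^{\ell_1}+\dots+p^{\ell_m}$ with nonnegative integer exponents. The new ratio list $r^{\ell},r^{\ell+1},r^{\ell+\ell_1},\dots,r^{\ell+\ell_m}$ has all ratios below $1/2$ (so the cube-packing you envisioned now works with $2^d\ge m+2$), exponent gcd one because $\gcd(\ell,\ell+1)=1$, and similarity dimension $s$ because the corresponding powers of $p$ sum to $1$. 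Without a re-expression step of this kind, the construction cannot handle the case of large $\max_i r^{\lambda_i}$, so as written the sufficiency direction does not go through.
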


\subsection{Class number problem}\label{ssec:clsnmb}

In this subsection, we will discuss the Lipschitz class number
of~$\TOEC(p,r)$ by making use of Theorem~\ref{t:TOCELCN}. This problem is
closely related to the famous class number problems posed by Gauss in
Articles~303 and~304 of \emph{Disquisitiones Arithmeticae} of~1801.

Recall that the class number of an algebraic number field is defined to be
the class number of the ring of all algebraic integers in it. We state some
Gauss class number problems in the modern terminology.

\begin{cthm}[Gauss class number conjecture]
  The number of imaginary quadratic fields $\Q(\sqrt D)$ $(D<0)$ which have a
  given class number~$n$ is finite.
\end{cthm}
This conjecture was solved by Heilbronn in~1934. Gauss further posed the
following problem.
\begin{cthm}[Gauss class number problem]
  For small~$n$, determine all $D<0$ such that the class number of~$\Q(\sqrt
  D)$ equals~$n$.
\end{cthm}
Watkins~\cite{Watki04} gave the solution for $n\le100$ in~2004. As a special
case,
\begin{cthm}[Gauss class number one problem for imaginary quadratic fields]
  There are only nine imaginary quadratic fields~$\Q(\sqrt D)$ $(D<0)$ with
  class number one. They are
  \[D\in\{-1,-2,-3,-7,-11,-19,-43,-67,-163\}.\]
\end{cthm}
This conjecture was solved by Baker~\cite{Baker68} and Stark~\cite{Stark67}
independently in~1967. For the contrasting case of real quadratic fields,
Gauss conjecture
\begin{cthm}[Gauss class number one problem for real quadratic fields]
  There are infinitely many real quadratic fields~$\Q(\sqrt D)$ $(D>0)$ with
  class number one.
\end{cthm}
This conjecture is still open. We do not even yet know whether there are
infinitely many algebraic number fields (of arbitrary degree) with class
number one.

\medskip

Like the Gauss class number problems, we can pose the Lipschitz class number
problem.
\begin{ques}\label{q:LCN}
  For a given integer~$n>0$, determine all~$p,r$ such that $\LCN(p,r)=n$.
\end{ques}

Theorem~\ref{t:TOCELCN} says that $\LCN(p,r)=n$ if and only if
$\TOEC(p,r)\ne\emptyset$ (see Proposition~\ref{p:p,r}) and $h(\Z[p])=n$.
However, from the Gauss class number problems, we know that, in algebraic
number theory, it is very difficult to determine all the algebraic
numbers~$p$ such that the ring~$\Z[p]$ has a given class number~$n$. In other
words, it seems very hard to solve Question~\ref{q:LCN} by making use of
algebraic number theory. A natural question is: can we obtain some results on
Question~\ref{q:LCN} by analyzing the geometrical structure of self-similar
sets directly? Such results, if obtained, might lead to deeper understanding
of the Gauss class number problems. Of course, this is also very difficult.
We don't know how to do it so far.

\subsection{More results on principal ideal}

The simplest case of Theorem~\ref{t:TOCELCN} is that the ring~$\Z[p]$ is a
principal ideal domain.
\begin{thm}\label{t:PID}
  Suppose that $\ms,\mt\in\TOEC(p,r)$. If $\Z[p]$ is a principal ideal
  domain, then $E_\ms\simeq E_\mt$.
\end{thm}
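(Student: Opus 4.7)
The plan is to derive Theorem~\ref{t:PID} as a direct corollary of Theorem~\ref{t:TOCEpr} (equivalently Theorem~\ref{t:TOCELCN}), since essentially all of the hard work has already been done there. The key observation is that in a principal ideal domain there is only one ideal class: for any two nonzero principal ideals $(a)$ and $(b)$ of $\Z[p]$, the equation $b\cdot(a)=(ab)=a\cdot(b)$ puts them in the same ideal class in the sense of Definition~\ref{d:class}.

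Concretely, I would argue as follows. First, observe that the ideals $I_\ms$ and $I_\mt$ are nonzero ideals of $\Z[p]$. Indeed, since $\ms,\mt\in\TOEC(p,r)$ satisfy the OSC, hence SOSC, there exists an open set $O$ witnessing the OSC with $O\cap E_\ms\neq\emptyset$, and one can find interior separated subsets $F\subset E_\ms$ with $\mu_\ms(F)>0$ (for instance, the intersection of $E_\ms$ with a small closed ball contained in $O$ and disjoint from its complementary part of $E_\ms$). Hence $I_\ms$ contains a nonzero element of $\Z[p]$, and similarly for $I_\mt$.

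Next, since $\Z[p]$ is a principal ideal domain by hypothesis, we can write $I_\ms=(a)$ and $I_\mt=(b)$ for some nonzero $a,b\in\Z[p]$. Then
\[
b\,I_\ms=(ba)=(ab)=a\,I_\mt,
\]
so $I_\ms$ and $I_\mt$ lie in the same ideal class of $\Z[p]$. Applying Theorem~\ref{t:TOCEpr} to the pair $\ms,\mt\in\TOEC(p,r)$, we conclude that $E_\ms\simeq E_\mt$.

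There is no real obstacle here beyond what is already packed into Theorem~\ref{t:TOCEpr}; the only small technical point to verify is that $I_\ms$ and $I_\mt$ are nonzero, so that the PID hypothesis can be invoked in the form \emph{every nonzero ideal is principal}. All of the genuine difficulty---the construction of the bi-Lipschitz map, the reading off of ratios and ideals as invariants, and the one-to-one correspondence with ideal classes---is inherited from the preceding theorems.
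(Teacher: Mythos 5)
Your proof is correct and matches the route the paper intends: Theorem~\ref{t:PID} is left as an immediate corollary of Theorem~\ref{t:TOCEpr} (equivalently, $\LCN(p,r)=h(\Z[p])=1$ from Theorem~\ref{t:TOCELCN}), and the observation that any two nonzero ideals $(a),(b)$ satisfy $b(a)=a(b)$ is exactly the reason a PID has class number one. Your side remark that $I_\ms$, $I_\mt$ are nonzero is indeed the only thing to check, and it follows because interior blocks exist (Lemma~\ref{l:CP(k)} applied via Lemma~\ref{l:ideal} shows $\xc P^\circ\ne\emptyset$, hence $I_\ms\ne(0)$).
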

\begin{rem}
  Like the SSC, in this case the relative positions of the small copies of
  self-similar sets do \emph{not} affect the Lipschitz equivalence.
\end{rem}

The simplest example of $\Z[p]$ to be a principal ideal domain is that
$p=1/N$ for integers $N\ge2$. Given $\ms=\{S_1,\dots,S_N\}\in\TOEC$ with the
$N$ ratios are all equal to $r$, the ring related to~$\ms$ is just $\Z[1/N]$.
This leads to the following theorem.

\begin{thm}\label{t:srt}
  Suppose $E\subset\R^d$ and $E'\subset\R^{d'}$ are totally disconnected
  self-similar sets satisfying the open set condition, if both $E$ and $E'$
  are generated by $N$ contracting similarities with the same ratio $r$, then
  $E$ and $E'$ are Lipschitz equivalent.
\end{thm}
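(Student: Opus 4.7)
The plan is to derive this statement as an immediate corollary of Theorem~\ref{t:PID} applied to the ring $\Z[1/N]$. Let $\ms$ and $\mt$ denote the IFSs defining $E$ and $E'$, respectively; each consists of $N$ contracting similarities of common ratio $r\in(0,1)$.

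First I verify that $\ms,\mt\in\TOEC(p,r)$ with $p=1/N$. Total disconnectedness and the OSC are assumed, placing both IFSs in $\TDC\cap\OEC$ once commensurability is checked. But commensurability is trivial, since all ratios equal $r$: one takes $\lambda_1=\dots=\lambda_N=1$ (so $\gcd=1$), whence $r_\ms=r_\mt=r$. The similarity dimension $s$ of either self-similar set satisfies $\sum_{i=1}^N r_i^s=Nr^s=1$, so $r^s=1/N$, and consequently $p_\ms=p_\mt=r_\ms^s=1/N$. Thus both $\ms$ and $\mt$ lie in the same family $\TOEC(1/N,r)$.

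Next I check that $\Z[p]=\Z[1/N]$ is a principal ideal domain. This follows from the standard fact that $\Z[1/N]$ is the localization $S^{-1}\Z$ of $\Z$ at the multiplicative set $S=\{N^k:k\ge 0\}$, and any localization of a principal ideal domain is again a principal ideal domain. With both hypotheses confirmed, Theorem~\ref{t:PID} yields $E_\ms\simeq E_\mt$, i.e., $E\simeq E'$. The possibility $d\ne d'$ is harmless, since (as stressed in the remark following Theorem~\ref{t:TOCElip}) the framework of Theorems~\ref{t:TOCElip} and~\ref{t:PID} explicitly permits the two IFSs to live in Euclidean spaces of different dimensions.

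There is no substantive obstacle here: the deep content has been absorbed into Theorem~\ref{t:PID}, and what remains is the elementary ring-theoretic observation that $\Z[1/N]$ is a PID, together with the identification $p_\ms=p_\mt=1/N$. The one point worth flagging is purely notational: one must not confuse the ratio $r$ (arbitrary in $(0,1)$) with the measure root $p=1/N$, and must recognize that the appropriate ring for invoking Theorem~\ref{t:PID} is $\Z[p]=\Z[1/N]$ rather than $\Z[r]$.
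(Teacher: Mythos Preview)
Your proposal is correct and follows exactly the paper's approach: the text preceding Theorem~\ref{t:srt} explicitly observes that when all $N$ ratios equal $r$ the measure root is $p=1/N$ and the relevant ring is $\Z[1/N]$, a principal ideal domain, so that Theorem~\ref{t:srt} is an immediate corollary of Theorem~\ref{t:PID}. Your write-up simply makes the implicit verifications (commensurability, computation of $p_\ms=p_\mt=1/N$, and the PID property of $\Z[1/N]$) explicit.
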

\begin{rem}\label{r:srt}
  Theorem~\ref{t:srt} generalizes many known results on the Lipschitz
  equivalence of self-similar sets (see Section~\ref{ssec:OSC}), although it
  is only a very special corollary of Theorem~\ref{t:TOCELCN}.
\end{rem}

On the other hand, it is natural to think that a self-similar set has the
simplest geometrical structure if it is Lipschitz equivalent to a
self-similar set satisfying the SSC. If the set is generated by an
IFS~$\ms\in\TOEC$, by Theorem~\ref{t:TOCElip} and Example~\ref{e:idSSC}, this
is equivalent to that the ideal $I_\ms$ is a principle ideal. Let $\PI$
denote the set of all IFS $\ms\in\TOEC$ such that the ideal~$I_\ms$ is a
principal ideal. It follows from Theorem~\ref{t:TOCElip} that
\begin{thm}\label{t:PI}
  Suppose that $\ms,\mt\in\PI$, then $E_\ms\simeq E_\mt$ if and only if
  \begin{enumerate}[\upshape(i)]
    \item $\hdim E_\ms=\hdim E_\mt$;
    \item $\log r_\ms/\log r_\mt\in\Q$;
    \item $\Z[p_\ms]=\Z[p_\mt]$.
  \end{enumerate}
\end{thm}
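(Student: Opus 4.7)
The plan is to deduce Theorem~\ref{t:PI} as an immediate corollary of Theorem~\ref{t:TOCElip}. Conditions~(i) and~(ii) appear verbatim in both statements, so the sole task is to prove, under the hypothesis $\ms,\mt\in\PI$, that the remaining condition of Theorem~\ref{t:TOCElip}---namely $I_\ms=aI_\mt$ for some $a\in\R$---is equivalent to condition~(iii) here, $\Z[p_\ms]=\Z[p_\mt]$.

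For the easy direction, assume $\Z[p_\ms]=\Z[p_\mt]=:R$. Since $\ms,\mt\in\PI$, write $I_\ms=xR$ and $I_\mt=yR$ with nonzero $x,y\in R$; then $a:=x/y\in\R$ satisfies $aI_\mt=xR=I_\ms$, which is exactly the remaining condition of Theorem~\ref{t:TOCElip}.

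For the converse, assume $I_\ms=aI_\mt$ and use principality to write $I_\ms=x\Z[p_\ms]$ and $I_\mt=y\Z[p_\mt]$. Substituting gives $x\Z[p_\ms]=ay\Z[p_\mt]$, so with $c:=ay/x\in\R^*$ we obtain the scalar relation $\Z[p_\ms]=c\Z[p_\mt]$ between two subrings of $\R$. I claim $c$ lies in $\Z[p_\ms]\cap\Z[p_\mt]$: indeed $c=c\cdot 1\in c\Z[p_\mt]=\Z[p_\ms]$, and then $c^2\in\Z[p_\ms]=c\Z[p_\mt]$ forces $c\in\Z[p_\mt]$; running the symmetric argument on the equivalent relation $\Z[p_\mt]=c^{-1}\Z[p_\ms]$ shows $c^{-1}$ also lies in both rings. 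By the multiplicative closure of $\Z[p_\mt]$ together with $c\in\Z[p_\mt]$, we get $\Z[p_\ms]=c\Z[p_\mt]\subseteq\Z[p_\mt]$; symmetrically $\Z[p_\mt]\subseteq\Z[p_\ms]$. This gives the desired $\Z[p_\ms]=\Z[p_\mt]$ and, combined with Theorem~\ref{t:TOCElip}, completes the proof.

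The argument is essentially formal; the only mildly subtle point is promoting the multiplicative relation $\Z[p_\ms]=c\Z[p_\mt]$ to a strict equality of rings, which I handle by showing that $c$ and $c^{-1}$ must both be units living in the intersection of the two rings. Everything else is a direct bookkeeping of the principal generators, so no genuine obstacle arises beyond careful use of Theorem~\ref{t:TOCElip}.
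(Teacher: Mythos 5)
Your proof is correct and follows essentially the same route as the paper: both reduce Theorem~\ref{t:PI} to Theorem~\ref{t:TOCElip} and show that under principality the scalar relation $\Z[p_\ms]=c\Z[p_\mt]$ forces equality of the two rings. The paper observes $c\in\Z[p_\ms]$ and cancels $c$ from $c\Z[p_\ms]\subset\Z[p_\ms]=c\Z[p_\mt]$ directly, whereas you first deduce $c\in\Z[p_\mt]$ via the $c^2$ step and then invoke ideal closure---a minor rearrangement of the same argument.
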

The point is that the condition $I_\ms=aI_\mt$ in Theorem~\ref{t:TOCElip} is
equivalent to $\Z[p_\ms]=\Z[p_\mt]$ provided that $I_\ms$ and $I_\mt$ are
both principle ideals. In fact, under the assumption of being principle
ideal, $I_\ms=aI_\mt$ is equivalent to $\Z[p_\ms]=b\Z[p_\mt]$ for some
$b\in\R$. Then observe that $b\in\Z[p_\ms]$ since $1\in\Z[p_\mt]$, and so
$b\Z[p_\ms]\subset\Z[p_\ms]=b\Z[p_\mt]$, i.e., $\Z[p_\ms]\subset\Z[p_\mt]$.
By symmetry we have $\Z[p_\ms]=\Z[p_\mt]$. However, in general $I_\ms=aI_\mt$
is not equivalent to $\Z[p_\ms]=\Z[p_\mt]$, see Example~\ref{e:NPIZ}.
\begin{exmp}\label{e:NPIZ}
  Let $p_\ms=\sqrt{10}-3$ be the positive solution of the equation
  $p_\ms^2+6p_\ms=1$ and $p_\mt=37\sqrt{10}-117$ the positive solution of the
  equation $p_\mt^2+234p_\mt=1$. Then
  $\Z[p_\ms]=\Z[\sqrt{10}]\ne\Z[p_\ms]=\Z[37\sqrt{10}]$. Let
  $I_\ms=I_\mt=37(\sqrt{10}\Z+2\Z)$. One can check that $I_\ms$ is an ideal
  of~$\Z[p_\ms]$ and $I_\mt$ is an ideal of~$\Z[p_\mt]$. Thus, $I_\ms=I_\mt$
  but $\Z[p_\ms]\ne\Z[p_\mt]$.
\end{exmp}

A natural question arises:
\begin{ques}\label{q:PI}
  For what IFS~$\ms\in\TOEC$, is the ideal $I_\ms$ a principal ideal?
\end{ques}
We remark that Example~\ref{e:idSSC} says that $\PI$ contains all IFS
$\ms\in\TOEC$ satisfying the SSC. It is also obviously that $\ms\in\PI$ if
$\Z[p_\ms]$ is a principle ideal domain. We further give another partial
answer to Question~\ref{q:PI}. An IFS~$\ms$ on~$\R^d$ is said to be
\emph{orthogonal homogeneous} if there is a $d\times d$ orthogonal
matrix~$\bm A$ such that each $S_i\in\ms$ has the form $S_i\colon x\mapsto
r_i\bm Ax+b_i$ with $r_i\in(0,1)$. In other words, the similarities in~$\ms$
have the same orthogonal part but their ratios may be different. We say an
IFS~$\ms$ satisfies the convex open set condition (COSC) if $\ms$ satisfies
the OSC with a convex open set. Let
\begin{multline}\label{eq:PIS}
  \xs S:=\Bigl\{\ms\in\TOEC\colon \text{$\ms$ satisfies the COSC}\\
  \text{and is orthogonal homogeneous}\Bigr\}.
\end{multline}
\begin{thm}\label{t:covopen}
 For every $\ms\in\xs S$, we have $I_\ms=\Z[p_\ms]$. As a result,
 $\xs S\subset\PI$.
\end{thm}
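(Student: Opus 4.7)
My plan is to reduce the problem to exhibiting a single interior separated subset of~$E_\ms$ whose $\mu_\ms$-measure is a positive power of~$p_\ms$. The key input is that $p_\ms$ is a unit of $\Z[p_\ms]$: dividing $\sum_{i=1}^N p_\ms^{\lambda_i}=1$ by $p_\ms$ and using $\lambda_i\ge1$ gives
\[p_\ms^{-1}=\sum_{i=1}^N p_\ms^{\lambda_i-1}\in\Z[p_\ms].\]
Hence $p_\ms^n\in I_\ms$ for any $n\ge0$ already forces $1=p_\ms^{-n}\cdot p_\ms^n\in I_\ms$, yielding $I_\ms=\Z[p_\ms]$. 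Since the natural measure of a cylinder is $\mu_\ms\bigl(S_w(E_\ms)\bigr)=p_\ms^{\lambda_{w_1}+\cdots+\lambda_{w_k}}$, it suffices to find one word~$w$ for which the cylinder $F=S_w(E_\ms)$ is an interior separated set.

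Let $U$ be the convex open set from the COSC. By the SOSC (which is equivalent to the OSC on~$\R^d$), $U\cap E_\ms\ne\emptyset$, and the standard fact $\mu_\ms(\partial U\cap E_\ms)=0$ forces the countable union $\bigcup_w S_w(\partial U)\cap E_\ms$ to be $\mu_\ms$-null. I pick $x_0\in U\cap E_\ms$ outside this null set and set $\delta=\dist(x_0,\partial U)>0$. If $w^{(1)}w^{(2)}\cdots$ is the address of~$x_0$, then for~$k$ large enough that $r_{w^{(1)}}\cdots r_{w^{(k)}}\cdot\mathrm{diam}(\overline U)<\delta$, the cylinder closure $S_w(\overline U)$ (with $w=w^{(1)}\cdots w^{(k)}$) sits inside $B(x_0,\delta)\subset U$; in particular $F=S_w(E_\ms)\subset U$, so $F$ is housed in the OSC open set~$U$. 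Note that the same argument shows the cleaner variant: if $E_\ms\cap\partial U=\emptyset$, then $O=U$ already contains $E_\ms$ and $F=E_\ms$ has measure~$1$.

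The remaining and principal step---the main obstacle---is to verify that $F$ is clopen in~$E_\ms$, i.e.~that $S_w(\overline U)$ shares no boundary point with any other $k$-cylinder closure $S_{w'}(\overline U)$. The OSC alone does not preclude such tangential contact, but orthogonal homogeneity is decisive: every iterate $S_u$ has orthogonal part $\bm A^{|u|}$, so all $k$-cylinders are mutually congruent rotated-scaled copies of~$\overline U$, and the combinatorial pattern of contacts is governed rigidly by the level-$1$ configuration. Combining this rigidity with the convexity of~$U$ (so that any contact between two cylinders must occur on a common supporting hyperplane) and the fact that $x_0$ avoids every cylinder boundary, one can pass if necessary to a deeper iterate of~$w$ for which $S_w(\overline U)$ is separated from all other $k$-cylinder closures. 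Once $F$ is shown clopen, it is interior separated with $\mu_\ms(F)$ a positive power of $p_\ms$; by the reduction in the first paragraph, $1\in I_\ms$, so $I_\ms=\Z[p_\ms]$ and $\ms\in\PI$.
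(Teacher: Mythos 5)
Your reduction in the first paragraph is sound: since $p_\ms^{-1}\in\Z[p_\ms]$ (Lemma~\ref{l:Z[p]}(a)), any interior separated set whose $\mu_\ms$-measure is a unit of $\Z[p_\ms]$ forces $1\in I_\ms$. But the plan then hinges entirely on the claim that some single cylinder $S_w(E_\ms)$ is an interior separated set, and that claim is \emph{false} for IFSs in~$\xs S$. Consider $\ms=\{S_1,S_2,S_3,S_4\}$ on~$\R$, $S_i(x)=x/5+a_i$ with $(a_1,a_2,a_3,a_4)=(0,1/5,3/5,4/5)$, and $U=(0,1)$. This $\ms$ lies in~$\xs S$ (COSC with the interval $U$, orientation-preserving hence orthogonally homogeneous, $\dim_H E_\ms=\log4/\log5<1$ so TDC). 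Here $0$ and $1$ are fixed points of $S_1$, $S_4$, so $1/5=S_1(1)=S_2(0)\in S_1(E_\ms)\cap S_2(E_\ms)$ and $4/5=S_3(1)=S_4(0)\in S_3(E_\ms)\cap S_4(E_\ms)$. Propagating by $S_{\bm i}$, for \emph{every} word~$w$ the cylinder $S_w(E_\ms)$ shares a point with the sibling obtained by swapping the last letter $1\leftrightarrow2$ or $3\leftrightarrow4$, and since $E_\ms$ is perfect that shared point is approached from inside the sibling. Hence $E_\ms\setminus S_w(E_\ms)$ is never closed, so no cylinder is clopen, and the step you flag as ``the principal step'' cannot be completed: ``passing to a deeper iterate'' never produces an isolated cylinder here. (Incidentally, the phrase ``mutually congruent'' is also off: orthogonal homogeneity gives all level-$k$ cylinders the same orientation $\bm A^k$, but their scales $r_{\bm i}$ may differ, so they are only similar.)

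The paper's proof avoids this trap by targeting a strictly weaker property. Theorem~\ref{t:separated} shows $I_\ms=\Z[p_\ms]$ as soon as every point $x\in\partial_\ms$ is \emph{separated} in the sense of~\eqref{eq:xbdr}: there exists a word~$\bm i$ with $S_{\bm i}(x)\notin\partial_\ms$ and $S_{\bm i}(x)\notin S_{\bm j}(E_\ms)$ for all $\bm j\ne\bm i$ of the same length. This is a statement about a single iterate of a single point, not about isolating a whole cylinder, and Theorem~\ref{t:separated} then \emph{covers} $\partial_\ms$ by finitely many clopen pieces $F^*_k$, each a finite union of cylinders, and obtains $1=\sum_k\mu_\ms(F^*_k)\in I_\ms$. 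The proof that points of $\partial_\ms$ are separated is exactly where convexity and orthogonal homogeneity enter, and it needs a genuine argument, not just ``rigidity'': one picks an interior separated set $F=\bigcup_{\bm i\in\Lambda}S_{\bm i}(E_\ms)$, notes that a contact $S_{\bm i^*}(x_0)\in S_{\bm j}(E_\ms)$ produces, by convexity of~$O$, a linear functional~$H$ separating $S_{\bm i^*}(O)$ from $S_{\bm j}(O)$; orthogonal homogeneity guarantees that $H\circ S_{\bm i}$ attains its maximum over $E_\ms$ at the \emph{same} point for every $\bm i\in\Lambda$, so $H(S_{\bm j}(x_0))>H(S_{\bm i^*}(x_0))$, and the set $\Lambda_1=\{\bm i\in\Lambda\colon H(S_{\bm i}(x_0))>H(S_{\bm i^*}(x_0))\}$ is a proper subset of~$\Lambda$ that is closed under the contact relation. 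Iterating strictly shrinks~$\Lambda$, which is finite, so the process terminates with the desired~$\bm i^*$. You should either adopt this descent, or—if you want to keep your unit trick—prove instead that some finite union of cylinders inside~$U$ has $\mu_\ms$-measure a unit; a single cylinder will not do.
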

\begin{rem}
  We don't know whether Theorem~\ref{t:covopen} is still true if we drop the
  COSC. On the other hand, Example~\ref{e:NPI} and~\ref{e:ideal} implies that
  the condition that $\ms$ is orthogonal homogeneous can not be relaxed too
  much.
\end{rem}
\begin{rem}
  Under the commensurable case, Theorem~\ref{t:PI} and~\ref{t:covopen} extend
  the results in~\cite{RuWaX12,XiRu07} in a very general setting for IFSs
  on~$\R^d$ ($d\ge1$), see Section~\ref{ssec:OSC}.
\end{rem}

The paper is organized as follows. In Section~\ref{sec:SSS}, we review some
known results about the Lipschitz equivalence of self-similar sets and
present some new results on the non-commensurable case, including
Theorem~\ref{t:Z+} and~\ref{t:sublip}. Section~\ref{sec:Zp} concerns the
algebraic properties of measure root. As a result, we give the proof of
Proportion~\ref{p:p,r}. Section~\ref{sec:ideal} devoted to the proof of
Theorem~\ref{t:TOCELCN} and~\ref{t:covopen}. This is based on some techniques
of computing the ideal of IFS, see Theorem~\ref{t:gdid}
and~\ref{t:separated}. Section~\ref{sec:BD} introduces the notions of blocks
decomposition, interior blocks and measure polynomials. This section also
prove some basic results, such as the finiteness of the measure polynomials
(Proposition~\ref{p:finiteCP}) and the cardinality of boundary blocks and
interior blocks (Lemma~\ref{l:C(k)}, \ref{l:CP(k)} and~\ref{l:CB(k)}). All of
this are fundamental to our study. Section~\ref{sec:idea} discusses the main
ideas behind the proof of Theorem~\ref{t:TOCElip}, including the cylinder
structure (Definition~\ref{d:qscldr}), the dense island structure
(Definition~\ref{d:dnsilnd}), the measure linear property
(Definition~\ref{d:msrln}) and the suitable decomposition
(Definition~\ref{d:suitde}). We conclude this section with
Lemma~\ref{l:suitde}, which is the tool to construct the same cylinder
structure. The proof of Theorem~\ref{t:TOCElip} is presented in
Section~\ref{sec:LipBE} and~\ref{sec:proof}. By making use of cylinder
structure and dense island structure, we first prove the whole self-similar
set is Lipschitz equivalent to interior blocks of it
(Proposition~\ref{p:BlipE}), then deal with the Lipschitz equivalence between
interior blocks of different self-similar sets (Proposition~\ref{p:BlipB}).
Thus the proof of Theorem~\ref{t:TOCElip} is complete. Finally, we study the
non-commensurable case and give the proofs of Theorem~\ref{t:Z+}
and~\ref{t:sublip} in Section~\ref{sec:NC}.

\section{Geometrical structure of Self-similar Sets}
\label{sec:SSS}

\subsection{About self-similar sets}

Self-similar sets in Euclidean spaces are fundamental objects in fractal
geometry. However, we do not know much about them.

Given an IFS $\ms=\{S_i\}_{i=1}^N$ consisting of contracting similarities
$\ms=\{S_i\}_{i=1}^N$ on~$\R^d$, Hutchinson~\cite{Hutch81} showed that there
is a unique nonempty compact set~$E_\ms\subset\R^d$, called self-similar set,
such that
\[E_\ms=\bigcup_{i=1}^NS_i(E_\ms).\]
Conversely, given a self-similar set~$E$, it is not easy to determine all the
IFSs which generate~$E$, even under some reasonable additional conditions.
This is why we state our results by IFSs rather than self-similar sets. This
problem is rather fundamental and has some relationship with the Lipschitz
equivalence problem of self-similar sets. In fact, if two IFSs generate the
same self-similar set, they must satisfy the conditions necessary to the
Lipschitz equivalence. It is somewhat surprising that there is little known
about the generating IFSs of a given self-similar set. We refer
to~\cite{DenLa,FenWa09} for detailed study of this problem.

Another basic problem is to determine the dimension of self-similar sets. In
general this problem is very difficult. A open conjecture of Furstenberg says
that $\hdim E_\lambda=1$ for any $\lambda $ irrational, where
$E_\lambda=E_\lambda/3\cup (E_\lambda/3+\lambda/3)\cup (E_\lambda/3+2/3)$.
Although the IFSs involved are rather sample, the conjecture remained open
from 1970s until settled by Hochman~\cite{Hochm12} very recently, see
also~\cite{Kenyo97,RaoWe98,SVe02}. We know much more about the dimension of
self-similar sets if some separation conditions hold. Such conditions control
the overlaps between small copies of self-similar set. The OSC, which means
the overlaps are \emph{small}, was introduced by Moran~\cite{Moran46}. For
IFSs on Euclidean spaces, it is well known from Hutchinson~\cite{Hutch81}
that if $\ms$ satisfies the OSC, then $\hdim E_\ms$ equals the
\emph{similarity dimension}~$s$ (the unique positive solution of
$\sum_{i=1}^N r_i^s=1$) and the Hausdorff measure $\xc H^s(E_\ms)>0$.
Moreover, Bandt and Graf~\cite{BanGr92} and Schief~\cite{Schie94} proved that
\[\text{SOSC}\Leftrightarrow\text{OSC}\Leftrightarrow\xc H^s(E_\ms)>0.\]
Although there are various conditions which equivalent to the OSC obtained
by~\cite{BanGr92,Moran46,Schie94}, in general it is not known how to
determine whether a given IFS satisfies the OSC. We refer
to~\cite{BaHuR06,BanRa07} for more studies on the OSC. Another well studied
separation condition is the \emph{weak separation condition} (WSP), which
extends the OSC while allowing overlaps on the iteration,
see~\cite{DasEd11,LauNg99,Zerner1996}.

If one want to know more about the geometrical structure of self-similar
sets, the information of dimension is not enough, which only tells us about
the size of sets. It is natural to think that the self-similar sets in the
same Lipschitz equivalence class have the same geometrical structure. In this
sense, our result is a step towards the well-understanding of the geometrical
structure of self-similar sets satisfying the OSC. In the remainder of this
section, we review some known results about Lipschitz equivalence of
self-similar sets in Euclidean spaces and generalize almost all of them by
making use of our new results. For other related works on Lipschitz
equivalence, see~\cite{DavSe97,FalMa89,MatSa09,Xi04,Xi07,XiXi12,XioXi09}.

\subsection{The SSC case}

When the self-similar sets satisfy the SSC, their geometrical structure are
clear since there are no overlaps between the small copies $S_i(E_\ms)$. But
the problem of Lipschitz equivalence in this case is rather difficult. It is
not hard to see that, in the SSC case, the algebraic properties of the ratios
of the self-similar sets completely determine whether or not they are
Lipschitz equivalent. However, we do not yet know completely what algebraic
properties affect the Lipschitz equivalence.

Cooper and Pignataro~\cite{CooPi88} studied order-preserving bi-Lipschitz
mappings between self-similar subsets of $\R^1$ and proved the measure linear
property (see Section~\ref{ssec:ML}). Falconer and Marsh~\cite{FalMa92}
obtained two necessary conditions in terms of algebraic properties of ratios.
Based on the ideas in~\cite{CooPi88,FalMa92}, Rao, Ruan and
Wang~\cite{RaRuW12} completely characterize the Lipschitz equivalence for
several \emph{special} kinds of self-similar sets satisfying the SSC. Some
sufficient and necessary conditions on the Lipschitz equivalence in the SSC
case were obtained in Xi~\cite{Xi10}, Llorente and Mattila~\cite{LloMa10} and
Deng and Wen et~al.~\cite{DeWXX11}. But these conditions are not based on the
algebraic properties of ratios and so it is impossible to verify them for
given IFSs.

Our results substantially improves the study of the SSC case. By
Theorem~\ref{t:PI} and Example~\ref{e:idSSC}, we find the complete Lipschitz
invariants in terms of algebraic properties of ratios under the commensurable
condition.

\begin{thm}\label{t:SSC}
  Suppose that $\ms,\mt$ both satisfy the SSC and the ratios of them are
  both commensurable. Then $E_\ms\simeq E_\mt$ if and only if
  \begin{enumerate}[\upshape(i)]
    \item $\hdim E_\ms=\hdim E_\mt$;
    \item $\log r_\ms/\log r_\mt\in\Q$;
    \item $\Z[p_\ms]=\Z[p_\mt]$.
  \end{enumerate}
\end{thm}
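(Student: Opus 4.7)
The plan is to deduce Theorem~\ref{t:SSC} as a direct corollary of Theorem~\ref{t:PI}, via the observation in Example~\ref{e:idSSC}. The entire job is therefore a verification that the hypotheses of Theorem~\ref{t:PI} are met; there is no genuine obstacle beyond this bookkeeping.

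First I would verify the class memberships $\ms,\mt\in\PI$. Since the SSC implies that the pieces $S_i(E_\ms)$ are pairwise disjoint compact subsets of~$E_\ms$, the self-similar set $E_\ms$ is totally disconnected (each $S_i(E_\ms)$ has diameter bounded by $r_i\cdot\mathrm{diam}(E_\ms)$ and iterating produces arbitrarily small clopen pieces), so $\ms\in\TDC$. The SSC trivially implies the OSC, and together with the assumed commensurability of the ratios this gives $\ms\in\TOEC$. The same applies to~$\mt$. By Example~\ref{e:idSSC}, the SSC forces $I_\ms=\Z[p_\ms]$, which is the unit ideal and in particular a principal ideal; hence $\ms\in\PI$, and likewise $\mt\in\PI$.

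Having reduced to the hypotheses of Theorem~\ref{t:PI}, I would invoke that theorem directly: $E_\ms\simeq E_\mt$ if and only if $\hdim E_\ms=\hdim E_\mt$, $\log r_\ms/\log r_\mt\in\Q$, and $\Z[p_\ms]=\Z[p_\mt]$. These are exactly conditions~(i)--(iii) in the statement, and the proof is complete.

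The only step that warrants a moment's thought is that the total disconnectedness of $E_\ms$ does follow from the SSC alone; this is standard, since the SSC allows one to realize $E_\ms$ as the attractor of a code map from a Cantor-type symbolic space $\{1,\dots,N\}^{\N}$ with disjoint cylinder images, but it is worth mentioning explicitly, because Theorem~\ref{t:PI} is stated for IFSs in the class~$\TOEC$, and the membership in~$\TDC$ is not part of the SSC definition.
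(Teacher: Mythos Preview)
Your proposal is correct and follows exactly the route the paper itself indicates: the theorem is stated as an immediate consequence of Theorem~\ref{t:PI} together with Example~\ref{e:idSSC}, and your only additional work is the routine check that the SSC implies $\ms,\mt\in\TOEC$ (in particular the total disconnectedness), which the paper leaves implicit.
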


We remark that the Conditions~(ii) and~(iii) are independent, see the
following two examples.
\begin{exmp}\label{e:ssc1}
  Let $\ms$ be an IFS satisfying the SSC with ratios~$3^{-1}$, $3^{-1}$,
  $3^{-2}$ and~$3^{-2}$, and $\mt$ an IFS satisfying the SSC with ratios
  \[\underbrace{3^{-3},\dots,3^{-3}}_{20},
  \underbrace{3^{-6},\dots,3^{-6}}_{8}.\]
  Then $p_\ms=\frac{\sqrt3-1}2$ is the positive solution of the equation
  $2p_\ms^2+2p_\ms=1$ and $p_\mt=\frac{3\sqrt3-5}4$ is the positive solution
  of the equation $8p_\mt^2+20p_\mt=1$. We have $\hdim E_\ms=\hdim E_\mt$,
  \[\frac{\log p_\ms}{\log p_\mt}=\frac13\in\Q\quad \text{and}\quad
  \Q(p_\ms)=\Q(p_\mt)=\Q(\sqrt3),\]
  but
  \[\Z[p_\ms]=\Z[\sqrt3,\frac12]\ne\Z[3\sqrt3,\frac12]=\Z[p_\mt].\]
\end{exmp}

\begin{exmp}\label{e:ssc2}
  Let $p_\ms=\frac{\sqrt5-1}4$ be the positive solution of the equation
  $4p_\ms^2+2p_\ms=1$ and $p_\mt=\frac{\sqrt5-2}2$ the positive solution of
  the equation $4p_\mt^2+8p_\mt=1$. Then
  \[\Z[p_\ms]=\Z[p_\mt]=\Z[\sqrt5,\frac12],\quad \text{but}\
  \log p_\ms/\log p_\mt\notin\Q\]
  since $p_\mt=4p_\ms^3$.
\end{exmp}

\subsection{The non-commensurable case}

It is interesting to compare Theorem~\ref{t:SSC} with Falconer and Marsh's
classic result in~\cite{FalMa92}. Without assuming the commensurable
condition, they obtained some \emph{necessary} conditions for $E_\ms\simeq
E_\mt$.

\begin{thm*}[Falconer and Marsh~\cite{FalMa92}]
Suppose that $\ms,\mt$ both satisfy the SSC and $r_1,\dots,r_n$ are ratios
of~$\ms$, $t_1,\dots,t_m$ are ratios of~$\mt$. The following conditions are
necessary for $E_\ms\simeq E_\mt$.
\begin{enumerate}[\upshape(i)]
  \item $\hdim E_\ms=\hdim E_\mt$;
  \item there exist positive integers $u,v$ such that
      \begin{gather*}
      \sgp(r_1^u,\dots,r_n^u)\subset\sgp(t_1,\dots,t_m), \
      \sgp(t_1^v,\dots,t_m^v)\subset\sgp(r_1,\dots,r_n),
      \end{gather*}
      where $\sgp(a_1,\dots,a_n)$ denotes the multiplicative sub-semigroup
      of positive real numbers generated by $a_1,\dots,a_n$;
  \item $\Q(r_1^s,\dots,r_n^s)=\Q(t_1^s,\dots,t_m^s)$, where $s=\hdim
      E_\ms=\hdim E_\mt$.
\end{enumerate}
\end{thm*}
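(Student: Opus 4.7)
The plan is to exploit the clean tree structure that the SSC provides. Fix a bi-Lipschitz bijection $f\colon E_\ms \to E_\mt$ with constant $L$ and let $s$ denote the common Hausdorff dimension, which settles (i). By Hutchinson, $\xc H^s(E_\ms), \xc H^s(E_\mt) \in (0,\infty)$, and the normalised measures $\nu_\ms, \nu_\mt$ assign exact multiplicative masses to cylinders: $\nu_\ms(E_\sigma) = r_\sigma^s \nu_\ms(E_\ms)$ with $r_\sigma := \prod r_{\sigma_j}$, and similarly for $\mt$. The key geometric input is that under SSC the level-$k$ cylinders are separated by a positive gap $c_k$, so every cylinder $E_\sigma$ is clopen; hence $f(E_\sigma)$ is clopen in $E_\mt$ and, after passing to an $\mt$-level deeper than $L r_\sigma$, decomposes as
\[
f(E_\sigma) \;=\; \bigsqcup_{\tau \in T_\sigma} S_\tau(E_\mt).
\]
Moreover every $\tau \in T_\sigma$ satisfies $t_\tau \asymp r_\sigma$ with constants depending only on $L, \mathrm{diam}(E_\ms), \mathrm{diam}(E_\mt)$, since $S_\tau(E_\mt)$ lies inside an image of diameter $\asymp r_\sigma$ and would otherwise shrink below the required separation gap in $E_\mt$.

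For (iii), convert the decomposition into measure identities. We have $\nu_\mt(f(E_\sigma)) = \nu_\mt(E_\mt) \sum_{\tau \in T_\sigma} t_\tau^s$, and the Radon--Nikodym derivative $J := d(f_* \nu_\ms)/d\nu_\mt$ is bounded in $[L^{-s}, L^s]$. Taking $\sigma = \sigma(x, k)$ to be the level-$k$ cylinder containing a typical point $x$ and letting $k \to \infty$, the Lebesgue differentiation theorem (valid because $\nu_\mt$ is Ahlfors $s$-regular) yields
\[
\sum_{\tau \in T_{\sigma(x,k)}} t_\tau^s \;=\; \bigl(J(f(x))^{-1} + o(1)\bigr) \cdot \frac{\nu_\ms(E_\ms)}{\nu_\mt(E_\mt)} \cdot r_{\sigma(x,k)}^s.
\]
The left side lies in $\Z_{\ge 0}[t_1^s, \dots, t_m^s]$ and $r_{\sigma(x,k)}^s \in \Q(r_1^s,\dots,r_n^s)$. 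Comparing such identities at two independent typical points and eliminating the common analytic factor yields algebraic relations forcing $\Q(r_1^s,\dots,r_n^s) \subseteq \Q(t_1^s,\dots,t_m^s)$; applying the same argument to $f^{-1}$ gives equality.

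For (ii), I would work with the diameter estimate $t_\tau \asymp r_\sigma$ directly. For each $\sigma$ select a distinguished word $\tau(\sigma) \in T_\sigma$ (say of maximal $t_\tau$), and apply this to the periodic family $\sigma^k := \sigma\sigma\cdots\sigma$. The nesting $f(E_{\sigma^{k+1}}) \subset f(E_{\sigma^k})$ propagates to a nesting of the dominant cylinders, so $\tau(\sigma^{k+1})$ extends $\tau(\sigma^k)$ by an $\mt$-suffix $\rho_k$ of length bounded in terms of $L$ and $r_\sigma$ alone. Since each $\rho_k$ lies in a finite set of words, a pigeonhole pair $k_1 < k_2$ with $\rho_{k_1} = \rho_{k_2}$ and matching periodic heads produces the exact multiplicative identity
\[
r_\sigma^{k_2 - k_1} \;=\; t_{\eta_1} t_{\eta_2} \cdots t_{\eta_\ell}.
\]
Thus each generator $r_i$ has some power lying in $\sgp(t_1,\dots,t_m)$; taking $u$ to be the least common multiple of those exponents for $i = 1,\dots,n$ delivers the first inclusion in (ii), and $f^{-1}$ supplies the second with some $v$.

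The main obstacle I expect is (ii): one must squeeze an \emph{exact} multiplicative identity out of inherently bi-Lipschitz, hence inequality-based, geometric comparisons. This requires controlling the dominant-cylinder map $\sigma \mapsto \tau(\sigma)$ uniformly under arbitrary concatenation so that the pigeonhole step produces a genuine algebraic equality rather than an approximation. Part (iii), though non-trivial, reduces to a reasonably standard density / Lebesgue-differentiation argument once the cylinder decomposition of $f(E_\sigma)$ is in hand; the delicate combinatorial bookkeeping of (ii) is what I would expect to absorb most of the effort.
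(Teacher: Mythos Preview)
The paper does not reprove this cited theorem of Falconer and Marsh, but it does establish the same kind of necessity (for the commensurable OSC setting in Section~8.1, and the strengthening Theorem~\ref{t:Z+} in Section~9.1) using exactly the machinery Falconer--Marsh used: the Cooper--Pignataro \emph{measure-linear} lemma (Lemma~\ref{l:ML}/\ref{l:ml}) together with the bounded-depth cylinder decomposition of images (Lemma~\ref{l:fE}/\ref{l:image}). Your proposal has the second ingredient but is missing the first, and this is a genuine gap in both (ii) and (iii).

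For (iii), Lebesgue differentiation only tells you that $\nu_\mt(f(E_\sigma))/\nu_\ms(E_\sigma)$ converges $\nu_\ms$-a.e.\ along shrinking cylinders; it does \emph{not} say the limit is ever achieved, nor that it lies in any particular field. Your sentence ``eliminating the common analytic factor yields algebraic relations'' has no content: a sequence of elements of $\Q(r_i^s,t_j^s)$ converging to some real number $J(f(x))^{-1}\cdot\nu_\ms(E_\ms)/\nu_\mt(E_\mt)$ gives no field containment. What Cooper--Pignataro prove (by a pigeonhole on the finitely many possible image-decomposition types, exactly the finiteness you already noted) is much sharper: there is a cylinder $E_{\bm i}$ on which the ratio $\nu_\ms(A)/\nu_\mt(f(A))$ is \emph{constant} for all sub-cylinders $A$. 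Once you have that, one application of the decomposition lemma to $f^{-1}$ of a single $\mt$-cylinder gives an exact identity $t_j^s=\sum r_{\bm i^*}^s$, hence $t_j^s\in\Z^+[r_1^s,\dots,r_n^s]\subset\Q(r_1^s,\dots,r_n^s)$; this is precisely the paper's proof of Theorem~\ref{t:Z+}, which implies Falconer--Marsh~(iii).

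For (ii) your pigeonhole is set up on the wrong data. Even granting that the ``dominant'' $\tau(\sigma^{k+1})$ extends $\tau(\sigma^k)$ (which is not automatic: the largest cylinder in $f(E_{\sigma^{k+1}})$ need not sit inside the largest cylinder of $f(E_{\sigma^k})$), a repetition $\rho_{k_1}=\rho_{k_2}$ of the suffix word only gives $t_{\rho_{k_1}}=t_{\rho_{k_2}}$, which says nothing about $r_\sigma$. The diameter comparison $t_{\tau(\sigma^k)}\asymp r_\sigma^k$ is an inequality, and pigeonholing on a finite set of suffix words cannot upgrade it to the exact identity $r_\sigma^{k_2-k_1}=t_{\eta_1}\cdots t_{\eta_\ell}$ you claim. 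The paper's argument (Section~8.1, proof of condition~(ii)) again routes through measure linearity: inside the measure-linear cylinder $A_f$, find two sub-cylinders $A_1\supsetneq A_2$ with the \emph{same} measure polynomial and whose images have the \emph{same} combinatorial type (finiteness of both, then pigeonhole); the constant ratio $\mu/\nu\!\circ\! f$ then forces $p^{k_2-k_1}=q^{l_2-l_1}$ exactly. You correctly identified the obstacle --- turning bi-Lipschitz inequalities into algebraic equalities --- but the tool that does this is the measure-linear lemma, not Lebesgue differentiation.
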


If we assume the commensurable condition, then the Condition~(ii) in
Theorem~\ref{t:SSC} is equivalent to the Condition~(ii) in Falconer and
Marsh's theorem. While the Condition~(iii) in Theorem~\ref{t:SSC} is strictly
stronger than the Condition~(iii) in Falconer and Marsh's theorem. In fact,
let $\ms$ and $\mt$ be as in Example~\ref{e:ssc1}, then $\ms$ and $\mt$
satisfy all the conditions in Falconer and Marsh's theorem. However,
Condition~(iii) of Theorem~\ref{t:SSC} says that the self-similar sets
$E_\ms$ and $E_\mt$ are not Lipschitz equivalent.

This observation inspires the following theorem. For positive numbers
$a_1,\dots,a_n$, let $\Z^+[a_1,\dots,a_n]$ denotes the smallest set that
contains $a_1,\dots,a_n$ and all positive integers, and is closed under
addition and multiplication. In other words,
\begin{multline}\label{eq:Z+[a]}
  \Z^+[a_1,\dots,a_n]=\bigl\{P(a_1,\dots,a_n)\colon\\
  \text{$P$ is a polynomial with positive integer coefficients}\bigr\}.
\end{multline}
\begin{thm}\label{t:Z+}
  Let $\ms$ and $\mt$ be two IFSs satisfying the SSC. Suppose that
  $\ms\simeq\mt$ and $\hdim E_\ms=\hdim E_\mt=s$. Then
  \begin{equation}\label{eq:Z+}
    \Z^+[r_1^s,\dots,r_n^s]=\Z^+[t_1^s,\dots,t_m^s],
  \end{equation}
  where $r_1,\dots,r_n$ are the ratios of~$\ms$ and $t_1,\dots,t_m$ the
  ratios of~$\mt$.
\end{thm}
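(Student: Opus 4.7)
The plan is to combine the topological rigidity of the bi-Lipschitz bijection $f\colon E_\ms\to E_\mt$ with the clopen cylinder structure afforded by the SSC. First I will decompose an arbitrary element $\alpha\in\Z^+[r_1^s,\dots,r_n^s]$ into a ``layered'' sum of $\mu_\ms$-measures of clopen subsets of $E_\ms$: expanding $\alpha=P(r_1^s,\dots,r_n^s)$ and distributing, write $\alpha=\sum_{w\in W}r_w^s$ for a finite multiset $W$ of words. Using $\sum_{i=1}^n r_i^s=1$ to refine every word to a common length $k$ yields $\alpha=\sum_{|v|=k}d_v\,r_v^s$ with non-negative integer multiplicities $d_v$. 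Setting $A_\ell=\bigsqcup_{v:\,d_v\ge\ell}S_v(E_\ms)$ for $\ell=1,\dots,M:=\max_v d_v$ produces a decreasing chain of clopen subsets of $E_\ms$ with $\alpha=\sum_{\ell=1}^{M}\mu_\ms(A_\ell)$.

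Next, since SSC forces $E_\ms$ and $E_\mt$ to be totally disconnected and $f$ is automatically a homeomorphism, each $f(A_\ell)$ is clopen in $E_\mt$ and hence a finite disjoint union of cylinders $\bigsqcup_u S_u(E_\mt)$; consequently $\mu_\mt(f(A_\ell))=\sum_u t_u^s\in\Z^+[t_1^s,\dots,t_m^s]\cup\{0\}$. If I can establish the measure identity $\mu_\mt(f(A_\ell))=\mu_\ms(A_\ell)$ for every clopen $A_\ell\subseteq E_\ms$, then summing over~$\ell$ gives
\[
\alpha=\sum_{\ell=1}^M\mu_\ms(A_\ell)=\sum_{\ell=1}^M\mu_\mt(f(A_\ell))\in\Z^+[t_1^s,\dots,t_m^s].
\]
Applying the same reasoning to $f^{-1}$ then yields the reverse inclusion and hence the semiring identity~\eqref{eq:Z+}.

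The main obstacle is this measure identity, which is precisely the \emph{measure-linear property} with scaling constant $1$; the value $1$ is forced by taking $A=E_\ms$, since $\mu_\mt(E_\mt)=\mu_\ms(E_\ms)=1$. A priori, bi-Lipschitzness combined with $s$-Ahlfors regularity of the normalized Hausdorff measures $\mu_\ms,\mu_\mt$ only yields the two-sided comparability $\mu_\mt(f(A))/\mu_\ms(A)\in[L^{-2s},L^{2s}]$ for clopen $A$, and this bound must be sharpened to an equality. I plan to carry this out via a Radon-Nikodym/density argument: introduce the density $h=d((f^{-1})_*\mu_\mt)/d\mu_\ms$, which is bounded between $L^{-2s}$ and $L^{2s}$ by the preceding comparability, and invoke the structural tools developed earlier in the paper (cylinder structure, dense island structure, suitable decomposition) to conclude that $h\equiv 1$ $\mu_\ms$-almost everywhere, with the normalization $\mu_\mt(E_\mt)=1$ fixing the constant. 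Cooper-Pignataro~\cite{CooPi88} established this measure-linearity for order-preserving bi-Lipschitz maps on~$\R^1$; its extension to general bi-Lipschitz maps in arbitrary Euclidean dimension under the SSC is precisely what lifts the semigroup-level invariant of Falconer-Marsh~\cite{FalMa92} to the semiring-level invariant~\eqref{eq:Z+}.
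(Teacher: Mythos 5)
Your reduction of an arbitrary $\alpha\in\Z^+[r_1^s,\dots,r_n^s]$ to a telescoping sum $\alpha=\sum_{\ell}\mu_\ms(A_\ell)$ of measures of a nested chain of clopen sets is fine, and the observation that under SSC every $f(A_\ell)$ is a finite disjoint union of cylinders is also correct. But the whole argument hinges on a step you cannot prove: the claim that the density $h=d((f^{-1})_*\mu_\mt)/d\mu_\ms$ is identically $1$, i.e.\ that $f$ is globally measure-linear on all of $E_\ms$ with constant~$1$. This is false in general. Cooper--Pignataro~\cite{CooPi88} only give measure linearity of $f$ \emph{restricted to some cylinder} $E_{\xb i}$, with an a priori unknown constant $\xc H^s(E_{\xb i})/\xc H^s(f(E_{\xb i}))$ that need not equal~$1$ (and that may vary from cylinder to cylinder). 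Section~\ref{ssec:ML} of the paper explicitly addresses this: while every bi-Lipschitz $f$ is measure-linear on a subset, bi-Lipschitz maps which are measure-linear on the whole set need not exist, and even for maps between identical IFSs one can permute cylinders at different levels to manufacture bi-Lipschitz bijections whose density is a non-constant step function. Normalization fixes only the global average of $h$, not $h$ itself, and the Ahlfors-regularity bounds you cite give boundedness, not constancy. No amount of cylinder/island/suitable-decomposition machinery will force $h\equiv 1$ because the statement you are trying to prove about $h$ is simply not a theorem.

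The paper's proof gets around this exactly by \emph{not} requiring global measure linearity. It fixes one cylinder $E_{\xb i}$ on which $f$ is measure-linear (Lemma~\ref{l:ML}), and uses Lemma~\ref{l:fE} to write $f(E_{\xb i})=\bigcup_{\bm j^*\in\Lambda}F_{\bm j\bm j^*}$. Then, for a fixed generator $t_j$, it applies the measure-linear identity to the Borel subset $A=f^{-1}\bigl(\bigcup_{\bm j^*\in\Lambda}F_{\bm j\bm j^*j}\bigr)\subset E_{\xb i}$. Both sides of the resulting equality contain the unknown factor $\xc H^s(E_{\xb i})/\xc H^s(f(E_{\xb i}))$, which cancels, leaving $t_j^s=\sum_{\bm i^*\in\Lambda_j}r_{\bm i^*}^s\in\Z^+[r_1^s,\dots,r_n^s]$; symmetry then closes~\eqref{eq:Z+}. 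The lesson is that you should aim to have the Cooper--Pignataro constant divide out, rather than to prove it equals~$1$: restrict attention to one cylinder where local measure linearity holds, and compare two sets inside that cylinder whose images under $f$ differ by an exact factor of $t_j^s$, so that the unknown ratio appears on both sides of the identity and cancels.
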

\begin{rem}
  Theorem~\ref{t:Z+} strengthens the condition~(iii) in Falconer and Marsh's
  theorem. For this, note that
  $\Z^+[r_1^s,\dots,r_n^s]=\Z^+[t_1^s,\dots,t_m^s]$ implies
  $\Z[r_1^s,\dots,r_n^s]=\Z[t_1^s,\dots,t_m^s]$, and the latter implies
  $\Q(r_1^s,\dots,r_n^s)=\Q(t_1^s,\dots,t_m^s)$.
\end{rem}
\begin{rem}\label{r:eqcond}
  Under the commensurable condition, if we assume that $\hdim E_\ms=\hdim
  E_\mt=s$ and the Condition~(ii) in Theorem~\ref{t:SSC}, then the
  Condition~(iii) in Theorem~\ref{t:SSC} is equivalent to~\eqref{eq:Z+}, see
  Lemma~\ref{l:Z[p]}(e).
\end{rem}

For convenience of further discussion, we introduce some notations. Let $\ms$
be an IFS consisting of contracting similarities with ratios $r_1,\dots,r_n$.
Write
\begin{equation}\label{eq:sgpZ+}
  \sgp\ms=\sgp(r_1,\dots,r_n)\quad\text{and}\quad
  \Z^+[\ms]=\Z^+[r_1^s,\dots,r_n^s],
\end{equation}
where $s=\hdim E_\ms$. We call two multiplicative sub-semigroup $G_1$ and
$G_2$ of~$(0,1)$ are equivalent, denoted by $G_1\sim G_2$, if there exist two
positive integers $u$ and $v$ such that $g_1^u\in G_2$ for all $g_1\in G_1$
and $g_2^v\in G_1$ for all $g_2\in G_2$. With these notations, we can rewrite
the above necessary conditions as: if $E_\ms\simeq E_\mt$, then
\begin{enumerate}[\upshape(i)]
  \item $\hdim E_\ms=\hdim E_\mt$;
  \item $\sgp\ms\sim\sgp\mt$;
  \item $\Z^+[\ms]=\Z^+[\mt]$.
\end{enumerate}

From Theorem~\ref{t:SSC}, Theorem~\ref{t:Z+} and Remark~\ref{r:eqcond}, one
might expect that the above necessary conditions are also sufficient for
$E_\ms\simeq E_\mt$. Unfortunately, it turns out that these conditions are
far from being sufficient. Indeed, we can find infinitely many IFSs
satisfying the SSC such that any two of them satisfy above conditions~(i),
(ii) and~(iii), but are not Lipschitz equivalent (Example~\ref{e:infSSC}).
This fact implies that the difference between the commensurable case and the
non-commensurable case is \emph{essential} and that the problem for the
non-commensurable case is much more difficult. This is also why we cannot
drop the commensurable assumption in Theorem~\ref{t:TOCElip}. Among many
difficulties, the lack of some finiteness result like
Proposition~\ref{p:finiteCP} in the non-commensurable case may be the biggest
obstacle. How to settle the problem for the non-commensurable case is still
not clear.

The insufficiency of the conditions (i), (ii) and (iii) follows from a new
criterion for the Lipschitz equivalence. To state it, we need some more
notations.

Let $\ms$ be an IFS consisting of contracting similarities. For every
multiplicative sub-semigroup $G$ of $(0,1)$, write
\[\ms^G=\bigl\{S\in\ms\colon (r\cdot\sgp\ms)\cap G\ne\emptyset,\
 \text{where $r$ is the ratio of~$S$}\bigr\}.\]
\begin{exmp}
  Let $\ms=\{S_1,S_2,S_3,S_4\}$. The corresponding ratios
  \[r_1=a,\quad r_2=a^2,\quad r_3=ab,\quad r_4=b,\]
  where $a,b\in(0,1)$ such that $\log a/\log b\notin\Q$. Then $\sgp\ms$ is
  the multiplicative semigroup generated by $a$ and $b$. Let $G_1$ be the
  multiplicative semigroup generated by $a$, $G_2$ the multiplicative
  semigroup generated by $b$, and $G_3$ the multiplicative semigroup
  generated by $ab$. Then
  \[\ms^{G_1}=\{S_1,S_2\},\quad \ms^{G_2}=\{S_4\},\quad \ms^{G_3}=\ms.\]
\end{exmp}

To simplify notation, we write $\ms\simeq\mt$ instead of $E_\ms\simeq E_\mt$.
When the IFS~$\mt$ is empty or contains only one similarity~$S$, we keep the
conventions that $\ms\simeq\emptyset$ if and only if $\ms=\emptyset$ and that
$\ms\simeq\{S\}$ if and only if $\ms$ also contains only one similarity.

\begin{thm}\label{t:sublip}
  Let $\ms$ and $\mt$ be two IFSs satisfying the SSC. Then $\ms\simeq\mt$ if
  and only if $\ms^G\simeq\mt^G$ for all multiplicative sub-semigroups $G$ of
  $(0,1)$.
\end{thm}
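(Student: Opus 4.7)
The plan is to exhibit a single semigroup that recovers both IFSs in full. Let $G_{0}$ be the multiplicative sub-semigroup of $(0,1)$ generated by $\sgp\ms\cup\sgp\mt$. For any $S\in\ms$ with ratio $r$, the set $r\cdot\sgp\ms$ is a nonempty subset of $\sgp\ms\subset G_{0}$, so $(r\cdot\sgp\ms)\cap G_{0}\ne\emptyset$ and $S\in\ms^{G_{0}}$; hence $\ms^{G_{0}}=\ms$, and symmetrically $\mt^{G_{0}}=\mt$. Specializing the hypothesis to $G=G_{0}$ then gives $\ms\simeq\mt$.

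\textbf{Necessity.} Fix a bi-Lipschitz bijection $f\colon E_{\ms}\to E_{\mt}$ and a multiplicative sub-semigroup $G\le(0,1)$. The plan is to realize $E_{\ms^{G}}$ and $E_{\mt^{G}}$ as explicit subsets of $E_{\ms}$ and $E_{\mt}$, and then transport one into the other via~$f$. Because $\ms$ satisfies the SSC, each point of $E_{\ms}$ has a unique coding in $\ms^{\N}$, and $E_{\ms^{G}}\subset E_{\ms}$ is precisely the set of points whose coding lies in $(\ms^{G})^{\N}$; the analogous identification holds for $\mt$. Thus $f|_{E_{\ms^{G}}}$ is automatically bi-Lipschitz onto its image, and the task reduces to proving $f(E_{\ms^{G}})\simeq E_{\mt^{G}}$.

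The key ingredient is a scale-matching principle for bi-Lipschitz maps between self-similar sets with SSC: there is a constant $C=C(f)$ so that, for every cylinder $S_{I}(E_{\ms})$ of diameter $\delta$, the image $f(S_{I}(E_{\ms}))$ is contained in some cylinder $T_{J}(E_{\mt})$ whose ratio $t_{J}$ is comparable to $r_{I}$ within the factor $C$, and conversely. Combined with the Falconer--Marsh-type necessary condition $\sgp\ms\sim\sgp\mt$ (already in the paper), this produces an essentially bijective correspondence between cylinder scales in $\sgp\ms$ and $\sgp\mt$ that preserves membership in the fixed semigroup $G$ up to the equivalence $\sim$. Pushing this correspondence down to the first symbol and iterating, one should obtain $f(E_{\ms^{G}})\simeq E_{\mt^{G}}$.

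\textbf{Main obstacle.} The hard step is promoting the scale-matching, which is accurate only up to a bounded multiplicative factor $C$, into an exact preservation of the strict condition ``there exists $\rho\in\sgp\ms$ with $r\rho\in G$''. Since $G$ is an arbitrary sub-semigroup of $(0,1)$, $C$-bounded perturbations can in principle move elements in and out of $G$. The remedy is presumably to pass to sufficiently deep iterates $\ms^{(k)},\mt^{(k)}$ so that the wobble $C$ becomes negligible compared with the multiplicative structure of $G$ inside $\sgp\ms$ (resp.\ $\sgp\mt$); this is where the full force of $\sgp\ms\sim\sgp\mt$ and of the non-commensurable techniques from Section~\ref{sec:NC} of the paper should enter.
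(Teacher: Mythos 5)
Your sufficiency argument is fine, though the paper's is even simpler: take $G=(0,1)$, for which $\ms^G=\ms$ and $\mt^G=\mt$ immediately. The gap is in the necessity direction, and it is the gap you yourself flag as the ``main obstacle.''

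Your plan is to show $f(E_{\ms^G})\simeq E_{\mt^G}$, but you never actually produce a Lipschitz equivalence between these two sets; you only observe that cylinder \emph{scales} match up to a bounded multiplicative constant. The trouble is not quantitative (the size of the wobble) but qualitative: whether $S\in\ms^G$ is the arithmetic statement $(r\cdot\sgp\ms)\cap G\ne\emptyset$, and no amount of deepening iterates turns ``ratio comparable within a factor $C$'' into ``lands in the arbitrary sub-semigroup $G$.'' Passing to powers $\ms^{(k)}$ does not help because $G$ is fixed and arbitrary; the condition is not stable under bounded multiplicative perturbations, which is exactly why the straightforward approach fails. The paper's actual proof sidesteps this entirely: it first invokes the Llorente--Mattila / Deng--Wen--Xiong--Xi embedding theorem, so it suffices to build a bi-Lipschitz \emph{injection} $E_{\ms^G}\hookrightarrow E_{\mt^G}$ (plus the symmetric one). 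Then, using the Cooper--Pignataro measure-linear lemma (Lemma~\ref{l:ML}) and the Falconer--Marsh cylinder-image lemma (Lemma~\ref{l:fE}), it proves Lemma~\ref{l:cardG}: along images of words $\bm i$ with $\card_G\bm i=0$, the quantity $\card_G\bm j$ is uniformly bounded. One then chooses $\bm i_0,\bm j_0$ realizing the maximum, and a maximality argument shows $f\circ S_{\xb i\bm i_0}$ carries $E_{\ms^G}$ into $T_{\bm j_0}(E_{\mt^G})$; conjugating by $T_{\bm j_0}^{-1}$ gives the required embedding. This reduction to embeddings plus the finiteness/maximality trick is the idea your proposal is missing, and without it your scale-matching heuristic does not close.
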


It follows from Theorem~\ref{t:sublip} that
\begin{exmp}\label{e:infSSC}
  Let $\ms$ be an IFS satisfying the SSC with ratios $1/9$ and $4/9$, then
  $\hdim E_\ms=1/2$. Let $\ms_1=\ms$; $\ms_2$ an IFS satisfying the SSC
  with ratios $1/81$, $1/81$, $1/81$ and $4/9$; \dots; $\ms_n$ an IFS
  satisfying the SSC with ratios
  \[\underbrace{9^{-n},\dots,9^{-n}}_{3^{n-1}},4/9;\]
  and so on. Then we have
  \begin{enumerate}[\upshape(i)]
    \item $\hdim E_{\ms_1}=\hdim E_{\ms_2}=\dots=\hdim
        E_{\ms_n}=\dots=1/2$,
    \item $\sgp\ms_1\sim\sgp\ms_2\sim\dots\sim\sgp\ms_n\sim\cdots$,
    \item $\Z^+[\ms_1]=\Z^+[\ms_2]=\dots=\Z^+[\ms_n]=\dots=\Z^+[1/3]$,
  \end{enumerate}
  but $\ms_i\not\simeq\ms_j$ whenever $i\ne j$ since $\hdim
  E_{\ms_n^G}=\frac{n-1}{2n}$, where $G$ is the multiplicative semigroup
  generated by~$1/9$.
\end{exmp}

Using the same idea, we have the following more general result.
\begin{prop}\label{p:infSSC}
  Let $\ms$ be an IFS satisfying the SSC and $\hdim E_\ms=s$. Suppose that
  one of the ratios of~$\ms$, say $r$, satisfies
  \begin{itemize}
    \item $\ms^G\ne\ms$, where $G$ is the multiplicative semigroup
        generated by~$r$;
    \item there exist positive integers $\lambda_1$, $\lambda_2$, \dots,
        $\lambda_m$ such that
        \[r^{\lambda_1}+r^{\lambda_2}+\dots+r^{\lambda_m}=1.\]
  \end{itemize}
  Then there exist infinitely many IFS $\ms_1$, $\ms_2$, \dots satisfying
  the SSC such that for each $n\ge1$,
  \begin{enumerate}[\upshape(i)]
    \item $\hdim E_{\ms_n}=s$,
    \item $\sgp\ms_n\sim\sgp\ms$,
    \item $\Z^+[\ms_n]=\Z^+[\ms]$,
  \end{enumerate}
  but $\ms_i\not\simeq\ms_j$ whenever $i\ne j$.
\end{prop}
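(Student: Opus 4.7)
The plan is to generalize the construction in Example~\ref{e:infSSC}. Fix the distinguished similarity $S_0\in\ms$ of ratio $r$, keep all the other similarities of $\ms$ (which include at least one witness to $\ms^G\ne\ms$) unchanged across every $\ms_n$, and replace $S_0$ by a growing family of new similarities whose ratios are all positive-integer powers of $r$. For each $n\ge 1$ I want to choose a positive integer $K_n$ and exponents $a_1^{(n)},\ldots,a_{K_n}^{(n)}\ge 2$ with
\begin{equation*}
  \sum_{\ell=1}^{K_n}\bigl(r^{a_\ell^{(n)}}\bigr)^{s}=r^{s},
\end{equation*}
and then position the new similarities disjointly inside $S_0(\overline O)$ for some SSC-admissible open set $O$ of $\ms$. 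Combined with $\sum_{i\ne i_0}r_i^s=1-r^s$ coming from the dimension equation of $\ms$, this forces $\sum(\text{ratio of }\ms_n)^s=1$ and so $\hdim E_{\ms_n}=s$, giving (i); the SSC is maintained by the position choice.

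The algebraic key is to upgrade the hypothesis $r^{\lambda_1}+\cdots+r^{\lambda_m}=1$ into an identity of the form $\sum_\ell p^{\mu_\ell}=1$ for $p=r^s$ with positive integers $\mu_\ell$, and then iterate it (substituting the identity into itself) to produce such identities with arbitrarily large exponents. In Example~\ref{e:infSSC} this upgrade is immediate: $9r=1$ together with $s=1/2$ gives $3p=1$. With $a_\ell^{(n)}=1+\mu_\ell^{(n)}$ taken from the $n$-th iterate, condition~(ii) follows from $G\subset\sgp\ms$ together with the observation that an integer power of each generator of $\sgp\ms$ already lies in $\sgp\ms_n$. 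Condition~(iii) follows because each new $s$-power is a positive-integer polynomial in $p\in\Z^+[\ms]$, while the $p$-identity in turn expresses $p$ positively from these new $s$-powers.

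For pairwise non-equivalence, apply Theorem~\ref{t:sublip} with $G=\langle r\rangle$. Since $\ms^G\ne\ms$, every similarity of $\ms\setminus\ms^G$ remains in $\ms_n$ and stays outside $\ms_n^G$; so $\ms_n^G$ consists precisely of the $K_n$ new power-of-$r$ similarities together with the similarities of $\ms^G\setminus\{S_0\}$. Computing $\hdim E_{\ms_n^G}$ as in Example~\ref{e:infSSC}, this dimension varies with $n$ once the exponents $a_\ell^{(n)}$ are chosen to grow appropriately, so $\ms_i^G\not\simeq\ms_j^G$ and therefore $\ms_i\not\simeq\ms_j$ for $i\ne j$. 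The \emph{main obstacle} is the algebraic step of extracting the identity $\sum p^{\mu_\ell}=1$ from $\sum r^{\lambda_j}=1$: this is transparent in the example because $9^s=3\in\Z$, but in the general non-commensurable setting it requires a careful argument that combines the hypothesis with the $s$-power dimension equation $\sum r_i^s=1$ of $\ms$. Once this identity is in hand, arranging the exponents $a_\ell^{(n)}$ so that $\hdim E_{\ms_n^G}$ takes pairwise distinct values over $n$ is a routine counting matter.
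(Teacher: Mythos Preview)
Your plan coincides with the route the paper indicates (it offers no proof beyond ``using the same idea'' as Example~\ref{e:infSSC}): keep $\ms\setminus\{S_0\}$ fixed, replace $S_0$ by similarities of ratio $r^{a_\ell^{(n)}}$, and separate the resulting $\ms_n$ via $\hdim E_{\ms_n^G}$ and Theorem~\ref{t:sublip}. Your verifications of (ii), (iii) and of the shape of $\ms_n^G$ are correct once the needed identity in $p=r^s$ is available.

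However, the step you flag as the ``main obstacle'' is a genuine gap, and it cannot be closed by the argument you envisage. Keeping $\ms\setminus\{S_0\}$ fixed forces the new maps to carry $s$-power mass exactly $p$, so if their ratios are all powers of $r$ you must have $\sum_\ell p^{\mu_\ell}=1$ with $\mu_\ell\in\N$. The hypothesis $\sum_j r^{\lambda_j}=1$ does \emph{not} imply this. Concretely, take $\ms=\{S_0,S_1\}$ with $r=1/2$ and $r_1$ determined by $r_1^{\,s}=1/10$, so that $p=(1/2)^s=9/10$; the hypotheses of the proposition hold (\,$\tfrac12+\tfrac12=1$, and $\log r_1/\log r=\log10/\log(10/9)\notin\Q$ by the $\Q$-linear independence of $\log2,\log3,\log5$\,), yet $\sum_\ell(9/10)^{\mu_\ell}=1$ is impossible: clearing denominators by $10^{\max\mu_\ell}$ and reducing modulo~$9$ yields $0\equiv1\pmod 9$. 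So for this $\ms$ the construction you (and, implicitly, the paper) propose cannot produce a single $\ms_n$. The natural resolution is to read the second bullet as an identity in $r^s$ rather than in $r$---this is what Example~\ref{e:infSSC} actually uses (there $3r^s=1$, whereas $9r=1$)---and with that correction your outline goes through without change.
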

\begin{rem}
  Note that, if we assume the commensurable condition, then the
  Conditions~(i), (ii) and~(iii) in Proposition~\ref{p:infSSC} ensure that
  there are only one Lipschitz equivalence class in the SSC case
  (Theorem~\ref{t:SSC}), or there are only finitely many Lipschitz
  equivalence classes in the OSC case (Theorem~\ref{t:TOCELCN}). However,
  Proposition~\ref{p:infSSC} says that such finiteness result does not hold
  without the commensurable condition. In other word, the difference between
  the commensurable case and the non-commensurable case is essential.
\end{rem}

\subsection{The OSC case}\label{ssec:OSC}

If the SSC does not hold, the situation is much more complicated. Unlike the
SSC case, generally, the geometrical structure depends not only on the
algebraic properties of the ratios, but also on the relative positions of the
small copies of self-similar sets due to the occurrence of the overlaps. In
fact, we know \emph{very little} about the geometrical structure of
self-similar sets with overlaps. This is a fundamental but extremely
difficult problem in fractal geometry. Here we only discuss some known
results about Lipschitz equivalence in the OSC case.

Wen and Xi~\cite{WenXi03} studied the self-similar arcs, a kind of connected
self-similar sets satisfying the OSC. They constructed two self-similar arcs
of the same Hausdorff dimension, which are not Lipschitz equivalent. This
means that the Hausdorff dimension is not enough to determine the Lipschitz
equivalence in this case. In general, more Lipschitz invariants other than
Hausdorff dimension remain unknown for the self-similar sets with non-trivial
connected component. In fact, we still have no efficient method to
investigate such self-similar sets.

In the OSC case, we do know more if the self-similar sets satisfy the
\emph{totally disconnectedness condition} (TDC). One reason is that the
geometrical structure of the totally disconnected self-similar sets
satisfying the OSC is similar to that of self-similar sets satisfying the
SSC, and so we can make use of some ideas appearing in the study of the SSC
case. Up to now all known results in the OSC and the TDC case are some
generalized versions of the $\{1,3,5\}$-$\{1,4,5\}$ problem. Let
\begin{gather*}
    E_{1,3,5}=(E_{1,3,5}/5)\cup(E_{1,3,5}/5+2/5)
    \cup(E_{1,3,5}/5+4/5),\\
    E_{1,4,5}=(E_{1,4,5}/5)\cup(E_{1,4,5}/5+3/5)
    \cup(E_{1,4,5}/5+4/5).
\end{gather*}
\begin{figure}[h]
  \centering
  \includegraphics{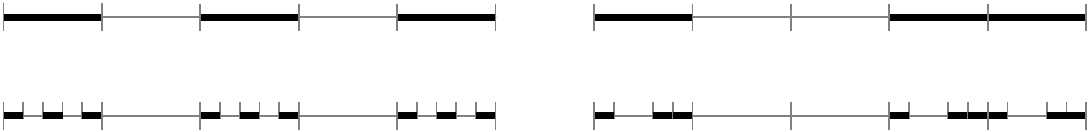}\\
  \caption{\{1,3,5\}-\{1,4,5\} problem posed by David and Semmes}
  \label{fig:135145}
\end{figure}%
The two sets are called $\{1,3,5\}$-set and $\{1,4,5\}$-set, respectively
(see Figure~\ref{fig:135145}). David and Semmes~\cite{DavSe97} asked whether
$E_{1,3,5}$ and $E_{1,4,5}$ are Lipschitz equivalent, and the question is
called the $\{1,3,5\}$-$\{1,4,5\}$ problem. Rao, Ruan and Xi~\cite{RaRuX06}
gave an affirmative answer to this problem by the method of using
graph-directed system (Definition~\ref{d:graphdir}) to investigate the
self-similar sets. So far all further developments depend on this method more
or less.

\begin{figure}[h]
  \centering
  \includegraphics{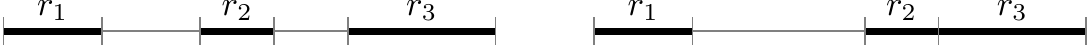}\\
  \caption{\{1,3,5\}-\{1,4,5\} problem with different ratios}
  \label{fig:135145g}
\end{figure}

Xi and Ruan~\cite{XiRu07} studied generalized $\{1,4,5\}$-sets in the line
(see Figure~\ref{fig:135145g}). This is a version of \{1,3,5\}-\{1,4,5\}
problem with different ratios. Given $r_1,r_2,r_3\in(0,1)$ with
$r_1+r_2+r_3<1$, let $\ms=\{S_1,S_2,S_3\}$, where
\[S_1\colon x\mapsto r_1x,\quad S_2\colon x\mapsto r_2x+(1-r_2-r_3),\quad
S_3\colon x\mapsto r_3x+(1-r_3).\]
Let $\mt$ be an IFS satisfying the SSC with ratios~$r_1$, $r_2$ and $r_3$.
They showed that
\[E_\ms\simeq E_\mt\Longleftrightarrow\log r_1/\log r_3\in\Q.\]
Recently, Ruan, Wang and Xi~\cite{RuWaX12} further study this problem for
IFSs containing more than three similarities. Although the IFSs studied
by~\cite{RuWaX12,XiRu07} are allowed to have non-commensurable ratios, their
settings, which only consider IFSs on~$\R^1$ and require an open interval to
satisfy the OSC and some other additional conditions, are very special. The
method of~\cite{RuWaX12,XiRu07}, depending heavily on the special settings,
sheds no light on how to settle the problem for the non-commensurable case in
general. Under the assumption that the ratios are commensurable,
Theorem~\ref{t:PI} and~\ref{t:covopen} extend their results in a very general
setting for IFSs on~$\R^d$ ($d\ge1$).

\begin{figure}[h]
  \centering
  \includegraphics{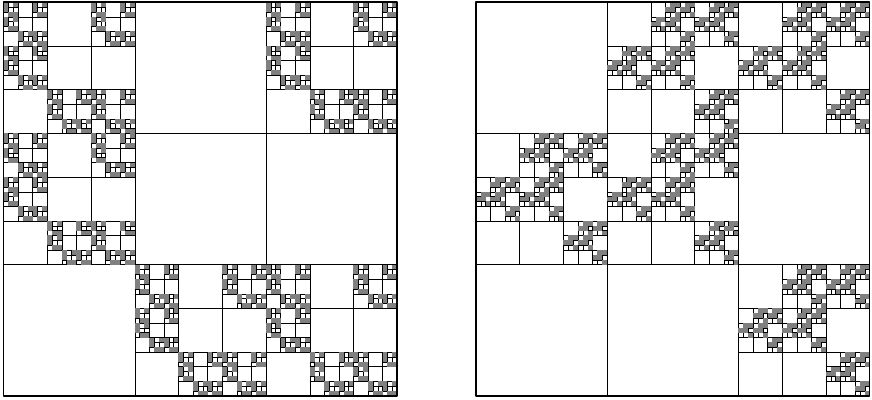}\\
  \caption{\{1,3,5\}-\{1,4,5\} problem in higher dimensional spaces}
  \label{fig:135145h}
\end{figure}

The authors~\cite{XiXi10} consider the \{1,3,5\}-\{1,4,5\} problem in~$\R^d$
(see Figure~\ref{fig:135145h}) and showed that if the two self-similar sets
\[E_A=\bigcup_{a\in A}N^{-1}(E_A+a),\quad E_B=\bigcup_{b\in B}N^{-1}(E_B+b)\]
are totally disconnected, where $A,B\subset\{0,\dots,N-1\}^d$, then
$E_A\simeq E_B$ if and only if $\card A=\card B$. Recently, Luo and
Lau~\cite{LuoLa12} and Deng and He~\cite{DenHe12} also studied the IFSs with
equal ratios in more general setting than~\cite{RaRuX06,XiXi10} and proved
some special cases of Theorem~\ref{t:srt}.

\begin{figure}[h]
  \centering
  \includegraphics{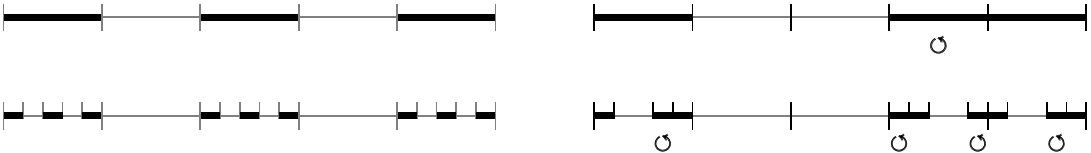}\\
  \caption{\{1,3,5\}-\{1,4,5\} problem with rotation}
  \label{fig:135145r}
\end{figure}

The authors~\cite{XioXi12} also proved a rotation version of
\{1,3,5\}-\{1,4,5\} problem (see Figure~\ref{fig:135145r}). Let $S_1\colon
x\mapsto x/5$, $S_2\colon x\mapsto (-x+4)/5$ and $S_3\colon x\mapsto
(x+4)/5$. The self-similar set $E_{1,-4,5}=\bigcup_{i=1}^3S_i(E_{1,-4,5})$ is
called the $\{1,-4,5\}$-set. Then $E_{1,-4,5}\simeq E_{1,4,5}\simeq
E_{1,3,5}$. (In Figure~\ref{fig:135145r}, the symbol~$\bm\circlearrowleft$
means that there is a minus sign in the contraction coefficient of the
corresponding similarity. In geometry, this means a rotation by the
angle~$\pi$.)

\smallskip

As we see, the method of using graph-directed system can deal with various
versions of the $\{1,3,5\}$-$\{1,4,5\}$ problem. But this method cannot give
a general result for the Lipschitz equivalence problem of self-similar sets
since it is in general very hard or even impossible to find a suitable
graph-directed system for a given family of self-similar sets. In other word,
only very special self-similar sets can be studied by using of graph-directed
system.

In this paper, we introduce the blocks to study the self-similar sets and
replace the graph-directed system by the interior blocks (see
Section~\ref{ssec:BDdef}). This new and powerful method leads to deeper
insights into geometrical structure of self-similar sets than the method of
the graph-directed system. Consequently, we are able to generalize almost all
of known results in the OSC and the TDC case. In fact, all the results
in~\cite{DenHe12,LuoLa12,RaRuX06,XiXi10,XioXi12} are very special cases of
Theorem~\ref{t:srt}, which is only a very special corollary of
Theorem~\ref{t:TOCELCN}. While Theorem~\ref{t:PI} and~\ref{t:covopen} also
generalize the results in~\cite{RuWaX12,XiRu07} under the commensurable case.
More important, we think this new method is also useful for the further study
on Lipschitz equivalence and other related problems.

\section{The Algebraic Properties of Measure Root}
\label{sec:Zp}

This section concerns the algebraic properties of measure root. As a result,
we give the proof of Proposition~\ref{p:p,r}.

The following lemma is the collection of some algebraic properties of measure
root. These properties may be known, we include the proof only for the
self-containedness since we don't find appropriate references (note that
$\Z[p]$ is in general not a Dedekind domain).
\begin{lem}\label{l:Z[p]}
  Let $p\in(0,1)$. Suppose that there exist positive integers
  $\lambda_1,\lambda_2,\dots,\lambda_N$ with
  $\gcd(\lambda_1,\dots,\lambda_N)=1$ such that
  \[p^{\lambda_1}+p^{\lambda_2}+\dots+p^{\lambda_N}=1.\]
  Then we have the following conclusions.
\begin{enumerate}[\upshape(a)]
  \item $p^{-1}$ is an algebraic integer and $p^{-1}\in\Z[p]$.
  \item The quotient ring $\Z[p]/I$ is finite for every nonzero ideal~$I$.
  \item For each nonzero ideal~$I$ of~$\Z[p]$, there exists a positive
      integer~$\ell$ such that $1-p^\ell\in I$.
  \item $\Z[p]$ is noetherian, i.e., every ideal is finitely generated.
  \item For each positive number $a\in\Z[p]$, there exists a polynomial~$P$
      with positive integer coefficients such that $a=P(p)$. In other
      words,
      \[\{a>0\colon a\in\Z[p]\}=\Z^+[p],\]
      where $\Z^+[p]$ is defined by~\eqref{eq:Z+[a]}.
  \item Let $a_1,\dots,a_m$ be positive numbers and $I=(a_1,\dots,a_m)$ the
      ideal of~$\Z[p]$ generated by $a_1,\dots,a_m$. Then, for each
      positive number $a\in I$, there exist positive numbers
      $b_1,\dots,b_m\in\Z[p]$ such that
      \[a=a_1b_1+a_2b_2+\dots+a_mb_m.\]
  \item $h(\Z[p])\le h(\Z[p^{-1}])$.
  \item $h(\xc O_p)\le h(\Z[p^{-1}])$, where $\xc O_p$ denotes the ring of
      all algebraic integers in the field~$\Q(p)$.
\end{enumerate}
\end{lem}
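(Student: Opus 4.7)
The plan is to prove the eight parts in a logical order, bootstrapping from (a) and viewing $\Z[p]$ throughout as a localization of the order $\Z[p^{-1}]$ in the number field $\Q(p)$. For (a), since every $\lambda_i\ge 1$, multiplying $\sum p^{\lambda_i}=1$ by $p^{-1}$ yields $p^{-1}=\sum p^{\lambda_i-1}\in\Z[p]$; substituting $y=p^{-1}$ in the same relation and clearing denominators produces a monic integer polynomial of degree $M=\max\lambda_i$ satisfied by $p^{-1}$, so $p^{-1}$ is an algebraic integer.

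With (a) in hand, $p\cdot p^{-1}=1$ shows that $p$ is a unit in $\Z[p]$ and that $\Z[p]$ equals the localization of $\Z[p^{-1}]$ at the element $p^{-1}$. Since $p^{-1}$ is an algebraic integer, $\Z[p^{-1}]$ is a finitely generated free $\Z$-module, hence Noetherian, and every nonzero ideal has finite index (multiplication by any nonzero element is an injective $\Z$-linear endomorphism of the free module, so the quotient is finite). This yields (b)--(d). For (b), set $J=I\cap\Z[p^{-1}]$; any nonzero $a\in I$ has the form $b\cdot p^k$ with nonzero $b\in\Z[p^{-1}]$, so $J\ne 0$, hence $\Z[p^{-1}]/J$ is finite, and therefore so is its localization $\Z[p]/I$. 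Part (c) follows because $\bar p$ is a unit in the finite ring $\Z[p]/I$ and thus has finite multiplicative order $\ell$, giving $1-p^\ell\in I$. For (d), localizations of Noetherian rings are Noetherian.

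The delicate parts are (e) and (f), where I expect the main obstacle to lie. For (e), given positive $a=P(p)$ with $P\in\Z[x]$, I would consider
\[Q_n(x)=P(x)\cdot(x^{\lambda_1}+\dots+x^{\lambda_N})^n,\]
which still satisfies $Q_n(p)=a$ because $\sum p^{\lambda_i}=1$. Its coefficients are convolutions $\sum_i c_i\,w_n(L-i)$, where $w_n(j)$ is the coefficient of $x^j$ in $(\sum x^{\lambda_i})^n$; the support of $w_n$ fills an arithmetic progression of gcd $1$ (using $\gcd(\lambda_1,\dots,\lambda_N)=1$), and for large $n$ these coefficients are approximately unimodal and slowly varying on the scale of $\sqrt{n}$. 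After first adjusting $P$ by adding a multiple of the polynomial $1-\sum x^{\lambda_i}$ (which vanishes at $p$, and which shifts $P(1)$ by $1-N\ne 0$ since $N\ge 2$) so that $P(1)>0$, a local central limit type estimate shows that for all $n$ sufficiently large every coefficient of $Q_n$ is non-negative, yielding the positive representation. Part (f) then reduces to (e) by using "null-vector" freedom: one can replace a given $(c_1,\dots,c_m)$ by $(\dots,c_i+ta_j,\dots,c_j-ta_i,\dots)$ for any $t\in\Z[p]$ without altering $\sum a_i c_i$, which together with (e) allows successive sign corrections until every coefficient is positive.

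Parts (g) and (h) are class number comparisons via extension of ideals. For (g), the map $[J]\mapsto[J\Z[p]]$ from ideal classes of $\Z[p^{-1}]$ to those of $\Z[p]$ is well-defined (as $a,b\in\Z[p^{-1}]\subset\Z[p]$) and surjective because every ideal $I$ of $\Z[p]$ satisfies $I=(I\cap\Z[p^{-1}])\Z[p]$ by the localization correspondence, giving $h(\Z[p])\le h(\Z[p^{-1}])$. For (h), choose a positive integer $f$ with $f\xc O_p\subset\Z[p^{-1}]$ (this exists because $\xc O_p/\Z[p^{-1}]$ is a finite abelian group); for any ideal $I$ of $\xc O_p$, the subset $fI$ is an ideal of $\Z[p^{-1}]$ whose extension to $\xc O_p$ is again $fI$, and in $\xc O_p$ we have $I\sim fI$ via $f\cdot I=1\cdot(fI)$. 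Hence $[J]\mapsto[J\xc O_p]$ is a well-defined surjection, proving $h(\xc O_p)\le h(\Z[p^{-1}])$.
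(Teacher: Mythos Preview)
Your treatment of (a)--(d), (g), (h) via the localization $\Z[p]=\Z[p^{-1}][1/p^{-1}]$ is correct and in some ways tidier than the paper's more hands-on arguments (the paper shows directly that every nonzero ideal of $\Z[p]$ contains a nonzero integer, then deduces (b)--(d)); your derivation of (c) from ``$\bar p$ is a unit of finite order in the finite ring $\Z[p]/I$'' is particularly clean.

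The genuine gap is in (e). Your local central limit heuristic only controls the coefficients of $Q_n(x)=P(x)\bigl(\sum_i x^{\lambda_i}\bigr)^n$ in the bulk; at the edges it fails outright, and the failure is not fixable by merely arranging $P(1)>0$. Take $\lambda_1=1$, $\lambda_2=2$ (so $p=(\sqrt5-1)/2$) and $P(x)=2x-1$; then $P(p)=\sqrt5-2>0$ and $P(1)=1>0$, yet
\[
Q_n(x)=(2x-1)(x+x^2)^n=(2x-1)\,x^n(1+x)^n
\]
has $[x^n]Q_n=-1$ for every $n$. The correct positive representation is $2p-1=p^3$, which your scheme never reaches. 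The underlying reason is that the CLT weights each step $\lambda_i$ equally, whereas the hypothesis is $P(p)>0$; the relevant ``tilt'' is by $p^j$, not by the uniform measure. The paper's proof of (e) does exactly this tilt: it writes $a=p^\ell\bm p\cdot\bm a$ with $\bm p=(1,p,\dots,p^\lambda)$, iterates the companion-type matrix $\bm\Xi$ (so $a=p^{\ell+k}\bm p\,\bm\Xi^k\bm a$), and uses that $p^{-1}$ is the Perron eigenvalue of the primitive matrix $\bm\Xi$ to force $\bm\Xi^k\bm a>\bm0$ for large $k$.

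Your sketch for (f) is also too thin. The null-vector moves $(c_i,c_j)\mapsto(c_i+ta_j,\,c_j-ta_i)$ do preserve $\sum a_kc_k$, and in fact one can make an induction on $m$ work using only the density of $\Z[p]$ in $\R$ (choose $t$ in the nonempty real interval that makes both $c_m-ta_1>0$ and $\sum_{i<m}a_ic_i+ta_1a_m>0$; nonemptiness is exactly $a>0$) --- but this is not what you wrote, and it does not use (e) at all. The paper's argument is different: it inducts on $m$ using (c) to get $1-p^\ell\in(a_1,\dots,a_{m-1})$, then for $c_m>0$ replaces $c_m$ by $c_mp^{k\ell}$ for $k$ large, noting
\[
a-a_mc_mp^{k\ell}=(a-a_mc_m)+a_mc_m(1-p^{k\ell})\in(a_1,\dots,a_{m-1})
\]
is positive and handled by the inductive hypothesis.
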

We remark that the inequalities in Lemma~\ref{l:Z[p]}(g) and~(h) may be
strict, see Example~\ref{e:clane1} and~\ref{e:clane2}. We need the following
fact.

\begin{fct}\label{f:square}
  The class number $h(\Z[\sqrt{n}\,])>1$ if the nonzero integer $n$ is not
  square free (i.e., $m^2\mid n$ for some integer $m>1$). For this, one can
  check that the ideal $(m,\sqrt n)$ is not a principle ideal, where $m$ is a
  prime number such that $m^2\mid n$.
\end{fct}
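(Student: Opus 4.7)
The plan is to follow the hint: fix a prime $m$ with $m^2\mid n$, write $n=m^2k$ for some integer~$k$, and show that the ideal $I=(m,\sqrt n\,)$ of $R=\Z[\sqrt n\,]$ is proper and nonprincipal. I implicitly assume $n$ is not a perfect square, since otherwise $R=\Z$ and the statement is vacuous; in the intended applications $\sqrt n$ is irrational.

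First I would verify $I\ne R$. If $1=m(a+b\sqrt n\,)+\sqrt n\,(c+d\sqrt n\,)$ for some $a,b,c,d\in\Z$, then using $n=m^2k$ the rational part reads $1=ma+m^2kd$, which is impossible for the prime $m>1$. Next I would compute the index $[R:I]$. Writing $R=\Z[x]/(x^2-n)$ and reducing modulo~$m$ gives $R/(m)\cong(\Z/m\Z)[x]/(x^2-n)=(\Z/m\Z)[x]/(x^2)$ since $m\mid n$; killing the image of $\sqrt n$ (i.e.\ $x$) further yields $R/I\cong\Z/m\Z$, so $[R:I]=m$.

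Now suppose for contradiction that $I=(\alpha)$ with $\alpha=u+v\sqrt n\in R$. Relative to the $\Z$-basis $\{1,\sqrt n\,\}$ of $R$, the $\Z$-linear map ``multiplication by $\alpha$'' has matrix $\begin{pmatrix} u & nv\\ v & u\end{pmatrix}$ with determinant $u^2-nv^2$, so $[R:(\alpha)]=|u^2-nv^2|$. Combined with $[R:I]=m$, this forces $u^2-m^2kv^2=\pm m$, whence $m\mid u^2$ and therefore $m\mid u$. Writing $u=mu'$ and dividing through by~$m$ gives $mu'^2-mkv^2=\pm 1$, contradicting $m>1$. Hence $I$ is not principal and $h(R)>1$.

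The main obstacle is justifying the index formula $[R:(\alpha)]=|u^2-nv^2|$, since the order $\Z[\sqrt n\,]$ need not be Dedekind in general (this is precisely why its class number can exceed that of the maximal order~$\xc O_n$). The elementary $\Z$-determinant computation on the basis $\{1,\sqrt n\,\}$ circumvents any Dedekind-theoretic machinery and makes the argument entirely self-contained.
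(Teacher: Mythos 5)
Your proof is correct and does exactly what the paper's one-line hint asks for; the paper itself gives no argument beyond ``one can check that $(m,\sqrt n\,)$ is not a principal ideal.'' The key ingredients you supply---computing $[R:I]=m$ directly, and using the determinant formula $[R:(\alpha)]=|u^2-nv^2|$ on the $\Z$-basis $\{1,\sqrt n\,\}$ to see that no element has norm $\pm m$---are sound and, as you point out, deliberately avoid Dedekind-theoretic machinery, which matters here since $\Z[\sqrt n\,]$ is typically not the maximal order when $n$ is not squarefree. Your caveat that $n$ should not be a perfect square is also warranted: for $n=4$ or $n=9$ the ring $\Z[\sqrt n\,]$ collapses to $\Z$ and the stated inequality fails, though this case never occurs in the paper's actual uses of the fact (Examples~\ref{e:clane1} and~\ref{e:clane2}, where $n=40$ and $n=50$). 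The final arithmetic step is fine: $u^2-m^2kv^2=\pm m$ forces $m\mid u$, and substituting $u=mu'$ yields $m(u'^2-kv^2)=\pm1$, impossible for the prime $m>1$.
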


\begin{exmp}\label{e:clane1}
  Let $p=(\sqrt{10}-3)/2$ be the solution of $4p^2+12p=1$. Then
  \[h(\Z[p])=h(\Z[\sqrt{10},\tfrac12])=1<h(\Z[p^{-1}])=h(\Z[2\sqrt{10}]).\]
  To see that $h(\Z[\sqrt{10},\tfrac12])=1$, observe that the mapping
  \[\pi\colon I\to I^*=\{2^{-\ell}\alpha\colon\alpha\in I,\ell\ge0\}\]
  from the nonzero ideal~$I$ of~$\Z[\sqrt{10}]$ to the nonzero ideal~$I^*$
  of~$\Z[\sqrt{10},\tfrac12]$ is a surjection, and that $I=a J$ implies
  $I^*=a J^*$. Then since a nonzero ideal~$I$ of~$\Z[\sqrt{10}]$ is either a
  principle ideal or belongs to the same ideal class of~$(2,\sqrt{10})$, it
  follows from $\pi(2,\sqrt{10})=\Z[\sqrt{10},\tfrac12]$ that
  $h(\Z[\sqrt{10},\tfrac12])=1$.
\end{exmp}

\begin{exmp}\label{e:clane2}
  Let $p=5\sqrt2-7$ be the solution of $p^2+14p=1$. Then
  \[h(\xc O_p)=h(\Z[\sqrt2])=1<h(\Z[p^{-1}])=h(\Z[5\sqrt2]).\]
\end{exmp}

We remark that in Example~\ref{e:clane1}, $h(\Z[p])=1<h(\xc
O_p)=h(\Z[\sqrt{10}])=2$, while in Example~\ref{e:clane2}, $h(\xc
O_p)=1<h(\Z[p])=h(\Z[5\sqrt2])$.

\medskip

The remainder of this section is devoted to the proof of Lemma~\ref{l:Z[p]}
and Proposition~\ref{p:p,r}. We begin with a technical lemma.

\begin{lem}\label{l:primitive}
  Let
  \[\bm\Xi=\begin{pmatrix}
    \xi_1 & 1 & 0 & \hdotsfor4 & 0 \\
    \xi_2 & 0 & 1 & 0 & \hdotsfor3 & 0 \\
    \xi_3 & 0 & 0 & 1 & 0 & \hdotsfor2 & 0 \\
    \vdots & \vdots & \vdots & \vdots & \vdots & \vdots & \vdots & \vdots\\
    \vdots & \vdots & \vdots & \vdots & \vdots & \vdots & \vdots & \vdots\\
    \xi_{n-2} & 0 & \hdotsfor3 & 0 & 1 & 0 \\
    \xi_{n-1} & 0 & \hdotsfor4 & 0 & 1 \\
    \xi_n & 0 & \hdotsfor5 & 0 \\
  \end{pmatrix}\]
  be an $n\times n$ matrix, where $\xi_1,\xi_2,\dots,\xi_n$ are nonnegative
  integers. Let $\lambda_1,\dots,\lambda_m$ be all the indexes such that
  $\xi_{\lambda_i}>0$. If $\gcd(\lambda_1,\dots,\lambda_m)=1$ and
  $n\in\{\lambda_1,\dots,\lambda_m\}$, i.e., $\xi_n>0$, then the
  matrix~$\bm\Xi$ is primitive. Moreover, let $p$ be the unique positive
  solution of the equation $\xi_1p+\xi_2p^2+\dots+\xi_np^n=1$ and $\bm
  p=(1,p,\dots,p^n)$, then
  \[\bm p\bm\Xi=p^{-1}\bm p.\]
  In other word, the value $p^{-1}$ is the Perron-Frobenius eigenvalue
  of~$\bm\Xi$ and the vector~$\bm p$ is the corresponding left-hand
  Perron-Frobenius eigenvector.
\end{lem}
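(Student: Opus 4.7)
The plan is to split the lemma into two independent parts, the eigenvector identity and the primitivity, and then to combine them via the Perron--Frobenius theorem.

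First I would verify the eigenvector identity $\bm p\bm\Xi=p^{-1}\bm p$ by a direct computation, where $\bm p=(1,p,p^2,\dots,p^{n-1})$ (I suspect the statement has a typographical off-by-one; the vector must have $n$ entries to match the size of $\bm\Xi$). For the first column of $\bm\Xi$, the entry of $\bm p\bm\Xi$ is $\sum_{i=1}^{n}p^{i-1}\xi_i=p^{-1}\sum_{i=1}^{n}\xi_ip^i=p^{-1}\cdot 1=p^{-1}$, using the defining relation $\sum_i\xi_ip^i=1$. For column $j\ge 2$ the only nonzero entry of $\bm\Xi$ lies in row $j-1$ and equals $1$, so the $j$-th entry of $\bm p\bm\Xi$ is $p^{j-2}=p^{-1}\cdot p^{j-1}$. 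Hence $\bm p\bm\Xi=p^{-1}\bm p$, as required.

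Next I would prove primitivity by the standard directed-graph criterion: a nonnegative matrix $M$ is primitive if and only if the digraph $G_M$ with an arc $i\to j$ whenever $M_{ij}>0$ is strongly connected and the greatest common divisor of its cycle lengths equals one. In our case, the superdiagonal $1$'s contribute arcs $i\to i+1$ for $1\le i\le n-1$, and the first column contributes arcs $\lambda_k\to 1$ for $k=1,\dots,m$. Since $\xi_n>0$, vertex $n$ feeds back to $1$, which together with the chain $1\to 2\to\cdots\to n$ makes $G_{\bm\Xi}$ strongly connected. Every cycle must use at least one backward arc (since the only arcs not of the form $i\to i+1$ go to $1$), so every cycle passes through vertex~$1$ and decomposes into simple loops $1\to 2\to\cdots\to\lambda_k\to 1$ of length $\lambda_k$. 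Hence the set of cycle lengths is the numerical semigroup generated by $\lambda_1,\dots,\lambda_m$, and its gcd is $\gcd(\lambda_1,\dots,\lambda_m)=1$. Therefore $\bm\Xi$ is primitive.

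Finally I would invoke Perron--Frobenius: a primitive nonnegative matrix has a simple positive eigenvalue equal to its spectral radius, and this is the only eigenvalue possessing a strictly positive left (and right) eigenvector. Since $\bm p=(1,p,\dots,p^{n-1})$ has all positive entries and satisfies $\bm p\bm\Xi=p^{-1}\bm p$, the number $p^{-1}$ must be the Perron--Frobenius eigenvalue of $\bm\Xi$ and $\bm p$ the corresponding left Perron--Frobenius eigenvector, completing the proof.

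I expect the only real work to be the primitivity argument, and even that is essentially bookkeeping once the digraph picture is in place; the gcd hypothesis $\gcd(\lambda_1,\dots,\lambda_m)=1$ and the extra assumption $\xi_n>0$ are used precisely, and separately, to get aperiodicity and strong connectivity.
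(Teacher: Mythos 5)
Your proof is correct, and the approach to the primitivity claim is genuinely different from the paper's. The paper argues entirely by explicit matrix manipulation: it writes $\bm\Xi\ge\bm B+\sum_{\lambda_i\ne n}\bm A_{\lambda_i1}$, where $\bm B$ is the cyclic permutation matrix (including the wrap-around arc $n\to 1$ coming from $\xi_n>0$), uses the relation $\sum_{\lambda_i\ne n}l_i\lambda_i=ln+1$ together with $\bm B^n=I$ and $\bm B^{k-1}\bm A_{k1}=\bm A_{11}$ to force a power of $\bm\Xi$ to dominate $\bm B+\bm A_{11}$, and then checks by hand that $(\bm B+\bm A_{11})^{2n-3}>\bm 0$. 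You instead invoke the digraph characterization of primitivity (irreducible with period one), which is cleaner: the paper's computation is essentially a hands-on proof of the aperiodicity criterion for this particular digraph, and your argument packages exactly the same combinatorics---backward arcs only go to vertex $1$, so cycles through vertex~$1$ have lengths in the numerical semigroup generated by the $\lambda_k$'s---into the standard theorem. The two proofs thus use the hypotheses ($\xi_n>0$ for strong connectivity/irreducibility, $\gcd=1$ for aperiodicity) in the same way, only at different levels of abstraction. You also correctly spot and repair the off-by-one in the statement of $\bm p$, which the paper's proof glosses over by declaring the identity ``obvious.''
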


In what follows, $\bm A\ge\bm B$ means that each $a_{ij}\ge b_{ij}$ and $\bm
A>\bm B$ that each $a_{ij}>b_{ij}$ for arbitrary matrices $\bm A=(a_{ij})$
and $\bm B=(b_{ij})$.
\begin{proof}
  The equality $\bm p\bm\Xi=p^{-1}\bm p$ is obvious. It remains to show that
  the matrix~$\bm\Xi$ is primitive. Let $\bm A_{ij}$ be the $n\times n$
  matrix such that the $(i,j)$-entry of $\bm A_{ij}$ is~$1$ and all other
  entries are zero. Let $\bm B=(b_{ij})$ be the $n\times n$ matrix as
  \[b_{ij}=\begin{cases}
    1, & i+1 \equiv j \pmod n;\\
    0, & \text{otherwise}.
  \end{cases}\]
  It follows from the meanings of~$\lambda_1,\dots,\lambda_m$ that
  \[\bm\Xi\ge\bm B+\sum_{\lambda_i\ne n}\bm A_{\lambda_i1}.\]
  Since $\gcd\{\lambda_1,\lambda_2,\dots,\lambda_m\}=1$ and
  $n\in\{\lambda_1,\lambda_2,\dots,\lambda_m\}$, there exist positive
  integers $l_i$ and $l$ such that
  \[\sum_{\lambda_i\ne n}l_i\cdot\lambda_i=ln+1.\]
  Observe that $\bm B^{k-1} \bm A_{k1}=\bm A_{11}$ and $\bm B^n$ is the
  identity matrix. We have
  \[\biggl(\bm B +\sum_{\lambda_i\ne n}\bm A_{\lambda_i1}\biggr)
   ^{ln+1}\ge \bm B^{ln+1}+\prod_{\lambda_i\ne n}(\bm B^{\lambda_i-1}
   \bm A_{\lambda_i1})^{l_i}=\bm B+\bm A_{11}.\]
  Finally, a straightforward computation reveals that $(\bm B+\bm
  A_{11})^{2n-3}>\bm0$.
\end{proof}

The following lemma is a well-known property of the primitive matrix.
\begin{lem}\label{l:limMatrix}
  Let $\bm\Xi$, $p$ and $\bm p$ be as in Lemma~\ref{l:primitive}. Suppose
  that $\bm q$ is the right-hand Perron-Frobenius eigenvector of~$\bm\Xi$ such
  that $\bm p\cdot\bm q=1$. Then
  \[\lim_{k\to\infty}p^k\bm\Xi^k=\bm q\cdot\bm p.\]
\end{lem}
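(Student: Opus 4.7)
The plan is to deduce this limit from the Perron--Frobenius theorem applied to the primitive matrix $\bm\Xi$. First I would invoke Lemma~\ref{l:primitive}: it guarantees that $\bm\Xi$ is primitive with Perron--Frobenius eigenvalue $p^{-1}$, left eigenvector $\bm p$, and (by hypothesis) right eigenvector $\bm q$, normalized so that $\bm p\cdot\bm q=1$. The Perron--Frobenius theorem for primitive non-negative matrices then tells us that $p^{-1}$ is a \emph{simple} eigenvalue of $\bm\Xi$ and that every other eigenvalue of $\bm\Xi$ has modulus strictly smaller than $p^{-1}$.

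Next I would rescale: set $\bm M=p\bm\Xi$, so that $\bm M$ has spectral radius $1$, with $1$ a simple eigenvalue and all other eigenvalues lying strictly inside the open unit disc. Decomposing via the Jordan form of $\bm M$, I write
\[
\bm M=\bm P+\bm N,
\]
where $\bm P$ is the spectral projection onto the one-dimensional $1$-eigenspace spanned by $\bm q$, and $\bm N$ has spectral radius strictly less than $1$ with $\bm P\bm N=\bm N\bm P=\bm0$.

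A direct calculation using the biorthogonality $\bm p\cdot\bm q=1$ identifies $\bm P$ as the rank-one matrix $\bm q\cdot\bm p$: indeed, $(\bm q\cdot\bm p)\bm q=\bm q(\bm p\cdot\bm q)=\bm q$, and $(\bm q\cdot\bm p)\bm v=\bm q(\bm p\cdot\bm v)=\bm 0$ for any generalized eigenvector $\bm v$ associated with a different eigenvalue (such a $\bm v$ is annihilated by $\bm p$ because $\bm p$ lies in the left $1$-eigenspace). Since $\bm P$ and $\bm N$ commute and $\bm P^2=\bm P$, iteration gives $\bm M^k=\bm P+\bm N^k$, and $\|\bm N^k\|\to0$ by the spectral radius formula. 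Therefore $p^k\bm\Xi^k=\bm M^k\to\bm P=\bm q\cdot\bm p$.

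There is no real obstacle here beyond the standard Perron--Frobenius machinery; the only point that deserves care is the identification of the spectral projection with the explicit outer product $\bm q\cdot\bm p$, which hinges on the simplicity of the top eigenvalue (already supplied by primitivity in Lemma~\ref{l:primitive}) together with the normalization $\bm p\cdot\bm q=1$.
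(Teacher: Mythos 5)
Your proof is correct and is the standard Perron--Frobenius spectral-projection argument. The paper itself gives no proof of this lemma; it is stated as ``a well-known property of the primitive matrix,'' so there is no proof in the paper to compare against. Your argument supplies exactly the expected justification: primitivity (from Lemma~\ref{l:primitive}) gives simplicity of the dominant eigenvalue $p^{-1}$ and a spectral gap; rescaling by $p$ and splitting $\bm M=\bm P+\bm N$ with $\bm P\bm N=\bm N\bm P=\bm 0$ yields $\bm M^k=\bm P+\bm N^k\to\bm P$; and the biorthogonality $\bm p\cdot\bm q=1$ together with the vanishing of $\bm p$ on the complementary generalized eigenspaces identifies $\bm P$ with the outer product $\bm q\cdot\bm p$. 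All steps check out.
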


Now we are able to prove Lemma~\ref{l:Z[p]} and Proposition~\ref{p:p,r}.
\begin{proof}[Proof of Lemma~\ref{l:Z[p]}]
  (a) It is obvious.

  (b) First observe that $\Z[p]/(m)$ is finite for every nonzero integer~$m$.
  It remains to show that each nonzero ideal~$I$ contains a nonzero integer.
  Pick a nonzero number $a\in I$. By (a), for $\ell$ large enough,
  $p^{-\ell}a\in I$ is a algebraic integer. Thus for a fixed such $\ell$, we
  can find a polynomial~$P$ with integer coefficients such that
  $P(p^{-\ell}a)\in I$ is a nonzero integer.

  (c) By (b), we can find two integers $\ell_2>\ell_1>0$ with
  $p^{\ell_1}-p^{\ell_2}\in I$. We have $1-p^{\ell_2-\ell_1}\in I$ since
  $p^{-1}\in\Z[p]$ by (a).

  (d) Suppose on the contrary that $I$ is an ideal that is not finitely
  generated. Then the quotient group $I/(a)$ is infinite for all $a\in I$,
  which contradicts (b) since $I/(a)\subset\Z[p]/(a)$.

  (e) Suppose that
  \[p^{\lambda_1}+p^{\lambda_2}+\dots+p^{\lambda_N}=
  \xi_1p+\xi_2p^2+\dots+\xi_np^n,\]
  where $\xi_n>0$. Let $\bm\Xi$ be the matrix as in Lemma~\ref{l:primitive}.
  Since $\gcd(\lambda_1,\dots,\lambda_N)=1$, the conditions of
  Lemma~\ref{l:primitive} are fulfilled. Let $\bm p$ and $\bm q$ be the
  Perron-Frobenius eigenvectors as in Lemma~\ref{l:limMatrix}. Since
  $a\in\Z[p]$ is positive, there exist $\ell\ge0$ and a column vector $\bm
  a=(a_1,a_2,\dots,a_n)^T$ with integer entries such that $a=p^\ell\bm
  p\cdot\bm a>0$. Recall that $p^{-1}$ and $\bm p$ are the eigenvalue and the
  eigenvector of the matrix~$\bm\Xi$, respectively. And so $a=p^{\ell+k}\bm
  p\bm\Xi^k\bm a$ for all $k\ge0$. By Lemma~\ref{l:limMatrix},
  \[p^k\bm\Xi^k\bm a\to\bm q\cdot(\bm p\cdot\bm a)>\bm0\quad
  \text{as $k\to\infty$}.\]
  This implies that $\bm\Xi^k\bm a>\bm0$ for sufficiently large~$k$. Thus,
  Conclusion~(e) follows.

  (f) We prove this by induction on~$m$. The case $m=1$ is obvious. Now
  suppose this is true for $m-1$, let $a\in(a_1,\dots,a_m)$ be a
  positive number. We have $a=a_1b'_1+\dots+a_mb'_m$ for some
  $b'_1,\dots,b'_m\in\Z[p]$. Suppose without loss of generality that
  $b'_m>0$. By (c), we can find a positive integer~$\ell$ such that
  $1-p^\ell\in(a_1,\dots,a_{m-1})$. Pick $k$ large enough such that
  $a-a_mb'_mp^{k\ell}>0$. The proof is completed by the induction
  assumption since
  \[0<a-a_mb'_mp^{k\ell}=a-a_mb'_m+a_mb'_m(1-p^{k\ell})
  \in(a_1,\dots,a_{m-1}).\]

  (g) For each nonzero ideal $I$ of~$\Z[p]$, write $I^*=I\cap\Z[p^{-1}]$,
  then $I^*$ is a nonzero ideal of~$\Z[p^{-1}]$. It suffices to show the fact
  that if $I^*=a J^*$ for some $a\in\R$, then $I=a J$, where $I$ and $J$ are
  two nonzero ideals of~$\Z[p]$. Indeed, for each $b\in J$, there exists an
  integer~$\ell$ with $bp^{-\ell}\in J^*$, so $abp^{-\ell}\in aJ^*=I^*$. Thus
  $ab\in I$, i.e., $aJ\subset I$. By symmetric, $a^{-1}I\subset J$ and so
  $I=aJ$.

  (h) Recall that $p^{-1}$ is an algebraic integer and that
  $\Q(p)=\Q(p^{-1})$. Together with the fact that $\xc O_p$ is a finitely
  generated $\Z$-module, we know that there exists a positive integer $m$
  such that $m\xc O_p\subset\Z[p^{-1}]$. For each nonzero ideal $I$ of~$\xc
  O_p$, write $I^*=mI$, then $I^*$ is a nonzero ideal of~$\Z[p^{-1}]$. It is
  obviously that $aI^*=J^*$ if and only if $aI=J$, where $I$ and $J$ are
  two nonzero ideals of~$\xc O_p$. Therefore, $h(\xc O_p)\le h(\Z[p^{-1}])$.
\end{proof}

\begin{proof}[Proof of Proposition~\ref{p:p,r}]
  Suppose first that there is an IFS $\ms\in\TOEC(p,r)$. By the meanings
  of~$p,r$, we can assume that the ratios of~$\ms$ are
  $r^{\lambda_1},r^{\lambda_2},\dots,r^{\lambda_N}$, where
  $\gcd(\lambda_1,\dots,\lambda_N)=1$. Then we have
  \[p^{\lambda_1}+p^{\lambda_2}+\dots+p^{\lambda_N}=1.\]
  The conclusion that $p,r\in(0,1)$ is obvious.

  Conversely, fix an integer $\ell>0$ such that $r^{\ell}<1/2$ and
  $1-p^\ell-p^{\ell+1}>0$. By Lemma~\ref{l:Z[p]}(e), there exist nonnegative
  integers $\ell_1,\dots,\ell_m$ such that
  \[p^{-\ell}(1-p^\ell-p^{\ell+1})=p^{\ell_1}+p^{\ell_2}+\dots+p^{\ell_m}.\]
  Let $\ms$ be an IFS satisfying the OSC and consisting of $m+2$ similarities
  with ratios $r^\ell$, $r^{\ell+1}$,
  $r^{\ell+\ell_1},r^{\ell+\ell_2},\dots,r^{\ell+\ell_m}$. Since all the
  ratios are less than~$1/2$, such IFS does exist on~$\R^d$ with $2^d\ge m+2$.
  For example, let $\xc S=\{S_1,\dots,S_{m+2}\}$ with
  \begin{align*}
    S_1&\colon x\mapsto r^\ell x+(\underbrace{0,\dots,0,0}_d)/2,&
    S_2&\colon x\mapsto r^{\ell+1}x+(\underbrace{0,\dots,0,1}_d)/2, \\
    S_3&\colon x\mapsto r^{\ell+\ell_1}x+(\underbrace{0,\dots,1,0}_d)/2,&
    S_4&\colon x\mapsto r^{\ell+\ell_2}x+(\underbrace{0,\dots,1,1}_d)/2,\\
    &\dots\dots&\dots\dots
  \end{align*}
  Then $\ms$ satisfies the OSC with the open set $(0,1)^d$. Also note that
  $\gcd(\ell,\ell+1)=1$, so we have $r_\ms=r$ and $p_\ms=p$. Therefore,
  $\ms\in\TOEC(p,r)\ne\emptyset$.
\end{proof}

\section{The Ideal of IFS}\label{sec:ideal}

This section is devoted to the proofs of Theorem~\ref{t:TOCELCN} and
Theorem~\ref{t:covopen}, which are closely related to the problem of
determining the ideal of IFS in $\TOEC$. The difficult is that there is no
general method to determine such ideals. For our purpose, we consider the
problem in two special cases: the self-similar set has the graph-directed
structure and the self-similar set generated by IFSs in~$\xs S$, where $\xs
S$ is defined by~\eqref{eq:PIS}.

\subsection{The graph-directed structure}

The key point of the proof of Theorem~\ref{t:TOCELCN} is the following
theorem.
\begin{thm}\label{t:ideal}
  Suppose that $\TOEC(p,r)\ne\emptyset$. Then for each nonzero ideal~$I$ of
  the ring~$\Z[p]$, there exists an IFS $\ms\in\TOEC(p,r)$ such that
  $I_\ms=I$.
\end{thm}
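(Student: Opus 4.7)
The plan is to construct such an $\ms$ explicitly, tuned to a chosen presentation of~$I$. First, using Lemma~\ref{l:Z[p]}(d), I would write $I=(a_1,\dots,a_m)$; since any generator can be replaced by its negative, I may assume each $a_k>0$. By Lemma~\ref{l:Z[p]}(c) pick $\ell\in\N$ with $1-p^\ell\in I$, and by Lemma~\ref{l:Z[p]}(e) express each $a_k=P_k(p)$ as a polynomial in~$p$ with positive integer coefficients. Each such expression presents $a_k$ as a sum of monomials~$p^\lambda$, and each monomial is the natural measure of a single cylinder at the depth corresponding to~$\lambda$, so the generators of~$I$ are realizable as measures of explicit unions of cylinders.

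Next I would build $\ms$ from a base IFS in~$\TOEC(p,r)$ satisfying the SSC (which exists by Proposition~\ref{p:p,r}), by replacing certain cylinders with carefully designed \emph{clumps}. The clump~$C_k$ realizing~$a_k$ is a union of cylinders whose measures sum to~$a_k$, arranged in~$\R^d$ so that consecutive sub-pieces touch at single points; this touching is the mechanism forcing $C_k$ to act as an atomic unit for interior separated sets. Copies of such clumps would be planted throughout the self-similar structure, with the relation $1-p^\ell\in I$ used to make the touching pattern periodic in depth, so that every sufficiently deep cylinder lies inside some clump. Working in~$\R^d$ for $d$ large enough leaves room to preserve both the OSC and total disconnectedness; the graph-directed viewpoint developed in this section should help in book-keeping the self-similar relations among clumps at different depths.

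Verification of $I_\ms=I$ splits into two halves. The easy direction $I\subset I_\ms$ is immediate: each clump~$C_k$ is contained in a small OSC open set whose complement in~$E_\ms$ remains compact, so $a_k=\mu_\ms(C_k)\in I_\ms$. For the reverse direction $I_\ms\subset I$, given any interior separated set~$F$, I would argue that $F$, being clopen in~$E_\ms$ and contained in an OSC open set, cannot separate two touching sub-pieces of a clump, since no OSC open set can contain one sub-piece and exclude its neighbor once they share a point. Hence $F$ is a union of whole clumps plus an SSC-like remainder whose measure is controlled by $1-p^\ell$ and the generators~$a_k$, placing $\mu_\ms(F)$ in~$I$.

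The main obstacle I anticipate is this reverse inclusion, and specifically the rigorous verification that touching genuinely obstructs separation by \emph{every} OSC open set. This will require a descending cylinder decomposition of~$F$ (the blocks decomposition advertised in Section~\ref{sec:BD}) and an induction on depth, together with a clump design robust enough that no accidental clopen subset at \emph{any} scale can be carved out by a cleverly chosen~$O$. Striking the right balance---rich enough to realize an arbitrary ideal, yet rigid enough to forbid unintended interior separated sets---is where the construction must earn its keep.
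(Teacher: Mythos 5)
Your proposal identifies the right ingredients and the right overall strategy---positive generators of $I$, the element $1-p^\ell\in I$, realizing each $a_k$ by a ``clump'' of pieces touching at a point so that it acts as an atomic unit, and using a high enough ambient dimension to arrange the OSC. This does match the paper's construction in spirit. But there are three concrete gaps that prevent the plan from closing.

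\textbf{(1) The positive decomposition of $1-p^\ell$.} It is not enough to know $1-p^\ell\in I$. To build a self-similar identity $E_\ms=S_0(E_\ms)\cup\bigcup_{i,j}S_{i,j}(E_i)$ whose measures balance, one must write $1-p^\ell=a_1b_1+\dots+a_mb_m$ with each $b_i$ a \emph{positive} element of $\Z[p]$ (Lemma~\ref{l:Z[p]}(f)); the $b_i$ then dictate how many scaled copies of each clump to plant and at which scales (via Lemma~\ref{l:Z[p]}(e)). Your phrase ``the relation $1-p^\ell\in I$ used to make the touching pattern periodic in depth'' glosses over this: a priori the coefficients expressing $1-p^\ell$ in terms of the $a_i$ could be negative, in which case no IFS realizes them. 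Lemma~\ref{l:Z[p]}(f) is the step that makes the construction geometric rather than merely algebraic.

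\textbf{(2) The reverse inclusion $I_\ms\subset I$.} The argument you sketch---that a clopen $F$ cannot contain one sub-piece and exclude a touching neighbour---is correct as far as it goes, but it only rules out $F$ being exactly one sub-piece; it says nothing about $F$ carving out deeper fragments of a clump, which is the typical situation. What actually controls $\mu(F)$ is the \emph{self-similar periodicity} you install with $S_0$: because the origin is a fixed point shared by all reflected copies, each scaled clump $S_{i,j}(E_i)$ reproduces the same touching configuration at every depth, so any interior separated set decomposes as a finite disjoint union of sets of the form $S_{e_1}\circ\dots\circ S_{e_k}(E_i)$ with $i\in\xc V_O$. That decomposition is precisely Theorem~\ref{t:gdid}, which the paper proves separately using the blocks machinery; it is not a book-keeping aid but the crux of the reverse inclusion. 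You cannot get it ``for free'' from the touching claim alone.

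\textbf{(3) Two smaller omissions.} You need $r_\ms=r$ exactly, not a power of $r$; the paper arranges two consecutive exponents among the generators ($u_{1,1}+1=u_{1,2}$) to force $\gcd$ of the exponents to be $1$. Also, ``replacing cylinders in a base SSC IFS with clumps'' is not obviously a well-defined operation on IFSs---an arbitrary local replacement destroys the global self-similarity. The paper instead constructs $\ms$ directly so that $E_0=E_\ms$ and the clumps $E_i=\bigcup_{T\in\mt_i}T(E_0)$ form a dust-like graph-directed system (all copies reflected through the origin rather than chained), and the self-similar equation for $E_0$ is a consequence, not an input.
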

We make use of the graph-directed sets to prove Theorem~\ref{t:ideal}. For
convenience, we recall the definition of graph-directed sets
(see~\cite{MauWi88}).
\begin{defn}[graph-directed sets]\label{d:graphdir}
  Let $G=(\xc V,\xc E)$ be a directed graph with vertex set $\xc V$ and
  directed-edge set~$\xc E$. Suppose that for each edge $e\in\xc E$, there is
  a corresponding similarity $S_e\colon\R^d\to\R^d$ of ratio~$r_e\in(0,1)$.

  The graph-directed sets on~$G$ with the similarities $\{S_e\}_{e\in\xc E}$
  are defined to be the unique nonempty compact sets $\{E_i\}_{i\in\xc V}$
  satisfying
\begin{equation}\label{eq:graphdir}
  E_i=\bigcup_{j\in\xc V}\bigcup_{e\in\xc E_{i,j}}S_e(E_j)
  \qquad\text{for $i\in\xc V$},
\end{equation}
  where $\xc E_{i,j}$ is the set of edges staring at~$i$ and ending at~$j$.
  In particular, if \eqref{eq:graphdir}~is a disjoint union for each $i\in\xc
  V$, we call $\{E_i\}_{i\in\xc V}$ are \emph{dust-like} graph-directed sets
  on~$(\xc V,\xc E)$.
\end{defn}
If the self-similar set $E$ has the graph-directed structure, we can
determine its ideal easily. Let $\ms\in\TOEC(p,r)$. Suppose that $E_\ms$ is
one of the dust-like graph-directed sets $\{E_i\}_{i\in\xc V}$ on $G=(\xc
V,\xc E)$, and that for all $e\in\xc E$, $\log r_e/\log r\in\N$. Without loss
of generality, we also suppose that $\xc V=\{0,1,\dots,n\}$ and $E_\ms=E_0$.
Let $\xc E_{i,j}^k$ denote the set of sequences of $k$ edges
$(e_1,e_2,\dots,e_k)$ which form a directed path from vertex~$i$ to
vertex~$j$. Let $O$ be an open set of~$\ms$ satisfying the SOSC. We use $\xc
V_O$ to denote the set of vertexes~$i$ such that there exists
$(e_1,e_2,\dots,e_k)\in\xc E_{0,i}^k$ for some $k\ge1$ satisfying
\[S_{e_1}\circ S_{e_2}\circ\dots\circ S_{e_k}(E_i)\subset O.\]
\begin{thm}\label{t:gdid}
  The ideal $I_\ms$ is generated by $\{\xc H^s(E_i)/\xc H^s(E_\ms)\colon
  i\in\xc V_O\}$, where $s=\hdim E_\ms$.
\end{thm}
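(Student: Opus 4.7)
The plan is to prove the two inclusions of ideals separately, using the dust-like property of the graph-directed decomposition to realize cylinders as interior separated sets and to decompose arbitrary interior separated sets into cylinders.

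For the inclusion $(\xc H^s(E_i)/\xc H^s(E_\ms):i\in\xc V_O)\subseteq I_\ms$, I would fix $i\in\xc V_O$ together with a witnessing path $\pi=(e_1,\dots,e_k)\in\xc E_{0,i}^k$ such that $S_\pi(E_i):=S_{e_1}\circ\cdots\circ S_{e_k}(E_i)\subset O$. Because $E_\ms=E_0$ is dust-like graph-directed, iterating~\eqref{eq:graphdir} up to level~$k$ expresses $E_\ms$ as a finite \emph{disjoint} union of cylinders of the form $S_\alpha(E_{j_\alpha})$, so $E_\ms\setminus S_\pi(E_i)$ is a finite union of compact cylinders and hence compact. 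Thus $S_\pi(E_i)$ is an interior separated set, and
\[\mu_\ms\bigl(S_\pi(E_i)\bigr)=\frac{(r_{e_1}\cdots r_{e_k})^s\,\xc H^s(E_i)}{\xc H^s(E_\ms)}=p^{\ell}\cdot\frac{\xc H^s(E_i)}{\xc H^s(E_\ms)}\in I_\ms,\]
where $\ell=\sum_j\log r_{e_j}/\log r\in\N$ by the hypothesis on the ratios. Since $p^{-1}\in\Z[p]$ by Lemma~\ref{l:Z[p]}(a), multiplying by $p^{-\ell}\in\Z[p]$ yields $\xc H^s(E_i)/\xc H^s(E_\ms)\in I_\ms$.

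For the reverse inclusion $I_\ms\subseteq(\xc H^s(E_i)/\xc H^s(E_\ms):i\in\xc V_O)$, I would invoke Remark~\ref{r:ideal}, which allows us to generate $I_\ms$ using only those interior separated sets~$F$ with $F\subset O$. Take such an~$F$; because $F$ and $E_\ms\setminus F$ are disjoint compact subsets of~$E_\ms$, their mutual distance $\delta=\dist(F,E_\ms\setminus F)$ is positive. Choose $k$ large enough that every level-$k$ cylinder $S_\alpha(E_{j_\alpha})$ in the dust-like decomposition of~$E_\ms$ has diameter less than~$\delta$. Each such cylinder then lies entirely inside~$F$ or is disjoint from~$F$, yielding a finite disjoint decomposition
\[F=\bigsqcup_{\alpha\in A}S_\alpha(E_{j_\alpha}),\qquad S_\alpha(E_{j_\alpha})\subset F\subset O.\]
Since $S_\alpha(E_{j_\alpha})\subset O$, each endpoint satisfies $j_\alpha\in\xc V_O$, and
\[\mu_\ms(F)=\sum_{\alpha\in A}p^{\ell_\alpha}\cdot\frac{\xc H^s(E_{j_\alpha})}{\xc H^s(E_\ms)}\]
with $p^{\ell_\alpha}\in\Z[p]$, exhibiting $\mu_\ms(F)$ as an element of the claimed ideal.

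The main obstacle is the reliance on Remark~\ref{r:ideal}, namely the assertion that a \emph{single} SOSC open set suffices to generate~$I_\ms$; without this, an interior separated set~$F$ could be trapped in some other SOSC open set $O'\ne O$, and the cylinder decomposition would only show $j_\alpha\in\xc V_{O'}$, leaving a gap to $\xc V_O$. The decomposition argument itself is routine once the distance $\delta>0$ is available and the diameters of level-$k$ cylinders are known to shrink uniformly (a standard consequence of $\max_e r_e<1$), and the algebraic step of clearing $p^\ell$ is immediate from Lemma~\ref{l:Z[p]}(a); so the genuine content is the interplay with the $O$-independence statement, which should be proved in parallel via the blocks-decomposition machinery of Section~\ref{sec:BD}.
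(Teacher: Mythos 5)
Your proposal is correct and follows essentially the same route as the paper's proof: both directions hinge on the dust-like graph-directed decomposition (to realize cylinders $S_\pi(E_i)$ as interior separated sets for $I\subset I_\ms$, and to decompose generators into cylinders landing in $\xc V_O$ for $I_\ms\subset I$), and the reverse inclusion in both cases is reduced to Remark~\ref{r:ideal}. Your worry about that remark is well-placed in that it is the genuine load-bearing input, but it is exactly what the paper invokes too; the paper phrases its generators as the specific blocks $\xs B^O_k$ rather than all interior separated sets contained in~$O$, a harmless difference since your cylinder-decomposition argument applies equally well to those blocks.
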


The proof of Theorem~\ref{t:gdid} will be given in Section~\ref{ssec:BDnum}
since it requires a basic fact about the ideal of IFS (Remark~\ref{r:ideal}).
We give an example here.
\begin{exmp}\label{e:gdid}
  Let $\ms$ be as in Example~\ref{e:NPI}. Let $E_0=E_\ms$, $E_1=-rE_\ms\cup
  E_\ms$ and $E_2=-E_\ms+E_\ms$. It is easy to check that $E_0$, $E_1$ and
  $E_2$ forms a family of graph-directed sets. Let $O=(0,1)$, then $\xc
  V_O=\{1,2\}$. By Theorem~\ref{t:gdid},
  \[I_\ms=(\xc H^s(E_1)/\xc H^s(E_0),\xc H^s(E_2)/\xc H^s(E_0))=(p+1,2).\]
\end{exmp}

By Theorem~\ref{t:gdid}, we are able to give the proof of
Theorem~\ref{t:ideal}.

\begin{proof}[Proof of Theorem~\ref{t:ideal}]
Fix a nonzero ideal $I$ of $\Z[p]$. By Lemma~\ref{l:Z[p]}(c), there exists
a positive integer~$\ell$ such that $1-p^\ell\in I$. We can further require
that $r^\ell<1/6$ since $1-p^{k\ell}\in I$ for all $k\ge1$. By
Lemma~\ref{l:Z[p]}(d), we can choose positive numbers
$a_1,a_2,\dots,a_m\in\Z[p]$ such that $I=(a_1,\dots,a_m)$. By
Lemma~\ref{l:Z[p]}(f), there exist positive numbers $b_1,\dots,b_m\in I$ such
that
\begin{equation}\label{eq:pk}
  1-p^\ell=a_1b_1+\dots+a_mb_m.
\end{equation}
By Lemma~\ref{l:Z[p]}(e), for $1\le i\le m$, there exist nonnegative integers
$u_{i,j}$ ($1\le j\le N_i$) and positive integers $v_{i,j}$ with
$r^{v_{i,j}}<1/6$ ($1\le j\le M_i$) such that
\begin{equation}\label{eq:aibi}
\begin{aligned}
  a_i&=p^{u_{i,1}}+p^{u_{i,2}}+\dots+p^{u_{i,N_i}}, \\
  b_i&=p^{v_{i,1}}+p^{v_{i,2}}+\dots+p^{v_{i,M_i}}.
\end{aligned}
\end{equation}
We can further require that
\begin{equation}\label{eq:u,u+1}
  u_{1,1}+1=u_{1,2}
\end{equation}
since there exists a nonnegative integer $u$ such that $a_1-p^u-p^{u+1}>0$,
then set $u_{1,1}=u$, $u_{2,1}=u+1$ and apply Lemma~\ref{l:Z[p]}(e) to
$a_1-p^u-p^{u+1}$. Finally, choose a positive integer $d$ such that
\begin{equation}\label{eq:d}
  2^d\ge\max(N_1,N_2,\dots,N_m)\quad\text{and}\quad
  2^d\ge M_1+M_2+\dots+M_m.
\end{equation}

Now we are ready to construct the desired IFS~$\ms$. In the remainder of this
proof, we use $\bm x$ and~$\bm y$ to denote the points in~$\R^d$. For
$\Lambda\subset\{1,2,\dots,d\}$, define an isometric mapping
$T_\Lambda\colon\R^d\to\R^d$ by
\begin{equation}\label{eq:T}
  (T_\Lambda\bm x)_i=\begin{cases}
  x_i,& i\notin\Lambda,\\
  -x_i, & i\in\Lambda,
  \end{cases}\quad
  \text{for $\bm x=(x_1,\dots,x_d)\in\R^d$}.
\end{equation}
Since $2^d\ge\max(N_1,\dots,N_m)$, for each $i\in\{1,2,\dots,m\}$, we can
choose distinct
\[\Lambda_{i,1},\Lambda_{i,2}\dots,\Lambda_{i,N_i}\subset\{1,\dots,d\},\]
and then define an IFS $\mt_i$ as
\begin{equation}\label{eq:Ti}
  \mt_i=\bigl\{r^{u_{i,1}}T_{\Lambda_{i,1}},r^{u_{i,2}}T_{\Lambda_{i,2}},
  \dots,r^{u_{i,N_i}}T_{\Lambda_{i,N_i}}\bigr\}.
\end{equation}
Let
\[Y=\bigl\{\bm y=(y_1,\dots,y_d)\in\R^d\colon\text{$y_i=1/3$ or $2/3$ for
$i=1,\dots,d$}\}.\]
Since $2^d\ge M_1+\dots+M_m$, we can choose distinct points $\bm y_{i,j}\in
Y$ for $1\le i\le m$ and $1\le j\le M_i$. Define a contracting similarity
$S_0\colon\bm x\mapsto r^\ell\bm x$ on~$\R^d$ and IFSs
\begin{equation}\label{eq:Sij}
  \ms_{i,j}=\bigl\{r^{v_{i,j}}T+\bm y_{i,j}\colon T\in\mt_i\bigr\}
\end{equation}
for $1\le i\le m$, $1\le j\le M_i$. Finally, define
\[\ms=\{S_0\}\cup\bigcup_{i=1}^{m}\bigcup_{j=1}^{M_i}\ms_{i,j}.\]

It remains to show that $\ms\in\TOEC(p,r)$ and $I_\ms=I$.

We first prove that $\ms\in\OEC(p,r)$. Note that the ratios of $\ms$ are
\[r^\ell\quad\text{and}\quad r^{u_{i,j}+v_{i,j'}}\
(1\le i\le m,1\le j\le N_i,1\le j'\le M_i).\]
By~\eqref{eq:u,u+1}, we have $r_\ms=r$. By~\eqref{eq:pk} and~\eqref{eq:aibi},
we have $p_\ms=p$. We will show that $\ms$ satisfies the OSC for the open
set~$(0,1)^d$. Note that the ratios of~$\ms$ are all less than $1/6$ since
$r^\ell<1/6$, $r^{v_{i,j}}<1/6$ and $u_{i,j}\ge0$. And so
$S_0(0,1)^d\subset(0,1/6)^d$; $S(0,1)^d\subset(-1/6,1/6)^d+\bm y_{i,j}$ for
$S\in\ms_{i,j}$. Therefore, $S(0,1)^d\subset(0,1)^d$ for all $S\in\ms$. On
the other hand, for distinct $S,S'\in\ms$, we need to show $S(0,1)^d\cap
S'(0,1)^d=\emptyset$. There are three cases to consider.
\begin{description}
  \item[Case~1] One of $S,S'$ is $S_0$. Then there exists some $\bm y\in Y$
      such that
      \[S(0,1)^d\cap S'(0,1)^d\subset(0,1/6)^d\cap
      \bigl((-1/6,1/6)^d+\bm y\bigr)=\emptyset.\]
  \item[Case~2] $S\in\ms_{i,j}$ and $S'\in\ms_{i',j'}$ with
      $(i,j)\ne(i',j')$. Then the corresponding $\bm y_{i,j},\bm
      y_{i',j'}\in Y$ are distinct. And so
      \[S(0,1)^d\cap S'(0,1)^d\subset\bigl((-1/6,1/6)^d+\bm y_{i,j}\bigr)
      \cap\bigl((-1/6,1/6)^d+\bm y_{i',j'}\bigr)=\emptyset.\]
  \item[Case~3] $S,S'\in\ms_{i,j}$. By the definition of $\ms_{i,j}$, we
      have $S(0,1)^d\cap S'(0,1)^d=\emptyset$.
\end{description}
This completes the proof of $\ms\in\OEC(p,r)$.

\smallskip

Let $E_0=E_\ms$ be the self-similar set generated by~$\ms$ and
$E_i=\bigcup_{T\in\mt_i}T(E_0)$ for $1\le i\le m$. Define $S_{i,j}\colon
x\mapsto r^{v_{i,j}}x+\bm y_{i,j}$ for $1\le i\le m$, $1\le j\le M_i$. The
proof of $\ms\in\TDC$ and $I_\ms=I$ is based on the fact that the sets
$\{E_i\}_{i=0}^m$ are \emph{dust like graph-directed sets}. Indeed, we have
\begin{equation}\label{eq:E0}
  E_0=S_0(E_0)\cup\bigcup_{i=1}^m\bigcup_{j=1}^{M_i}S_{i,j}(E_i),
\end{equation}
and for $1\le i\le m$,
\begin{multline*}
  E_i=\bigcup_{T\in\mt_i}T(E_0)=\bigcup_{T\in\mt_i}T\circ S_0(E_0)\cup
  \bigcup_{T\in\mt_i}\bigcup_{i'=1}^m\bigcup_{j=1}^{M_{i'}}
  T\circ S_{i',j}(E_{i'}) \\
  =\bigcup_{T\in\mt_i}S_0\circ T(E_0)\cup
  \bigcup_{T\in\mt_i}\bigcup_{i'=1}^m\bigcup_{j=1}^{M_{i'}}
  T\circ S_{i',j}(E_{i'})
\end{multline*}
since $T\circ S_0=S_0\circ T$ for $T\in T_i$. It follows from
$E_i=\bigcup_{T\in\mt_i}T(E_0)$ that
\begin{equation}\label{eq:Ei}
  E_i=S_0(E_i)\cup\bigcup_{T\in\mt_i}\bigcup_{i'=1}^m\bigcup_{j=1}^{M_{i'}}
  T\circ S_{i',j}(E_{i'})\quad
  \text{for $1\le i\le m$}.
\end{equation}
We will show that all the unions in~\eqref{eq:E0} and~\eqref{eq:Ei} are
disjoint. By the definition of~$\ms$, we know that $E_0\subset[0,1]^d$,
$E_0\setminus(0,1)^d=\{\bm0\}$ and $E_i\subset(-1,1)^d$ for all $1\le i\le
m$. Note that the ratios of $S_0$ and $S_{i,j}$ are $r^\ell$ and
$r^{v_{i,j}}$, all less than $1/6$. This means
\[S_0(E_0)\subset[0,1/6]^d,\quad
S_{i,j}(E_{i})\subset(-1/6,1/6)^d+\bm y_{i,j}.\]
Recall that $\bm y_{i,j}\in Y$, we have
\[\bigcup_{i=1}^m\bigcup_{j=1}^{M_i}S_{i,j}(E_i)\subset(1/6,5/6)^d,\]
and
\begin{equation}\label{eq:SijDJNT}
  S_{i,j}(E_i)\cap S_{i',j'}(E_{i'})=\emptyset\quad
  \text{when $(i,j)\ne(i',j')$},
\end{equation}
since $\bm y_{i,j}\ne\bm y_{i',j'}$. Therefore, the unions in~\eqref{eq:E0}
are disjoint. For the unions in~\eqref{eq:Ei}, observe that, for $1\le i\le
m$ and distinct $T,T'\in\mt_i$, $T(0,1)^d\cap T'[0,1]^d=\emptyset$ by the
definition of~$\mt_i$. There are two results follow from the observation. The
first is
\[\bigcup_{i'=1}^m\bigcup_{j=1}^{M_{i'}}T\circ S_{i',j}(E_{i'})\cap
\bigcup_{i'=1}^m\bigcup_{j=1}^{M_{i'}}T'\circ S_{i',j}(E_{i'})\subset
T(1/6,5/6)^d\cap T'(1/6,5/6)^d=\emptyset\]
for distinct $T,T'\in\mt_i$. The second is, for $T\in\mt_i$, $1\le i'\le m$
and $1\le j\le M_{i'}$,
\begin{multline*}
  T\circ S_{i',j}(E_{i'})\cap S_0(E_i)=T\circ S_{i',j}(E_{i'})
  \cap\bigcup_{T'\in\mt_i}T'\circ S_0(E_0) \\
  =T\circ S_{i',j}(E_{i'})\cap T\circ S_0(E_0)=\emptyset
\end{multline*}
since $S_{i',j}(E_{i'})\cap S_0(E_0)=\emptyset$. It follows from the two
results and~\eqref{eq:SijDJNT} that the unions in~\eqref{eq:Ei} are also
disjoint. Thus, we have proved that the sets $\{E_i\}_{i=0}^m$ are dust like
graph-directed sets, and so $\ms\in\TDC$ follows.

Finally, we turn to prove that $I_\ms=I$ by making use of
Theorem~\ref{t:gdid}. Let $O=(0,1)^d$, then $\xc V_O=\{1,2,\dots,m\}$. For
$1\le i\le m$, we have
\begin{equation*}
  \xc H^s(E_i)=\sum_{T\in\xc T_i}\xc H^s(T(E_0))
  =(p^{u_{i,1}}+p^{u_{i,2}}+\dots+p^{u_{i,N_i}})\xc H^s(E_0)
  =a_i\xc H^s(E_0)
\end{equation*}
by~\eqref{eq:aibi} and~\eqref{eq:Ti}. Therefore, we have
$I_\ms=(a_1,\dots,a_m)=I$.
\end{proof}

\begin{figure}[h]
  \centering
  \includegraphics{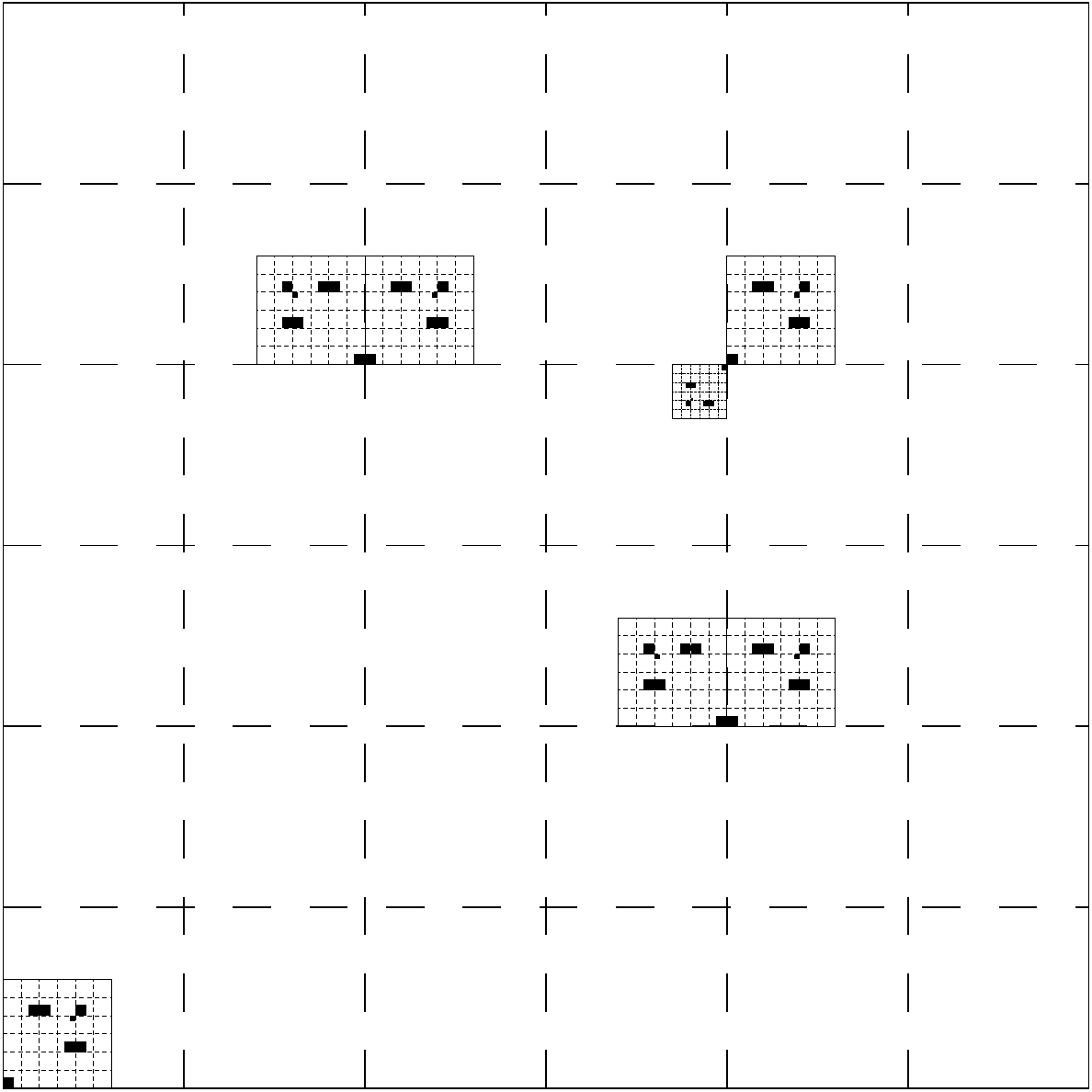}\\
  \caption{The structure of~$E_\ms$ in Example~\ref{e:ideal}}
  \label{fig:ideal}
\end{figure}

\begin{exmp}\label{e:ideal}
  Let $r=1/10$ and $p=\sqrt{10}-3$ be the positive solution of the equation
  $p^2+6p=1$. Let $I=(2,\sqrt{10})$ be an ideal of the ring
  $\Z[p]=\Z[\sqrt{10}]$. It is worth noting that $I$ is not a principle
  ideal.

  It follows from Proposition~\ref{p:p,r} that $\TOEC(p,r)\ne\emptyset$. We
  will construct an IFS $\ms\in\TOEC(p,r)$ such that $I_\ms=I$ according to
  the proof of Theorem~\ref{t:ideal}.

  Observe that $I=(2,p+1)$ and $1-p=p(p+1)+2p\cdot 2$. By~\eqref{eq:pk},
  \eqref{eq:aibi}, \eqref{eq:u,u+1} and~\eqref{eq:d}, we may set
  \[\begin{cases}
    a_1=1+p,\\ a_2=2=1+1,
  \end{cases}\quad
  \begin{cases}
    b_1=p,\\ b_2=2p=p+p,
  \end{cases}
  \begin{cases}
    \ell=1,\\ d=2.
  \end{cases}\]
  By~\eqref{eq:T}, \eqref{eq:Ti} and~\eqref{eq:Sij}, we may set
  \[\mt_1=\{T_\emptyset,r\cdot T_{\{1,2\}}\},\quad
  \mt_2=\{T_\emptyset,T_{\{1\}}\},\]
  $S_0=rT_\emptyset$ and
  \begin{gather*}
    \ms_{1,1}=\bigl\{rT_\emptyset+(2/3,2/3),r^2T_{\{1,2\}}+(2/3,2/3)\bigr\},\\
    \ms_{2,1}=\bigl\{rT_\emptyset+(2/3,1/3),rT_{\{1\}}+(2/3,1/3)\bigr\},\\
    \ms_{2,2}=\bigl\{rT_\emptyset+(1/3,2/3),rT_{\{1\}}+(1/3,2/3)\bigr\}.
  \end{gather*}
  Finally, let $\ms=\{S_0\}\cup\ms_{1,1}\cup\ms_{2,1}\cup\ms_{2,2}$, see
  Figure~\ref{fig:ideal} for the corresponding self-similar set~$E_\ms$. By
  the proof of Theorem~\ref{t:ideal}, we know that $I_\ms=I$.
\end{exmp}

We also need the following special version of Jordan-Zassenhaus Theorem (see,
e.g., \cite{CurRe62}) to prove Theorem~\ref{t:TOCELCN}.

\begin{cthm}[Jordan-Zassenhaus Theorem]
  Suppose that $\alpha$ is an algebraic integer, then the class number
  $h(\Z[\alpha])$ is finite.
\end{cthm}
We remark that $\Z[\alpha]$ is in general not a Dedekind domain, so the
conclusion on the finiteness of class number $h(\Z[\alpha])$ cannot be
derived directly by the corresponding result of Dedekind domain.

\begin{proof}[Proof of Theorem~\ref{t:TOCELCN}]
  By Theorem~\ref{t:TOCEpr} and~\ref{t:ideal}, we have $\LCN(p,r)=h(\Z[p])$
  when $\TOEC(p,r)\ne\emptyset$. Then by Lemma~\ref{l:Z[p]}(g), we have
  $\LCN(p,r)=h(\Z[p])\le h(\Z[p^{-1}])$. Finally, since $p^{-1}$ is an
  algebraic integer (Lemma~\ref{l:Z[p]}(a)), by the Jordan-Zassenhaus
  Theorem, the class number $\LCN(p,r)\le h(\Z[p^{-1}])$ is finite.
\end{proof}

\subsection{Principle ideal}

Let $\ms=\{S_1,S_2,\dots,S_N\}$ be an IFS satisfying the OSC. We write $\xs
O_\ms$ to denote all the open sets satisfying the OSC for the IFS~$\ms$ and
$\partial_\ms=E_\ms\setminus\bigcup_{O\in\xs O_\ms}O$, where $E_\ms$ is the
self-similar set generated by~$\ms$. Notice that $\partial_\ms=\emptyset$ if
and only if $\ms$ satisfies the SSC. We say that a point~$x\in\partial_\ms$
is \emph{separated} if there is a finite word $\bm i=i_1\dots
i_n\in\{1,\dots,N\}^n$ such that
\begin{equation}\label{eq:xbdr}
  S_{\bm i}(x)\notin\partial_\ms\quad\text{and}\quad
  S_{\bm i}(x)\notin S_{\bm j}(E_\ms)
\end{equation}
for every word~$\bm j$ of the same length as~$\bm i$ but $\bm j\ne\bm i$,
where $S_{\bm i}=S_{i_1}\circ\dots\circ S_{i_n}$.

We need the following theorem to prove Theorem~\ref{t:covopen}, which is also
of interesting in itself.
\begin{thm}\label{t:separated}
  Let $\ms\in\TOEC$. If the points in~$\partial_\ms$ are all separated, then
  $I_\ms=\Z[p_\ms]$.
\end{thm}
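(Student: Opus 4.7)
The plan is to show that $1\in I_\ms$, which forces the ideal $I_\ms$ of $\Z[p_\ms]$ to equal the whole ring. Since $E_\ms$ is compact, Hausdorff, and totally disconnected, it admits a basis of clopen sets; my strategy is to partition $E_\ms$ into finitely many pairwise disjoint clopen subsets, each of which either is itself an interior separated set or is carried by a similarity $S_{\bm i}$ of the IFS onto one. Because $p_\ms^{-1}\in\Z[p_\ms]$ by Lemma~\ref{l:Z[p]}(a), and $\mu_\ms(S_{\bm i}(A))=r_{\bm i}^s\mu_\ms(A)$ with $r_{\bm i}^s$ a nonnegative integer power of $p_\ms$, the measure of each piece will lie in $I_\ms$, and summing will recover $\mu_\ms(E_\ms)=1\in I_\ms$.

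For each $x\in\partial_\ms$ the separatedness hypothesis supplies a word $\bm i(x)$ with $S_{\bm i(x)}(x)\notin\partial_\ms$ and $S_{\bm i(x)}(x)\notin S_{\bm j}(E_\ms)$ for every word $\bm j\ne\bm i(x)$ of length $|\bm i(x)|$. Accordingly there is an OSC open set $O_x\ni S_{\bm i(x)}(x)$, and I choose an open neighbourhood $U_x$ of $S_{\bm i(x)}(x)$ with $U_x\subset O_x$ and $U_x\cap S_{\bm j}(E_\ms)=\emptyset$ for every such $\bm j$. The set $W_x:=S_{\bm i(x)}^{-1}(U_x)\cap E_\ms$ is then an open neighbourhood of $x$ in $E_\ms$. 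For each $y\in E_\ms\setminus\partial_\ms$ the definition of $\partial_\ms$ provides an OSC open set $O_y\ni y$, and total disconnectedness yields a clopen neighbourhood of $y$ in $E_\ms$ contained in $O_y$---already an interior separated set.

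By compactness, $\partial_\ms$ is covered by finitely many $W_{x_1},\dots,W_{x_m}$; using total disconnectedness I refine this to a disjoint clopen cover $V_1,\dots,V_k$ of $\partial_\ms$ with each $V_l\subset W_{x_{j(l)}}$. The complement $V_0=E_\ms\setminus\bigsqcup_{l\ge1}V_l$ is clopen and disjoint from $\partial_\ms$, so by the previous paragraph's construction it decomposes into finitely many interior separated clopen pieces $V_{0,1},\dots,V_{0,p}$. Setting $\tilde V_l=S_{\bm i(x_{j(l)})}(V_l)\subset U_{x_{j(l)}}\subset O_{x_{j(l)}}$ and granting that each $\tilde V_l$ is an interior separated set, I will obtain $\mu_\ms(V_l)=p_\ms^{-k_l}\mu_\ms(\tilde V_l)\in I_\ms$ for the appropriate nonnegative integer $k_l$, hence
\[1=\sum_i\mu_\ms(V_{0,i})+\sum_l\mu_\ms(V_l)\in I_\ms.\]

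The main obstacle is verifying that $\tilde V_l$ is clopen \emph{in $E_\ms$}, not merely in the sub-piece $E_{\bm i(x_{j(l)})}$. Here the second clause of the separatedness hypothesis is crucial: since $U_{x_{j(l)}}$ meets no level-$|\bm i(x_{j(l)})|$ piece other than $E_{\bm i(x_{j(l)})}$, one has $U_{x_{j(l)}}\cap E_\ms=U_{x_{j(l)}}\cap E_{\bm i(x_{j(l)})}$; consequently $\tilde V_l$, which is clopen in $E_{\bm i(x_{j(l)})}$ via the homeomorphism $S_{\bm i(x_{j(l)})}$, is cut out of $E_\ms$ by an open subset of $\R^d$ intersected with $U_{x_{j(l)}}$ and is therefore clopen in $E_\ms$. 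Combined with $\tilde V_l\subset O_{x_{j(l)}}$, this establishes the interior separated property and closes the argument.
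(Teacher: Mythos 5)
Your proposal is correct and follows essentially the same strategy as the paper's proof: for each boundary point use the separating word to build a clopen neighbourhood whose image under $S_{\bm i}$ is an interior separated set, pass to a finite disjoint clopen cover of $\partial_\ms$, absorb the rest of $E_\ms$ into interior separated sets, and sum $\mu_\ms$-measures to conclude $1\in I_\ms$. The paper's version is slightly terser (it asserts the existence of the compact clopen sets $F_x$ directly rather than constructing $U_x$ and $W_x$), but the decomposition, the role of $p_\ms^{-1}\in\Z[p_\ms]$, and the final summation are the same.
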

\begin{proof}
For each $x\in\partial_\ms$, let $\bm i_x$ be a word of finite length
satisfying~\eqref{eq:xbdr}. Choose a compact subset $F_x\subset E_\ms$
containing~$x$ such that $E_\ms\setminus F_x$ is also compact and $S_{\bm
i_x}(F_x)\cap S_{\bm j}(E_\ms)=\emptyset$ for every word~$\bm j$ of same
length as~$\bm i_x$ but $\bm j\ne\bm i_x$. Such $F_x$ does exist since
$E_\ms$ is totally disconnected and $\bm i_x$ satisfies~\eqref{eq:xbdr}. We
can further require $S_{\bm i_x}(F_x)$ to be an interior separated set due to
$S_{\bm i_x}(x)\notin\partial_\ms$. Note that $F_x$ is also an open subset in
the topology space~$E_\ms$. So $\{F_x\}_x$ is an open cover of the compact
set~$\partial_\ms$, thus we have a finite sub-cover $F_1$, \dots, $F_n$ and
the corresponding words $\bm i_1$, \dots, $\bm i_n$. Let
\[F^*_0=E_\ms\setminus\bigcup_{k=1}^nF_k,\quad
F^*_1=F_1,\quad F^*_2=F_2\setminus F_1,\ldots,\quad
F^*_n=F_n\setminus\bigcup_{k=1}^{n-1}F_k.\]
Then $\{F^*_k\}_{k=0}^n$ is a disjoint cover of~$E_\ms$. We claim that
$\mu_\ms(F^*_k)\in I_\ms$ for all $0\le k\le n$. Then
\[\sum_{k=0}^n\mu_\ms(F^*_k)=\mu_\ms(E_\ms)=1\in I_\ms.\]
Thus $I_\ms=\Z[p_\ms]$. It remains to prove the claim. For $1\le k\le n$,
observe that $S_{\bm i_k}(F^*_k)$ are all interior separated sets. And so
$\mu_\ms(F^*_k)\in I_\ms$ for $1\le k\le n$ since $p_\ms^{-1}\in\Z[p_\ms]$
(Lemma~\ref{l:Z[p]}(a)). For $F^*_0$, since $F^*_0$ is compact and each point
in $F^*_0$ can be covered by an interior separated set, we have $F^*_0$ is a
finite union of interior separated sets. Thus the claim $\mu_\ms(F^*_0)\in
I_\ms$ follows.
\end{proof}

\begin{proof}[Proof of Theorem~\ref{t:covopen}]
  According to Theorem~\ref{t:separated}, we only need to prove that the
  points in~$\partial_\ms$ are all separated. For this, fix a point
  $x_0\in\partial_\ms$. Let $F$ be an interior separated set. It is easy to
  see that, for $n$ large enough, there is a $\Lambda\subset\{1,\dots,N\}^n$
  such that
  \[F=\bigcup_{\bm i\in\Lambda}S_{\bm i}(E_\ms).\]
  Choose a word in $\Lambda$, say $\bm i^*$. If $S_{\bm i^*}(x_0)\notin
  S_{\bm j}(E_\ms)$ for all $\bm j\in\Lambda$ and $\bm j\ne\bm i^*$, then
  $x_0$ is separated since $F$ is an interior separated set. Thus, the proof
  is completed. Otherwise, suppose that $S_{\bm i^*}(x_0)\in S_{\bm
  j}(E_\ms)$ for some $\bm j$ other than~$\bm i^*$. Let $O$ be the convex
  open set satisfying the OSC. Since $S_{\bm i^*}(O)\cap S_{\bm j}(O)
  =\emptyset$, by convexity, there is a liner function~$H$ such that
  $H(x)<H(y)$ for all $x\in S_{\bm i^*}(O)$ and all $y\in S_{\bm j}(O)$.
  Since $S_{\bm i^*}(x_0)\in\overline{S_{\bm i^*}(O)}\cap\overline{S_{\bm
  j}(O)}$, we have
  \[H(S_{\bm i^*}(x_0))=\sup_{x\in S_{\bm i^*}(O)}H(x)
  =\max_{x\in S_{\bm i^*}(E_\ms)}H(x).\]
  Since each $S\in\ms$ has the form $S\colon x\mapsto r_S\bm Ax+b_S$ with
  $r_S\in(0,1)$, we have
  \[S_{\bm i}\colon x\mapsto r_{\bm i}\bm A^nx+b_{\bm i}\quad
  \text{for all $\bm i\in\Lambda$}.\]
  It follows that
  \[H(S_{\bm i}(x_0))=\sup_{x\in S_{\bm i}(O)}H(x)
  =\max_{x\in S_{\bm i}(E_\ms)}H(x)\quad\text{for all $\bm i\in\Lambda$}.\]
  And so $H(S_{\bm j}(x_0))>H(S_{\bm i^*}(x_0))$. Now let $\Lambda_1$ denote
  the set of all~$\bm i\in\Lambda$ such that $H(S_{\bm i}(x_0))>H(S_{\bm
  i^*}(x_0))$. We have
  \begin{itemize}
    \item $\Lambda_1\subsetneq\Lambda$ (since $\bm i^*\notin\Lambda_1$).
    \item For $\bm i\in\Lambda_1$, if $S_{\bm i}(x_0)\in S_{\bm j}(E_\ms)$
        for some $\bm j\in\Lambda$, then $\bm j\in\Lambda_1$ (since
        $H(S_{\bm i}(x_0))\le H(S_{\bm j}(x_0))$).
  \end{itemize}

  Repeat the above argument with replacing $\Lambda$ by~$\Lambda_1$, then
  either we find a word $\bm i^*\in\Lambda_1$ such that $S_{\bm
  i^*}(x_0)\notin S_{\bm j}(E_\ms)$ for all $\bm j\in\Lambda_1$ but $\bm
  j\ne\bm i^*$, this means $x_0$ is separated, or we get a
  subset~$\Lambda_2\subsetneq\Lambda_1$ such that for $\bm i\in\Lambda_2$, if
  $S_{\bm i}(x_0)\in S_{\bm j}(E_\ms)$ for some $\bm j\in\Lambda_1$, then
  $\bm j\in\Lambda_2$. If the latter case happens, then we repeat the
  argument again. The process stops when we find a desired word $\bm i^*$ to
  show that $x_0$ is separated. This completes the proof since $\Lambda$ is
  finite.
\end{proof}

\section{The Blocks Decomposition of Self-similar Sets}\label{sec:BD}

To understand the geometric structure of self-similar sets generated by IFSs
$\ms\in\TOEC$, we shall make use of the \emph{blocks decomposition}. Indeed,
the whole proof of our result is base on it.

In this section, we introduce the basic definitions of blocks decomposition
and give some important properties. From now on, fix an
$\ms=\{S_1,S_2,\dots,S_N\}\in\TOEC$. For notational convenience, we will
write $E_\ms$, $\mu_\ms$, $r_\ms$ and~$p_\ms$ as~$E$, $\mu$, $r$ and~$p$,
respectively. Let $r_i$ be the contraction ratio of~$S_i$ for $1\le i\le N$
and $s=\hdim E$. Write
\begin{equation}\label{eq:lambda}
  \lambda_i=\log r_i/\log r\quad\text{and}\quad
  \lambda=\max_{1\le i\le N}\lambda_i-1.
\end{equation}
Recall that $\gcd(\lambda_1,\dots,\lambda_N)=1$. For $\bm i=i_1i_2\dots
i_n\in\{1,2,\dots,N\}^n$, write $\bm i^-=i_1i_2\dots i_{n-1}$ and
\begin{equation}\label{eq:Siripi}
  S_{\bm i}=S_{i_1}\circ S_{i_2}\circ\dots\circ S_{i_n},
  \quad r_{\bm i}=r_{i_1}r_{i_2}\cdots r_{i_n},
  \quad p_{\bm i}=r_{\bm i}^s.
\end{equation}
Define
\begin{equation}\label{eq:Sk}
  \ms_k=\bigl\{S_{\bm i}\colon r_{\bm i}\le r^k<r_{\bm i^-}
  \bigr\}.
\end{equation}

\subsection{The definition of blocks decomposition}\label{ssec:BDdef}

\begin{defn}[level-$k$ blocks decomposition]\label{d:block}
  The decomposition $E=\bigcup_{j=1}^{n_k}B_{k,j}$ is called the level-$k$
  blocks decomposition of~$E$ ($k\ge0$), if each set
  \[\bigl\{x\colon\dist(x,B_{k,j})<r^k|E|/2\bigr\}\]
  is connected for $1\le j\le n_k$ and
  \[\dist(B_{k,i},B_{k,j})\ge r^k|E|,\quad\text{for $i\ne j$},\]
  where $|E|$ denotes the diameter of~$E$. The set $B_{k,j}$ is called
  a level-$k$ block of~$\ms$. The family of all the level-$k$ blocks will
  be denoted by~$\xs B_k$. Write $\xs B=\bigcup_{k\ge0}\xs B_k$.
\end{defn}
\begin{rem}\label{r:block}
  For $B\in\xs B_k$, write $\ms_B=\{S\in\ms_k\colon S(E)\subset B\}$.
  According to above definition, it is easy to check that
  $B=\bigcup_{S\in\ms_B}S(E)$.
\end{rem}

We shall use the natural measure~$\mu$ to describe the size of blocks.
Notice that $r_{\bm i} \in \{r^k, r^{k+1}, \dots, r^{k+\lambda}\}$ for
$S_{\bm i}\in\ms_k$. This leads to the following definition.

\begin{defn}[measure polynomial]\label{d:PB}
  For $B\in\xs B_k$ and $0\le \ell\le\lambda$, write
  \[\xi_{B,\ell}=\card\bigl\{S\in\ms_B\colon
  \text{the ratio of $S$ is $r^{k+\ell}$}\bigr\}.\]
  The polynomial
  \[P_B\colon t\mapsto \xi_{B,0} + \xi_{B,1}t + \dots +
   \xi_{B,\lambda}t^\lambda\]
  is called the measure polynomial of level-$k$ block~$B$. Write
  \[\xc P_k=\left\{P_B\colon B\in\xs B_k\right\}\quad\text{and}
  \quad \xc P=\bigcup_{k=0}^\infty \xc P_k.\]
\end{defn}
\begin{rem}\label{r:muB}
  For $B\in\xs B_k$, we have $\mu(B)=p^k P_B(p)$. This is why we call $P_B$
  the measure polynomial and $p$ the measure root of~$\ms$.
\end{rem}
\begin{rem}
  The measure polynomial of a block $B$ depends not only on~$B$ but also on
  the level of~$B$ since the level of~$B$ may be not unique. For example, let
  $\ms=\{S_1,S_2\}$ with $S_1\colon x\mapsto x/9$ and $S_2\colon x\mapsto
  x/3+2/3$. Then $r_\ms=1/3$ and
  \[\xs B_1=\bigl\{S_1(E_\ms),S_2(E_\ms)\bigr\},\quad
  \xs B_2=\bigl\{S_1(E_\ms),S_2\circ S_1(E_\ms),S_2\circ S_2(E_\ms)\bigr\}.\]
  Note that $S_1(E_\ms)\subset\xs B_1\cap\xs B_2$, so the level of
  $S_1(E_\ms)$ may be~$1$ or~$2$. Consequently, the measure polynomial of
  $S_1(E_\ms)$ may be $t\mapsto t$ for level-$1$ or $t\mapsto 1$ for
  level-$2$.
\end{rem}

Now we introduce the definition of interior blocks, which is the key to our
study.
\begin{defn}[interior block]\label{d:inblcok}
For $k\ge0$, $B\in\xs B_k$ is called a level-$k$ \emph{interior} block if
$\dist(B,O^c)\ge r^k|E|$ for some open set $O$ satisfying the SOSC. While
$B\in\xs B_k$ is called a level-$k$ \emph{boundary} block if $B$ is not a
level-$k$ interior block. Let $\xs B^\circ_k$ and $\xs B^\partial_k$ denote
the family of all level-$k$ interior blocks and the family of all level-$k$
boundary blocks, respectively. Write
  \[\xs B^\circ=\bigcup_{k\ge0}\xs B^\circ_k,\quad
  \xc P^\circ_k=\left\{P_B\colon B\in\xs B^\circ_k\right\},
  \quad \xc P^\circ=\bigcup_{k=0}^\infty \xc P^\circ_k.\]
\end{defn}
\begin{rem}\label{r:inblock}
  Suppose that $B\in\xs B^\circ_k$, then
  \begin{enumerate}[(a)]
    \item for all level-$l$ blocks $A\subset B$, we have $A\in\xs
        B^\circ_l$;
    \item suppose that $r_{\bm i}=r^l$, then $S_{\bm i}(B)\in\xs
        B^\circ_{k+l}$ and $P_{S_{\bm i}(B)}=P_B$.
  \end{enumerate}
\end{rem}

For further study of blocks decomposition, we introduce some more notations.
\begin{defn}[notations]\label{d:ntt}
  (a) Let $\xs C$ be a family of sets. Write
  \[\bigsqcup\xs C=\bigcup_{C\in\xs C}C.\]

  (b) Let $\xs A\subset\xs B_l$ be a nonempty family of level-$l$ blocks. For
  $k\ge0$, write
  \[\xs B_k(\xs A)=\Bigl\{B\in\xs B_{l+k}\colon B\subset
  \bigsqcup\xs A\Bigr\}.\]

  (c) Let $A\in\xs B_l$ be a level-$l$ block. For $k\ge0$, write
  \[\xs B^\circ_k(A)=\{B\in\xs B^\circ_{l+k}\colon B\subset A\},\quad
  \xs B^\partial_k(A)=\{B\in\xs B^\partial_{l+k}\colon B\subset A\}\]
  and $\xs B_k(A)=\{B\in\xs B_{l+k}\colon B\subset A\}$.
\end{defn}

The interior blocks have many advantages. The first is that, under the OSC,
different small copies of the self-similar set may has overlaps, but the
intersection of interior blocks in different small copies must be empty. The
second is that blocks in an interior block are still interior blocks (see
Remark~\ref{r:inblock}(a)). This means that we recover a form of disjointness
result for interior blocks. Therefore, the geometrical structure of interior
blocks is like the self-similar sets satisfying the SSC in some sense. The
last but not the least is the following lemma, which reveals the relationship
between the measure polynomials of interior blocks and the ideal of~$\ms$.
\begin{lem}\label{l:ideal}
  Let $I$ be the ideal of~$\Z[p]$ generated by $\{P(p)\colon P\in\xc
  P^\circ\}$, then $I=I_\ms$.
\end{lem}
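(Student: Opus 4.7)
The plan is to prove both inclusions $I\subseteq I_\ms$ and $I_\ms\subseteq I$ separately, each via a direct construction using the blocks decomposition.

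For $I\subseteq I_\ms$, I would pick $B\in\xs B^\circ_k$ with measure polynomial $P_B$ and show that $B$ itself is an interior separated set. Compactness of $B$ is immediate, and compactness of $E\setminus B$ follows because the finitely many blocks in $\xs B_k$ are pairwise separated by distance at least $r^k|E|$, so $E\setminus B=\bigsqcup_{B'\in\xs B_k,\,B'\ne B}B'$ is a finite union of compact sets. The containment $B\subset O$ (with $O$ witnessing SOSC, hence OSC, from the definition of interior block) is direct. So $\mu(B)=p^k P_B(p)\in I_\ms$. Since $p^{-1}\in\Z[p]$ by Lemma~\ref{l:Z[p]}(a), multiplying by $p^{-k}$ yields $P_B(p)\in I_\ms$, and hence $I\subseteq I_\ms$.

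For the reverse inclusion $I_\ms\subseteq I$, I would take an arbitrary interior separated set $F\subset E$ with $F\subset O$ for some $O$ satisfying OSC, and show that for $k$ sufficiently large, $F$ is a disjoint union of level-$k$ interior blocks. Two positive gaps drive the argument: since $F$ and $E\setminus F$ are disjoint and compact, $\delta_1:=\dist(F,E\setminus F)>0$; and since $F$ is compact and $O$ is open with $F\subset O$, $\delta_2:=\dist(F,O^c)>0$. Choose $k$ so large that $r^k|E|<\min(\delta_1,\delta_2)$. The key geometric observation is that no level-$k$ block can meet both $F$ and $E\setminus F$: otherwise the thickening $\{x\colon\dist(x,B)<r^k|E|/2\}$ would split into two pieces separated by distance at least $\delta_1-r^k|E|>0$, contradicting its connectedness in Definition~\ref{d:block}. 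Hence $F=\bigsqcup_{B\in\xs B_k,\,B\subset F}B$, and for each such $B$ we have $\dist(B,O^c)\ge\delta_2\ge r^k|E|$, so $B\in\xs B^\circ_k$. Summing $\mu(F)=\sum_{B\subset F}p^k P_B(p)$ and noting $p^k\in\Z[p]$ and $P_B(p)\in I$, we conclude $\mu(F)\in I$, giving $I_\ms\subseteq I$.

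The main obstacle is the combinatorial step showing that at sufficiently fine level, no block straddles $F$ and $E\setminus F$; everything else is bookkeeping with the identity $\mu(B)=p^k P_B(p)$ from Remark~\ref{r:muB}. This is not hard once one exploits the thickened-connectedness condition together with the disjoint-compact-sets argument that $\dist(F,E\setminus F)>0$, but it is the one place where the specific geometric content of Definition~\ref{d:block} is essential. One small point to watch is that the open set $O$ in the definition of interior separated set is required only to satisfy OSC, while interior block needs SOSC; however, since $F\subset O$ is nonempty, $O\cap E\ne\emptyset$, so $O$ automatically witnesses SOSC, and no adjustment of $O$ is needed.
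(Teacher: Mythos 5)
Your proof is correct and follows the same two-inclusion strategy as the paper: the paper likewise observes that every interior block is an interior separated set (giving $I\subseteq I_\ms$) and that every interior separated set is a finite disjoint union of interior blocks (giving $I_\ms\subseteq I$), together with Remark~\ref{r:muB} and $p^{-1}\in\Z[p]$. You have simply supplied the geometric details (the thickened-connectedness argument and the SOSC/OSC reconciliation) that the paper leaves implicit, so the two proofs are essentially the same.
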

\begin{proof}
  It follows from Remark~\ref{r:muB} and $p^{-1}\in\Z[p]$
  (Lemma~\ref{l:Z[p]}(a)) that $I$ is just the ideal generated by
  $\{\mu(B)\colon B\in\xs B^\circ\}$. We have $I\subset I_\ms$ since every
  interior block is an interior separated set. On the other hand, observe
  that each interior separated set can be written as a finite disjoint union
  of interior blocks, so $I_\ms\subset I$ holds too.
\end{proof}

\subsection{Finiteness of measure polynomials}

This subsection devoted to the finiteness of the measure polynomials, which
is the start point of our research. It follows from the totally
disconnectedness of the self-similar set.
\begin{prop}\label{p:finiteCP}
  There are only finitely many measure polynomials for every IFS
  $\ms\in\TOEC$.
\end{prop}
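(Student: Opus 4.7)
The plan is to bound the coefficients $\xi_{B,\ell}$ of every measure polynomial $P_B$ by a single universal constant. Since each $P_B$ has nonnegative integer coefficients and degree at most $\lambda$, a uniform bound $P_B(p)\le C$ forces $\xi_{B,\ell}\le C/p^\lambda$ via the trivial estimate $\xi_{B,\ell}\,p^\ell\le P_B(p)$. Only finitely many $(\lambda+1)$-tuples of nonnegative integers are bounded by $C/p^\lambda$, so finiteness of $\xc P$ follows.

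By Remark~\ref{r:muB} one has $P_B(p)=\mu(B)/p^k$ for every $B\in\xs B_k$. Since $\ms$ satisfies the OSC, the natural measure $\mu$ is $s$-Ahlfors regular on $E$: $\mu(E\cap U_\rho(x))\le c\rho^s$ for $x\in E$ and $\rho>0$. Covering $B$ by a single ball of radius $|B|$ centred at any point of $B$ yields $\mu(B)\le c|B|^s$, so the whole statement reduces to a uniform diameter bound
\[
  |B|\le D\,r^k\quad\text{for all $k\ge0$ and all $B\in\xs B_k$,}
\]
for then $\mu(B)\le cD^s p^k$ and hence $P_B(p)\le cD^s$.

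For this diameter bound I would combine the total disconnectedness of $E$ with its self-similar structure. Compactness and total disconnectedness yield, for every $\epsilon>0$, a finite clopen partition of $E$ into parts of diameter at most $\epsilon$; by compactness such a partition automatically has strictly positive minimum separation $\delta(\epsilon)>0$. Fixing one such partition at a single scale $\epsilon_0$ and pushing it forward by each iterate $S_{\bm i}$ produces, inside every copy $S_{\bm i}(E)$, a partition at scale $r_{\bm i}\epsilon_0$ with separation $r_{\bm i}\delta(\epsilon_0)$, so the separation-to-diameter ratio $\delta(\epsilon_0)/\epsilon_0$ is preserved under the self-similar scaling. Combining these scaled partitions with the OSC-disjoint pieces $\{S_{\bm i}(O):S_{\bm i}\in\ms_k\}$ should assemble a clopen partition of $E$ at scale $r^k|E|$ whose minimum separation is a fixed multiple of $r^k$. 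Since a level-$k$ block is by definition an $(r^k|E|)$-connected component of $E$, it must lie inside a single part of such a partition, giving $|B|\le Dr^k$.

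The main obstacle is precisely this assembly step. Under the OSC, the pieces $\{S_{\bm i}(E):S_{\bm i}\in\ms_k\}$ may touch at their boundaries, so the naive union of the scaled partitions is not itself a clopen partition of $E$; the TDC must be invoked to ensure that no $(r^k|E|)$-connected chain of pieces can link arbitrarily many of them across such touchings, for otherwise a rescaling and Hausdorff-limit argument would recover a non-trivial connected subset of $E$. Turning this compactness observation into a uniform quantitative chain-length bound, independent of $k$, is the crux of the argument; once it is in hand, the remainder of the proof is the routine algebraic bookkeeping sketched above.
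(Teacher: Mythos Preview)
Your outline is correct and matches the paper's proof closely: both reduce everything to the uniform diameter bound $|B|\le D\,r^k$ and obtain that bound by a compactness argument on rescaled local configurations---precisely the ``rescaling and Hausdorff-limit'' mechanism you name at the end (the paper formalises it as Lemmas~\ref{l:compact} and~\ref{l:continuous}: the family $\xs F$ of possible rescaled neighbourhood pictures is compact in the Hausdorff metric, and the connectedness gauge $\Delta$ is continuous and strictly positive by TDC, so has a positive infimum). Two small differences worth noting: (i) to pass from the diameter bound to a coefficient bound, the paper evaluates $P_B$ at $\xc L(O)$ using disjointness of the open images $S(O)$ rather than invoking Ahlfors regularity of $\mu$---both work, and your route is arguably cleaner; (ii) the paper does not attempt your push-forward-of-a-single-clopen-partition idea at all, going straight to the compactness argument you correctly flag as the crux.
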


We need some lemmas to prove Proposition~\ref{p:finiteCP}. The first two,
Lemma~\ref{l:4compact} and~\ref{l:discnt}, are known facts in topology.

\begin{lem}[{\cite[\S2.10.21]{Feder69}}]\label{l:4compact}
  Let $X$ be a compact metric space and $\xs K(X)$ the set of all nonempty
  compact subset of~$X$, then $\xs K(X)$ is compact under the Hausdorff
  metric.
\end{lem}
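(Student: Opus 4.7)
The plan is to show that $(\xs K(X), \hdist)$ is both totally bounded and complete, and then invoke the standard fact that a metric space with these two properties is compact. This cleanly separates the two distinct ingredients, rather than trying to do a diagonal subsequence extraction directly.

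For total boundedness, I would fix $\eps>0$ and use compactness of $X$ to pick a finite $\eps$-net $F=\{x_1,\dots,x_M\}\subset X$. For any nonempty compact $K\subset X$ set
\[F_K=\{x_i\in F\colon \dist(x_i,K)<\eps\}.\]
Then $F_K$ is nonempty and $\hdist(K,F_K)\le\eps$: every $x\in K$ lies within $\eps$ of some $x_i\in F$, and that $x_i$ automatically belongs to $F_K$; conversely each element of $F_K$ is within $\eps$ of $K$ by definition. Since $F$ has only $2^M-1$ nonempty subsets, this exhibits a finite $\eps$-net in $\xs K(X)$.

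For completeness, let $\{K_n\}$ be $\hdist$-Cauchy and define
\[K=\bigcap_{m=1}^\infty\overline{\bigcup_{n\ge m}K_n}.\]
I would verify in order: (i) $K$ is compact, since it is a closed subset of the compact space $X$; (ii) $K\ne\emptyset$, obtained by passing to a subsequence with $\hdist(K_{n_k},K_{n_{k+1}})<2^{-k}$, choosing $x_{n_1}\in K_{n_1}$ and inductively $x_{n_{k+1}}\in K_{n_{k+1}}$ with $\dist(x_{n_k},x_{n_{k+1}})<2^{-k}$, and observing that the limit (which exists by completeness of $X$) lies in $K$; and (iii) $\hdist(K_n,K)\to0$. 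The inclusion $K\subset\{x\colon \dist(x,K_n)<\eps\}$ for large $n$ is immediate from the definition of $K$ as a decreasing intersection together with the Cauchy property. The opposite direction, that $K_n\subset\{x\colon \dist(x,K)<\eps\}$ for large $n$, is the delicate part: given $x\in K_n$ one builds, via the same $2^{-k}$-telescoping argument applied to a fast Cauchy subsequence starting from $x$, a convergent sequence $x_m\in K_m$ whose limit belongs to $K$ and is within $O(\eps)$ of $x$.

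The main obstacle is this last verification in (iii); while each step is routine, one has to be careful to use the Cauchy condition, not just pointwise, but uniformly to control the telescoping distances. Once that is in hand, total boundedness plus completeness yields compactness of $(\xs K(X),\hdist)$, which is what is needed later to invoke Lemma~\ref{l:4compact} on families such as $\xs K(E)$ for the self-similar set $E$.
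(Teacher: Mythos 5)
The paper does not actually prove this lemma; it simply cites Federer \S2.10.21, so there is no in-paper argument to compare against. Your proposal is a correct and essentially self-contained proof of the classical fact that the hyperspace $\xs K(X)$ of a compact metric space is compact in the Hausdorff metric, and the route you take --- total boundedness plus completeness --- is exactly the standard one that the cited reference encapsulates. The total boundedness step via the assignment $K\mapsto F_K$ into nonempty subsets of a finite $\eps$-net is clean (note $F_K$ is automatically nonempty and finite, hence compact, so it genuinely lies in $\xs K(X)$), and the completeness step via the Kuratowski upper limit $K=\bigcap_m\overline{\bigcup_{n\ge m}K_n}$ is the usual construction. You correctly identify the only place that needs real care, namely the inclusion $K_n\subset\{x\colon\dist(x,K)<\eps\}$ for large $n$, and your telescoping argument --- fixing $x\in K_n$, choosing a fast Cauchy subsequence uniformly (not just pointwise), and chaining nearby points to produce a limit in $K$ within $O(\eps)$ of $x$ --- handles it correctly. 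So this is a sound filling-in of the cited result; there is nothing here that diverges from the standard treatment in any essential way.
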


\begin{lem}[see also~\cite{XiXi10}]\label{l:discnt}
  Let $\{F_i\}_{i=1}^n$ be a finite family of totally disconnected and
  compact subsets of a Hausdorff topology space, then $\bigcup_{i=1}^n F_i$
  is also totally disconnected.
\end{lem}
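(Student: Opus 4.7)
The plan is to induct on $n$. The base case $n=1$ is trivial, and since $F_1\cup\dots\cup F_n=(F_1\cup\dots\cup F_{n-1})\cup F_n$ with the first term compact by the finite union of compacts and totally disconnected by the induction hypothesis, the inductive step reduces to proving the result for two sets. So the real work is: if $A,B$ are compact totally disconnected subsets of a Hausdorff space $X$, then $A\cup B$ is totally disconnected.

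Since $A\cup B$ is compact Hausdorff, I would use the Šura-Bura theorem that on compact Hausdorff spaces connected components coincide with quasi-components. Thus it suffices to show that for any distinct points $x,y\in A\cup B$ there is a clopen subset $V$ of $A\cup B$ with $x\in V$ and $y\notin V$. Both $A$ and $B$ are themselves compact Hausdorff and totally disconnected, hence zero-dimensional (in particular, any two disjoint closed subsets of $A$ or of $B$ can be separated by a clopen of that space).

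The argument splits into two cases. In the case $x,y\in A$ (the case $x,y\in B$ is symmetric), pick a clopen $V_A$ of $A$ containing $x$ but not $y$. The sets $V_A$ and $A\setminus V_A$ are disjoint compacts of the Hausdorff space $X$, so they can be separated by disjoint open sets $U_1,U_2$ in $X$. Consequently, the closed subsets $V_A\cap B$ and $(A\setminus V_A)\cap B$ of $B$ lie in $U_1$ and $U_2$ respectively and are therefore disjoint. By zero-dimensionality of $B$, I can find a clopen $V_B$ of $B$ containing $V_A\cap B$ and disjoint from $(A\setminus V_A)\cap B$. I then take $V:=V_A\cup V_B$ and verify that $(A\cup B)\setminus V=(A\setminus V_A)\cup(B\setminus V_B)$, which is a finite union of sets closed in $X$; this makes $V$ clopen in $A\cup B$ and contains $x$ but excludes $y$. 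In the remaining case $x\in A\setminus B$ and $y\in B$ (and its symmetric version), I separate the compact sets $\{x\}$ and $B$ in $X$ by disjoint open sets $U_1,U_2$, then use zero-dimensionality of $A$ to find a clopen $V_A$ of $A$ with $x\in V_A\subset U_1\cap A$. Because $V_A\subset U_1$ is disjoint from $B$, the formula $V_A=(A\cup B)\cap(U_1\cap \tilde U)$, where $\tilde U$ is an open set of $X$ witnessing that $V_A$ is open in $A$, exhibits $V_A$ as open in $A\cup B$; it is already closed in $A\cup B$ since $A$ is.

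The step I expect to be the main obstacle is the verification that the clopen constructed on the piece $A$ can actually be extended to a clopen on $A\cup B$; naively taking $V_A$ itself fails because openness in $A$ does not transfer to openness in $A\cup B$. This is precisely why zero-dimensionality of the other piece $B$ (not merely total disconnectedness as an abstract property) is invoked, to adjust the extension on $B$ so that the union has the right separation property. Once this gluing is done correctly, the rest of the argument is routine.
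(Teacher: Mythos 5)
Your proof is correct. The paper does not give its own argument but simply cites \cite{XiXi10}, so a direct route comparison isn't possible; still, your reduction to $n=2$, the appeal to the \v{S}ura-Bura theorem (components $=$ quasi-components in a compact Hausdorff space) to recast total disconnectedness as clopen point-separation, and the use of strong zero-dimensionality of compact Hausdorff totally disconnected spaces are the standard ingredients, and you have assembled them correctly. The delicate point you flag is indeed the crux: in the case $x,y\in A$, naively taking the clopen $V_A\subset A$ fails to be open in $A\cup B$, and the repair by choosing $V_B\subset B$ to contain $V_A\cap B$ while avoiding $(A\setminus V_A)\cap B$, so that $(A\cup B)\setminus(V_A\cup V_B)=(A\setminus V_A)\cup(B\setminus V_B)$ is compact and hence closed, is exactly right; similarly, in the case $x\in A\setminus B$, $y\in B$, shrinking $V_A$ inside a neighborhood $U_1$ of $x$ disjoint from $B$ makes the relative openness transfer from $A$ to $A\cup B$. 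Worth noting for perspective: compactness is essential here and cannot be dropped (e.g., $\Q$ and $\R\setminus\Q$ are each totally disconnected, yet their union $\R$ is connected), and your proof correctly isolates the two places where compactness enters---once to get \v{S}ura-Bura and strong zero-dimensionality, and once to conclude that the complement $(A\setminus V_A)\cup(B\setminus V_B)$, being a finite union of compacts, is closed.
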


\begin{lem}\label{l:finite}
  Suppose that $x\in\R^d$ and $k\ge0$, then
  \[M=\sup_{x,k}\card\bigl\{S\in\ms_k\colon
   \dist\bigl(S(E),x\bigr)\le r^k|E|\bigr\}<\infty.\]
\end{lem}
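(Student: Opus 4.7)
The plan is to prove Lemma~\ref{l:finite} by the classical Moran-style packing argument based on the open set condition. The statement is purely metric, makes no use of total disconnectedness, and follows from a volume comparison.

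First I would fix an open set $O$ witnessing the OSC for~$\ms$, and choose a ball $B(y_0,\rho_0)\subset O$ (any nonempty open set in~$\R^d$ contains such a ball). Since $O$ satisfies the OSC, the sets $\{S_{\bm i}(O)\}_{S_{\bm i}\in\ms_k}$ are pairwise disjoint for every fixed $k$, hence so are the balls $\{S_{\bm i}(B(y_0,\rho_0))\}_{S_{\bm i}\in\ms_k}$. Here it will be important to recall from~\eqref{eq:lambda} and~\eqref{eq:Sk} that every $S_{\bm i}\in\ms_k$ has ratio $r_{\bm i}\in[r^{k+\lambda},r^k]$, so each such ball has radius at least $r^{k+\lambda}\rho_0$.

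Next I would show that if $S_{\bm i}\in\ms_k$ satisfies $\dist(S_{\bm i}(E),x)\le r^k|E|$, then $S_{\bm i}(B(y_0,\rho_0))$ is contained in a ball of radius $Cr^k$ centered at~$x$, where $C=C(|E|,\rho_0,\dist(y_0,E))$ depends only on~$\ms$. Indeed, picking any $z\in S_{\bm i}(E)$ with $|z-x|\le r^k|E|$ and using that $S_{\bm i}$ is a similarity of ratio $r_{\bm i}\le r^k$,
\[
S_{\bm i}(B(y_0,\rho_0))\subset
\overline{B\bigl(S_{\bm i}(y_0),r_{\bm i}\rho_0\bigr)}\subset
\overline{B\bigl(z,r_{\bm i}(\dist(y_0,E)+|E|+\rho_0)\bigr)}\subset
\overline{B(x,Cr^k)}.
\]

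With these two facts in hand, the bound is immediate by comparing Lebesgue measures in~$\R^d$: the disjoint balls $\{S_{\bm i}(B(y_0,\rho_0))\}$, each of radius at least $r^{k+\lambda}\rho_0$, are all packed inside $\overline{B(x,Cr^k)}$, so their number is at most
\[
\frac{(Cr^k)^d}{(r^{k+\lambda}\rho_0)^d}=\frac{C^d}{r^{\lambda d}\rho_0^d},
\]
a constant independent of $x$ and~$k$. I do not foresee a genuine obstacle; the only step requiring a little care is the constant~$C$, which must be chosen once and for all from $\ms$, $E$, $O$, and $y_0$, and then shown to work uniformly in~$k$ and~$x$. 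No use of the totally disconnectedness hypothesis, nor of the commensurability of ratios, is needed; only the OSC and the compactness of~$E$ enter.
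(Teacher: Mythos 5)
Your proposal is correct, and it is essentially the same Moran-style volume-packing argument via the OSC that the paper itself uses; the only cosmetic difference is that you pack a ball $B(y_0,\rho_0)\subset O$ where the paper packs $O$ directly, but the disjointness, the localization inside a ball of radius $\asymp r^k$ about~$x$, and the $\xc L$-measure comparison with lower bound from the ratio floor $r^{k+\lambda}$ are identical.
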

\begin{proof}
  This is a simple consequence of the OSC. Let $O$ be an open
  set satisfying the OSC, then $\dist(O,E)=0$. And so
  $\dist\bigl(S(O),x\bigr)\le 2r^k|E|$ for all $S\in\ms_k$ such that
  $\dist\bigl(S(E),x\bigr)\le r^k|E|$. It follows that
  \[M\le\frac{\xc L\bigl(U(0,2|E|+|O|)\bigr)}{r^{\lambda d}\xc L(O)}\]
  since the diameter of~$S(O)$ is not less than $r^{k+\lambda}|O|$.
  Here $\xc L$ denotes the Lebesgue measure and $U(x,\rho)$ the open ball of
  radius~$\rho$ centered at~$x$.
\end{proof}

\begin{lem}\label{l:compact}
  Given $M\ge1$. Let $\xs F$ be the family of all nonempty compact
  subsets~$F$ of~$\R^d$ such that
  \begin{enumerate}[\upshape(i)]
    \item $F=\bigcup_{i=1}^MT_i(E)$, where each $T_i$ is a similar mapping
        with ratio lying in $\{1,r,\dots, r^\lambda\}$, we allow that
        $T_i=T_j$ for $i\ne j$;
    \item $\dist\bigl(T_i(E),0\bigr)\le|E|$ for $1\le i\le M$;
    \item $0\in F$.
  \end{enumerate}
  Then $\xs F$ is compact under the Hausdorff metric.
\end{lem}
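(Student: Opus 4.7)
\medskip
\noindent\textbf{Proof proposal.}
The plan is to realize $\xs F$ as the continuous image of a compact parameter space of $M$-tuples of admissible similarities. Once this is set up, compactness of $\xs F$ under the Hausdorff metric follows from Lemma~\ref{l:4compact} and a closedness check.

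First, I would show that every $F\in\xs F$ lies in a fixed compact ball. Since $0\in F$, some $T_{i_0}(E)$ contains~$0$, and condition~(ii) together with $|T_i(E)|\le|E|$ gives $T_i(E)\subset\ol U(0,3|E|)$ for all~$i$. So $\xs F\subset\xs K(X)$ for $X=\ol U(0,3|E|)$, which is compact in the Hausdorff metric by Lemma~\ref{l:4compact}. It then suffices to check that $\xs F$ is closed in $\xs K(X)$.

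Next, I would parametrize. Every similarity $T$ on $\R^d$ with ratio $\rho\in(0,1]$ has the form $T(x)=\rho\bm Ax+b$ with $\bm A\in O(d)$ and $b\in\R^d$. The admissibility conditions force $\rho\in\{1,r,\dots,r^\lambda\}$ (finite) and $|b|\le 2|E|$ (since $\dist(T(E),0)\le|E|$). Hence the set $\xc T$ of admissible similarities is compact in, say, the topology of uniform convergence on $\ol U(0,3|E|)$, and the map $T\mapsto T(E)\in\xs K(X)$ is continuous (bi-Lipschitz deformations of the compact set~$E$). Consequently the map
\[\Phi\colon\xc T^M\to\xs K(X),\qquad
(T_1,\dots,T_M)\mapsto\bigcup_{i=1}^M T_i(E),\]
is continuous, because finite unions are continuous under the Hausdorff metric.

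Finally, I would deduce that $\xs F$ is closed. If $F_n\to F$ in $\xs K(X)$ with $F_n=\Phi(T_1^{(n)},\dots,T_M^{(n)})$, compactness of $\xc T^M$ lets me pass to a subsequence along which each $T_i^{(n)}$ converges to some $T_i\in\xc T$; by continuity of~$\Phi$ we get $F=\Phi(T_1,\dots,T_M)$, so~(i) holds. Condition~(ii) passes to the limit because $\dist(\cdot,0)$ is continuous under Hausdorff convergence of compact sets, and~(iii) passes to the limit because $0\in F_n$ for all~$n$ implies $0\in F$. Therefore $\xs F$ is closed in the compact set~$\xs K(X)$, hence compact. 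The only mildly delicate point is the continuity of $T\mapsto T(E)$ and of finite unions in the Hausdorff metric, but both are standard once the parameter space is identified as compact.
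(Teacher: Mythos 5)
Your proof is correct and follows essentially the same route as the paper: reduce to showing $\xs F$ is closed in $\xs K(X)$ via Lemma~\ref{l:4compact} and condition~(ii), extract a convergent subsequence of the defining similarities, and pass to the limit. The only difference is in how the subsequence is extracted: the paper invokes the Arzel\`a--Ascoli theorem (equicontinuity of the $T_{i,j}$), while you parametrize admissible similarities explicitly by ratio, orthogonal part $\bm A\in O(d)$, and translation $b$, and use compactness of $O(d)$ together with boundedness of $b$; the two devices are interchangeable here and yield the same continuity argument for $\Phi$. One small slip worth noting: the bound $|b|\le 2|E|$ tacitly assumes $0\in E$, which is not given; the correct general bound is $|b|\le |E|+\sup_{z\in E}|z|$, but since that is still a fixed constant the compactness of your parameter space, and hence the proof, is unaffected.
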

\begin{proof}
  By Lemma~\ref{l:4compact} and Condition~(ii), it is sufficient to prove
  that $\xs F$ is closed. Suppose that $F_i=\bigcup_{j=1}^MT_{i,j}(E)\in\xs
  F$ and $F_i\to F$ under the Hausdorff metric, we shall show that $F\in\xs
  F$. Notice that the family of functions~$T_{i,j}$ is equicontinuous. By the
  Arzela-Ascoli Theorem and Condition~(ii), we can assume that $T_{i,j}$
  converge to some continuous mapping~$T_i$ under compact open topology as
  $j\to\infty$ for $1\le i\le M$ (that is, $T_{i,j}$ converge to~$T_i$
  uniformly on each compact set).

  Now let $F^*=\bigcup_{i=1}^MT_i(E)$. It is not difficult to check that
  $F_i\to F^*$ and $F^*\in\xs F$, and so $F=F^*\in\xs F$.
\end{proof}

\begin{lem}\label{l:continuous}
  Let $\xs F$ be as in Lemma~\ref{l:compact}. For $F\in\xs F$, let
  $F_\delta=\{x\colon \dist(x,F)\le\delta\}$ be the $\delta$-neighbourhood
  of~$F$ and $F_{\delta,0}$ the connected component of~$F_\delta$
  containing~$0$. Define
  \[\Delta(F)=\sup\bigl\{\delta\ge0\colon
  \text{$|x|<|E|$ for all $x\in F_{\delta,0}$}\bigr\},\] where $|x|$ denotes
  the usually absolute value of $x\in\R^d$ and $|E|$ the diameter of~$E$.
  Then $\Delta(F)>0$ for all $F\in\xs F$ and $\Delta$ is continuous on
  $\xs F$.
\end{lem}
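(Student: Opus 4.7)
The plan is to deduce both assertions from the finiteness and separation of the connected components of the closed $\delta$-neighbourhoods of~$F$, which rest on the total disconnectedness of~$E$. To establish $\Delta(F)>0$, I first note that each piece $T_i(E)$ in $F=\bigcup_{i=1}^M T_i(E)$ is a similar image of the totally disconnected self-similar set~$E$, so by Lemma~\ref{l:discnt} the union $F$ is itself totally disconnected. Since $F$ is compact, totally disconnected, and contains~$0$, I may choose a clopen subset $U\subset F$ with $0\in U$ and $\mathrm{diam}(U)<|E|/2$. Setting $d_0=\dist(U,F\setminus U)\in(0,+\infty]$, for $\delta<\min\{d_0/2,|E|/2\}$ the closed neighbourhoods $U_\delta$ and $(F\setminus U)_\delta$ are disjoint, hence $F_{\delta,0}\subset U_\delta$, and every $x\in U_\delta$ satisfies $|x|\le\mathrm{diam}(U)+\delta<|E|$ because $0\in U$. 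This proves $\Delta(F)\ge\min\{d_0/2,|E|/2\}>0$.

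For the continuity of~$\Delta$ at a fixed $F\in\xs F$ along a sequence $F_n\to F$ in Hausdorff metric, I would split the statement into upper and lower semicontinuity. The bound $\limsup\Delta(F_n)\le\Delta(F)$ is the easier direction: given $\delta_0>\Delta(F)$, pick $\delta_1\in(\Delta(F),\delta_0)$, which by definition of~$\Delta$ supplies a point $x^*\in F_{\delta_1,0}$ with $|x^*|\ge|E|$. Once $\hdist(F_n,F)<\delta_0-\delta_1$, one has $F\subset(F_n)_{\delta_0-\delta_1}$, hence $F_{\delta_1}\subset F_{n,\delta_0}$. The connected set $F_{\delta_1,0}\subset F_{n,\delta_0}$ then carries both $0$ and~$x^*$, so $x^*\in F_{n,\delta_0,0}$ and therefore $\Delta(F_n)\le\delta_0$.

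The opposite bound $\liminf\Delta(F_n)\ge\Delta(F)$ will be the main obstacle, since enlarging $F$ to~$F_n$ could in principle create new ``bridges'' merging the component of~$0$ in $F_{\delta_0}$ with distant components. The key observation I would use is that for every $\delta_0>0$ the set $F_{\delta_0}$ has only finitely many connected components, as each component contains an open ball of radius~$\delta_0$ and $F_{\delta_0}$ is bounded. Given $\delta_0<\Delta(F)$, let $r^*=\max_{y\in F_{\delta_0,0}}|y|<|E|$ and $d_0=\dist(F_{\delta_0,0},F_{\delta_0}\setminus F_{\delta_0,0})\in(0,+\infty]$. For $\eta<\min\{d_0/2,|E|-r^*\}$ and $\hdist(F_n,F)<\eta$, the inclusion $F_n\subset F_\eta$ gives $F_{n,\delta_0}\subset(F_\eta)_{\delta_0}=(F_{\delta_0})_\eta$. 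Inside this $\eta$-thickening, components of $F_{\delta_0}$ at mutual distance greater than~$2\eta$ remain disjoint, so the component of~$0$ in $(F_{\delta_0})_\eta$ is precisely the connected set $(F_{\delta_0,0})_\eta$; hence $F_{n,\delta_0,0}\subset(F_{\delta_0,0})_\eta\subset\overline{B}(0,r^*+\eta)\subset U(0,|E|)$, which yields $\Delta(F_n)\ge\delta_0$.
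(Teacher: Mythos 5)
Your argument is correct, but it follows a genuinely different route from the paper's, and does more work than necessary. For positivity, you give a direct argument: $F$ is totally disconnected by Lemma~\ref{l:discnt}, so a small clopen neighbourhood $U$ of $0$ in $F$ is separated from $F\setminus U$, and for small $\delta$ the component $F_{\delta,0}$ stays inside $U_\delta$. The paper argues by contradiction instead: if $\Delta(F)=0$, it extracts points $x_\delta\in F_{\delta,0}$ with $|x_\delta|\ge|E|$, passes to Hausdorff limits $F_{\delta_i,0}\to F_0$ and $x_{\delta_i}\to x_0$, and produces a non-degenerate connected subset of $F$ containing $0$ and $x_0$, contradicting total disconnectedness. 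Both are fine; yours is more constructive, the paper's is a quick compactness argument. For continuity you split into two semicontinuity inequalities and treat the lower one as ``the main obstacle,'' proving it via finiteness of the components of $F_{\delta_0}$ and their pairwise separation. This works, but it is redundant: your ``easy'' upper bound already yields $\Delta(F_n)\le\Delta(F)+\hdist(F_n,F)$, and since the Hausdorff metric is symmetric in $F$ and $F_n$ the exact same argument gives $\Delta(F)\le\Delta(F_n)+\hdist(F_n,F)$, i.e.\ the Lipschitz estimate $|\Delta(F)-\Delta(F_n)|\le\hdist(F,F_n)$ in both directions. That is precisely what the paper does (proving the one-sided inequality and invoking symmetry), and in fact it gives a stronger conclusion -- $1$-Lipschitz continuity of $\Delta$ -- than the bare continuity you obtain. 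So your proof is valid, but the separate lower-semicontinuity argument could be dropped entirely by noticing the symmetry you have already used implicitly in the inclusion $F\subset(F_n)_{\hdist(F,F_n)}$.
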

\begin{proof}
  We first show that $\Delta(F)>0$ for all $F$. Suppose otherwise that
  $\Delta(F)=0$ for some $F\in\xs F$. Then for every $\delta>0$,
  $F_{\delta,0}$ contains an $x_\delta$ with $|x_\delta|\ge|E|$. We can pick
  $\delta_i\to0$ such that $F_{\delta_i,0}\to F_0$ (under the Hausdorff
  metric) and $x_{\delta_i}\to x_0$ for some compact set~$F_0$ and some point
  $x_0$ with $|x_0|\ge|E|$. It follows that $F_0$ is a connected component of
  $F$ containing two distinct points: $0$ and~$x_0$. This contradicts the
  fact that $F$ is totally disconnected (by Lemma~\ref{l:discnt}).

  Next we claim that $|\Delta(F)-\Delta(G)|\le\hdist(F,G)$ for $F,G\in\xs
  F$, where $\hdist$ denotes the Hausdorff metric, and so $\Delta$ is
  continuous. By the symmetry, it is sufficient to show that
  $\Delta(F)\le\Delta(G)+\hdist(F,G)$. Pick an $\delta>\Delta(G)$. Then
  $F_{\delta+\hdist(F,G)}\supset G_\delta\supset G_{\delta,0}$ which
  contains an $x$ with $|x|\ge|E|$. So we have
  $\Delta(F)\le\delta+\hdist(F,G)$. Desired inequality follows from the
  arbitrary of~$\delta$.
\end{proof}

\begin{proof}[Proof of Proposition~\ref{p:finiteCP}]
  We claim that there exists a positive integer $K$ such that for all $k\ge
  K$ and all $B\in\xs B_k$, we have $|B|\le 2r^{k-K}|E|$. Assume this is
  true, pick an open set~$O$ satisfying the OSC, we will show that
  $\sup_{P\in\xc P}P\bigl(\xc L(O)\bigr)<\infty$ ($\xc L$ denotes the
  Lebesgue measure), and so $\xc P$ must be finite.

  Let $B\in\xs B_k$ with $k\ge K$. By the claim and Remark~\ref{r:block}, we
  have $\bigl|\bigcup_{S\in\ms_B}S(E)\bigr|=|B|\le 2r^{k-K}|E|$. It follows
  from $E\subset\overline O$ (the closure of~$O$) that
  \[\biggl|\bigcup_{S\in\ms_B}S(O)\biggr|\le r^k\bigl(2r^{-K}|E|+2|O|\bigr)\]
  since the diameter of~$S(O)$ is not greater than $r^k|O|$ for $S\in\ms_B$.
  Thus
  \[P_B\bigl(\xc L(O)\bigr)=r^{-dk}\sum_{S\in\ms_B}\xc L\bigl(S(O)\bigr)
  \le\xc L\bigl(U(0,2r^{-K}|E|+2|O|)\bigr).\]
  This obviously implies that $\sup_{P\in\xc P}P\bigl(\xc L(O)\bigr)<\infty$.

  It remains to verify our claim. Consider the family~$\xs F$ in
  Lemma~\ref{l:compact} with the constant~$M$ being as in
  Lemma~\ref{l:finite}. By Lemma~\ref{l:continuous}, we can find a positive
  integer~$K$ such that
  \[0<r^K|E|<\inf_{F\in\xs F}\Delta(F).\]
  We will show this~$K$ is desired. For this, let $B\in\xs B_k$ with $k\ge
  K$ and $x\in B$, consider the similar mapping $T\colon y\mapsto
  r^{K-k}(y-x)$. Let
  \[F=\bigcup_{\substack{S\in\ms_{k-K}\\\dist(T\circ S(E),0)\le|E|}}
  T\circ S(E).\]
  Then $F\in\xs F$ by Lemma~\ref{l:finite}. Thus $0=T(x)\in T(B)\subset
  F_{r^K|E|,0}\subset U(0,|E|)$. This means that $|B|\le 2r^{k-K}|E|$.
\end{proof}
\begin{rem}\label{r:diameter}
  From the proof of Proposition~\ref{p:finiteCP}, we conclude that there
  exists a constant $\varpi>1$ such that for all $k\ge0$ and all $B\in\xs
  B_k$,
  \[\varpi^{-1}r^k|E|\le|B|\le\varpi r^k|E|.\]
\end{rem}

\subsection{The cardinality of blocks}\label{ssec:BDnum}

In this subsection, we show that almost all blocks are interior blocks. This
conclusion follows from two lemmas.
\begin{lem}\label{l:C(k)}
  Let $\zeta(k)=\card\xs B^\partial_k$ be the number of all level-$k$ boundary
  blocks, then
  \[\lim_{k\to\infty}p^k\cdot \zeta(k)=0.\]
\end{lem}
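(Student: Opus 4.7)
The plan is to convert the counting statement into a measure estimate and then drive the measure of the union of boundary blocks to zero by a shrinking-neighbourhood argument centred on the IFS boundary $\partial_\ms$. First I would use Proposition~\ref{p:finiteCP}: the set $\xc P$ of measure polynomials is finite, and every $P\in\xc P$ is a polynomial with nonnegative integer coefficients having at least one positive coefficient, so $P(p)>0$. Hence $c_0:=\min_{P\in\xc P}P(p)>0$, and by Remark~\ref{r:muB}, $\mu(B)=p^k P_B(p)\ge c_0\,p^k$ for every $B\in\xs B_k$. Distinct level-$k$ blocks are pairwise disjoint (Definition~\ref{d:block}), so summing over $\xs B^\partial_k$ gives
\[c_0\,p^k\zeta(k)\le\sum_{B\in\xs B^\partial_k}\mu(B)=\mu(E^\partial_k),\qquad
E^\partial_k:=\bigsqcup\xs B^\partial_k,\]
and the lemma reduces to proving $\lim_{k\to\infty}\mu(E^\partial_k)=0$.

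Next, I would localise $E^\partial_k$ in a shrinking tube around the boundary set $\partial_\ms$. By Remark~\ref{r:diameter}, every level-$k$ block has diameter at most $\varpi\,r^k|E|$; by the definition of a boundary block, for each $B\in\xs B^\partial_k$ and every SOSC open set $O$ we have $\dist(B,O^c)<r^k|E|$. The triangle inequality therefore yields $\dist(x,O^c)\le(1+\varpi)r^k|E|$ for every $x\in E^\partial_k$ and every $O\in\xs O_\ms$. Setting
\[W_k:=\bigcap_{O\in\xs O_\ms}\bigl\{x\in E:\dist(x,O^c)\le(1+\varpi)r^k|E|\bigr\},\]
each $W_k$ is closed, $E^\partial_k\subset W_k$, the sequence $(W_k)$ decreases in $k$, and, after swapping the two intersections and using that $O^c$ is closed,
\[\bigcap_{k\ge0}W_k=\bigcap_{O\in\xs O_\ms}(E\setminus O)=\partial_\ms.\]
Continuity of the finite measure $\mu$ from above gives $\mu(E^\partial_k)\le\mu(W_k)\to\mu(\partial_\ms)$, so it suffices to show $\mu(\partial_\ms)=0$.

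This last step is the main obstacle, and the only place where the totally disconnectedness of $E$ is essential. My plan is to prove the inclusion $\partial_\ms\subset T$, where
\[T:=\bigcup_{n\ge1}\bigcup_{\substack{\bm i,\bm j\in\{1,\dots,N\}^n\\ \bm i\ne\bm j}}S_{\bm i}(E)\cap S_{\bm j}(E)\]
is the countable overlap set. Under the OSC, the invariance $\mu=\sum_i r_i^s\,\mu\circ S_i^{-1}$ with $\sum_i r_i^s=1$ forces $\mu\bigl(S_i(E)\cap S_j(E)\bigr)=0$ for $i\ne j$; iterating across levels and taking a countable union yields $\mu(T)=0$. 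For the inclusion $\partial_\ms\subset T$ I would argue contrapositively: a point $x\in E\setminus T$ has a unique address $(i_n)$, so for every $n$ the compact set $F_n:=\bigcup_{|\bm j|=n,\,\bm j\ne i_1\cdots i_n}S_{\bm j}(E)$ is disjoint from~$x$, while the nested sets $S_{i_1\cdots i_n}(E)$ shrink to $\{x\}$. Using the TDC of $E$ to produce clopen-in-$E$ neighbourhoods of~$x$ missing $F_n$ for $n$ large, I plan to modify a fixed SOSC open set $O_0$ along the address of $x$ into a new open set that still satisfies the OSC and now contains $x$, witnessing $x\notin\partial_\ms$. The technical heart of the proof is carrying out this modification while preserving both the invariance $S_i(O)\subset O$ and the pairwise disjointness of the family $\{S_i(O)\}$; this is precisely where the totally disconnectedness is indispensable, as no such localised modification is possible for connected self-similar attractors.
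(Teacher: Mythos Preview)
Your first two steps are fine and parallel the paper: bound $p^k\zeta(k)$ by a constant times the $\mu$-measure of the union of boundary blocks, then trap that union in a shrinking tube. The detour comes when you intersect over \emph{all} OSC open sets to reach $\partial_\ms$. The paper instead fixes a \emph{single} SOSC open set $O$, sets $\xs B^O_k=\{B\in\xs B_k:\dist(B,O^c)\ge r^k|E|\}\subset\xs B^\circ_k$, and observes
\[\sum_{B\in\xs B^\partial_k}\mu(B)\;\le\;\sum_{B\in\xs B_k\setminus\xs B^O_k}\mu(B)
=\mu\Bigl(\bigcup_{B\in\xs B_k\setminus\xs B^O_k}B\Bigr)\longrightarrow\mu(E\setminus O)=0.\]
The vanishing $\mu(E\setminus O)=0$ holds for \emph{any} SOSC open set and needs only the OSC, not the TDC: from $S_i(O)\subset O$ and measure-zero overlaps one gets $\mu(S_{\bm i}(E)\setminus O)=r_{\bm i}^s\,\mu(E\setminus O)$ for every finite word~$\bm i$, and SOSC supplies some $\bm i_0$ with $S_{\bm i_0}(E)\subset S_{\bm i_0}(\overline O)\subset O$, forcing $\mu(E\setminus O)=0$. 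Since $\partial_\ms\subset E\setminus O$, your ``main obstacle'' $\mu(\partial_\ms)=0$ dissolves in one line.

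Your proposed route via $\partial_\ms\subset T$ has a genuine gap. Its heart---modifying a fixed SOSC open set $O_0$ so that it still satisfies $S_i(O)\subset O$ for all $i$ and pairwise disjointness of the $S_i(O)$, yet now contains a prescribed uniquely-coded point~$x$---is only announced, not carried out; clopen-in-$E$ neighbourhoods of $x$ do not obviously produce admissible open sets in $\R^d$, and the invariance constraint $S_i(O)\subset O$ is global, not local along one address. Even if this can be salvaged, it is far heavier than what the lemma requires. Your assertion that the TDC is indispensable at this stage reflects a feature of your chosen route, not of the lemma: in the paper's argument TDC enters only through Proposition~\ref{p:finiteCP}, and even that is avoidable here since every measure polynomial already satisfies $P_B(p)\ge p^\lambda$.
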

\begin{proof}
  Let $O$ be an open set satisfying the SOSC. Write
  \[\xs B^O_k=\bigl\{B\in\xs B_k\colon\dist(B,O^c)\ge r^k|E|\bigr\},\]
  then $\xs B^O_k\subset\xs B^\circ_k$. Notice that $p^k \cdot \zeta(k) \le
  \bigl(\min_{P\in\xc P}P(p)\bigr)^{-1} \sum_{B\in\xs
  B^\partial_k}p^kP_B(p)$. By Remark~\ref{r:muB},
  \[\begin{split}
    \sum_{B\in\xs B^\partial_k}p^kP_B(p)
    &\le\sum_{B\in\xs B_k\setminus\xs B^O_k}p^kP_B(p)
    =\sum_{B\in\xs B_k\setminus\xs B^O_k}\mu(B)\\
    &=\mu\biggl(\bigcup_{B\in\xs B_k\setminus\xs B^O_k}B
    \biggr)\to\mu(E\setminus O)=0,\quad\text{as $k\to\infty$}.\qedhere
  \end{split}\]
\end{proof}

\begin{lem}\label{l:CP(k)}
  Let $\zeta_P(k)=\card\{B\in\xs B^\circ_k\colon P_B=P\}$ be the number of all
  level-$k$ interior blocks which measure polynomial is~$P$, then for each
  measure polynomial $P\in\xc P^\circ$,
  \[\liminf_{k\to\infty}p^k\cdot \zeta_P(k)>0.\]
\end{lem}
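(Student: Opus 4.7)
The plan is to exhibit, for each $P \in \xc P^\circ$, a ``seed'' level-$k_0$ interior block $B_0$ with $P_{B_0} = P$ (which exists by the definition of $\xc P^\circ$), and then to propagate it via the contractions in $\ms$ into many level-$k$ interior blocks of the same polynomial. For every word $\bm i$ with $r_{\bm i} = r^{k-k_0}$, Remark~\ref{r:inblock}(b) immediately yields $S_{\bm i}(B_0) \in \xs B^\circ_k$ with $P_{S_{\bm i}(B_0)} = P$. The delicate step is checking these images are pairwise distinct: if $\bm i \ne \bm j$ both have ratio $r^{k-k_0}$, then neither is a prefix of the other (every $\lambda_j \ge 1$), so by the OSC, $S_{\bm i}(O) \cap S_{\bm j}(O) = \emptyset$ for any OSC-witnessing open set $O$. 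Taking $O$ to witness the SOSC with $B_0 \subset O$ (possible because $B_0$ is an interior block), we get $S_{\bm i}(B_0) \cap S_{\bm j}(B_0) = \emptyset$, which yields
\[
  \zeta_P(k) \ge N(k-k_0), \qquad \text{where } N(n) := \card\{\bm i : r_{\bm i} = r^n\}.
\]

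The remaining task is to show $p^n N(n) \to C > 0$ as $n \to \infty$. The sequence $N(n)$ satisfies $N(0) = 1$ and the linear recurrence $N(n) = \sum_{j=1}^N N(n - \lambda_j)$, which is precisely the recurrence encoded by the companion matrix $\bm\Xi$ of Lemma~\ref{l:primitive}. Since $\gcd(\lambda_1, \ldots, \lambda_N) = 1$, that lemma makes $\bm\Xi$ primitive with Perron--Frobenius eigenvalue $p^{-1}$, so Lemma~\ref{l:limMatrix} yields $\lim_{n\to\infty} p^n \bm\Xi^n = \bm q \cdot \bm p$, a strictly positive rank-one matrix. Reading off the appropriate entry applied to the initial data of the recurrence gives $p^n N(n) \to C$ for some $C > 0$. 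Combining, $p^k \zeta_P(k) \ge p^{k_0} \cdot p^{k-k_0} N(k-k_0) \to p^{k_0} C > 0$, which establishes the desired $\liminf > 0$. The main obstacle is the distinctness argument in the first paragraph; once that is secured by the interior-block property combined with the OSC, the asymptotic count is an immediate consequence of Lemmas~\ref{l:primitive} and~\ref{l:limMatrix}, already established in the paper.
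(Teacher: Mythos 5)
Your proof is correct and follows essentially the same approach as the paper's: both lower-bound $\zeta_P(k)$ by $N(k-k_0)$ (which is the paper's $\xi_{k-k_0,0}$), obtained by pushing a seed interior block with measure polynomial $P$ through all words of exponent $k-k_0$ via Remark~\ref{r:inblock}(b), and then apply the Perron--Frobenius asymptotics furnished by Lemmas~\ref{l:primitive} and~\ref{l:limMatrix}. Your explicit verification that the images $S_{\bm i}(B_0)$ are pairwise distinct (incomparability of words of equal exponent plus the OSC, using that $B_0$ sits inside an SOSC-witnessing open set) fills in a point the paper leaves implicit.
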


\begin{proof}
  For $\ell\in\{0,1,\dots,\lambda\}$ and $k\ge1$, write
  \begin{equation}\label{eq:numkj}
    \xi_{k,\ell}=\card\{S\in\ms_k\colon
    \text{the ratio of $S$ is $r^{k+\ell}$}\}.
  \end{equation}
  Let $\bm\xi_k=(\xi_{k,0},\xi_{k,1},\dots,\xi_{k,\lambda})^T$ and
  \begin{equation}\label{eq:Xi}
    \bm\Xi=\begin{pmatrix}
      \xi_{1,0} & 1 & 0 & \hdotsfor4 & 0 \\
      \xi_{1,1} & 0 & 1 & 0 & \hdotsfor3 & 0 \\
      \xi_{1,2} & 0 & 0 & 1 & 0 & \hdotsfor2 & 0 \\
      \vdots & \vdots & \vdots & \vdots & \vdots & \vdots & \vdots & \vdots\\
      \vdots & \vdots & \vdots & \vdots & \vdots & \vdots & \vdots & \vdots\\
      \xi_{1,\lambda-2} & 0 & \hdotsfor3 & 0 & 1 & 0 \\
      \xi_{1,\lambda-1} & 0 & \hdotsfor4 & 0 & 1 \\
      \xi_{1,\lambda} & 0 & \hdotsfor5 & 0 \\
    \end{pmatrix}
  \end{equation}
  be a $(\lambda+1)\times(\lambda+1)$ matrix. Then we have the following
  recursion formula.
  \begin{equation}\label{eq:numktok+1}
    \bm\xi_{k+1}=\bm\Xi\bm\xi_k,\qquad k\ge1.
  \end{equation}
  Note that $\ms_1=\ms$, and so
  \[\xi_{1,0}p+\xi_{1,1}p^2+\dots+\xi_{1,\lambda}p^{\lambda+1}=
  p^{\lambda_1}+p^{\lambda_2}+\dots+p^{\lambda_N}=1,\]
  where $\lambda_i$ are as in~\eqref{eq:lambda}. Therefore, the
  matrix~$\bm\Xi$ satisfies the conditions of Lemma~\ref{l:primitive}. So
  $\bm\Xi$ is primitive and $p^{-1}$ is the Perron-Frobenius eigenvalue.

  For $P\in\xc P^\circ$, there exists a level-$l$ interior block $B$ for some
  $l\ge1$ such that $P_B=P$. It follows from Remark~\ref{r:inblock}(b) that
  \[\zeta_P(k)\ge\xi_{k-l,0}\quad\text{for $k>l$}.\]
  Then this lemma follows from the recursion formula~\eqref{eq:numktok+1} and
  the fact that $p^{-1}$ is the Perron-Frobenius eigenvalue of the primitive
  matrix~$\bm\Xi$.
\end{proof}

\begin{rem}\label{r:ideal}
  Let $O$ and $\xs B^O_k$ be as in the proof of Lemma~\ref{l:C(k)}. From the
  proof of Lemma~\ref{l:C(k)}, we have
  \[\lim_{k\to\infty}p^k\card(\xs B_k\setminus\xs B^O_k)=0.\]
  Together with Lemma~\ref{l:ideal}, Lemma~\ref{l:C(k)} and
  Lemma~\ref{l:CP(k)}, we see that $I_\ms$ can be generated by
  $\bigl\{\mu(B)\colon B\in\xs B^O_k\ \text{for some $k\ge1$}\bigr\}$, where
  $O$ is an arbitrary open set satisfying the SOSC. This means that we need
  only find a specific open set satisfying the SOSC when we want to determine
  the ideal of an IFS.
\end{rem}

The following lemma is a corollary of Lemma~\ref{l:C(k)} and~\ref{l:CP(k)}.
Recall notations in Definition~\ref{d:ntt}(c). For $k\ge0$, define
\[\zeta^\partial(k)=\max_{B\in\xs B}\card\xs B^\partial_k(B)\quad\text{and}
\quad\zeta^\circ(k)=\min_{B\in\xs B,\,P\in\xc P^\circ}
\card\bigl\{A\in\xs B^\circ_k(B)\colon P_A=P\bigr\}.\]
\begin{lem}\label{l:CB(k)}
  We have
  \[\lim_{k\to\infty}p^k\zeta^\partial(k)=0\quad\text{and}\quad
  \liminf_{k\to\infty}p^k\zeta^\circ(k)>0.\]
\end{lem}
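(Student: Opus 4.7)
My plan is to prove the two halves of Lemma~\ref{l:CB(k)} separately, transferring the corresponding global statements (Lemmas~\ref{l:C(k)} and~\ref{l:CP(k)}) to every block $A \in \xs B$ by the self-similar structure of $\ms$. For the lower bound $\liminf_k p^k \zeta^\circ(k) > 0$, fix $A \in \xs B_l$ and any $S \in \ms_A$; then $S = S_{\bm i}$ for some word $\bm i$ with $r_{\bm i} = r^{l+\ell}$ and $\ell \in \{0,\dots,\lambda\}$. Applying Remark~\ref{r:inblock}(b) with level $l+\ell$ in place of $l$, the map $B \mapsto S(B)$ injects $\{B \in \xs B^\circ_{k-\ell}\colon P_B = P\}$ into $\{B' \in \xs B^\circ_k(A)\colon P_{B'} = P\}$ and preserves the measure polynomial. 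Combined with Lemma~\ref{l:CP(k)} this gives
\[\liminf_{k\to\infty} p^k\card\{B' \in \xs B^\circ_k(A)\colon P_{B'} = P\}
  \ge p^\lambda\liminf_{k'\to\infty} p^{k'}\card\{B \in \xs B^\circ_{k'}\colon P_B = P\} > 0,\]
uniformly in $A$. Since $\xc P^\circ \subset \xc P$ is finite (Proposition~\ref{p:finiteCP}), taking the minimum over $P \in \xc P^\circ$ yields $\liminf_{k\to\infty} p^k\zeta^\circ(k) > 0$.

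For the upper bound $\lim_k p^k \zeta^\partial(k) = 0$, I fix an SOSC open set $O$ of $\ms$ and reuse $\xs B^O_m = \{B \in \xs B_m\colon \dist(B, O^c) \ge r^m|E|\}$; the proof of Lemma~\ref{l:C(k)} already gives $p^m \card(\xs B_m \setminus \xs B^O_m) \to 0$. Since $\xs B^\partial_k(A) \subset \xs B_{l+k}(A) \setminus \xs B^O_{l+k}$, it suffices to estimate the latter. For $A \in \xs B_l$ I split each such $B$ into (a) blocks contained in a single $S(E)$ with $S \in \ms_A$ of ratio $r^{l+\ell_S}$, and (b) blocks meeting two distinct small copies $S(E), S'(E) \subset A$. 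For case (a), the SOSC inclusion $S(O) \subset O$ forces $S^{-1}(O^c) \subset O^c$, and combining with $\dist(B, O^c) < r^{l+k}|E|$ gives
\[\dist(S^{-1}(B), O^c) \le r^{-l-\ell_S}\dist(B, O^c) < r^{k-\ell_S}|E|;\]
hence some point of $S^{-1}(B)$ lies in a level-$(k-\ell_S)$ block of $E$ belonging to $\xs B_{k-\ell_S} \setminus \xs B^O_{k-\ell_S}$. A packing argument based on Remark~\ref{r:diameter} (blocks have diameter at most $\varpi r^{k-\ell_S}|E|$ and are separated by at least $r^{k-\ell_S}|E|$) shows that this assignment is at most $C_\varpi$-to-one, so case (a) contributes at most $C_\varpi \cdot \card\ms_A \cdot \max_{\ell\le\lambda}\card(\xs B_{k-\ell}\setminus\xs B^O_{k-\ell})$; multiplied by $p^k$ this tends to zero uniformly in $A$.

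The main obstacle is case (b). Under OSC the intersections $S(E)\cap S'(E)$ may be nonempty and even uncountable, so a naive count of spanning blocks can fail to be uniformly bounded. I would use the TDC to show that for $k$ large every spanning sub-block lies within distance $\varpi r^{l+k}|E|$ of the compact set $\bigcup_{S\ne S'\in\ms_A} S(E)\cap S'(E)$, and then argue, by a compactness/packing analysis in the spirit of Lemmas~\ref{l:compact} and~\ref{l:continuous} already used for Proposition~\ref{p:finiteCP}, that the total number of such spanning sub-blocks at any level and inside any $A \in \xs B$ is bounded by a constant $C_s$ depending only on~$\ms$. Since $p^k C_s \to 0$, this together with the bound for case (a) yields $p^k \zeta^\partial(k) \to 0$. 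The subtlety is converting the TDC hypothesis into this uniform bound, and I expect the required geometric control to come from the same compactness apparatus that underlies Proposition~\ref{p:finiteCP}.
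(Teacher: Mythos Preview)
Your $\zeta^\circ$ argument is correct and essentially matches the paper's.

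For $\zeta^\partial$ you are working much harder than necessary, and case~(b) is a genuine gap. Your hoped-for uniform bound $C_s$ on the number of spanning sub-blocks is not obviously available: under OSC and TDC in $\R^d$ with $d \ge 2$, two copies $S(E)$, $S'(E)$ can touch along a set of positive dimension, and nothing in the compactness apparatus behind Proposition~\ref{p:finiteCP} controls the \emph{number} of blocks near such an interface --- that machinery bounds diameters, not cardinalities. You yourself flag this as the subtle point, and it is not clear it can be closed along the lines you suggest.

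The paper sidesteps the case split entirely via the contrapositive of Remark~\ref{r:inblock}(b). Fix $A \in \xs B_l$ and a boundary sub-block $B'' \in \xs B^\partial_k(A)$. Pick any cylinder $S \circ S'(E) \subset B''$ with $S \in \ms_A$ of ratio $r^{l+\ell}$ and $S' \in \ms_{k-\ell}$, and let $B' \in \xs B_{k-\ell}$ be the block of $E$ containing $S'(E)$. Then $B'$ must itself be a boundary block: if it were interior, $S(B')$ would be a level-$(l+k)$ interior block meeting $B''$, forcing $B'' = S(B')$ to be interior. The assignment $B'' \mapsto (S, B')$ is injective, since the $\ms_{l+k}$-cylinders in $S(B')$ are chained at gap $< r^{l+k}|E|$ and hence lie in a single level-$(l+k)$ block. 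Therefore
\[\card \xs B^\partial_k(A) \;\le\; \sum_{\ell=0}^\lambda \xi_{A,\ell}\,\zeta(k-\ell) \;\le\; \Bigl(\max_{P \in \xc P} P(1)\Bigr)\sum_{\ell=0}^\lambda \zeta(k-\ell),\]
which is uniform in $A$; multiply by $p^k$ and apply Lemma~\ref{l:C(k)}. The argument never assumes $B'' \subset S(E)$, so spanning blocks are handled for free.
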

\begin{proof}
  Let $B\in\xs B_l$ and $P_B$ the measure polynomial of~$B$. Recall
  that $P_B(t)=\sum_{\ell=0}^\lambda\xi_{B,\ell}t^\ell$, where
  $\xi_{B,\ell}=\{S\in\ms_B\colon\text{the ratio of~$S$ is $r^{l+\ell}$}\}$.

  To prove the first limit, we use Remark~\ref{r:inblock}(b) to obtain that,
  for $k>\lambda$,
  \[\card\xs B^\partial_k(B)\le\sum_{\ell=0}^\lambda\xi_{B,\ell}\zeta(k-\ell)
    \le P_B(1)\sum_{\ell=0}^\lambda \zeta(k-\ell).\]
  So for $k>\lambda$,
  \[\zeta^\partial(k)\le\max_{P\in\xc P}P(1)\sum_{\ell=0}^\lambda \zeta(k-\ell).\]
  By Lemma~\ref{l:C(k)}, we have $\lim_{k\to\infty}p^k\zeta^\partial(k)=0$.

  To prove the second limit, let $P\in\xc P^\circ$, also by
  Remark~\ref{r:inblock}(b), we have,
  \[\card\bigl\{A\in\xs B^\circ_k(B)\colon P_A=P\bigr\}
  \ge\sum_{\ell=0}^\lambda\xi_{B,\ell}\zeta_P(k-\ell)
  \ge\min_{0\le \ell\le\lambda}\zeta_P(k-\ell)\]
  for $k>\lambda$. And so for $k>\lambda$,
  \[\zeta^\circ(k)\ge\min_{0\le \ell\le\lambda,\,P\in\xc P^\circ}\zeta_P(k-\ell).\]
  By Lemma~\ref{l:CP(k)}, we have $\liminf_{k\to\infty}p^k\zeta^\circ(k)>0$.
\end{proof}

We close this subsection by the proof of Theorem~\ref{t:gdid}.

\begin{proof}[Proof of Theorem~\ref{t:gdid}]
  Let $I$ denote the ideal generated by
  \[\bigl\{\xc H^s(E_i)/\xc H^s(E_\ms)\colon i\in\xc V_O\bigr\}.\]
  Since the graph-directed sets $\{E_i\}_{i\in\xc V}$ are dust-like, the set
  $S_{e_1}\circ S_{e_2}\circ\dots\circ S_{e_k}(E_i)\subset O$, where
  $(e_1,e_2,\dots,e_k)\in\xc E_{0,i}^k$, is an interior separated set
  of~$E_\ms=E_0$. So the ideal $I_\ms$ contains $\mu(S_{e_1}\circ\dots\circ
  S_{e_k}(E_i))$. By the condition that $\log r_e/\log r\in\N$ for all
  $e\in\xc E$, we have
  \[\mu(S_{e_1}\circ\dots\circ S_{e_k}(E_i))=p^\ell\xc H^s(E_i)/\xc H^s(E_\ms)
  \]
  for some positive integer $\ell$. This means that $\xc H^s(E_i)/\xc
  H^s(E_\ms)\in I_\ms$ for all $i\in\xc V_O$ since $p^{-1}\in\Z[p]$ by
  Lemma~\ref{l:Z[p]}(a). Thus we have $I\subset I_\ms$.

  Conversely, by Remark~\ref{r:ideal}, we know that $I_\ms$ is generated by
  \[\bigl\{\mu(B)\colon B\in\xs B^O_k\ \text{for some $k\ge1$}\bigr\}.\]
  Fix a $B\in\bigcup_{k\ge1}\xs B^O_k$. It follows from $B$ is a block that,
  for large enough positive integer~$\ell$,
  \[B=\bigcup_{i\in\xc V_O}\bigcup_{
  \substack{(e_1,\dots,e_\ell)\in\xc E_{0,i}^\ell\\
  S_{e_1}\circ\dots\circ S_{e_\ell}(E_i)\subset B}} S_{e_1}\circ\dots\circ
  S_{e_\ell}(E_i).\]
  We have $\mu(B)\in I$ since the above union is disjoint. And so
  $I_\ms\subset I$.
\end{proof}

\section{Main Ideas of the Proof}\label{sec:idea}

The most difficult part in our proof is the sufficient part of
Theorem~\ref{t:TOCElip}. It is rather tedious and technical, requiring
delicate composition and decomposition of blocks. Although the proof is very
complicated, the main ideas behind it is simple. This section is devoted to
the introduction of these ideas.

\subsection{Cylinder structure and dense island structure}

It is usually very difficult to define bi-Lipschitz mappings between given
sets. However, if these sets have some special structure, things become
somewhat easy. In this paper, we make use of two special structures: the
cylinder structure and the dense island structure.

Lemma~\ref{l:cylin} consider the Lipschitz equivalence between sets with
structure of nested Cantor sets. We call this cylinder structure
(Definition~\ref{d:qscldr}). Lemma~\ref{l:djt} consider the dense island
structure (Definition~\ref{d:dnsilnd}), which involves the idea of extension
of bi-Lipschitz mapping. This idea is also used by Llorente and
Mattila~\cite{LloMa10}.

\begin{defn}
  A family of disjoint subsets of a set~$F$ is called a \emph{partition}
  of~$F$ if the union of the family is~$F$.
\end{defn}

\begin{defn}
  Let $\xs C_1$ and $\xs C_2$ be two partitions of a set $F$. We say that
  $\xs C_2$ is \emph{finer} than~$\xs C_1$, denoted by $\xs C_1\prec\xs C_2$,
  if each set in~$\xs C_2$ is a subset of some set in~$\xs C_1$. This is
  equivalent to that each set in~$\xs C_1$ is a union of some sets in~$\xs
  C_2$.
\end{defn}

\begin{defn}[cylinder structure]\label{d:qscldr}
  Let $F$ be a compact subset of a metric space. We say $F$ has
  \emph{$(\varrho,\iota)$-cylinder structure} for $\varrho\in(0,1)$ and
  $\iota\ge1$ if there exist families $\xs C_k$ for $k\ge1$ such that
  \begin{enumerate}[(i)]
    \item each $\xs C_k$ is a partition of~$F$;
    \item $\xs C_1\prec\xs C_2\prec\dots\prec\xs C_k\prec\xs
        C_{k+1}\prec\dotsb$;
    \item for each $k\ge1$,
        \begin{alignat*}{2}
          \iota^{-1}\varrho^k&\le|C|/|F|\le\iota\varrho^k &&\qquad
          \text{for all $C\in\xs C_k$}; \\
          \iota^{-1}\varrho^k&\le\dist(C_1,C_2)/|F| &&\qquad
          \text{for distinct $C_1,C_2\in \xs C_k$};
        \end{alignat*}
        where $|\cdot|$ denotes the diameter.
  \end{enumerate}
  The sets in $\xs C_k$ ($k\ge1$) are called cylinders and the families $\xs
  C_k$ are called cylinder families.
\end{defn}
\begin{exmp}
  Let $X=\{0,1\}^\N$ be the symbolic space with metric
  \[\rho(x,y)=2^{-\inf\{k\colon x_k\ne y_k\}}\]
  for $x=x_1x_2\dotsc$ and $y=y_1y_2\dotsc$. For each $k\ge1$ and each word
  $w=w_1w_2\dots w_k$ of length~$k$, define cylinder
  \[[w]=\{x\in X\colon x_1x_2\dots x_k=w_1w_2\dots w_k\}.\]
  Let $\xs C_k=\{[w]\colon\text{$w$ has length~$k$}\}$ for $k\ge1$. We see
  that $X$ has $(1/2,1)$-cylinder structure.
\end{exmp}

\begin{exmp}
  Let $\ms\in\TOEC$ and $\xs C_k=\xs B_k$. By the definition of blocks
  (Definition~\ref{d:block}) and Remark~\ref{r:diameter}, we know that the
  self-similar set~$E_\ms$ has $(r_\ms,\varpi_\ms)$-cylinder structure.
\end{exmp}

\begin{defn}\label{d:smcyl}
  Suppose that $F$ and $F'$ have $(\varrho,\iota)$-cylinder structure with
  cylinder families $\xs C_k$ and $\xs C'_k$, respectively. We say $F$ and
  $F'$ have the same $(\varrho,\iota)$-cylinder structure if there exists a
  one-to-one mapping $\tilde f$ of $\bigcup_{k=1}^\infty\xs C_k$ onto
  $\bigcup_{k=1}^\infty\xs C'_k$ such that
  \begin{enumerate}[(i)]
    \item $\tilde f$ maps $\xs C_k$ onto $\xs C'_k$ for all $k\ge1$;
    \item for $C_1\in\xs C_{k_1}$ and $C_2\in\xs C_{k_2}$, where $k_1<k_2$,
        we have $\tilde f(C_1)\supset\tilde f(C_2)$ if and only if
        $C_1\supset C_2$.
  \end{enumerate}
  We call $\tilde f$ the cylinder mapping.
\end{defn}

\begin{lem}\label{l:cylin}
  Suppose that $F$ and $F'$ have the same $(\varrho,\iota)$-cylinder
  structure, then there is a bi-Lipschitz mapping~$f$ of~$F$ onto~$F'$ such
  that
  \[\varrho\iota^{-2}\rho(x,y)/|F|\le\rho(f(x),f(y))/|F'|\le
  \varrho^{-1}\iota^2\rho(x,y)/|F|\quad\text{for $x,y\in F$}.\]
  Here $\rho$ denotes the metric. In particular, $F\simeq F'$.
\end{lem}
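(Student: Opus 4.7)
The plan is to build $f$ directly from the cylinder mapping $\tilde f$ by exploiting the nested compact structure. For each $x\in F$, the partitions $\xs C_1\prec\xs C_2\prec\dotsb$ together with $x\in F=\bigsqcup\xs C_k$ single out a unique sequence of cylinders $C_1(x)\supset C_2(x)\supset\dotsb$ with $x\in C_k(x)\in\xs C_k$. Because $|C_k(x)|\le\iota\varrho^k|F|\to0$ and each $C_k(x)$ is closed (it is a subset of the compact $F$ and its complement in $F$ is a finite union of cylinders, which is closed by the positive-distance condition in Definition~\ref{d:qscldr}(iii)), the nested intersection $\bigcap_k\tilde f(C_k(x))$ inside $F'$ is a singleton, and I define $f(x)$ to be its unique point. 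Swapping the roles of $F$ and $F'$ via $\tilde f^{-1}$ yields a two-sided inverse, so $f\colon F\to F'$ is a bijection.

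Next I would establish the bi-Lipschitz bounds. For distinct $x,y\in F$, since $\bigcap_k C_k(x)=\{x\}$ and $y\ne x$, there is a smallest $k_0\ge1$ with $C_{k_0}(x)\ne C_{k_0}(y)$; in particular $C_{k_0-1}(x)=C_{k_0-1}(y)$ (interpreting $C_0$ as $F$). From the cylinder axioms, on the one hand,
\[\rho(x,y)\le|C_{k_0-1}(x)|\le\iota\varrho^{k_0-1}|F|,\]
and on the other hand,
\[\rho(x,y)\ge\dist\bigl(C_{k_0}(x),C_{k_0}(y)\bigr)\ge\iota^{-1}\varrho^{k_0}|F|.\]
Applying $\tilde f$, which preserves the inclusion/non-inclusion relations among cylinders (Definition~\ref{d:smcyl}(ii)), the cylinders $\tilde f(C_{k_0}(x))$ and $\tilde f(C_{k_0}(y))$ are distinct members of $\xs C'_{k_0}$ both contained in $\tilde f(C_{k_0-1}(x))\in\xs C'_{k_0-1}$, so
\[\iota^{-1}\varrho^{k_0}|F'|\le\rho\bigl(f(x),f(y)\bigr)\le\iota\varrho^{k_0-1}|F'|.\]

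Finally I combine the four inequalities. From $\rho(x,y)/|F|\ge\iota^{-1}\varrho^{k_0}$ I get $\varrho^{k_0}\le\iota\rho(x,y)/|F|$, hence
\[\rho\bigl(f(x),f(y)\bigr)/|F'|\le\iota\varrho^{-1}\cdot\varrho^{k_0}\le\varrho^{-1}\iota^{2}\,\rho(x,y)/|F|.\]
From $\rho(x,y)/|F|\le\iota\varrho^{k_0-1}$ I get $\varrho^{k_0}\ge\varrho\iota^{-1}\rho(x,y)/|F|$, hence
\[\rho\bigl(f(x),f(y)\bigr)/|F'|\ge\iota^{-1}\varrho^{k_0}\ge\varrho\iota^{-2}\,\rho(x,y)/|F|.\]
These are the required bounds, and in particular $F\simeq F'$.

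There is no serious obstacle here: the argument is essentially a standard ``coding'' proof for Cantor-like sets. The only technical point to double-check is the well-definedness of $f$, i.e.\ that each $C_k(x)$ is closed so that $\bigcap_k\tilde f(C_k(x))\ne\emptyset$ by compactness of $F'$; this is handled by observing that the positive-distance condition in Definition~\ref{d:qscldr}(iii) forces each member of the finite partition $\xs C_k$ to be both open and closed in $F$.
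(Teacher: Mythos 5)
Your proof is essentially the paper's proof: both define $f(x)$ as the unique point of $\bigcap_k\tilde f(C_k(x))$, both obtain the Lipschitz bounds by locating the level where the cylinders of $x$ and $y$ first separate, and both combine the diameter and separation axioms of the cylinder structure. The only differences are cosmetic — you handle the level-$0$ edge case explicitly (the paper tacitly starts at a $k\ge1$ where $x,y$ still share a cylinder), and for bijectivity you invoke the two-sided inverse built from $\tilde f^{-1}$ whereas the paper argues that $f(F)$ is compact and dense in $F'$. Both routes are correct and lead to the identical constant $\varrho^{-1}\iota^2$.
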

\begin{proof}
  We use the same notations as in Definition~\ref{d:smcyl}. For $x\in F$,
  there exists a unique $C_k\in\xs C_k$ for each $k\ge1$ such that $x\in C_k$
  since $F=\bigsqcup\xs C_k$ and the union is disjoint. Since $C_1\supset
  C_2\supset\dots\supset C_k\supset\dotsb$, we have
  \[\tilde f(C_1)\supset\tilde f(C_2)\supset\dots\supset\tilde
  f(C_k)\supset\dotsb.\]
  Together with the fact that $|\tilde f(C_k)|\to0$ as $k\to\infty$, we know
  that there is a unique $x'\in\bigcap_{k=1}^\infty\tilde f(C_k)$. This leads
  to a mapping $f\colon F\to F'$, $x\mapsto x'$. It remains to show that $f$
  is the desired mapping. For this, let $x,y\in F$. Then there exists a
  $k\ge1$, $C\in\xs C_k$ and distinct $C_x,C_y\in\xs C_{k+1}$ such that
  $x,y\in C$ and $x\in C_x$, $y\in C_y$. By the definition of~$f$, we have
  $f(x),f(y)\in\tilde f(C)$ and $f(x)\in\tilde f(C_x)$, $f(y)\in\tilde
  f(C_y)$. It follows from the definition of quasi cylinder structure that
  \begin{gather*}
    \iota^{-1}\varrho^{k+1}\le\dist(C_x,C_y)/|F|\le\rho(x,y)/|F|\le|C|/|F|
    \le\iota\varrho^k, \\
    \iota^{-1}\varrho^{k+1}\le\dist(\tilde f(C_x),\tilde f(C_y))/|F'|\le
    \rho(f(x),f(y))/|F'|\le|\tilde f(C)|/|F'|\le\iota\varrho^k.
  \end{gather*}
  Thus, $f$ satisfies the inequality in this lemma. Finally, we have
  $f(F)=F'$ since $f(F)$ is compact and dense in~$F'$.
\end{proof}

Recall that $\bigsqcup\xs D=\bigcup_{D\in\xs D}D$ for any family~$\xs D$ of
sets (Definition~\ref{d:ntt}(a)).
\begin{defn}[dense island structure]\label{d:dnsilnd}
  Let $F$ be a compact set in a metric space. A subset $D$ of $F$ is called
  an $\iota$-island for $\iota>0$ if
  \[|D|\le\iota\dist(D,F\setminus D).\]
  We say that $F$ has dense $\iota$-island structure if there exists a
  family~$\xs D$ of disjoint $\iota$-islands of~$F$ such that $\bigsqcup\xs
  D$ is dense in~$F$.
\end{defn}

\begin{exmp}
  Let $X=\{0,1\}^\N$ be the symbolic space with metric
  \[\rho(x,y)=2^{-\inf\{k\colon x_k\ne y_k\}}\]
  for $x=x_1x_2\dotsc$ and $y=y_1y_2\dotsc$. Then $X$ has dense $1/2$-island
  structure with families $\xs D=\{[0^k1]\colon k\ge0\}$.
\end{exmp}

\begin{defn}\label{d:smdi}
  Suppose that $F$ and $F'$ have dense $\iota$-island structure with disjoint
  $\iota$-island families $\xs D$ and $\xs D'$, respectively. We say that $F$
  and $F'$ have the same dense $\iota$-island structure if there exist a
  one-to-one mapping $\tilde f$ of $\xs D$ onto $\xs D'$ and a constant
  $\tilde L>1$ such that
  \begin{enumerate}[(i)]
    \item $\tilde L^{-1}\dist(D_1,D_2)/|F|\le\dist(\tilde f(D_1),\tilde
        f(D_2))/|F'|\le\tilde L\dist(D_1,D_2)/|F|$ for each two distinct
        $\iota$-islands $D_1,D_2\in\xs D$;
    \item for each $\iota$-island $D\in\xs D$, there is a bi-Lipschitz
        mapping $f_D$ of $D$ onto $\tilde f(D)$ such that
        \[\tilde L^{-1}\rho(x,y)/|F|\le\rho(f_D(x),f_D(y))/|F'|\le
        \tilde L\rho(x,y)/|F|\quad\text{for $x,y\in D$}.\]
        Here $\rho$ denotes the metric.
  \end{enumerate}
  We call $\tilde f$ the island mapping.
\end{defn}

\begin{lem}\label{l:djt}
  Let $F$ and $F'$ have dense $\iota$-island structure with the
  $\iota$-island families $\xs D$ and $\xs D'$, respectively. If $F$ and $F'$
  have the same dense $\iota$-island structure with island mapping~$\tilde f$
  and constant $\tilde L$. Then there is a bi-Lipschitz mapping~$f$ of~$F$
  onto~$F'$ such that
  \begin{equation}\label{eq:eddjt}
    L^{-1}\rho(x,y)/|F|\le\rho(f(x),f(y))/|F'|\le L\rho(x,y)/|F|
    \quad\text{for $x,y\in F$}.
  \end{equation}
  Here $L=(2\iota+1)\tilde L$. In particular, $F\simeq F'$.
\end{lem}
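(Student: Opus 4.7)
The plan is to build $f$ on the dense subset $\bigsqcup\xs D\subset F$ by gluing together the local bi-Lipschitz maps $f_D$, verify the two-sided estimate~\eqref{eq:eddjt} on this dense set, and then extend $f$ continuously to all of~$F$, using density and compactness to promote the extension to a bijection onto~$F'$ with the same Lipschitz constants.

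Concretely, I would first define $f\colon\bigsqcup\xs D\to\bigsqcup\xs D'$ by setting $f|_D=f_D$ for each $D\in\xs D$; this is well defined because the islands in~$\xs D$ are pairwise disjoint and $\tilde f\colon\xs D\to\xs D'$ is a bijection. To check bi-Lipschitzness on this set, I would split into two cases. When $x,y$ lie in the same island~$D$, condition~(ii) of Definition~\ref{d:smdi} immediately gives~\eqref{eq:eddjt} with constant $\tilde L\le L$. When $x\in D_1$ and $y\in D_2$ with $D_1\ne D_2$, the crucial observation is that $D_2\subset F\setminus D_1$ (and symmetrically for $D_1$), so $\dist(D_i,F\setminus D_i)\le\dist(D_1,D_2)$ for $i=1,2$, and the $\iota$-island inequality then yields $|D_i|\le\iota\dist(D_1,D_2)$. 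Hence
\[\dist(D_1,D_2)\le\rho(x,y)\le|D_1|+\dist(D_1,D_2)+|D_2|\le(2\iota+1)\dist(D_1,D_2),\]
and the exact analogue (with $\tilde f(D_1)$, $\tilde f(D_2)$) bounds $\rho(f(x),f(y))$ in terms of $\dist(\tilde f(D_1),\tilde f(D_2))$. Plugging in condition~(i) of Definition~\ref{d:smdi} produces~\eqref{eq:eddjt} with $L=(2\iota+1)\tilde L$.

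Finally, because $\bigsqcup\xs D$ is dense in~$F$ and $f$ is Lipschitz there, it extends uniquely to a continuous map $f\colon F\to F'$ which satisfies the same estimate by passage to the limit in~\eqref{eq:eddjt}. The lower bound forces this extension to be injective, compactness of~$F$ makes $f(F)$ closed in~$F'$, and since $\bigsqcup\xs D'=f(\bigsqcup\xs D)\subset f(F)$ is dense in~$F'$, we conclude $f(F)=F'$. The only real technical point is the mixed-island estimate above; once one extracts $|D_1|,|D_2|\le\iota\dist(D_1,D_2)$ from the $\iota$-island property for any two distinct islands of~$F$, the remaining inequalities are routine, and neither finiteness nor nesting of~$\xs D$ is used.
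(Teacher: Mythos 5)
Your proof is correct and follows essentially the same approach as the paper: define $f$ piecewise by $f|_D=f_D$, split into the same two cases, and extend by density. The one place you go slightly beyond the paper is in explicitly deriving $|D_i|\le\iota\dist(D_1,D_2)$ from the $\iota$-island inequality (the paper states the chain $\dist(D_x,D_y)\le\rho(x,y)\le|D_x|+\dist(D_x,D_y)+|D_y|\le(2\iota+1)\dist(D_x,D_y)$ without spelling out that last step), and in spelling out the surjectivity of the extension, but the argument is identical in substance.
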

\begin{proof}
  Let $f$ be the one-to-one mapping of $\bigsqcup\xs D$ onto $\bigsqcup\xs
  D'$ such that the restriction of $f$ to $D$ is just $f_D$, i.e.,
  $f|_D=f_D$, for every $D\in\xs D$, where $f_D$ is as in
  Definition~\ref{d:smdi}. We claim that $f$ and $L=(2\iota+1)\tilde L$
  satisfies the inequality~\eqref{eq:eddjt} for $x,y\in\bigsqcup\xs D$. For
  this, let $x,y\in\bigsqcup\xs D$. There are two cases. If $x,y\in D$ for
  some $D\in\xs D$, then $f$ satisfies the inequality~\eqref{eq:eddjt} for
  $L=\tilde L$ since $f|_D=f_D$. Suppose otherwise that $x\in D_x$ and $y\in
  D_y$ for distinct $D_x,D_y\in\xs D$. Then by the definition of~$f$,
  $f(x)\in\tilde f(D_x)$ and $f(y)\in\tilde f(D_y)$. Therefore,
  \begin{gather*}
    \dist(D_x,D_y)\le\rho(x,y)\le
    |D_x|+\dist(D_x,D_y)+|D_y|\le(2\iota+1)\dist(D_x,D_y), \\
  \begin{split}
    \dist(\tilde f(D_x),\tilde f(D_y))\le\rho(f(x),f(y))
    &\le|\tilde f(D_x)|+\dist(\tilde f(D_x),\tilde f(D_y))+|\tilde f(D_y)|\\
    &\le(2\iota+1)\dist(\tilde f(D_x),\tilde f(D_y)).
  \end{split}
  \end{gather*}
  Together with Condition~(i) of Definition~\ref{d:smdi}, we have
  \[((2\iota+1)\tilde L)^{-1}\rho(x,y)/|F|\le\rho(f_D(x),f_D(y))/|F'|
  \le(2\iota+1)\tilde L\rho(x,y)/|F|.\]
  A summary of the above two cases shows that $f$ and $L=(2\iota+1)\tilde L$
  satisfy the inequality~\eqref{eq:eddjt} for $x,y\in\bigsqcup\xs D$.
  Finally, note that $f$ can be extended to a bi-Lipschitz mapping from $F$
  onto $F'$ since $\bigsqcup\xs D$ and $\bigsqcup\xs D'$ are dense in $F$ and
  $F'$, respectively. We also denote this mapping by~$f$. Then $f$ and $L$
  satisfy the inequality~\eqref{eq:eddjt} for $x,y\in F$.
\end{proof}

\subsection{Measure linear}\label{ssec:ML}

To make use of Lemma~\ref{l:cylin} and~\ref{l:djt}, we need to construct
corresponding structure for given sets. The difficult is how to do it. An
important observation obtained by Cooper and Pignataro~\cite{CooPi88} gives
the key hint. This observation is called measure linear.
\begin{defn}[measure linear]\label{d:msrln}
  Let $(X,\mu)$ and $(Y,\nu)$ be two measure spaces. A map $f\colon X\to Y$
  is called \emph{measure linear} if there is a constant $a>0$ such that for
  all $\mu$-measurable sets $A\subset X$, $f(A)$ is $\nu$-measurable and
  $\nu(f(A))=a\mu(A)$.
\end{defn}
Let $f$ be a bi-Lipschitz mapping from a self-similar set~$E$ onto another
self-similar set. Cooper and Pignataro~\cite{CooPi88} showed that the
restriction of $f$ to some small copy of~$E$ is measure linear for
$s$-dimensional Hausdorff measure, where $s=\hdim E$, provided that the two
self-similar sets both satisfy the SSC (see Lemma~\ref{l:ML}). In fact,
measure linear property also holds in our setting (see Lemma~\ref{l:ml}).

Inspired by the observation of measure linear, it is natural to require
\begin{equation}\label{eq:ML}
  \xc H^s(\tilde f(C))/\xc H^s(C)=\xc H^s(F')/\xc H^s(F)
  \quad\text{for all cylinders $C$}
\end{equation}
in the construction of same cylinder structure for $F$ and $F'$. In fact,
this is just the case in the proofs of Lemma~\ref{l:4blockslip} and the
sufficient part of Proposition~\ref{p:BlipB}. We remark that a cylinder
mapping~$\tilde f$ satisfying~\eqref{eq:ML} induces a bi-Lipschitz
mapping~$f$ (as in the proofs of Lemma~\ref{l:cylin}) such that $f$ is
measure linear on the whole set~$F$. We also remark that all the bi-Lipschitz
mappings appearing in~\cite{DenHe12,LuoLa12,RaRuX06,RuWaX12,XiRu07,XiXi10}
have the measure linear property on the whole set.

However, in many cases, e.g., the rotation version of \{1,3,5\}-\{1,4,5\}
problem in~\cite{XioXi12}, bi-Lipschitz mappings which are measure linear on
the whole set do not exist. In other words, there only exist bi-Lipschitz
mappings which are measure linear on subset. In fact, this is exactly what
obtained by Cooper and Pignataro~\cite{CooPi88}. In such cases, we cannot
construct the same cylinder structure by the inspiration of measure linear.
This makes our proof much more complicated. In such cases, we must consider
all the subsets on which some bi-Lipschitz mapping is measure linear.
Falconer and Marsh~\cite{FalMa92} showed that the union of such subsets are
dense in the whole set. This is why we introduce the dense island structure.
Lemma~\ref{l:djt} is our tool to deal with these cases, see the proofs of
Lemma~\ref{l:4blockslip'} and Proposition~\ref{p:BlipE}.

We close this subsection with an example in~\cite{XioXi12} to show that
bi-Lipschitz mappings which are measure linear on whole set do not exist.
\begin{exmp}\label{e:NML}
  We consider the $\{1,3,5\}$-set and the $\{1,-4,5\}$-set (see
  Figure~\ref{fig:135145r}). Recall that the two sets are the self-similar
  sets such that
  \begin{align*}
    E_{1,3,5} & =(E_{1,3,5}/5)\cup(E_{1,3,5}/5+2/5)\cup(E_{1,3,5}/5+4/5), \\
    E_{1,-4,5} & =(E_{1,-4,5}/5)\cup(-E_{1,-4,5}/5+4/5)\cup(E_{1,-4,5}/5+4/5).
  \end{align*}
  It follows from Theorem~\ref{t:srt} that $E_{1,3,5}\simeq E_{1,-4,5}$. But
  bi-Lipschitz mappings of~$E_{1,3,5}$ onto~$E_{1,-4,5}$ which are measure
  linear on~$E_{1,3,5}$ do not exist.

  Suppose on the contrary that $f$ is a such mapping. Then we have
  \[\nu(f(A))=\mu(A)\quad\text{for all Borel sets $A\subset E_{1,3,5}$},\]
  where $\mu$ and $\nu$ are the natural measure of $E_{1,3,5}$ and
  $E_{1,-4,5}$, respectively. Let $F=2E_{1,3,5}$, then rewrite
  \[E_{1,-4,5}=(E_{1,-4,5}/5)\cup(F/5+3/5).\]
  We say that $A$ is a small copy of a self-similar set~$E$ if $A=S(E)$,
  where $S$ can be written as a composition of similarities in the IFS
  of~$E$. Now let $A$ be a small copy of~$E_{1,3,5}$ such that
  $f(A)\subset(F/5+3/5)$, then $\mu(A)=3^{-k}$ for some positive integer~$k$
  and
  \[f(A)=(F_1\cup\dots\cup F_n)/5+3/5\subset(F/5+3/5),\]
  where $F_i$ is a small copy of $F$ for $1\le i\le n$. So for each $1\le
  i\le n$, there is a positive integer~$k_i$ such that $\nu(F_i)=2\cdot
  3^{-k_i}$. Therefore,
  \[\nu(f(A))=2\cdot3^{-1}(3^{-k_1}+\dots+3^{-k_n})=\mu(A)=3^{-k}.\]
  But this is impossible.
\end{exmp}

\subsection{Suitable decomposition}\label{ssec:BDsd}

In the construction of cylinder mapping satisfying~\eqref{eq:ML}, it is often
required to decompose interior blocks into small parts with measures equal to
given numbers. We call such decompositions the suitable decomposition
(Definition~\ref{d:suitde}). This subsection deals with the problem of the
existence of suitable decomposition (Lemma~\ref{l:suitde}).

We begin with the definition of suitable decomposition by using the notations
in Definition~\ref{d:ntt}.
\begin{defn}[suitable decomposition]\label{d:suitde}
  Let $\xs A\subset\xs B_l$ be a nonempty family of level-$l$ blocks. We call
  \[\xs B_k(\xs A)=\bigcup_{i=1}^n\xs A_i\]
  an order-$k$ suitable decomposition for positive numbers
  $a_1,a_2,\dots,a_n$, if $\xs A_i\cap\xs A_j=\emptyset$ for $i\ne j$ and
  \[\mu\left(\bigsqcup\xs A_i\right)=\sum_{B\in\xs A_i}\mu(B)
  =a_i\quad\text{for $1\le i\le n$}.\]
\end{defn}
\begin{rem}\label{r:suit}
  It is plain to observe that the existence of an order-$k$ suitable
  decomposition ensures the existence of an order-$K$ suitable decomposition
  for all $K\ge k$ since each level-$(l+k)$ block can be written as a
  disjoint union of some level-$(l+K)$ blocks.
\end{rem}
\begin{defn}\label{d:suit}
  Let $\alpha_1$, $\alpha_2$, $\dots$, $\alpha_\eta$ be positive numbers and
  $\xs A\subset\xs B_l$ a nonempty family of level-$l$ blocks. We say that
  $a_1$, $a_2$, \dots, $a_n$ are $(\xs
  A;\alpha_1,\dots,\alpha_\eta)$-suitable if
  \[\sum_{i=1}^na_i=\sum_{B\in\xs A}P_B(p)\]
  and $a_i\in\{\alpha_1,\dots,\alpha_\eta\}$ for all $i=1,2,\dots,n$.
\end{defn}
\begin{lem}\label{l:suitde}
  Given positive numbers $\alpha_1,\alpha_2,\dots,\alpha_\eta\in I_\ms$,
  there exists an integer $K\ge1$ depending only on $\ms$ and $\alpha_1$,
  $\dots$, $\alpha_\eta$ has the following property. For each ${l\ge1}$, each
  nonempty family of level-$l$ interior blocks $\xs A\subset\xs B^\circ_l$
  and each $(\xs A;\alpha_1,\dots,\alpha_\eta)$-suitable sequence
  $a_1,a_2,\dots,a_n$, there is an order-$K$ suitable decomposition
  \[\xs B_K(\xs A)=\bigcup_{i=1}^n\xs A_i\]
  for $p^la_1,\dots,p^la_n$.
\end{lem}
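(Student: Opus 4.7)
The plan is to choose $K$ so large that, inside every $B \in \xs A$, the level-$(l+K)$ decomposition supplies an overwhelming number of interior sub-blocks of each prescribed measure polynomial, and then to assemble the partition greedily, with a small swap to absorb the few boundary sub-blocks.

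First I would rewrite each $\alpha_j$ in a useful form. Since $\alpha_j \in I_\ms$ is positive and Lemma~\ref{l:ideal} identifies $I_\ms$ with the ideal of $\Z[p]$ generated by $\{P(p):P\in\xc P^\circ\}$, applying Lemma~\ref{l:Z[p]}(f) followed by Lemma~\ref{l:Z[p]}(e) yields a fixed expansion
\[\alpha_j = \sum_{s=1}^{T_j} n_{j,s}\, p^{e_{j,s}}\, P_{j,s}(p),\qquad P_{j,s}\in\xc P^\circ,\ n_{j,s},e_{j,s}\in\N.\]
Fixing a witnessing interior block $B_{j,s}\in\xs B^\circ_{m_{j,s}}$ with $P_{B_{j,s}}=P_{j,s}$, Remark~\ref{r:inblock}(a) gives $\xs B_k(B_{j,s})=\xs B^\circ_k(B_{j,s})$, and the measure identity $p^{-k}P_{j,s}(p)=\sum_{A\in\xs B^\circ_k(B_{j,s})}P_A(p)$ applied with $k=K-e_{j,s}$ (valid once $K\ge\max_{j,s}e_{j,s}$) converts the expansion into a canonical form
\[p^{-K}\alpha_j = \sum_{P\in\xc P^\circ} M^{(K)}_{j,P}\, P(p),\qquad M^{(K)}_{j,P}\in\N,\]
whose total length is $O(p^{-K})$.

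Next I would enforce abundance by further enlarging $K$. By Lemma~\ref{l:CB(k)}, there is a constant $c>0$ such that for every $B\in\xs B$ and every $P\in\xc P^\circ$, the level-$K$ decomposition of $B$ contains at least $cp^{-K}$ interior sub-blocks with polynomial $P$, while $p^K\zeta^\partial(K)\to 0$. Since $\xc P^\circ$ is finite (Proposition~\ref{p:finiteCP}) and each $M^{(K)}_{j,P}$ is bounded by a constant multiple of $p^{-K}$, I can choose $K$ so that $cp^{-K}$ dominates $\sum_j M^{(K)}_{j,P}$ by a margin larger than the total boundary-block count across all $B\in\xs A$. Crucially, this choice depends only on $\ms$ and $\alpha_1,\dots,\alpha_\eta$: uniformity in $l$ and $\xs A$ is automatic because Remark~\ref{r:inblock}(b) makes the sub-block statistics inside $B$ depend only on $P_B$, and uniformity in $n$ is automatic because each partition step consumes only a bounded share of the available stock.

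The partition is then built inductively over $i=1,\dots,n$. At step $i$ I withdraw from the unassigned interior sub-blocks of $\xs B_K(\xs A)$ a multiset whose polynomials match the canonical expansion of $p^{-K}a_i=p^{-K}\alpha_{j(i)}$ from the first step; the abundance secured above guarantees this is always possible. Finally, the few remaining boundary sub-blocks are absorbed by a local swap: each such $B'$ is placed into some $\xs A_i$ and compensated by withdrawing from another group an interior block (or, where needed, a further refined interior collection using the identity $P_{B''}(p)=p^K\sum_{A\in\xs B_K(B'')}P_A(p)$) of equal polynomial sum. The main obstacle is carrying out this swap so that the $\xs A_i$ form a true partition of $\xs B_K(\xs A)$ with each polynomial sum exactly $p^{-K}a_i$; the $o(p^{-K})$ bound on boundary blocks from Lemma~\ref{l:CB(k)} is precisely what guarantees the swap budget stays within the margin secured above.
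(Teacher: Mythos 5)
The outline (use Lemma~\ref{l:CB(k)} to secure abundance of interior sub-blocks, express the targets over $\Z[p]$, absorb the boundary blocks) points in the right general direction, but the argument has two genuine gaps, and they correspond precisely to the two load-bearing devices in the paper's proof that your plan omits.

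First, the uniformity in~$n$. You assert it is ``automatic because each partition step consumes only a bounded share of the available stock.'' It is not: $n$ is bounded only by $\sum_{B\in\xs A}P_B(p)/\min_j\alpha_j$, which grows with $\card\xs A$, so a priori the margin you need could grow with the instance. The paper devotes Lemma~\ref{l:dvsum} to exactly this point: every $(\xs A;\alpha_1,\dots,\alpha_\eta)$-suitable instance splits into sub-instances with $\sum_ia_i\le\theta$, after which there are only finitely many pairs of data $\{P_B\}_{B\in\xs A}$ and $(a_i)$, and a single $K$ can serve for all of them. Your proposal has no analogue of this reduction and no other mechanism to bound the relevant data.

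Second, and more seriously, the exactness of the partition. Your canonical expansion $p^{-K}\alpha_j=\sum_PM^{(K)}_{j,P}P(p)$ is produced by refining arbitrarily chosen witnessing blocks $B_{j,s}$, whereas the supply of interior sub-blocks of each polynomial $P$ inside $\xs B_K(\xs A)$ is governed by the internal statistics of the blocks actually in $\xs A$. These two distributions need not agree. Conservation of total mass forces the weighted sums to match up to the boundary error, but nothing forces matching per-$P$ counts; a polynomial $P$ may be in structural deficit (demand exceeds the entire interior supply of $P$-blocks), at which point the greedy allocation stalls, and a swap that fixes one group's sum unbalances the donor group, producing a cascade you do not control. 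You yourself label the swap ``the main obstacle'' without resolving it. The paper sidesteps this entirely: its Step~1 builds an exact integer decomposition at the level of similarities $\ms_{\xs A,K'}$ by making $\bm\Xi^{K_2}\bm b-\bm\Xi^{K_2}\sum_{i<n}\bm a_i$ componentwise positive (Perron--Frobenius convergence), so exactness holds by construction; in Step~2 the only error introduced by passing to blocks is $a_i\sum_{B\in\xs B^\partial_k}P_B(p)$, which Lemma~\ref{l:idealP} rewrites as $p^{K'}\sum_P\gamma_{i,P_B,P}P(p)$ and which is replenished from a single reserved cylinder $S^*(E)$ with $S^*\in\xc A_n$ — a controlled one-shot correction, not an unbounded swap. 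Without an analogue of the similarity-level exact decomposition, the exactness your lemma requires is simply not established.
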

Lemma~\ref{l:suitde} ensures the measure linear property of bi-Lipschitz
mapping in our proof. Before proving it, we present two technical lemmas.

\begin{lem}\label{l:idealP}
  Let $\alpha\in I_\ms$ be a positive number. Then for each sufficiently
  large integer $\ell$, there exist integers $\gamma_{\ell,P}\ge0$ for
  $P\in\xc P^\circ$ such that
  \[\alpha=p^\ell\sum_{P\in\xc P^\circ}\gamma_{\ell,P}P(p).\]
\end{lem}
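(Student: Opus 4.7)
The plan is to convert $\alpha$ into a nonnegative integer combination of terms of the form $p^j P(p)$ with $P\in\xc P^\circ$, and then push every $p^j$ up to a common power $p^\ell$ using a substitution identity coming from the blocks decomposition.

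First, I would invoke Lemma~\ref{l:ideal} together with the finiteness of $\xc P^\circ$ from Proposition~\ref{p:finiteCP} to view $I_\ms$ as finitely generated by the positive real numbers $\{P(p)\colon P\in\xc P^\circ\}$. Since $\alpha>0$ lies in this ideal, Lemma~\ref{l:Z[p]}(f) writes
\[\alpha=\sum_{P\in\xc P^\circ}P(p)\,b_P\]
with each $b_P\in\Z[p]$ positive, and Lemma~\ref{l:Z[p]}(e) then identifies each $b_P$ with $Q_P(p)$ for a polynomial $Q_P(t)=\sum_{j=0}^{d_P}q_{P,j}t^j$ having nonnegative integer coefficients. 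This realizes $\alpha$ as a nonnegative integer combination of the terms $p^j P(p)$.

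The key ingredient I would prove is the following substitution identity: for every $P\in\xc P^\circ$ and every integer $k\ge0$, there exist nonnegative integers $\nu_{P,P',k}$ (for $P'\in\xc P^\circ$) such that
\[P(p)=p^k\sum_{P'\in\xc P^\circ}\nu_{P,P',k}\,P'(p).\]
To establish it, pick any level-$l_P$ interior block $B_P$ with $P_{B_P}=P$ and write $B_P$ as the disjoint union $\bigsqcup_{A\in\xs B_k(B_P)}A$ of its level-$(l_P+k)$ sub-blocks, each of which is itself an interior block by Remark~\ref{r:inblock}(a). Applying $\mu$ and using Remark~\ref{r:muB} gives $p^{l_P}P(p)=\sum_A p^{l_P+k}P_A(p)$, and cancelling $p^{l_P}$ yields the identity with $\nu_{P,P',k}=\card\{A\in\xs B_k(B_P)\colon P_A=P'\}$.

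With the identity in hand, set $d=\max_{P\in\xc P^\circ}d_P$. For any $\ell\ge d$ and any pair $(P,j)$ in the expansion of $\alpha$, one has $\ell-j\ge0$, so the identity applied with $k=\ell-j$ converts $p^jP(p)$ into $p^\ell\sum_{P'}\nu_{P,P',\ell-j}P'(p)$. Summing over $P$ and $j$ collects $\alpha$ into the desired form
\[\alpha=p^\ell\sum_{P'\in\xc P^\circ}\gamma_{\ell,P'}P'(p),\qquad \gamma_{\ell,P'}=\sum_{P,j}q_{P,j}\nu_{P,P',\ell-j}\in\Z_{\ge0}.\]
I do not anticipate a genuine obstacle: the proof is essentially an assembly of Lemma~\ref{l:ideal}, the finiteness of $\xc P^\circ$ and the positivity statements (e) and~(f) of Lemma~\ref{l:Z[p]}, with the substitution identity being a one-line consequence of the blocks decomposition already developed in Section~\ref{sec:BD}.
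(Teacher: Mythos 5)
Your proof is correct and follows essentially the same route as the paper: both express $\alpha$ as a nonnegative integer combination of terms $p^jP(p)$ via Lemma~\ref{l:ideal} and Lemma~\ref{l:Z[p]}(e)(f), and then use the same substitution identity (decomposing an interior block $B$ with $P_B=P$ into its level-$(l_B+k)$ sub-blocks, all interior by Remark~\ref{r:inblock}(a)) to push each $p^j$ up to a common $p^\ell$. The paper phrases this via the set $I^*$ of elements satisfying the conclusion and the observation $I^*+I^*\subset I^*$, while you substitute term-by-term with a uniform threshold $\ell\ge d$, but the content is the same.
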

\begin{proof}
  Let $I^*$ denotes the set of all the positive numbers $\alpha\in I_\ms$
  satisfying the property in the lemma. We claim that $p^m P(p)\in I^*$ for
  all integers~$m$ and all $P\in\xc P^\circ$. For this, suppose that $P=P_A$
  for some $A\in\xs B^\circ_l$. By Remark~\ref{r:muB} and
  Remark~\ref{r:inblock}(a), for all $k\ge1$,
  \[p^m P(p)=p^{m-l}\mu(A)=p^{m-l}\sum_{B\in\xs B_k(A)}
  \mu(B)=p^{m+k}\sum_{B\in\xs B^\circ_k(A)}P_{B}(p),\]
  where $\xs B_k(A)=\{B\in\xs B_{l+k}\colon B\subset A\}$ and $\xs
  B^\circ_k(A)=\{B\in\xs B_{l+k}\colon B\subset A\}$. This means that $p^m
  P(p)\in I^*$. Now let $\alpha\in I_\ms$ be a positive number, then by
  Lemma~\ref{l:ideal}, Lemma~\ref{l:Z[p]}(e) and~(f), we have
  \[\alpha=\sum_{P\in\xc P^\circ}b_PP(p),\]
  where each $b_P$ can be written as $p^{m_1}+\dots+p^{m_\kappa}$ for some
  integers $m_1$, \dots, $m_\kappa$. Observe that $I^*+I^*\subset I^*$.
  This fact together with $p^m P(p)\in I^*$ yields $\alpha\in I^*$, and so
  $I^*=\{\alpha\in I_\ms\colon \alpha>0\}$.
\end{proof}

\begin{lem}\label{l:dvsum}
  Given positive numbers $\alpha_1,\alpha_2,\dots,\alpha_\eta$ and
  $\beta_1,\beta_2,\dots,\beta_\tau$, let $\Sigma_\theta$ be the set of all
  vectors $(\kappa_1,\kappa_2,\dots,\kappa_{\eta+\tau})$ with nonnegative
  integer entries such that
  \[\kappa_1\alpha_1 + \kappa_2\alpha_2 + \dots + \kappa_\eta\alpha_\eta =
  \kappa_{\eta+1}\beta_1 + \kappa_{\eta+2}\beta_2 + \dots +
  \kappa_{\eta+\tau}\beta_\tau>\theta,\]
  for $\theta\ge0$. Suppose that $\Sigma_0\ne\emptyset$. Then there exists a
  constant $\theta>0$ such that each vector
  $(\kappa_1,\kappa_2,\dots,\kappa_{\eta+\tau})\in\Sigma_\theta$ can be
  written as the sun of two vectors in~$\Sigma_0$, i.e., there exist
  $(\kappa'_1, \dots, \kappa'_{\eta+\tau}), (\kappa''_1, \dots,
  \kappa''_{\eta+\tau}) \in \Sigma_0$ such that
  \[\kappa_i=\kappa'_i+\kappa''_i,\quad\text{for}\ 1\le i\le \eta+\tau.\]
\end{lem}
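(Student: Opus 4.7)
The plan is to recognise $\Sigma_0\cup\{\bm 0\}$ as a finitely generated submonoid of $\Z_{\ge 0}^{\eta+\tau}$ and to choose $\theta$ large enough that every vector of $\Sigma_\theta$ decomposes into strictly more than one irreducible generator.

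First I would observe that $\Sigma_0\cup\{\bm 0\}$ is closed under coordinatewise addition: the defining equation
\[
\sum_{i=1}^\eta \kappa_i\alpha_i=\sum_{j=1}^\tau \kappa_{\eta+j}\beta_j
\]
is homogeneous, and the strict positivity condition is preserved when two nonzero nonnegative solutions are summed. Dickson's lemma applied to $\Sigma_0\subset\Z_{\ge 0}^{\eta+\tau}$ then gives that its componentwise--minimal elements form a finite antichain; call them $\bm v_1,\ldots,\bm v_m$, and write $V_i=\sum_{k=1}^\eta (\bm v_i)_k\alpha_k$ for the common value of $\bm v_i$. A standard induction on the $\ell^1$--norm of the coordinate vector shows that every $\bm\kappa\in\Sigma_0$ admits a representation
\[
\bm\kappa=c_1\bm v_1+\dots+c_m\bm v_m,\qquad c_i\in\Z_{\ge 0},\quad c_1+\cdots+c_m\ge 1;
\]
indeed, if $\bm\kappa$ is not itself minimal, pick any minimal $\bm v_j\le\bm\kappa$, note that $\bm\kappa-\bm v_j$ still lies in $\Sigma_0\cup\{\bm 0\}$ (it is a nonnegative integer vector satisfying the linear equation; if it equalled $\bm 0$ then $\bm\kappa=\bm v_j$), and apply the inductive hypothesis.

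Next I would take $\theta=\max_{1\le i\le m}V_i$. For any $\bm\kappa\in\Sigma_\theta$ with decomposition $\bm\kappa=\sum_i c_i\bm v_i$, the common value equals $\sum_i c_i V_i>\theta\ge V_j$ for every $j$; this rules out $\sum c_i=1$ and therefore forces $\sum c_i\ge 2$. Picking any index $j$ with $c_j\ge 1$ and setting $\bm\kappa'=\bm v_j$, $\bm\kappa''=\bm\kappa-\bm v_j$ produces the desired splitting: $\bm\kappa''$ is a nonnegative integer combination of the $\bm v_i$ with total weight $\sum c_i-1\ge 1$ and so lies in $\Sigma_0$, while $\bm\kappa'=\bm v_j\in\Sigma_0$ as well.

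The only substantive ingredient is the appeal to Dickson's lemma: the finiteness of the minimal antichain is what furnishes a \emph{uniform} $\theta$. Everything else is combinatorial bookkeeping, and the real numbers $\alpha_i,\beta_j$ enter only through the single defining linear form --- the monoid $\Sigma_0\cup\{\bm 0\}$ is a purely combinatorial subset of $\Z_{\ge 0}^{\eta+\tau}$ whose structure depends only on which coordinates appear on which side of the equation.
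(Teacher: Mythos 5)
Your proof is correct, and it follows a genuinely different route from the paper's. The paper argues by contradiction: assuming no uniform $\theta$ exists, it extracts an infinite sequence of \emph{indecomposable} vectors in $\Sigma_0$ with strictly increasing common values, then passes to a subsequence in which every coordinate is constant or strictly increasing, so that the difference of the first two terms again lies in $\Sigma_0$ and decomposes the second term --- a contradiction. That subsequence extraction is exactly proving the relevant instance of the well-quasi-order property of $(\Z_{\ge 0}^{\eta+\tau},\le)$ inline. You instead invoke Dickson's lemma by name to obtain a finite set $\bm v_1,\dots,\bm v_m$ of minimal elements, show by induction on $\ell^1$-norm that every element of $\Sigma_0$ is a nonnegative integer combination of them (the step $\bm\kappa-\bm v_j\ne\bm0$ is sound because a zero common value forces $\bm\kappa=\bm v_j$ by positivity of the $\alpha_i,\beta_j$), and then read off the explicit bound $\theta=\max_i V_i$: any $\bm\kappa$ with value exceeding $\theta$ must use at least two generators, and peeling one off yields the splitting. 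What your version buys is an explicit $\theta$ and a cleaner conceptual picture of $\Sigma_0\cup\{\bm0\}$ as a finitely generated monoid; what the paper's version buys is self-containedness, not relying on Dickson's lemma as a black box. Both hinge on the same underlying combinatorial fact.
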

\begin{proof}
  Suppose to the contrary that such $\theta$ does not exist. So we can find a
  sequence of vectors $(\kappa^{(j)}_1, \dots, \kappa^{(j)}_{\eta+\tau}) \in
  \Sigma_0$ such that each vector $(\kappa^{(j)}_1, \dots,
  \kappa^{(j)}_{\eta+\tau})$ can not be written as the sum of two vectors
  in~$\Sigma_0$ and the sequences $\sum_{i=1}^\eta \kappa^{(j)}_i\alpha_i =
  \sum_{i=1}^\tau \kappa^{(j)}_{\eta+i}\beta_i$ are strictly increasing.

  Now consider the sequence of nonnegative integer~$\{\kappa_1^{(j)}\}$. If
  $\sup_j\kappa^{(j)}_1<\infty$, then there is a constant subsequence
  of~$\{\kappa_1^{(j)}\}$; otherwise $\sup_j\kappa^{(j)}_1=\infty$, then
  there is a strictly increasing subsequence of~$\{\kappa_1^{(j)}\}$.
  Therefore, by taking a subsequence, we can assume that
  $\{\kappa_1^{(j)}\}$ is either constant or strictly increasing. Applying
  the same argument, we can further assume that $\{\kappa_i^{(j)}\}$ is
  either constant or strictly increasing for $1\le i\le\eta+\tau$. But then
  $(\kappa^{(2)}_1-\kappa^{(1)}_1, \dots, \kappa^{(2)}_{\eta+\tau} -
  \kappa^{(1)}_{\eta+\tau}) \in \Sigma_0$ and
  \[\kappa^{(2)}_i = (\kappa^{(2)}_i-\kappa^{(1)}_i)+\kappa^{(1)}_i,\qquad
   1\le i\le \eta+\tau,\]
  contradicting the fact that $(\kappa^{(2)}_1, \dots,
  \kappa^{(2)}_{\eta+\tau})$ can not be written as the sum of two vectors
  in~$\Sigma_0$.
\end{proof}

\begin{proof}[Proof of Lemma~\ref{l:suitde}]
  We begin with applying Lemma~\ref{l:dvsum} by taking the
  $\alpha_1,\dots,\alpha_\eta$ in Lemma~\ref{l:dvsum} just as the given
  $\alpha_1,\dots,\alpha_\eta$ and $\{\beta_1,\dots,\beta_\tau\}=\{P(p)\colon
  P\in\xc P^\circ\}$. According to Lemma~\ref{l:dvsum}, we may assume that
  \begin{equation}\label{eq:<theta}
    \sum_{B\in\xs A}P_B(p)=\sum_{i=1}^na_i\le\theta.
  \end{equation}
  We shall prove the lemma for specified $\xs A$ and $a_1,\dots,a_n$, and
  show that the constant~$K$ depends only on the two sequences $P_B$
  ($B\in\xs A$) and $a_1,\dots,a_n$. Since there are only finitely many such
  two sequences under the assumption~\eqref{eq:<theta}, the lemma follows.

  Now fix $\xs A\subset\xs B^\circ_l$ and $a_1,\dots,a_n$ with
  $a_i\in\{\alpha_1,\dots,\alpha_\eta\}$ such that $\sum_{B\in\xs
  A}P_B(p)=\sum_{i=1}^na_i$. We divide the proof into two steps.

  \medskip

  In the first step, we consider a closely related decomposition
  of the family
  \[\ms_{\xs A,k}=\Bigl\{S\in\ms_{l+k}\colon S(E)\subset\bigsqcup\xs A\Bigr\},
  \]
  where $\xc S_{l+k}$ is defined by~\eqref{eq:Sk}. We will find a integer
  $K'>0$, which depends on the two sequences $P_B$ ($B\in\xs A$) and
  $a_1,\dots,a_n$, such that there is a decomposition
  \begin{equation}\label{eq:4dvsum}
    \ms_{\xs A,K'}=\bigcup_{i=1}^n\xc A_i
  \end{equation}
  satisfying
  \begin{equation}\label{eq:4dvsum'}
    \sum_{S\in\xc A_i}\mu\bigl(S(E)\bigr)=p^la_i
    \quad\text{for $1\le i\le n$}.
  \end{equation}

  By Lemma~\ref{l:Z[p]}(e), there exist an integer $K_1\ge1$ and integers
  $a_{i,\ell}\ge0$ such that
  \begin{equation}\label{eq:Zpnum}
    a_i=p^{K_1}\sum_{\ell=0}^\lambda a_{i,\ell}p^\ell
    \quad \text{for}\ 1\le i\le n.
  \end{equation}
  Here $\lambda$ is as in~\eqref{eq:lambda}. Note that $K_1$ depends on
  $\alpha_1,\dots,\alpha_\eta$ since $a_i\in\{\alpha_1,\dots,\alpha_\eta\}$.
  Write $\bm a_i=\bigl(a_{i,0},\dots,a_{i,\lambda}\bigr)^T$ for $1\le i\le
  n$. Let $\bm\Xi$ be the matrix in~\eqref{eq:Xi} and $\bm
  p=(1,p,\dots,p^\lambda)$. Recall that $\bm\Xi$ is primitive, $p^{-1}$ is
  the Perron-Frobenius eigenvalue of~$\bm\Xi$ and $\bm p$ is the
  corresponding left-hand Perron-Frobenius eigenvector. Consequently,
  by~\eqref{eq:Zpnum},
  \[a_i=p^{K_1}\bm p\bm a_i=p^{K_1+k}\bm p\bm\Xi^k\bm a_i,\quad
  \text{for all $k\ge0$}.\]
  Write
  \[b_{k,\ell} = \card\bigl\{S\in\ms_{\xs A,k}\colon\text{the ratio of $S$ is
    $r^{l+k+\ell}$}\bigr\}\quad\text{for $0\le \ell\le\lambda$},\]
  and $\bm b=(b_{K_1,0},\dots,b_{K_1,\lambda})^T$. Then we have
  \[\bm p\sum_{i=1}^n\bm a_i=p^{-K_1}\sum_{i=1}^na_i=p^{-K_1}\sum_{B\in\xs A}
  P_B(p)=p^{-l-K_1}\sum_{B\in\xs A}\mu(B) =\bm p\bm b.\]
  By Lemma~\ref{l:limMatrix}, as $k\to\infty$,
  \[p^k\bm\Xi^k\bm a_i\to(\bm p\cdot\bm a_i)\bm q\ \text{for}\ 1\le i\le n
  \quad\text{and}\quad p^k\bm\Xi^k\bm b\to(\bm p\cdot\bm b)\bm q.\]
  It follows that there exists an integer~$K_2>0$ such that
  \begin{equation}\label{eq:a<b}
    \bm\Xi^{K_2}\sum_{i=1}^{n-1}\bm a_i<\bm\Xi^{K_2}\bm b.
  \end{equation}
  Let $K'=K_1+K_2$. Note that $K'$ depends on the two sequences $P_B$
  ($B\in\xs A$) and $a_1,\dots,a_n$ since this holds for $K_1$ and $K_2$. We
  shall show that $K'$ has the desired property.

  Write $\bm\Xi^{K_2}\bm a_i=(a'_{i,0},\dots,a'_{i,\lambda})^T$ for $1\le
  i\le n-1$. By the definition of the matrix~$\bm\Xi$, we have
  $\bm\Xi^{K_2}\bm b=(b_{K',0}, \dots, b_{K',\lambda})^T$. It follows
  from~\eqref{eq:a<b} that
  \[a'_{1,\ell}+a'_{2,\ell}+\dots+a'_{n-1,\ell}<b_{K',\ell}\quad
  \text{for $0\le\ell\le\lambda$}.\]
  Note that $a'_{i,\ell}\ge0$ for $1\le i\le n$ and $0\le \ell\le\lambda$
  since $a_{i,\ell}\ge0$. Recall that
  \[b_{K',\ell}= \card\bigl\{S\in\ms_{\xs A,K'}\colon\text{the ratio of $S$ is
    $r^{l+K'+\ell}$}\bigr\}\quad\text{for $0\le \ell\le\lambda$}.\]
  So there exist disjoint subsets $\xc A_1,\dots,\xc A_{n-1}$ of $\ms_{\xs
  A,K'}$ such that
  \[\card\bigl\{S\in\xc A_i \colon \mu\bigl(S(E)\bigr)
  =p^{l+K'+\ell}\bigr\}=a'_{i,\ell}\]
  for $1\le i\le n-1$ and $0\le \ell\le\lambda$. Write $\xc A_n=\ms_{\xs
  A,K'}\setminus\bigcup_{i=1}^{n-1}\xc A_i$. We claim that the
  decomposition
  \[\ms_{\xs A,K'}=\bigcup_{i=1}^n\xc A_i\]
  satisfying
  \[\sum_{S\in\xc A_i}\mu\bigl(S(E)\bigr)=p^la_i
  \quad\text{for $1\le i\le n$}.\]
  In fact, for $1\le i\le n-1$,
  \[\sum_{S\in\xc A_i}\mu\bigl(S(E)\bigr)=p^{l+K'}\bm p
  \bm\Xi^{K_2}\bm a_i=p^{l+K_1}\bm p\bm a_i=p^la_i.\]
  And so
  \begin{multline*}
    \sum_{S\in\xc A_n}\mu\bigl(S(E)\bigr)=\sum_{S\in\ms_{\xs A,K'}}\mu
    \bigl(S(E)\bigr)-\sum_{i=1}^{n-1}\sum_{S\in\xc A_i}\mu\bigl(S(E)\bigr) \\
    =\sum_{B\in\xs A}\mu(B)-p^l\sum_{i=1}^{n-1}a_i=p^la_n.
  \end{multline*}

  \medskip

  In the second step, we will use the decomposition~\eqref{eq:4dvsum} to
  obtain a suitable decomposition. The idea is to approximate
  $\bigcup_{S\in\xc A_i}S(E)$ by $\bigcup_{S\in\xc A_i}\bigcup_{B\in\xs
  B^\circ_k}S(B)$, while the latter is a disjoint union of interior blocks
  (see Remark~\ref{r:inblock}(b)).

  For $1\le i\le n$, by~\eqref{eq:4dvsum'},
  \begin{multline*}
    \sum_{S\in\xc A_i}\biggl(\mu\bigl(S(E)\bigr)-\sum_{B\in\xs B^\circ_k}
    \mu\bigl(S(B)\bigr)\biggr)
    =\sum_{S\in\xc A_i}\sum_{B\in\xs B^\partial_k}
    \mu\bigl(S(B)\bigr)\\
    =\sum_{S\in\xc A_i}\sum_{B\in\xs B^\partial_k}
    \mu\bigl(S(E)\bigr)\cdot\mu(B)
    =p^{l+k}a_i\sum_{B\in\xs B^\partial_k}P_B(p).
  \end{multline*}
  Note that $a_iP_B(p)>0$ and $a_iP_B(p)\in I_\ms$ since $a_i\in I_\ms$. By
  Lemma~\ref{l:idealP}, we can further require that the positive integer $K'$
  in the first step also satisfies the condition that there exist integers
  $\gamma_{i,P_B,P}\ge0$ such that
  \[a_iP_B(p)=p^{K'}\sum_{P\in\xc P^\circ}\gamma_{i,P_B,P}P(p)\]
  for all $1\le i\le n$ and all $B\in\xs B^\partial_k$. Since
  $a_i\in\{\alpha_1,\dots,\alpha_\eta\}$ and $P_B\in\xc P$, such $K'$ depends
  on $\alpha_1,\dots,\alpha_\eta$ and $\ms$ rather than~$k$ or~$\xs
  B^\partial_k$. By above computation
  \begin{equation}\label{eq:addition}
    \sum_{S\in\xc A_i}\biggl(\mu\bigl(S(E)\bigr)-\sum_{B\in\xs B^\circ_k}
    \mu\bigl(S(B)\bigr)\biggr)=p^{l+K'+k}\sum_{P\in\xc P^\circ}
    \sum_{B\in\xs B^\partial_k}\gamma_{i,P_B,P}P(p).
  \end{equation}
  Recall that $\sum_{i=1}^na_i<\theta$, and so $n\le\theta\max_{1\le
  i\le\eta}\alpha_i^{-1}$. Write
  \[\gamma^*=\max_{i,P_B,P} \gamma_{i,P_B,P}.\]
  By Lemma~\ref{l:C(k)} and~\ref{l:CP(k)}, there exists an integer~$K_3>0$
  relying on~$\ms$ and $\alpha_1,\dots,\alpha_\eta$ such that for all
  $P\in\xc P^\circ$,
  \begin{equation}\label{eq:CP(K3)}
    \zeta_P(K_3)\ge\gamma^*\zeta(K_3)\theta\max_{1\le i\le\eta}\alpha_i^{-1}\ge
    n\gamma^*\zeta(K_3)\ge\sum_{i=1}^n\sum_{B\in\xs B^\partial_{K_3}}
    \gamma_{i,P_B,P}.
  \end{equation}
  For $1\le i\le n$ and $P\in\xc P^\circ$, write
  \[\xs A'_i=\bigl\{S(B)\colon S\in\xc A_i,B\in\xs B^\circ_{K_3}\bigr\}
  \quad\text{and}\quad\Upsilon_{i,P}=\sum_{B\in\xs B^\partial_{K_3}}
  \gamma_{i,P_B,P}.\]

  Now we consider $\xs A'_1$, $\xs A'_2$, \dots, $\xs A'_{n-1}$, which are
  families consisting of interior blocks. Equality~\eqref{eq:addition} says
  that, for each $P\in\xc P^\circ$ and each $1\le i\le n-1$, if we can find
  $\Upsilon_{i,P}$ many level-$(l+K'+K_3)$ interior blocks~$B$ such that
  $P_B=P$, and add them to~$\xs A'_i$, then $\mu(\bigsqcup\xs A'_i)$ is just
  equal to $p^la_i$. So we need to find such many interior blocks outside of
  $\xs A'_1$, $\xs A'_2$, \dots, $\xs A'_{n-1}$.

  It follows from \eqref{eq:a<b} that
  \[a'_{1,0}+a'_{2,0}+\dots+a'_{n-1,0}<b_{K',0}.\]
  According to the definition of~$\xc A_i$, the above inequality implies that
  $\xc A_n$ contains at least one $S$, say $S^*$, whose ratio is $r^{l+K'}$.
  By Remark~\ref{r:inblock}(b) and the definition of~$\zeta_P$, we know that, for
  each $P\in\xc P^\circ$, the family $\bigl\{S^*(B)\colon B\in\xs
  B^\circ_{K_3}\bigr\}$ contains $\zeta_P(K_3)$ many level-$(l+K'+K_3)$ interior
  blocks~$B$ such that $P_B=P$. Then inequality~\eqref{eq:CP(K3)} implies
  that there exist disjoint subfamilies $\xs A''_1$, \dots, $\xs A''_{n-1}$
  of $\bigl\{S^*(B)\colon B\in\xs B^\circ_{K_3}\bigr\}$ such that
  \[\card\{B\in\xs A''_i\colon P_B=P\}=\Upsilon_{i,P}\quad
  \text{for $1\le i\le n-1$ and $P\in\xc P^\circ$}.\]
  Since $S^*\in\xc A_n$, the families $\xs A'_1$, \dots, $\xs A'_{n-1}$,
  $\xs A''_1$, \dots, $\xs A''_{n-1}$ are disjoint.

  Notice that, for $B\in\bigcup_{i=1}^{n-1}\xs A'_i$, the level of~$B$ is at
  most $l+K'+K_3+\lambda$; while for all $B\in\bigcup_{i=1}^{n-1}\xs A''_i$
  the level of~$B$ is $l+K'+K_3$. Let $K=K'+K_3+\lambda$, then $K$ depends on
  $\ms$ and $\alpha_1$, \dots, $\alpha_\eta$. Define
  \[\xs A_i=\Bigl\{B\in\xs B_K(\xs A)\colon
    B\subset\bigsqcup\xs A'_i\cup\bigsqcup\xs A''_i\Bigr\}
    \quad\text{for $1\le i\le n-1$},
  \]
  and $\xs A_n=\xs B_K(\xs A)\setminus\bigcup_{i=1}^{n-1}\xs A_i$. Then $\xs
  B_K(\xs A)=\bigcup_{i=1}^n\xs A_i$ is a suitable decomposition, since
  \[\sum_{B\in\xs A_i}\mu(B)=\sum_{B\in\xs A'_i\cup\xs A''_i}\mu(B)
    =\sum_{S\in\xc A_i}S(E)=p^la_i
  \]
  for $1\le i\le n-1$ due to~\eqref{eq:addition}, and so $\sum_{B\in\xs
  A_n}\mu(B)=p^la_n$.
\end{proof}

\section{Interior Blocks and the Whole Set}\label{sec:LipBE}

Fix an $\ms=\{S_1,S_2,\dots,S_N\}\in\TOEC$. For notational convenience, we
denote $E_\ms$, $\mu_\ms$, $r_\ms$ and~$p_\ms$ shortly by~$E$, $\mu$, $r$
and~$p$, respectively. We also use notations as in Definition~\ref{d:ntt}.

The aim of this section is to prove the following proposition.
\begin{prop}\label{p:BlipE}
  We have $B_0\simeq E$ for all interior blocks~$B_0$.
\end{prop}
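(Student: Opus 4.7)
The plan is to apply Lemma~\ref{l:djt} with $F=E$ and $F'=B_0$, combined with Lemma~\ref{l:suitde} for constructing the matching and Lemma~\ref{l:cylin} for bi-Lipschitz-matching individual island pairs. Throughout, the finiteness of $\xc P^\circ$ from Proposition~\ref{p:finiteCP} is essential.

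First, I would set up a dense island structure on~$E$. By Definition~\ref{d:inblcok} and Remark~\ref{r:diameter}, every interior block $B\in\xs B^\circ_k$ is a $\varpi$-island of~$E$ in the sense of Definition~\ref{d:dnsilnd}: its diameter is at most $\varpi r^k|E|$, while it lies at distance at least $r^k|E|$ from $E\setminus B$ thanks to the SOSC. I would build the island family $\xs D$ iteratively: start by taking all level-$k_1$ interior blocks for a suitably large~$k_1$; inside each level-$k_1$ boundary block, extract level-$(k_1+k_2)$ interior sub-blocks for large~$k_2$; and continue. Lemma~\ref{l:CB(k)} guarantees $p^k\zeta^\partial(k)\to 0$, so after countably many rounds the union $\bigsqcup\xs D$ is dense in~$E$ (its complement has $\mu$-measure zero and is nowhere dense).

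Second, I would construct a matching family $\xs D'$ inside $B_0$. Each $D\in\xs D$ is an interior block of some level with measure polynomial $P_D\in\xc P^\circ$. Applying Lemma~\ref{l:suitde} with the finitely many values $\{\alpha_P=P(p):P\in\xc P^\circ\}\subset I_\ms$ (where membership in $I_\ms$ uses Lemma~\ref{l:ideal}), I would iteratively subdivide $B_0$ into disjoint pieces whose measures match $\{\mu(D):D\in\xs D\}$. Because Remark~\ref{r:inblock}(a) tells us every sub-block of $B_0$ is itself an interior block, each piece of $B_0$ is a disjoint union of interior sub-blocks whose measure polynomials can be chosen, by further refinement using Lemma~\ref{l:suitde} and the abundance of each polynomial type from Lemma~\ref{l:CP(k)}, to mirror the polynomial signature of the corresponding~$D$.

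Third, I would establish a uniform bi-Lipschitz constant for each matched pair $(D,D')$. Two interior blocks sharing the same measure polynomial decompose at each further level into the same multiset of diameter classes (since the recursion is driven by~$\ms$ and Remark~\ref{r:inblock}(b) preserves polynomials under $S_{\bm i}$), and all descendants remain interior by Remark~\ref{r:inblock}(a). This yields the same $(r,\varpi)$-cylinder structure in the sense of Definition~\ref{d:qscldr}, so Lemma~\ref{l:cylin} furnishes per-pair bi-Lipschitz maps with a constant depending only on~$\ms$. The pairwise distance estimate (i) of Definition~\ref{d:smdi} reduces, via Remark~\ref{r:diameter}, to the block separation built into Definition~\ref{d:block}. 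Lemma~\ref{l:djt} then concludes $E\simeq B_0$.

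The main obstacle will be Step~2: coordinating Lemma~\ref{l:suitde} so that the pieces of $B_0$ matched to islands of~$E$ not only have the correct total measure but also the correct measure polynomial distribution, despite the lemma prescribing only measures. Overcoming this requires interleaving the two constructions so that each new island on the $E$-side triggers a suitable refinement of $B_0$, with bookkeeping that keeps the island mapping well-defined and the cylinder-matching constants uniform over all matched pairs. A secondary point is arranging the iterative extraction of islands from~$E$ so that its levels $k_1,k_1+k_2,\dots$ are compatible with the levels at which Lemma~\ref{l:suitde} succeeds on the $B_0$-side.
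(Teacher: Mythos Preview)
Your overall architecture---build dense $\iota$-island structures on $E$ and on $B_0$ and invoke Lemma~\ref{l:djt}---is exactly the paper's plan. But Steps~2 and~3 each contain a genuine gap.

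\medskip

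\textbf{Step~3 is wrong as stated.} Two interior blocks with the same measure polynomial do \emph{not} in general have the same cylinder structure. The polynomial $P_B$ records only the multiset of ratios $\{r^{k+\ell}\}$ of the maps $S\in\ms_B$; it says nothing about how the copies $S(E)$ sit relative to one another. Two blocks $B_1,B_2\in\xs B^\circ_l$ with $P_{B_1}=P_{B_2}$ can have completely different level-$(l{+}1)$ block decompositions, because which copies merge into a common sub-block depends on their mutual distances. Remark~\ref{r:inblock}(b) only tells you that $S_{\bm i}(B)$ inherits $P_B$; it does not say that any two blocks with the same polynomial are similar. So you cannot feed Lemma~\ref{l:cylin} directly.

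\textbf{Step~2 is infeasible as stated.} You propose to partition $B_0$ via Lemma~\ref{l:suitde} into pieces whose measures match $\{\mu(D):D\in\xs D\}$. But $\sum_{D}\mu(D)=\mu(E)=1$ while $\mu(B_0)=p^lP_{B_0}(p)\ne 1$ in general, so Lemma~\ref{l:suitde} does not even apply. Normalizing does not help: Example~\ref{e:NML} shows that ratios $\mu(D)/\mu(E)$ need not be realizable as $\nu(D')/\nu(B_0)$ for any union of interior sub-blocks $D'\subset B_0$. The measure-linear matching you describe is precisely the obstruction the paper flags at the start of Section~\ref{sec:LipBE}.

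\medskip

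The paper resolves both issues by first proving a much stronger auxiliary result, Lemma~\ref{l:blockslip}: \emph{any} two nonempty finite families of interior blocks (with bounded structure parameter $k_0$) are bi-Lipschitz equivalent with a constant depending only on $k_0$---no measure or polynomial matching required. This lemma is itself nontrivial: it combines Lemma~\ref{l:4blockslip} (equal measure sums, handled by Lemma~\ref{l:suitde} $+$ Lemma~\ref{l:cylin}) with Lemma~\ref{l:4blockslip'} (same polynomial but different cardinalities, handled by a \emph{nested} dense-island argument), and then partitions by polynomial type. Once Lemma~\ref{l:blockslip} is available, the island matching in the proof of Proposition~\ref{p:BlipE} becomes purely combinatorial: one picks $\ell$ with $\zeta^\partial(\ell)<\min_B\card\xs B_\ell(B)$ (Lemma~\ref{l:CB(k)}), defines an injection $\Gamma$ from the tree of boundary blocks of $E$ into the block tree of $B_0$ by cardinality alone, and takes the islands on each side to be the complementary interior pieces at each stage. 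No measure bookkeeping is needed at this top level; Lemma~\ref{l:blockslip} supplies the uniform per-island maps $f_D$.
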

Let us say something about the proof of Proposition~\ref{p:BlipE}.
Example~\ref{e:NML} implies that, in some cases, it is impossible to
construct the same cylinder structure for $E$ and~$B_0$ under the guidance of
measure linear property. Fortunately, we find that $E$ and $B_0$ have the
same dense island structure. And so Proposition~\ref{p:BlipE} follows from
Lemma~\ref{l:djt}. The difficult is how to define the islands and the
bi-Lipschitz mapping $f_D$ between two islands $D$ and $\tilde f(D)$. The
results in Section~\ref{ssec:BDnum} say that almost all blocks are interior
blocks. So it is natural to define the islands to be the finite unions of
interior blocks. This means that we need consider the bi-Lipschitz mappings
between two finite unions of interior blocks. We do this in
Section~\ref{ssec:LipBB}, and then give the proof of
Proposition~\ref{p:BlipE} in Section~\ref{ssec:LipBE}.

\subsection{The Lipschitz equivalence of interior blocks}\label{ssec:LipBB}

For $A\in\xs B_l$ and $\xs A\subset\xs B_l$ ($l\ge0$), recall that
$\bigsqcup\xs A=\bigcup_{A\in\xs A}A$,
\[\xs B_k(A)=\bigl\{B\in\xs B_{l+k}\colon B\subset A\bigr\}
\quad\text{and}\quad
\xs B_k(\xs A)=\left\{B\in\xs B_{l+k}\colon B\subset\bigsqcup\xs A\right\}.\]

\begin{defn}\label{d:strc}
  Let $\xs A$ be a nonempty family of interior blocks. We say that $\xs A$
  has $(A;l,k)$-structure if $A\in\xs B_l$ is a level-$l$ block and $\xs
  A\subset\xs B_k^\circ(A)$.
\end{defn}
\begin{lem}\label{l:blockslip}
  For each $k_0\ge1$, there exists a constant~$L$ depending only on~$k_0$
  with the following property. Let $\xs A$ and $\xs A'$ be two nonempty
  families of interior blocks such that $\xs A$ has $(A;l_1,k_1)$-structure
  and $\xs A'$ has $(A';l_2,k_2)$-structure, where $\max(k_1,k_2)\le k_0$.
  Then there is a bi-Lipschitz mapping $f\colon\bigsqcup\xs A\to\bigsqcup\xs
  A'$ such that
  \[L^{-1}r^{-l_1}|x-y|\le r^{-l_2}|f(x)-f(y)|\le Lr^{-l_1}|x-y|
  \quad\text{for}\ x,y\in\bigsqcup\xs A.\]
  In particular, we have $\bigsqcup\xs A\simeq\bigsqcup\xs A'$ for any two
  nonempty families of interior blocks.
\end{lem}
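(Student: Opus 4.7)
The plan is to reduce the lemma to Lemma~\ref{l:cylin} by producing matching cylinder structures on $F := \bigsqcup\xs A$ and $F' := \bigsqcup\xs A'$ with parameters $(\varrho,\iota)$ depending only on $k_0$. Once this is done, Lemma~\ref{l:cylin} yields a bi-Lipschitz map $f\colon F\to F'$ with
\[
L_0^{-1}|x-y|/|F|\le|f(x)-f(y)|/|F'|\le L_0|x-y|/|F|.
\]
Using Remark~\ref{r:diameter} together with the two-sided estimate $\varpi^{-1}r^{l_1+k_1}|E|\le|F|\le\varpi r^{l_1}|E|$ (and the analogue for $F'$), valid because each block in $\xs A$ has level between $l_1$ and $l_1+k_0$, one converts the $|F|,|F'|$ normalizations into the $r^{-l_1},r^{-l_2}$ normalizations in the statement, at the cost of a further factor depending only on $k_0$.

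For the cylinder families I would use the hierarchical block decomposition. By Remark~\ref{r:inblock}(a), for every $n\ge0$ each set in $\xs B_n(\xs A)$ is an interior block, so these sets partition $F$ into level-$(l_1+k_1+n)$ interior blocks; by Definition~\ref{d:block} their pairwise distances are at least $r^{l_1+k_1+n}|E|$, and by Remark~\ref{r:diameter} their diameters are comparable to $r^{l_1+k_1+n}|E|$. After normalization by $|F|$, this gives an $(r,\iota)$-cylinder structure with $\iota$ depending only on $\ms$ and $k_0$. The analogous construction applies to $F'$.

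The main obstacle, and the real content of the proof, is not producing a cylinder structure on each side but arranging the two families so that they form the \emph{same} cylinder structure in the sense of Definition~\ref{d:smcyl}: one needs a bijection $\tilde f$ between the cylinder families that matches cardinalities at every scale and preserves nesting. Since the raw counts $|\xs B_n(\xs A)|$ and $|\xs B_n(\xs A')|$ grow like $p^{-n}$ times generically different constants, I would build the two cylinder families inductively in $n$, at each stage refining only \emph{some} of the scale-$n$ interior sub-blocks on each side so as to keep the cardinalities equal and to respect the previously constructed nesting. The room to do so is supplied by Lemma~\ref{l:CB(k)}, which guarantees that every interior block contains at least $\asymp p^{-k}$ interior sub-blocks of any prescribed measure-polynomial type, combined with the finiteness of measure polynomials (Proposition~\ref{p:finiteCP}). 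A pigeonhole/matching argument over the finitely many available types, carried out simultaneously on both sides at each scale, yields the required bijection $\tilde f$ while keeping all diameters and separations under uniform $(\varrho,\iota)$ control.

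Once $\tilde f$ is in hand, Lemma~\ref{l:cylin} produces the bi-Lipschitz map with constant depending only on $(\varrho,\iota)$, hence only on $k_0$ and $\ms$; the final assertion $\bigsqcup\xs A\simeq\bigsqcup\xs A'$ is immediate. The delicate point to keep honest is that when merging or refining sub-blocks to equalize cardinalities, the resulting composite cylinders remain spatially compact enough that their diameters stay comparable to $\varrho^n|F|$; I would ensure this by grouping only sub-blocks belonging to the same parent cylinder one scale up, so that geometric proximity is inherited from the ambient Euclidean structure.
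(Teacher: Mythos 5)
Your plan---build matching cylinder structures on $\bigsqcup\xs A$ and $\bigsqcup\xs A'$ and appeal to Lemma~\ref{l:cylin}---is not the route the paper takes, and the gap in your sketch is precisely where the paper deliberately switches tools. The paper splits the lemma into two sub-cases: Lemma~\ref{l:4blockslip} (equal measure-polynomial sums $\sum P_B(p)=\sum P_{B'}(p)$), which is proved by cylinder structure exactly as you envisage but \emph{crucially} uses the measure-linear normalization~\eqref{eq:ML} together with the suitable-decomposition machinery of Lemma~\ref{l:suitde} to get the cylinder bijection; and Lemma~\ref{l:4blockslip'} (all blocks share one measure polynomial), which is proved via the \emph{dense island structure} of Lemma~\ref{l:djt}, not a cylinder structure at all. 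The full Lemma~\ref{l:blockslip} is then obtained by partitioning both families by measure-polynomial type (the sets $\xs C_P$, $\xs D_P$) and applying Lemma~\ref{l:4blockslip'} to each type. Your proposal collapses all of this into one inductive cylinder-matching step and leaves the real work (``a pigeonhole/matching argument\dots yields the required bijection'') asserted rather than carried out.

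The specific gap is that when $\sum_{B\in\xs A}P_B(p)\ne\sum_{B\in\xs A'}P_B(p)$ there is no reason a nesting-preserving cylinder bijection with uniform $(\varrho,\iota)$ control should exist, and the paper explicitly warns against this (Section~\ref{ssec:ML} and Example~\ref{e:NML}): in the ``same measure polynomial but different cardinality'' case a measure-linear map may not exist, and the inductive bijection you describe accumulates a ``measure defect'' $\mu(C)/\mu(\tilde f(C))$ that must be corrected at every scale to stay bounded by $p^{-K}$---otherwise the cardinalities of sub-blocks available at the next level cannot match. You neither state this invariant nor explain how to maintain it. A second concrete problem is your last paragraph: grouping sub-blocks that belong to ``the same parent \emph{cylinder} one scale up'' does not give the required geometric control, because a parent cylinder can itself be a union of blocks spread out across a level-$(n-1)$ block; if every level-$n$ cylinder is allowed to span its parent cylinder, the diameter bound $|C|\le\iota\varrho^n|F|$ degenerates by a factor $\varrho^{-1}$ at each scale and compounds. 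The paper's construction instead keeps each new cylinder inside a single block at a \emph{fixed} block-level offset from the previous one (see the passage around~\eqref{eq:cylin}, where the level-$k$ cylinders $\bigsqcup\xs C_{B'}$ are unions of blocks in $\xs B_{2K}(B_0)$ for a single block $B_0$), which is what keeps $\iota$ uniform. The dense-island construction in Lemma~\ref{l:4blockslip'} sidesteps the cardinality mismatch entirely by dumping the discrepancy into a shrinking chain $B^{(k)}_1\supset B^{(k+1)}_1\supset\cdots$ converging to a single point, which the pure-cylinder viewpoint has no analogue of.
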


The proof of Lemma~\ref{l:blockslip} is based on two special cases,
Lemma~\ref{l:4blockslip} and~\ref{l:4blockslip'}.

\begin{lem}\label{l:4blockslip}
  For every $k_0\ge1$, there exists a constant~$L$ depending only on~$k_0$
  with the following property. Let $\xs A$ and $\xs A'$ be two nonempty
  families of interior blocks such that $\xs A$ has $(A;l_1,k_1)$-structure
  and $\xs A'$ has $(A';l_2,k_2)$-structure, where $\max(k_1,k_2)\le k_0$. If
  \[\sum_{B\in\xs A}P_B(p)=\sum_{B\in\xs A'}P_B(p),\]
  then there is a bi-Lipschitz mapping $f\colon\bigsqcup\xs
  A\to\bigsqcup\xs A'$ such that
  \[L^{-1}r^{-l_1}|x-y|\le r^{-l_2}|f(x)-f(y)|\le Lr^{-l_1}|x-y|
  \quad\text{for}\ x,y\in\bigsqcup\xs A.\]
\end{lem}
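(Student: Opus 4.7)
The plan is to invoke Lemma~\ref{l:cylin} after endowing $F:=\bigsqcup\xs A$ and $F':=\bigsqcup\xs A'$ with the same $(r^K,\iota)$-cylinder structure, where $K$ will depend only on $\ms$ and $\iota$ only on $\ms$ and $k_0$. Lemma~\ref{l:cylin} then yields a bi-Lipschitz bijection $f\colon F\to F'$ with $\rho(f(x),f(y))/|F'|\asymp\rho(x,y)/|F|$, and rescaling via $|F|\asymp r^{l_1}|E|$ and $|F'|\asymp r^{l_2}|E|$ (which follow from Remark~\ref{r:diameter} with multiplicative constants controlled by $k_0$) will produce the claimed inequality. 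As a preliminary, Proposition~\ref{p:finiteCP} makes $\{P(p):P\in\xc P^\circ\}$ finite; I enumerate it as $\alpha_1,\dots,\alpha_\eta$ and take $K$ to be the constant produced by Lemma~\ref{l:suitde} for these $\alpha_j$.

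I shall construct the cylinder families $\xs C_n$ on $F$ and $\xs C'_n$ on $F'$, together with a level- and containment-preserving bijection $\tilde f$, by induction on $n\ge 0$. The invariant is that each cylinder of $\xs C_n$ has the form $\bigsqcup\xs X$ for a subfamily $\xs X$ of interior blocks at source level $l_1+k_1+nK$ (analogously $l_2+k_2+nK$ on the target side), and that matched pairs satisfy
\[\sum_{B\in\xs X}P_B(p)=\sum_{B\in\xs X'}P_B(p).\]
At $n=0$ I set $\xs C_0=\{F\}$ and $\xs C'_0=\{F'\}$, so the invariant reduces to the hypothesis. Given a matched pair with common sum $s$, I list the $\eta$-valued sequence $(P_B(p))_{B\in\xs X}$ as $a_1,\dots,a_m$; it is both $(\xs X;\alpha_1,\dots,\alpha_\eta)$-suitable and $(\xs X';\alpha_1,\dots,\alpha_\eta)$-suitable, since the two $P$-sums coincide. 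Applying Lemma~\ref{l:suitde} to each family yields matched decompositions $\xs B_K(\xs X)=\bigcup_i\xs X_i$ and $\xs B_K(\xs X')=\bigcup_i\xs X'_i$ with common normalized $P$-sum $a_i/p^K$; taking $(\bigsqcup\xs X_i,\bigsqcup\xs X'_i)$ as the level-$(n+1)$ pairs preserves the invariant. The new sum $a_i/p^K$ is again expressible as an $\eta$-valued sum, since $\alpha_j/p^K=\sum_{B''\in\xs B^\circ_K(B)}P_{B''}(p)$ for any interior block $B$ with $P_B(p)=\alpha_j$, so the recursion continues indefinitely.

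Two bounds must be verified to complete the cylinder structure. The separation bound is immediate: distinct level-$n$ cylinders are disjoint unions of interior blocks at level $l_1+k_1+nK$, so by Definition~\ref{d:block} they lie at distance at least $r^{l_1+k_1+nK}|E|$, and dividing by $|F|\le\varpi r^{l_1}|E|$ gives $\dist(C_1,C_2)/|F|\ge\varpi^{-1}r^{k_0}\cdot r^{nK}$. For the diameter bound, the key observation is that the cardinality of each $\xs X_i$ is uniformly bounded: the normalized $P$-sum $a_i/p^K\le\max_j\alpha_j/p^K$ while each summand $P_B(p)\ge\min_j\alpha_j$, whence $|\xs X_i|\le M:=\max_j\alpha_j/(p^K\min_j\alpha_j)$, which depends only on $\ms$.

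The main obstacle, however, is geometric. Lemma~\ref{l:suitde} imposes no constraint on \emph{which} blocks are grouped into each $\xs X_i$, and a priori the $M$ blocks of a single child cylinder could be scattered across the parent, giving a diameter comparable to $|F|$ rather than to $r^{nK}|F|$. To overcome this, the inductive construction must be strengthened so that each child $\bigsqcup\xs X_i$ lies inside a single constituent block of its parent cylinder; this will be accomplished by replacing the single global application of Lemma~\ref{l:suitde} with local applications inside each parent constituent block, coupled with a global bookkeeping argument that coordinates the local partitions on source and target so that the resulting multisets of child $P$-sums still agree. This coordination is possible because the total sum $s$ is equal on both sides and the distribution of $\alpha_j$-values can be matched using the freedom afforded by Lemma~\ref{l:suitde}. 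Granting this localization, each level-$n$ child lies inside a single block at level $l_1+k_1+(n-1)K$, whence $|C|/|F|\le\varpi^2 r^{-k_0-K}\cdot r^{nK}$ and $\iota$ depends only on $\ms$ and $k_0$. Lemma~\ref{l:cylin} then produces the desired bi-Lipschitz map, and rescaling concludes the proof.
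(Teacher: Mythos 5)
Your framework is right — encode both families with matched cylinder structures via Lemma~\ref{l:suitde} and invoke Lemma~\ref{l:cylin} — and you correctly identify the real obstacle: Lemma~\ref{l:suitde} gives measure-matched decompositions but says nothing about spatial locality, so the blocks of a child cylinder could be scattered across the whole parent, destroying the diameter decay $|C|\asymp\varrho^n|F|$ that Definition~\ref{d:qscldr} requires. The problem is that you leave the fix essentially as an announcement (``local applications\ldots coupled with a global bookkeeping argument\ldots Granting this localization''). That bookkeeping is the entire difficulty: if a parent on the source side is a union of blocks $B_1,\dots,B_m$ and the matched target parent is a union of blocks $B'_1,\dots,B'_{m'}$ with possibly different $m'$ and different individual $P$-values, then grouping children inside each $B_i$ locally requires redistributing measure across the $B'_j$ in a way that Lemma~\ref{l:suitde} by itself does not control, and you give no mechanism for this.

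The paper's proof dissolves the issue with an alternation trick you did not find. It sets $\xs C_k=\xs B_{(k-1)K}(\xs A)$ for odd $k$ and $\xs C'_k=\xs B_{(k-1)K}(\xs A')$ for even $k$; i.e.\ on alternating levels the cylinder family on one side is forced to be the \emph{natural} block partition of that side, which consists of single level-blocks and therefore has automatic diameter control by Remark~\ref{r:diameter}. On the intermediate levels, the other side's cylinder family is built by applying Lemma~\ref{l:suitde} \emph{inside a single block} $B_0\in\xs C_{k-1}$ (so $\xs B_{2K}(B_0)$ is partitioned to match the natural children $B'\subset\tilde f(B_0)$, $B'\in\xs C'_k$). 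Because the parent $B_0$ is a single block, all children are contained in $B_0$ and hence have diameter $\le|B_0|\asymp r^{(k-2)K}$; together with the lower bound from containing at least one level-$(kK)$ block this yields $\iota=\varpi^2r^{-k_0-2K}$. This is precisely the ``localization'' you wanted, obtained for free from the alternation rather than from extra bookkeeping. Without introducing this (or an equivalent device), your proof does not close.
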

\begin{proof}
  Let $F=\bigsqcup\xs A$ and $F'=\bigsqcup\xs A'$. We shall show that $F$ and
  $F'$ have the same cylinder structure, then this lemma follows from
  Lemma~\ref{l:cylin}. For this, we make use of Lemma~\ref{l:suitde} to
  define a cylinder mapping satisfying~\eqref{eq:ML}. Take the positive
  numbers $\alpha_1,\dots,\alpha_\eta$ in Lemma~\ref{l:suitde} to be
  $\{P(p)\colon P\in\xc P^\circ\}$ and $K$ the corresponding integer
  constant. The cylinder families $\xs C_k$ and $\xs C'_k$ are defined as
  follows. Define $\xs C_1=\xs A$ and
  \[\begin{cases}
    \xs C_k=\xs B_{(k-1)K}(\xs A)&\text{for $k$ is odd};\\
    \xs C'_k=\xs B_{(k-1)K}(\xs A')&\text{for $k$ is even}.
  \end{cases}\]
\begin{center}
  \begin{tabular}{*8c}
    \hline
    & $k=1$ & $k=2$ & $k=3$ & $k=4$ & $k=5$ & $k=6$ & \dots
    \rule[-1ex]{0pt}{3.5ex} \\
    \hline
    $\xs C_k$:  & $\xs A$ &   & $\xs B_{2K}(\xs A)$ &
      & $\xs B_{4K}(\xs A)$ &   & \dots \rule[-1ex]{0pt}{3.5ex} \\
    $\xs C'_k$: &   & $\xs B_K(\xs A')$ &   & $\xs B_{3K}(\xs A')$
      &   & $\xs B_{5K}(\xs A')$ & \dots \rule[-1ex]{0pt}{3.5ex} \\
    \hline
  \end{tabular}
\end{center}
  It remains to define $\xs C_k$ for $k$ is even, $\xs C'_k$ for $k$ is odd
  and the cylinder mapping~$\tilde f$.

  We begin with the definition of~$\xs C'_1$ by making use of
  Lemma~\ref{l:suitde}. Since $P_B\in\xc P^\circ$ for all $B\in\xs C_1=\xs A$
  and
  \[\sum_{B\in\xs C_1}P_B(p)=\sum_{B\in\xs A}P_B(p)=\sum_{B\in\xs A'}P_B(p),\]
  we know that the sequence $\{P_B(p)\colon B\in\xs C_1\}$ are $(\xs
  A';\alpha_1,\dots,\alpha_\eta)$-suitable by Definition~\ref{d:suit},
  where $\{\alpha_1\dots,\alpha_\eta\}=\{P(p)\colon P\in\xc P^\circ\}$. Since
  $\xs A'\subset\xs B_{k_2}^\circ(A')\subset\xs B_{l_2+k_2}^\circ$, by
  Lemma~\ref{l:suitde}, for positive numbers $\{p^{l_2+k_2}P_B(p)\colon
  B\in\xs C_1\}$, there is a suitable decomposition
  \[\xs B_K(\xs A')=\bigcup_{B\in\xs C_1}\xs C_B\]
  such that
  \[\sum_{B'\in\xs C_B}\mu(B')=p^{l_2+k_2}P_B(p)\quad
  \text{for all $B\in\xs C_1=\xs A$}.\]
  Define
  \[\xs C'_1=\left\{\bigsqcup\xs C_B\colon B\in\xs C_1\right\},\]
  and $\tilde f\colon\xs C_1\to\xs C'_1$ by $\tilde f(B)=\bigsqcup\xs C_B$.
  It is easy to see that $\xs C'_1\prec\xs C'_2=\xs B_K(\xs A')$. We also
  have that
  \[p^{-l_1-k_1}\mu(B)=p^{-l_2-k_2}\mu(\tilde f(B))\quad
  \text{for all $B\in\xs C_1$},\]
  since $p^{-l_2-k_2}\mu(\tilde f(B))=p^{-l_2-k_2}\sum_{B'\in\xs C_B}\mu(B')
  =P_B(p)=p^{-l_1-k_1}\mu(B)$.

  Now suppose that the cylinder families $\xs C_1$, \dots, $\xs C_{k-1}$, $\xs
  C'_1$, \dots, $\xs C'_{k-1}$ and the cylinder mapping $\tilde f$ have
  been defined such that $\tilde f$ maps $\xs C_j$ onto $\xs C'_j$ for
  $1\le j\le k-1$ and
  \begin{equation}\label{eq:muC}
    p^{-l_1-k_1}\mu(C)=p^{-l_2-k_2}\mu(\tilde f(C))\quad
   \text{for all $C\in\bigcup_{j=1}^{k-1}\xs C_j$}.
  \end{equation}
  We shall define $\xs C_k$, $\xs C'_k$ and $\tilde f\colon\xs C_k\to\xs
  C'_k$. Suppose without loss of generality that $k$ is even, then $\xs
  C'_k=\xs B_{(k-1)K}(\xs A')$. We consider the suitable decomposition of $\xs
  B_K(B_0)$ for each $B_0\in\xs C_{k-1}=\xs B_{(k-2)K}(\xs A)$.
  By~\eqref{eq:muC},
  \begin{multline*}
    \sum_{\substack{B'\subset\tilde f(B_0)\\B'\in\xs C'_k}}P_{B'}(p)
    =p^{-l_2-k_2-(k-1)K}\mu(\tilde f(B_0))\\
    =p^{-l_1-k_1-(k-1)K}\mu(B_0)
    =\sum_{B\in\xs B_K(B_0)}P_B(p).
  \end{multline*}
  And so the sequence $\{P_{B'}(p)\colon B'\subset\tilde f(B_0), B'\in\xs
  C'_k\}$ are $\{\xs B_K(B_0);\alpha_1,\dots,\alpha_\eta\}$-suitable by
  Definition~\ref{d:suit}, where
  $\{\alpha_1\dots,\alpha_\eta\}=\{P(p)\colon P\in\xc P^\circ\}$. By
  Lemma~\ref{l:suitde}, for positive numbers
  \[\Bigl\{p^{l_1+k_1+(k-1)K}P_{B'}(p)\colon B'\subset\tilde f(B_0),
    B'\in\xs C'_k\Bigr\},\]
  there is a suitable decomposition
  \begin{equation}\label{eq:cylin}
    \xs B_K(\xs B_K(B_0))=\xs B_{2K}(B_0)=
    \bigcup_{\substack{B'\subset\tilde f(B_0)\\B'\in\xs C'_k}}\xs C_{B'}
  \end{equation}
  such that
  \[\sum_{B\in\xs C_{B'}}\mu(B)=p^{l_1+k_1+(k-1)K}P_{B'}(p)\quad
  \text{for all $B'\in\xs C'_k$ and $B'\subset\tilde f(B_0)$}.\]
  Indeed, we obtain $\xs C_{B'}$ for all $B'\in\xs C'_k$ by above argument
  since for every $B'\in\xs C'_k$, there is a unique $B_0\in\xs C_{k-1}$ such
  that $B'\subset\tilde f(B_0)$. Then we define
  \[\xs C_k=\left\{\bigsqcup\xs C_{B'}\colon B'\in\xs C'_k\right\}\]
  and $\tilde f\colon\xs C_k\to\xs C'_k$ by $\tilde f\left(\bigsqcup\xs
  C_{B'}\right)=B'$. By~\eqref{eq:cylin}, we know that
  \[\xs B_{(k-2)K}(\xs A)=\xs C_{k-1}\prec\xs C_k\prec\xs C_{k+1}
  =\xs B_{kK}(\xs A).\]
  We also have $p^{-l_1-k_1}\mu(C)=p^{-l_2-k_2}\mu(\tilde f(C))$ for all
  $C\in\xs C_k$. If $k$ is odd, we can define $\xs C_k$, $\xs C'_k$ and
  $\tilde f\colon\xs C_k\to\xs C'_k$ by a similar argument. Thus, by
  induction on~$k$, we finally obtain all the cylinder families $\xs C_k$,
  $\xs C'_k$ and the cylinder mapping~$\tilde f$.

  To prove $F=\bigsqcup\xs A$ and $F'=\bigsqcup\xs A'$ have the same cylinder
  structure, it remains to compute the constants $\varrho$ and $\iota$. Since
  $F\subset A\in\xs B_{l_1}$ and $F$ contains at least one level-$(l_1+k_1)$
  interior block, by Remark~\ref{r:diameter}, we have
  \[\varpi^{-1}r^{l_1+k_1}|E|\le|F|\le\varpi r^{l_1}|E|.\]
  Let $C,C_1,C_2\in\xs C_k$, where $C_1$ and $C_2$ are distinct. If $k$ is
  odd, then $\xs C_k=\xs B_{(k-1)K}(\xs A)\subset\xs
  B_{l_1+k_1+(k-1)K}^\circ$, by Remark~\ref{r:diameter} and the definition of
  blocks (Definition~\ref{d:block}), we have
  \begin{align*}
    \varpi^{-1}r^{l_1+k_1+(k-1)K}|E|&\le|C|\le\varpi r^{l_1+k_1+(k-1)K}|E|;\\
    r^{l_1+k_1+(k-1)K}|E|&\le\dist(C_1,C_2).
  \end{align*}
  If $k$ is even, then by the definition of $\xs C_k$, we know that $\xs
  B_{(k-2)K}(\xs A)=\xs C_{k-1}\prec\xs C_k\prec\xs C_{k+1}=\xs B_{kK}(\xs
  A)$. And so
  \begin{align*}
    \varpi^{-1}r^{l_1+k_1+kK}|E|&\le|C|\le\varpi r^{l_1+k_1+(k-2)K}|E|;\\
    r^{l_1+k_1+kK}|E|&\le\dist(C_1,C_2).
  \end{align*}
  As a summary, $F$ has the $(\varrho,\iota)$-cylinder structure for
  $\varrho=r^K$ and $\iota=\varpi^2r^{-k_0-2K}$, (recall that
  $k_0\ge\max(k_1,k_2)$). A similar argument shows that $F'$ also has the
  $(\varrho,\iota)$-cylinder structure for $\varrho=r^K$ and
  $\iota=\varpi^2r^{-k_0-2K}$. Then by the cylinder mapping~$\tilde f$, we
  know that $F$ and $F'$ have the same $(\varrho,\iota)$-cylinder structure.
  Therefore, by Lemma~\ref{l:cylin}, there is a bi-Lipschitz mapping~$f$ such
  that
  \[\varrho\iota^{-2}|x-y|/|F|\le|f(x)-f(y)|/|F'|\le
  \varrho^{-1}\iota^2|x-y|/|F|\quad\text{for distinct $x,y\in F$}.\]
  It is easy to check that
  \[\varpi^{-2}r^{l_1-l_2+k_0}\le|F|/|F'|\le\varpi^2r^{l_1-l_2-k_0}.\]
  Therefore, we have
  \[L^{-1}r^{-l_1}|x-y|\le r^{-l_2}|f(x)-f(y)|\le Lr^{-l_1}|x-y|
    \quad\text{for}\ x,y\in F=\bigsqcup\xs A,\]
  where $L=\varrho^{-1}\iota^2\varpi^2r^{-k_0}=\varpi^6r^{-3k_0-5K}$ depends
  only on~$k_0$ since $\varpi$ and $K$ are all constants related to the
  IFS~$\ms$.
\end{proof}

\begin{lem}\label{l:4blockslip'}
  For each $k_0\ge1$, there exists a constant~$L$ depending only on~$k$ with
  the following property. Let $\xs A$ and $\xs A'$ be two nonempty families
  of interior blocks such that $\xs A$ has $(A;l_1,k_1)$-structure and $\xs
  A'$ has $(A';l_2,k_2)$-structure, where $\max(k_1,k_2)\le k_0$. If all
  interior blocks in~$\xs A\cup\xs A'$ have the same measure polynomial, then
  there is a bi-Lipschitz mapping $f\colon\bigsqcup\xs A\to\bigsqcup\xs A'$
  such that
  \[L^{-1}r^{-l_1}|x-y|\le r^{-l_2}|f(x)-f(y)|\le Lr^{-l_1}|x-y|
  \quad\text{for}\ x,y\in\bigsqcup\xs A.\]
\end{lem}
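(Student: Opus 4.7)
The plan is to follow the structure of the proof of Lemma~\ref{l:4blockslip}, but replace the cylinder-structure mechanism (Lemma~\ref{l:cylin}) with the dense island mechanism (Lemma~\ref{l:djt}), since the sums $\sum_{B \in \xs A} P_B(p) = \card(\xs A) \cdot P(p)$ and $\sum_{B \in \xs A'} P_B(p) = \card(\xs A') \cdot P(p)$ need not coincide and so Lemma~\ref{l:4blockslip} does not apply directly. In the easy case $\card(\xs A) = \card(\xs A')$ the two sums are equal, and Lemma~\ref{l:4blockslip} itself finishes the argument immediately; henceforth I will assume $\card(\xs A) \neq \card(\xs A')$.

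Lemma~\ref{l:CB(k)} supplies a positive constant $c$ and a threshold $K_0$ such that every block in~$\xs B$ (interior or boundary) contains at least $c \cdot p^{-K}$ interior sub-blocks with polynomial~$P$ at level $+K$, for every $K \ge K_0$. Using this, I will recursively construct a dense $\varpi$-island family $\xs D$ of $F = \bigsqcup \xs A$ as follows: at stage $j \ge 1$, within every level-$(l_1 + k_1 + (j-1)K)$ sub-block of~$F$ disjoint from $\bigsqcup \xs D$, adjoin one level-$(l_1 + k_1 + jK)$ interior sub-block with polynomial~$P$. Each island is a single block, hence a $\varpi$-island by the block separation property and Remark~\ref{r:diameter}. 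Since every un-selected ancestor contains at least one island among its descendants by construction, $\bigsqcup \xs D$ is dense in~$F$. Define $\xs D'$ for $F' = \bigsqcup \xs A'$ analogously, using the levels $l_2 + k_2 + jK$.

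Next I will define a bijection $\tilde f \colon \xs D \to \xs D'$ compatible with Lemma~\ref{l:djt}. The strategy is hierarchical: pair up $\min(\card(\xs A),\card(\xs A'))$ blocks at the top level and, within each matched pair, recurse on the respective sub-islands. The cardinality mismatch at the top level (the $|\card(\xs A) - \card(\xs A')|$ excess blocks on one side) is absorbed by pairing these excess blocks with polynomial-$P$ sub-islands drawn from deeper stages inside the blocks on the smaller side; the linear-in-$p^{-K}$ supply of polynomial-$P$ sub-blocks provided by Lemma~\ref{l:CP(k)} guarantees that such deeper-stage sub-islands are abundant. Because every matched pair $(D, \tilde f(D))$ consists of single polynomial-$P$ blocks, Lemma~\ref{l:4blockslip} applied to the singleton families $\{D\}$ and $\{\tilde f(D)\}$ yields a bi-Lipschitz mapping $f_D \colon D \to \tilde f(D)$ with a uniform constant $L_0$, namely the constant of Lemma~\ref{l:4blockslip} attached to $k_0 = 0$. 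Finally, an application of Lemma~\ref{l:djt} glues the $f_D$'s into a global bi-Lipschitz mapping $f \colon F \to F'$ with the asserted scaling.

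The main obstacle will be the last step of the construction of $\tilde f$: verifying the separation condition of Definition~\ref{d:smdi}(i), namely that distances between island pairs in~$F$ are comparable, up to the overall rescaling factor $r^{l_2 - l_1}$, to distances between the matched island pairs in~$F'$. To control this, the matching $\tilde f$ must respect a common nested-block ancestry: islands whose nearest common ancestor is a level-$(l_1 + k_1 + jK)$ block on the $F$-side should be matched to islands whose nearest common ancestor is a level-$(l_2 + k_2 + jK)$ block on the $F'$-side. Arranging such a matching while simultaneously absorbing the top-level cardinality mismatch into deeper stages of the recursion requires careful bookkeeping, but this is precisely what the uniform lower bound from Lemma~\ref{l:CB(k)} makes feasible with all constants depending only on $k_0 \ge \max(k_1,k_2)$, as required by the statement.
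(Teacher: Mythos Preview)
Your overall framework is right and matches the paper: use the dense island structure and Lemma~\ref{l:djt}, with Lemma~\ref{l:4blockslip} supplying the maps $f_D$ on matched islands. The gap is in the construction of the island bijection~$\tilde f$, which you flag as ``the main obstacle'' but do not actually carry out.

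Concretely, your island families $\xs D$ and~$\xs D'$ are built independently by the rule ``one polynomial-$P$ sub-block per remaining parent at each stage.'' Since both $\xs A$ and $\xs A'$ consist of polynomial-$P$ interior blocks of the \emph{same} IFS~$\ms$, the number of stage-$j$ islands is $\card(\xs A)\cdot c_j$ in~$\xs D$ and $\card(\xs A')\cdot c_j$ in~$\xs D'$, for the same sequence~$c_j$. Thus no level-respecting bijection exists when $\card(\xs A)\neq\card(\xs A')$. Your absorption idea (``pair excess top-level blocks with deeper sub-islands on the smaller side'') is not compatible with the families you have already fixed: once the matched top-level pairs consume all sub-islands on the smaller side, nothing remains for the excess. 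One would have to modify the construction of $\xs D$ itself to create extra islands, and then redo the distance bookkeeping for condition~(i); none of this is done.

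The paper avoids this entirely by a preliminary reduction you are missing: it first observes that it suffices to treat the case $\xs A=\{B_1\}$ a singleton (the general case then follows by composing $f_1\colon B_1\to\bigsqcup\xs A$ with $f_2\colon B_1\to\bigsqcup\xs A'$). With one block on each side, the paper builds a single nested chain $B_1\supset B_1^{(1)}\supset B_1^{(2)}\supset\cdots$ and $B'_1\supset B'^{(1)}_1\supset\cdots$, choosing at each stage $m$ polynomial-$P$ sub-blocks. The islands at stage~$k$ come in two types: $\{B^{(k)}_2,\dots,B^{(k)}_m\}$ matched with $\{B'^{(k-1)}_2,\dots,B'^{(k-1)}_m\}$ (note the level shift, which absorbs the original $m-1$ extra blocks $B'_2,\dots,B'_m$ as the first type-1 island on the $F'$ side), and the complementary pieces $\xs B_\ell(B^{(k-1)}_1)\setminus\{B^{(k)}_1,\dots,B^{(k)}_m\}$ matched with $\xs B_\ell(B'^{(k-1)}_1)\setminus\{B'^{(k)}_1,\dots,B'^{(k)}_m\}$. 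Each matched pair has equal $P_B(p)$-sums by design, so Lemma~\ref{l:4blockslip} applies directly, and condition~(i) is immediate from the chain structure. The complements are single points.
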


\begin{proof}
  It suffices to prove the lemma in the case that $\xs A$ or $\xs A'$
  consists of only one interior block. If this is true, for general $\xs A$
  and $\xs A'$, pick $B\in\xs A$, then by the assumption, there are constant
  $L$ and bi-Lipschitz mappings $f_1\colon B\to\bigsqcup\xs A$, $f_2\colon
  B\to\bigsqcup\xs A'$ satisfying the condition in the lemma. Thus $f_2\circ
  f_1^{-1}\colon\bigsqcup\xs A\to\bigsqcup\xs A'$ satisfies
  \[L^{-2}r^{-l_1}|x-y|\le r^{-l_2}|f_2\circ f_1^{-1}(x)-f_1\circ f_0^{-1}(y)|
  \le L^2r^{-l_1}|x-y|.\]
  And so the lemma holds for the general case. Thus we need only to prove the
  case that $\xs A=\{B_1\}$, $\xs A'=\{B'_1,\dots,B'_m\}$ and
  $P_{B_1}=P_{B'_1}\dots=P_{B'_m}=P_0$ for some $m>1$. (If $m=1$, this
  follows from Lemma~\ref{l:4blockslip}.)

  By Lemma~\ref{l:CB(k)}, there exists an integer~$\ell$ such that
  \[\zeta^\circ(\ell)\ge p^{-k_0}\frac{\max_{P\in\xc P}P(p)}
  {\min_{P\in\xc P}P(p)}.\]
  Since
  \[mp^{l_2+k_0}P_0(p)\le mp^{l_2+k_2}P_0(p)=\sum_{i=1}^m\mu(B'_i)
  \le\mu(A')\le p^{l_2}P_{A'}(p),\]
  we have
  \[m\le p^{-k_0}\frac{\max_{P\in\xc P}P(p)}{\min_{P\in\xc P}P(p)}
  \le\zeta^\circ(\ell).\]
  This means that, for every $B\in\xs B$ and every $P\in\xc P^\circ$, $\xs
  B_\ell(B)$ contains at least $m$ interior blocks whose measure polynomial
  is~$P$. Notice that the constant $\ell$ depends on~$k_0$ rather than~$m$.

  Let $F=B_0$ and $F'=\bigcup_{i=1}^m B'_i$. We shall show that $F$ and $F'$
  have the same dense $\iota$-island structure. Then the lemma follows from
  Lemma~\ref{l:djt}. For this, we need to define $\iota$-island families $\xs
  D$, $\xs D'$, the island mapping~$\tilde f$ and bi-Lipschitz mappings $f_D$
  for each~$D$.

  By the property of~$\ell$, we have that both $\xs B_\ell(B_1)$ and $\xs
  B_\ell(B'_1)$ contain at least $m$ interior blocks, say, $B^{(1)}_1$,
  \dots, $B^{(1)}_m$ and $B'^{(1)}_1$, \dots, $B'^{(1)}_m$, respectively,
  whose measure polynomial are all~$P_0$. By induction, suppose that
  $B^{(j)}_1$, \dots, $B^{(j)}_m$ and $B'^{(j)}_1$, \dots, $B'^{(j)}_m$ have
  been defined for $j=1,\dots,k-1$. We define $B^{(k)}_1$, \dots, $B^{(k)}_m$
  and $B'^{(k)}_1$, \dots, $B'^{(k)}_m$ to be $m$ interior blocks in~$\xs
  B_\ell(B^{(k-1)}_1)$ and $\xs B_\ell(B'^{(k-1)}_1)$, respectively, whose
  measure polynomial are all~$P_0$. For convenience, we also write
  \[B^{(0)}_1=B_1\quad\text{and}\quad
  B'^{(0)}_i=B'_i\ \text{for $i=1,\dots,m$}.\]
  For $k\ge1$, define
  \begin{gather*}
    \xs A^{(k)}_1=\{B^{(k)}_2,\dots,B^{(k)}_m\},\quad
    \xs A^{(k)}_2=\xs B_\ell(B^{(k-1)}_1)\setminus
    \{B^{(k)}_1,\dots,B^{(k)}_m\}, \\
    \xs A'^{(k)}_1=\{B'^{(k-1)}_2,\dots,B'^{(k-1)}_m\},\quad
    \xs A'^{(k)}_2=\xs B_\ell(B'^{(k-1)}_1)\setminus
    \{B'^{(k)}_1,\dots,B'^{(k)}_m\},
  \end{gather*}
  and
  \begin{gather*}
    D^{(k)}_1=\bigsqcup\xs A^{(k)}_1,\quad D^{(k)}_2=\bigsqcup\xs A^{(k)}_2,
    \quad \xs D=\bigcup_{k=1}^\infty\left\{D^{(k)}_1,D^{(k)}_2\right\},\\
    D'^{(k)}_1=\bigsqcup\xs A'^{(k)}_1,\quad
    D'^{(k)}_2=\bigsqcup\xs A'^{(k)}_2,\quad
    \xs D'=\bigcup_{k=1}^\infty\left\{D'^{(k)}_1,D'^{(k)}_2\right\}.
  \end{gather*}

  Observe that there are $x$ and $x'$ such that
  \[F\setminus\bigsqcup\xs D=\bigcap_{k=1}^\infty B^{(k)}_1=\{x\}
  \quad\text{and}\quad F'\setminus\bigsqcup\xs D'=\bigcap_{k=1}^\infty
  B'^{(k)}_1=\{x'\},\]
  so $\bigsqcup\xs D$ and $\bigsqcup\xs D'$ are both dense in $F$ and $F'$,
  respectively. For $k\ge1$, $\xs A^{(k)}_1,\xs A^{(k)}_2\subset\xs
  B_\ell(B^{(k-1)}_1)\subset\xs B^\circ_{l_1+k_1+k\ell}$; $\xs
  A'^{(k+1)}_1,\xs A'^{(k)}_2\subset\xs B_\ell(B'^{(k-1)}_1)\subset\xs
  B^\circ_{l_2+k_2+k\ell}$ and $\xs A'^{(1)}_1\subset\xs
  B^\circ_{k_2}(A')\subset\xs B^\circ_{l_2+k_2}$. By Remark~\ref{r:diameter},
  for $k\ge1$
  \begin{gather*}
    \varpi^{-1}r^{l_1+k_1+k\ell}|E|\le|D^{(k)}_1|,|D^{(k)}_2|
    \le\varpi r^{l_1+k_1+(k-1)\ell}|E|, \\
    \varpi^{-1}r^{l_2+k_2+k\ell}|E|\le|D'^{(k+1)}_1|,|D'^{(k)}_2|
    \le\varpi r^{l_2+k_2+(k-1)\ell}|E|,\\
    \varpi^{-1}r^{l_2+k_2}|E|\le|D'^{(1)}_1|\le\varpi r^{l_2}|E|.
  \end{gather*}
  By the definition of blocks (Definition~\ref{d:block}), for $k\ge1$,
  \begin{gather*}
    r^{l_1+k_1+k\ell}|E|\le\dist(D^{(k)}_i,F\setminus D^{(k)}_i),
    \quad i=1,2; \\
    r^{l_2+k_2+(k-1)\ell}|E|\le\dist(D'^{(k)}_1,F'\setminus D'^{(k)}_1);\\
    r^{l_2+k_2+k\ell}|E|\le\dist(D'^{(k)}_2,F'\setminus D'^{(k)}_2).
  \end{gather*}
  As a summary, we see that both $F$ and $F'$ have dense $\iota$-island
  structure (Definition~\ref{d:dnsilnd}) for $\iota=\varpi r^{-k_0-2\ell}$,
  (recall that $\max(k_1,k_2)\le k_0$).

  Now define the island mapping $\tilde f\colon\xs D\to\xs D'$ by $\tilde
  f(D^{(k)}_i)=D'^{(k)}_i$ for $k\ge1$ and $i=1,2$. To show that $F$ and $F'$
  have the same dense $\iota$-island structure, we shall verify the
  conditions of Definition~\ref{d:smdi}.

  For Condition~(i), note that $D^{(k)}_i,D^{(k')}_j\subset B^{(k-1)}_1$ for
  $1\le k\le k'$; $D'^{(k)}_i,D'^{(k')}_j\subset B'^{(k-2)}_1$ for $2\le k\le
  k'$ and $D'^{(1)}_i,D'^{(k')}_j\subset A'$. Together with the definition of
  blocks (Definition~\ref{d:block}), we have
  \begin{gather*}
    r^{l_1+k_1+k\ell}|E|\le\dist(D^{(k)}_i,D^{(k')}_j)
    \le\varpi r^{l_1+k_1+(k-1)\ell}|E|
    \quad\text{for $k\ge1$}, \\
    r^{l_2+k_2+k\ell}|E|\le\dist(D'^{(k)}_i,D'^{(k')}_j)
    \le\varpi r^{l_2+k_2+(k-2)\ell}|E|
    \quad\text{for $k\ge2$},\\
    r^{l_2+k_2+\ell}|E|\le\dist(D^{(1)}_i,D^{(k')}_j)
    \le\varpi r^{l_2}|E|.
  \end{gather*}
  Since $F=B_0\in\xs B^\circ_{l_1+k_1}$ and $F'=\bigcup_{i=1}B'_i\subset
  A'\in\xs B_{l_2}$, where $B'_i\in\xs B^\circ_{l_2+k_2}$, by
  Remark~\ref{r:diameter},
  \begin{gather*}
    \varpi^{-1}r^{l_1+k_1}|E|\le|F|\le\varpi r^{l_1+k_1}|E|, \\
    \varpi^{-1}r^{l_2+k_2}|E|\le|F'|\le\varpi r^{l_2}|E|.
  \end{gather*}
  By the definition of $\tilde f$, for distinct $D_1,D_2\in\xs D$,
  \[L_1^{-1}\dist(D_1,D_2)/|F|\le\dist(\tilde f(D_1),\tilde f(D_2))/|F'|
  \le L_1\dist(D_1,D_2)/|F|,\]
  where $L_1=\varpi^3r^{-k_0-2\ell}$.

  For Condition~(ii) of Definition~\ref{d:smdi}, we use
  Lemma~\ref{l:4blockslip} to obtain~$f_D$ for each $D$. Observe that, for
  $k\ge1$,
  \begin{gather*}
    \sum_{B\in\xs A^{(k)}_1}P_B(p)=\sum_{B\in\xs A'^{(k)}_1}
     P_B(p)=(m-1)P_0(p), \\
    \sum_{B\in\xs A^{(k)}_2}P_B(p)=\sum_{B\in\xs A'^{(k)}_2}
     P_B(p)=(p^{-\ell}-(m-1))P_0(p);
  \end{gather*}
  and $\xs A^{(k)}_1$, $\xs A^{(k)}_2$ have
  $(B^{(k-1)}_1;l_1+k_1+(k-1)\ell,\ell)$-structure; $\xs A'^{(k+1)}_1$, $\xs
  A'^{(k)}_2$ have $(B'^{(k-1)}_1;l_2+k_2+(k-1)\ell,\ell)$-structure for
  $k\ge1$; $\xs A'^{(1)}_1$ has $(A';l_2,k_2)$-structure. By
  Lemma~\ref{l:4blockslip}, there are $L_2>1$ and bi-Lipschitz mappings~$f_D$
  of~$D$ onto~$\tilde f(D)$ for each $D\in\xs D$, such that
  \[L_2^{-1}r^{-l_1}|x-y|\le r^{-l_2}|f_D(x)-f_D(y)|
    \le L_2r^{-l_1}|x-y|\quad\text{for $x,y\in D$}.\]
  Here $L_2$ depends on~$\ell$ and~$k_0$, so finally depends on~$k_0$. Note
  that
  \begin{equation}\label{eq:F/F'}
    \varpi^{-2}r^{l_1-l_2+k_0}\le|F|/|F'|\le\varpi^2r^{l_1-l_2-k_0}.
  \end{equation}
  This means that $f_D$ satisfy the Condition~(ii) of Definition~\ref{d:smdi}
  for $L_3=L_2\varpi^2r^{-k_0}$. Therefore, $f_D$ and $\tilde
  L=\max(L_1,L_3)$ satisfy the conditions of Definition~\ref{d:smdi}. It
  follows that $F$ and $F'$ have the same dense $\iota$-island structure with
  $\iota=\varpi r^{-k_0-2\ell}$.

  Finally, by Lemma~\ref{l:djt}, there is a bi-Lipschitz mapping~$f$ of~$F$
  onto~$F'$ such that
  \begin{equation*}
    L_4^{-1}\rho(x,y)/|F|\le\rho(f(x),f(y))/|F'|\le L_4\rho(x,y)/|F|
    \quad\text{for $x,y\in F$}.
  \end{equation*}
  Here $L_4=(2\iota+1)\tilde L$ depends on~$k_0$. Together with
  inequality~\eqref{eq:F/F'}, the lemma follows.
\end{proof}

\begin{proof}[Proof of Lemma~\ref{l:blockslip}]
  By Lemma~\ref{l:CB(k)}, there is an integer~$\ell$ depends only on the
  IFS~$\ms$ such that for all $B\in\xs B$ and all $P\in\xc P^\circ$, the
  family $\xs B_\ell(B)$ contain at least one block whose measure polynomial
  is~$P$.

  Now for each $P\in\xc P^\circ$, write
  \[\xs C_P=\left\{B\in\xs B_\ell(\xs A)\colon P_B=P\right\}
  \quad\text{and}\quad
  \xs D_P=\left\{B\in\xs B_\ell(\xs A')\colon P_B=P\right\}\]
  Then $\xs C_p,\xs D_P\ne\emptyset$ and
  \[\bigsqcup\xs A=\bigcup_{P\in\xc P^\circ}\bigsqcup\xs C_P,\quad
  \bigsqcup\xs A'=\bigcup_{P\in\xc P^\circ}\bigsqcup\xs D_P.\]
  By Lemma~\ref{l:4blockslip'}, for each $P\in\xc P^\circ$, there is a
  bi-Lipschitz mapping $f_P\colon\bigsqcup\xs C_P\to\bigsqcup\xs D_P$ such
  that
  \begin{equation}\label{eq:|x-y|fP}
    L_1^{-1}r^{-l_1}|x-y|\le r^{-l_2}|f_P(x)-f_P(y)|\le L_1r^{-l_1}|x-y|
    \quad\text{for $x,y\in\bigsqcup\xs C_P$},
  \end{equation}
  where constant $L_1$ depends on $k_0+\ell$, so finally depends only
  on~$k_0$ and the IFS~$\ms$.

  Let $f\colon\bigsqcup\xs A\to\bigsqcup\xs A'$ be the bijection such that
  the restriction of~$f$ to $\bigsqcup\xs C_P$ is~$f_P$ for every $P\in\xc
  P^\circ$. We will show that $f$ is the desired bi-Lipschitz mapping. Let
  $x,y$ be two distinct points in~$\bigsqcup\xs A$, there are two cases to
  consider.

  \textit{Case~1}. $x,y\in\bigsqcup\xs C_P$ for some $P\in\xc P^\circ$. Then
  $f$ and $x,y$ satisfy the inequality~\eqref{eq:|x-y|fP}.

  \textit{Case~2}. $\{x,y\}\not\subset\xs C_P$ for all $P\in\xc P^\circ$. In
  this case, there are distinct $B_x,B_y\in\xs B_\ell(\xs A)$ such that $x\in
  B_x$ and $y\in B_y$. By the definition of~$f$, there are distinct
  $B'_x,B'_y\in\xs B_\ell(\xs A')$ such that $f(x)\in B'_x$ and $f(y)\in
  B'_y$. Therefore, by Remark~\ref{r:diameter},
  \begin{gather*}
    r^{l_1+k_1+\ell}|E|\le\dist(B_x,B_y)\le|x-y|\le|A|\le r^{l_1}\varpi|E|, \\
    r^{l_2+k_2+\ell}|E|\le\dist(B'_x,B'_y)\le|f(x)-f(y)|\le|A'|
    \le r^{l_2}\varpi|E|.
  \end{gather*}

  It follows from Case~1 and~2 that
  \[L^{-1}r^{-l_1}|x-y|\le r^{-l_2}|f(x)-f(y)|\le Lr^{-l_1}|x-y|,\]
  where constant $L=\max(L_1,\varpi r^{-(k_0+\ell)})$ depends only on~$k_0$.
\end{proof}

\subsection{Proof of Proposition~\ref{p:BlipE}}\label{ssec:LipBE}

Let $F=E$ and $F'=B_0$. Suppose that $B_0$ is a level-$l$ interior block. We
shall show that $F$ and $F'$ have the same dense $\iota$-island structure.
Then Proposition~\ref{p:BlipE} follows from Lemma~\ref{l:djt}. For this, we
need to define $\iota$-island families $\xs D$, $\xs D'$, the island
mapping~$\tilde f$ and bi-Lipschitz mappings $f_D$ for each~$D$. The key
point behind our construction is that almost all blocks are interior blocks,
see Lemma~\ref{l:C(k)}, \ref{l:CP(k)} and~\ref{l:CB(k)}.

By Lemma~\ref{l:CB(k)}, there exists an integer $\ell>0$ such that
\begin{equation}\label{eq:bd<in}
  \zeta^\partial(\ell)<\min_{B\in\xs B}\card\xs B_\ell(B).
\end{equation}
Let $\xs A_0=\{E\}$ and $\xs A_k=\xs B^\partial_{k\ell}$ for $k\ge1$. We
first define a injection
\[\Gamma\colon\bigcup_{k=0}^\infty\xs A_k\to
\bigcup_{k=0}^\infty\xs B_{k\ell}(B_0)\]
such that
\begin{enumerate}[(a)]
    \item $\Gamma$ maps $\xs A_k$ into $\xs B_{k\ell}(B_0)$ for all
        $k\ge1$;
    \item for $A_1\in\xs A_{k_1}$ and $A_2\in\xs A_{k_2}$, where $k_1\le
        k_2$, we have $\Gamma(A_1)\supset\Gamma(A_2)$ if and only if
        $A_1\supset A_2$.
  \end{enumerate}
We do this by induction on~$k$. When $k=0$, define $\Gamma(E)=B_0$. Now
suppose that $\Gamma$ have been defined on~$\xs A_k$. By
Remark~\ref{r:inblock}(a), $\xs A_{k+1}=\bigcup_{A\in\xs A_k}\xs
B^\partial_\ell(A)$. So we need only to define $\Gamma$ on each $\xs
B^\partial_\ell(A)$. Suppose that $\card\xs B^\partial_\ell(A)>0$, otherwise
there is nothing more to do. By~\eqref{eq:bd<in}, we have
\[\card\xs B^\partial_\ell(A)<\card\xs B_\ell\bigr(\Gamma(A)\bigl)\quad
\text{for all $A\in\xs A_k$}.\]
So there is a injection $\Gamma$ on $\xs B^\partial_\ell(A)$ such that
$\Gamma(B)\in\xs B_\ell\bigr(\Gamma(A)\bigl)\subset\xs B_{(k+1)\ell}(B_0)$
for all $B\in\xs B^\partial_\ell(A)$. Thus, we have finished the definition
of~$\Gamma$ on~$\xs A_{k+1}$. It is easy to check Condition~(a) and~(b).

Then for $A\in\bigcup_{k=0}^\infty\xs A_k$, define
\[\xs C_A=\xs B^\circ_\ell(A)\quad\text{and}\quad
\xs C'_A=\bigl\{B'\in\xs B_\ell\bigl(\Gamma(A)\bigr)\colon B'\ne\Gamma(B)\
\text{for all $B\in\xs B^\partial_\ell(A)$}\bigr\}.\]
By~\eqref{eq:bd<in}, we have $\xs C_A,\xs C'_A\ne\emptyset$ for all
$A\in\bigcup_{k=0}^\infty\xs A_k$. Let
\[D_A=\bigsqcup\xs C_A\quad\text{and}\quad D'_A=\bigsqcup\xs C'_A.\]
Define
\[\xs D=\biggl\{D_A\colon A\in\bigcup_{k=0}^\infty\xs A_k\biggr\}
\quad\text{and}\quad
\xs D'=\biggl\{D'_A\colon A\in\bigcup_{k=0}^\infty\xs A_k\biggr\}.\]

We shall show that both $F$ and $F'$ have the dense $\iota$-island structure
(Definition~\ref{d:dnsilnd}). First, it is easy to see that $\bigsqcup\xs D$
and $\bigsqcup\xs D'$ are both dense in $F=E$ and $F'=B_0$, respectively.
Second, for $D_A\in\xs D$ and $D'_A\in\xs D'$ with $A\in\xs A_k=\xs
B^\partial_{k\ell}$, we have $\xs C_A$ has $(A;k\ell,\ell)$-structure and
$\xs C'_A$ has $(\Gamma(A);k\ell+l,\ell)$-structure. So by
Remark~\ref{r:diameter},
\begin{gather*}
  \varpi^{-1}r^{(k+1)\ell}|E|\le|D_A|\le\varpi r^{k\ell}|E|, \\
  \varpi^{-1}r^{(k+1)\ell+l}|E|\le|D'_A|\le\varpi r^{k\ell+l}|E|.
\end{gather*}
By the definition of blocks (Definition~\ref{d:block}),
\begin{gather*}
  \dist(D_A,F\setminus D_A)\ge r^{(k+1)\ell}|E|, \\
  \dist(D'_A,F'\setminus D'_A)\ge r^{(k+1)\ell+l}|E|.
\end{gather*}
It follows that both $F$ and $F'$ have the dense $\iota$-island structure for
$\iota=\varpi r^{-\ell}$.

Now define the island mapping $\tilde f\colon\xs D\to\xs D'$ by $\tilde
f(D_A)=D'_A$ for $A\in\bigcup_{k=0}^\infty\xs A_k$. To show that $F$ and $F'$
have the same dense $\iota$-island structure, we shall verify the conditions
of Definition~\ref{d:smdi}.

For Condition~(i), we consider $D_{A_1}$ and $D_{A_2}$ for distinct
$A_1\in\xs A_{k_1},A_2\in\xs A_{k_2}$, where $k_1\le k_2$. There are two
cases to consider.

\textit{Case~1}. $A_1\supset A_2$. By the properties of~$\Gamma$, we have
$\Gamma(A_1)\in\xs B_{k_1\ell}(B_0)$ and $\Gamma(A_1)\supset\Gamma(A_2)$.
Recall that $D_A=\bigsqcup\xs C_A$, $\xs C_A=\xs B^\circ_\ell(A)$ and
$D'_A=\bigsqcup\xs C'_A$, $\xs C'_A\subset\xs B^\circ_\ell(\Gamma(A))$. We
have
\begin{gather*}
  r^{(k_1+1)\ell}|E|\le\dist(D_{A_1},D_{A_2})\le|A_1|
  \le\varpi r^{k_1\ell}|E|, \\
  r^{(k_1+1)\ell+l}|E|\le\dist(D'_{A_1},D'_{A_2})\le|\Gamma(A_1)|
  \le\varpi r^{k_1\ell+l}|E|.
\end{gather*}

\textit{Case~2}. $A_1\cap A_2=\emptyset$. Then there are $k\ge0$ and
$B_3\in\xs A_k$ and $B_1,B_2\in\xs A_{k+1}$ such that $A_1\cup A_2\subset
B_3$, $A_1\subset B_1$, $A_2\subset B_2$ and $B_1\ne B_2$. By the properties
of~$\Gamma$, we have $\Gamma(B_3)\in\xs B_{k\ell}(B_0)$,
$\Gamma(B_1),\Gamma(B_2)\in\xs B_{(k+1)\ell}(B_0)$,
$\Gamma(A_1)\cup\Gamma(A_2)\subset\Gamma(B_3)$,
$\Gamma(A_1)\subset\Gamma(B_1)$, $\Gamma(A_2)\subset\Gamma(B_2)$ and
$\Gamma(B_1)\ne\Gamma(B_2)$. And so
\begin{gather*}
  r^{(k+1)\ell}|E|\le\dist(B_1,B_2)\le\dist(D_{A_1},D_{A_2})\le|B_3|
  \le\varpi r^{k\ell}|E|, \\
  r^{(k+1)\ell+l}|E|\le\dist(\Gamma(B_1),\Gamma(B_2))
  \le\dist(D'_{A_1},D'_{A_2})\le|\Gamma(B_3)|\le\varpi r^{k\ell+l}|E|.
\end{gather*}
For the diameter of~$F=E$ and $F'=B_0$, recall that $B_0$ is a level-$l$
interior block. By Remark~\ref{r:diameter}, we have
\[|F|=|E|\quad\text{and}\quad\varpi^{-1}r^l|E|\le|F'|\le\varpi r^l|E|.\]
By the definition of $\tilde f$, for distinct $D_1,D_2\in\xs D$,
\[L_1^{-1}\dist(D_1,D_2)/|F|\le\dist(\tilde f(D_1),\tilde f(D_2))/|F'|
\le L_1\dist(D_1,D_2)/|F|,\]
where $L_1=\varpi^2r^{-\ell}$.

For Condition~(ii) of Definition~\ref{d:smdi}, we use Lemma~\ref{l:blockslip}
to obtain~$f_D$ for each~$D$. Observe that, for $A\in\xs A_k$ with $k\ge0$,
$\xs C_A$ has $(A;k\ell,\ell)$-structure and $\xs C'_A$ has
$(\Gamma(A);k\ell+l,\ell)$-structure. Recall that $D_A=\bigsqcup\xs C_A$ and
$D'_A=\bigsqcup\xs C'_A$. By Lemma~\ref{l:blockslip}, there are $L_2>1$ and
bi-Lipschitz mappings~$f_D$ of~$D$ onto~$\tilde f(D)$ for each $D\in\xs D$,
such that
\[L_2^{-1}|x-y|\le r^{-l}|f_D(x)-f_D(y)|\le L_2|x-y|
\quad\text{for $x,y\in D$}.\]
Here $L_2$ depends on~$\ell$. Note that
\begin{equation}\label{eq:F/F'2}
  \varpi^{-1}r^{-l}\le|F|/|F'|\le\varpi r^{-l}.
\end{equation}
This means that $f_D$ satisfy the Condition~(ii) of Definition~\ref{d:smdi}
for $L_3=L_2\varpi$. Therefore, $f_D$ and $\tilde L=\max(L_1,L_2)$ satisfy
the conditions of Definition~\ref{d:smdi}. It follows that $F$ and $F'$ have
the same dense $\iota$-island structure with $\iota=\varpi r^{-\ell}$.

Finally, by Lemma~\ref{l:djt}, we have $E=F\simeq F'=B_0$.

\section{The proof of Theorem~\ref{t:TOCElip}}\label{sec:proof}

This section is devoted to the proof of Theorem~\ref{t:TOCElip}. To this end,
we consider two IFS $\ms,\mt\in\TOEC$. Throughout this section, we adopt the
following notational conventions: $r_\ms$, $I_\ms$, $p$, $\mu$, $\xc
P^\circ$, $\xs A^\circ$, $\xs A^\circ_k$ will denote the ratio root, the
ideal, the measure root, the natural measure, the set of measure polynomials
of interior blocks, the family of interior blocks and the family of level-$k$
interior blocks of~$\ms$, respectively; while $r_\mt$, $I_\mt$, $q$, $\nu$,
$\xc Q^\circ$, $\xs B^\circ$, $\xs B^\circ_k$ will denote the ratio root, the
ideal, the measure root, the natural measure, the set of measure polynomials
of interior blocks, the family of interior blocks and the family of level-$k$
interior blocks of~$\mt$, respectively. For $A\in\xs A^\circ_l$ and $B\in\xs
B^\circ_l$, we use $P_A$ and $Q_B$ to denote the corresponding measure
polynomials. We also write
\[\xs A_k(A)=\bigl\{A'\in\xs A_{l+k}\colon A'\subset A\bigr\}
\quad\text{and}\quad\xs B_k(B)=\bigl\{B'\in\xs B_{l+k}\colon
B'\subset B\bigr\}.\]

By Proposition~\ref{p:BlipE} and Lemma~\ref{l:blockslip},
Theorem~\ref{t:TOCElip} can be obtained from the following proposition.

\begin{prop}\label{p:BlipB}
  Let $A_0\in\xs A^\circ$ and $B_0\in\xs B^\circ$ be two interior blocks of
  $\ms$ and $\mt$, respectively. Then $A_0\simeq B_0$ if and only if
  \begin{enumerate}[\upshape(i)]
    \item $\hdim E_\ms=\hdim E_\mt$;
    \item $\log r_\ms/\log r_\mt\in\Q$;
    \item $I_\ms=a I_\mt$ for some $a\in\R$.
  \end{enumerate}
\end{prop}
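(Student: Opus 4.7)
The plan is to handle the two directions separately. For necessity, assume $A_0 \simeq B_0$ via a bi-Lipschitz bijection $f$; condition (i) is immediate, since Hausdorff dimension is a Lipschitz invariant and interior blocks inherit the dimension of the ambient self-similar sets. For (ii) and (iii), the plan is to distil a measure-linear structure out of $f$ on a dense substructure, in the spirit of the Cooper--Pignataro scheme and its Falconer--Marsh refinement. The finiteness of the measure-polynomial sets $\xc P^\circ$ and $\xc Q^\circ$ (Proposition~\ref{p:finiteCP}), combined with a compactness argument over the finite bi-Lipschitz types of block--image pairs, yields at arbitrarily small scales matched pairs $(C, C')$ of interior blocks with $C \subset A_0$, $C' \subset B_0$, and $f(C)$ close to $C'$ in Hausdorff distance within a prescribed tolerance. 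The uniform two-sided bounds $|C| \asymp r_\ms^{l}|E_\ms|$ and $|f(C)| \asymp r_\mt^{l'}|E_\mt|$ force $l'/l$ to concentrate on $\log r_\ms/\log r_\mt$, giving rationality and hence (ii); at the same time $\xc H^s(f(C))/\xc H^s(C)$ must stabilise at a single constant $a > 0$, and because the image of every interior $\ms$-block is, modulo controllable boundary terms, a finite disjoint union of interior $\mt$-blocks, this scaling translates into $a^{-1} I_\ms \subset I_\mt$; the symmetric argument applied to $f^{-1}$ gives (iii).

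For sufficiency, the plan is to imitate Lemma~\ref{l:4blockslip}, constructing the same cylinder structure for $A_0$ and $B_0$, but now spanning two different IFSs. Using (ii), fix positive integers $n, m$ with $r_\ms^n = r_\mt^m$, so that $p^n = q^m$ and level-$(kn)$ $\ms$-decompositions match scales with level-$(km)$ $\mt$-decompositions. Using (iii) together with Lemma~\ref{l:idealP}, every positive element of $a^{-1} I_\ms = I_\mt$ is expressible as $q^L\sum_{Q\in\xc Q^\circ}\gamma_Q Q(q)$ for sufficiently large $L$, and symmetrically every positive element of $a\,I_\mt = I_\ms$ as $p^L\sum_{P\in\xc P^\circ}\gamma_P P(p)$. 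Build cylinder families $\xs C_k$ on $A_0$ and $\xs C'_k$ on $B_0$ alternating between $\ms$-block and $\mt$-block decompositions, with cylinder mapping $\tilde f$ enforcing $\mu(C) = a\,\nu(\tilde f(C))$ throughout. At each inductive step, apply a cross-IFS analogue of Lemma~\ref{l:suitde}: decompose a given $\mt$-cylinder into $\mt$-blocks realising prescribed measure polynomials from $\xc Q^\circ$, then in parallel decompose the matched $\ms$-cylinder into $\ms$-blocks whose measures mirror $a$ times those of the $\mt$-blocks. The uniform cylinder constants $(\varrho, \iota)$ then follow from Remark~\ref{r:diameter} applied to both IFSs, and Lemma~\ref{l:cylin} delivers the required bi-Lipschitz map $A_0 \to B_0$.

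The hardest step will be establishing the cross-IFS suitable decomposition at a single uniform order $K$ depending only on $\ms$, $\mt$, and $a$, so that the nesting $\xs C_k \prec \xs C_{k+1}$ is preserved whenever the construction switches between the two IFSs. The ingredients are Lemma~\ref{l:suitde} applied separately to $\ms$ and to $\mt$, the ideal identification $I_\ms = a\,I_\mt$, and the primitive-matrix Perron--Frobenius estimates (Lemma~\ref{l:primitive} and Lemma~\ref{l:limMatrix}); the finiteness of $\xc P^\circ$ and $\xc Q^\circ$ (Proposition~\ref{p:finiteCP}) guarantees that $K$ is indeed uniform and does not blow up along the induction. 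A secondary technical point is controlling boundary blocks in the necessity direction with enough precision to rule out spurious scalings, for which Lemma~\ref{l:CB(k)} and the density of interior blocks inside $A_0$ and $B_0$ are the essential tools.
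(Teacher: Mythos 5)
Your sufficiency argument matches the paper's approach essentially step for step: use~(ii) to pick $n,m$ with $r_\ms^n=r_\mt^m$ (the paper arranges $p^K=q^{K'}$ via Remark~\ref{r:suit}), use~(iii) together with Lemma~\ref{l:idealP} to express positive ideal elements as nonnegative combinations of interior measure polynomials, apply Lemma~\ref{l:suitde} separately to $\ms$ with constants $\{aQ(q)\colon Q\in\xc Q^\circ\}\subset I_\ms$ and to $\mt$ with constants $\{a^{-1}P(p)\colon P\in\xc P^\circ\}\subset I_\mt$, and build alternating cylinder families enforcing $\mu(C)=a\,\nu(\tilde f(C))$. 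You do not need a genuinely ``cross-IFS'' version of Lemma~\ref{l:suitde}; the two one-sided applications with ideal-linked constants suffice. One small omission: the paper first uses Lemma~\ref{l:Is>0} to produce families $\xs C\subset\xs A^\circ_l$ and $\xs C'\subset\xs B^\circ_{l'}$ with $p^l=q^{l'}$ and $\sum_{A\in\xs C}\mu(A)=a\sum_{B\in\xs C'}\nu(B)$, proves $\bigsqcup\xs C\simeq\bigsqcup\xs C'$, and then invokes Lemma~\ref{l:blockslip} to pass from these unions to $A_0$ and $B_0$ themselves; you would need that bridge too.

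The necessity direction as you describe it has a real gap in condition~(ii). You write that the two-sided diameter bounds ``force $l'/l$ to concentrate on $\log r_\ms/\log r_\mt$, giving rationality.'' Concentration of a real sequence at a limit tells you nothing about whether that limit is rational; bi-Lipschitz bounds alone can never produce an exact algebraic identity. The argument actually requires an exact relation $p^{k_2-k_1}=q^{l_2-l_1}$ with integer exponents, and that is obtained by a pigeonhole step: inside the measure-linear block $A_f$ of Lemma~\ref{l:ml}, Proposition~\ref{p:finiteCP} and Lemma~\ref{l:image} give only finitely many possible pairs $\bigl(P_A,\ \sum_{B\in\xs B_A}Q_B(q)\bigr)$, so by Lemma~\ref{l:CB(k)} you can choose two nested interior blocks $A_2\subsetneq A_1\subsetneq A_f$ with the same measure polynomial $P$ and the same image-sum $\sum_{B\in\xs B_{A_1}}Q_B(q)=\sum_{B\in\xs B_{A_2}}Q_B(q)$; plugging both into the measure-linearity constant $\mu(A_i)/\nu(f(A_i))$ yields $p^{k_2-k_1}=q^{l_2-l_1}$ with $k_1\ne k_2$, $l_1\ne l_2$, hence~(ii). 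Your framing in terms of ``$f(C)$ close to $C'$ in Hausdorff distance within a tolerance'' also departs from what Lemma~\ref{l:image} actually gives: $f(A)$ is an \emph{exact} finite union of $\mt$-cylinders inside a single $\mt$-interior-block, not an approximation, and similarly the phrase ``modulo controllable boundary terms'' in your argument for~(iii) is misleading---the image $f(A)$ is exactly an interior separated set of $\mt$, because it is a union of $\mt$-blocks contained in the interior block $B_A$, so $p^lP(p)=a\,q^{-kn}\nu(f(A))\in a\,I_\mt$ with no error term to control. With the pigeonhole step inserted and the approximate language replaced by the exact decomposition of Lemma~\ref{l:image}, your necessity sketch lines up with the paper's.
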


\subsection{Necessity}

This subsection is devoted to the proof of necessary part of
Proposition~\ref{p:BlipB}. For this, we always assume that $A_0\in\xs
A^\circ$, $B_0\in\xs B^\circ$ and $A_0\simeq B_0$. Fix a bi-Lipschitz mapping
$f\colon A_0\to B_0$. The idea used in the proof of necessary part is similar
to that given in~\cite{CooPi88,FalMa92}. In particular, the following two
lemmas are essentially the same as the corresponding results
in~\cite{CooPi88,FalMa92}, see also Lemma~\ref{l:ML} and~\ref{l:fE}.

\begin{lem}[measure linear, \cite{CooPi88}]\label{l:ml}
  There exists an interior block~$A_f\subset A_0$ such that the restriction
  $f|_{A_f}\colon A_f\to f(A_f)$ is measure linear, i.e., for any Borel
  subset $F\subset A_f$ with $\mu(F)>0$,
  \[\frac{\mu(F)}{\nu(f(F))}=\frac{\mu(A_f)}{\nu(f(A_f))}.\]
\end{lem}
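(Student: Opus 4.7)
The plan is to adapt the classical Cooper--Pignataro differentiation argument from the SSC cylinder setting to our OSC interior-block setting. Since $f\colon A_0\to B_0$ is bi-Lipschitz and Hausdorff dimension is a bi-Lipschitz invariant, $\hdim A_0=\hdim B_0=:s$. Under the OSC the natural measures $\mu$ and $\nu$ are Ahlfors $s$-regular, and in particular doubling, so the pushed-back measure $F\mapsto \nu(f(F))$ on $A_0$ is absolutely continuous with respect to $\mu|_{A_0}$ with Radon--Nikodym density $\phi$ bounded between $c^{-1}$ and $c$, where $c$ depends only on the bi-Lipschitz constant of $f$.

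Next I invoke Lebesgue differentiation along the nested sequence of interior blocks $\{A_k(x)\}$ containing $x$. This is legitimate because, by Definition~\ref{d:block} and Remark~\ref{r:diameter}, the block $A_k(x)$ is sandwiched between concentric balls of comparable radii $\asymp r_\ms^k$: the diameter bound of Remark~\ref{r:diameter} gives the outer ball, while the separation $\dist(B_1,B_2)\ge r_\ms^k|E|$ between distinct level-$k$ blocks together with $x\in A_k(x)$ gives the inner one. Hence for $\mu$-a.e.\ $x\in A_0$,
\[
\phi(x)=\lim_{k\to\infty}\frac{\nu(f(A_k(x)))}{\mu(A_k(x))}.
\]

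The heart of the argument is to upgrade this pointwise convergence to the existence of an interior block $A_f\subset A_0$ on which $\phi$ is $\mu$-a.e.\ constant. Let $M=\mathrm{ess\,sup}_{A_0}\phi$ and fix a Lebesgue point $x^{\ast}$ with $\phi(x^{\ast})=M$. Along $\{A_k(x^{\ast})\}$, the measure polynomial $P_{A_k(x^{\ast})}$ takes values in the finite set $\xc P^\circ$ (Proposition~\ref{p:finiteCP}), so by pigeonhole some $P_0$ repeats infinitely often; along this subsequence, the ratio $\nu(f(A_{k_j}(x^{\ast})))/\mu(A_{k_j}(x^{\ast}))$ is a convex combination of the ratios attached to its constituent small copies $S_{\mathbf i}(E_\ms)$, each at most $M$, and tends to $M$. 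Hence the weight must concentrate on small copies whose own ratio approaches $M$; pulling back along such an $S_{\mathbf i}$ produces a new bi-Lipschitz map out of a full scaled copy of $E_\ms$ whose induced density is still bounded by $M$ and attains $M$ at a full-measure point. Iterating this rescaling procedure and using the compactness afforded by the finiteness of $\xc P^\circ$ to extract a subsequential limit yields an interior block $A_f\subset A_0$ on which $\phi\equiv M$ $\mu$-a.e., which is precisely measure linearity on $A_f$.

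The main obstacle will be this final rescaling/compactness step. In Cooper--Pignataro's SSC setting a cylinder is itself a similar copy of the whole self-similar set, so iteration is transparent; in our OSC setting, interior blocks are unions of several small copies of $E_\ms$, and tracking the measure polynomial through successive rescalings while maintaining uniform bi-Lipschitz bounds requires the quantitative cardinality estimates of Lemmas~\ref{l:C(k)}, \ref{l:CP(k)} and~\ref{l:CB(k)} to guarantee that the set of relevant block types (and hence the limit object) remains under control.
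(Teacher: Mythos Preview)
The paper does not actually prove this lemma: it says only that the argument is ``similar to those used in~\cite{CooPi88,FalMa92}'' and that ``the proof of Lemma~\ref{l:ml} requires the finiteness of measure polynomials (Proposition~\ref{p:finiteCP}).'' So there is no detailed proof to compare against; what follows is an assessment of whether your outline can be completed.

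The differentiation setup is fine, but the ``heart of the argument'' has two real gaps. First, you fix a Lebesgue point $x^{\ast}$ with $\phi(x^{\ast})=M=\mathrm{ess\,sup}\,\phi$; a bounded measurable function need not attain its essential supremum anywhere, and securing this attainment is essentially the content of the lemma. Second, the rescaling you invoke is the SSC mechanism and does not transfer to blocks. In Cooper--Pignataro each cylinder is itself a similar copy of the whole self-similar set, so pre- and post-composing $f$ with the relevant similarities yields a new bi-Lipschitz map between the full sets with the same constant, and one may iterate and take an Arzel\`a--Ascoli limit. Here an interior block $A$ is a \emph{union} of several copies $S_{\bm i}(E_\ms)$ with different ratios; pulling back along a single $S_{\bm i}$ lands on $E_\ms$ but discards the block partition, and there is in general no similarity carrying one interior block onto another with the same measure polynomial. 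Your ``subsequential limit'' of the shrinking blocks $A_{k_j}(x^{\ast})$ is the single point $x^{\ast}$, not an interior block $A_f$. Lemmas~\ref{l:C(k)}--\ref{l:CB(k)} count blocks; they do not supply the similarities your compactness step would need.

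A line closer to the paper's hint avoids densities and limits. Set $\rho(A)=\nu(f(A))/\mu(A)$ on interior blocks $A\subset A_0$; then $\rho(A)=\sum_{A'\in\xs A_1(A)}w_{A'}\,\rho(A')$ with weights $w_{A'}=\mu(A')/\mu(A)$ lying, by Proposition~\ref{p:finiteCP}, in a fixed finite set bounded below by some $\delta>0$. If $M=\sup_A\rho(A)$ is \emph{attained} at some $A_f$, this identity together with $\rho(A')\le M$ forces $\rho(A')=M$ for every sub-block $A'\subset A_f$, which is exactly measure linearity. The attainment step is where finiteness works on both sides: by the companion Lemma~\ref{l:image}, $\nu(f(A))$ has the form $q^{\ell}$ times one of finitely many constants, so each interior block carries one of finitely many joint ``types''; pigeonholing these types along a $\rho$-maximising chain is what produces $A_f$.
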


\begin{lem}[\cite{FalMa92}]\label{l:image}
  There exists an integer~$K_f$ such that for each interior block $A\subset
  A_0$, there are interior block $B_A\subset B_0$ and $\xs B_A\subset\xs
  B_{K_f}(B_A)$ satisfying $f(A)=\bigsqcup\xs B_A\subset B_A$.
\end{lem}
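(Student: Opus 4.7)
The idea is to use the bi-Lipschitz constant of~$f$ to transfer the built-in separation of level-$l$ blocks of~$\ms$ into a separation of $f(A)$ from $B_0\setminus f(A)$ inside~$E_\mt$, and then let the nested-and-separated structure of the level-$k$ blocks of~$\mt$ cut $f(A)$ along block boundaries. Let $L\ge1$ denote the bi-Lipschitz constant of~$f$ and write $l_0$ for the level of~$B_0$. For a level-$l$ interior block $A\subset A_0$, distinct level-$l$ blocks are pairwise separated by at least $r_\ms^l|E_\ms|$ (Definition~\ref{d:block}), and $A_0\setminus A$ is a disjoint union of level-$l$ blocks different from~$A$, so $\dist(A,A_0\setminus A)\ge r_\ms^l|E_\ms|$. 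Since $f$ is a bijection of $A_0$ onto $B_0$, we have $f(A_0\setminus A)=B_0\setminus f(A)$, hence
\[\dist\bigl(f(A),B_0\setminus f(A)\bigr)\ge L^{-1}r_\ms^l|E_\ms|,\]
while Remark~\ref{r:diameter} combined with the Lipschitz bounds yields the size estimate $L^{-1}\varpi^{-1}r_\ms^l|E_\ms|\le|f(A)|\le L\varpi r_\ms^l|E_\ms|$.

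Next I would let $B_A$ be the (unique, by block nesting) block of~$\mt$ of maximal level $j_A$ containing $f(A)$; since $f(A)\subset B_0$ one has $j_A\ge l_0$ and $B_A\subset B_0$, and Remark~\ref{r:inblock}(a) then makes $B_A$ an interior block. The inclusion $f(A)\subset B_A$ combined with the size estimate gives the lower bound $r_\mt^{j_A}\ge L^{-1}\varpi^{-2}r_\ms^l|E_\ms|/|E_\mt|$; and the maximality of~$j_A$, which forces $f(A)$ to meet two distinct level-$(j_A+1)$ blocks of~$\mt$, gives the matching upper bound $r_\mt^{j_A}\le L\varpi r_\mt^{-1}r_\ms^l|E_\ms|/|E_\mt|$.

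Finally, fix any integer $K_f$, depending only on $L$, $\varpi$, $r_\mt$, with $r_\mt^{K_f}<L^{-2}\varpi^{-2}r_\mt$. For any level-$(j_A+K_f)$ block $B'$ of~$\mt$, substituting the upper bound on $r_\mt^{j_A}$ yields
\[|B'|\le\varpi r_\mt^{j_A+K_f}|E_\mt|=\varpi r_\mt^{K_f}r_\mt^{j_A}|E_\mt|<L^{-1}r_\ms^l|E_\ms|\le\dist\bigl(f(A),B_0\setminus f(A)\bigr),\]
so no such $B'$ can meet both $f(A)$ and $B_0\setminus f(A)$. If $B'\cap f(A)\ne\emptyset$, then nesting (since $j_A+K_f>j_A$ and $B'$ meets $B_A$) forces $B'\subset B_A$, and the above size-versus-separation inequality then forces $B'\subset f(A)$. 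Setting $\xs B_A=\{B'\in\xs B_{j_A+K_f}\colon B'\subset f(A)\}$ therefore gives a subfamily of $\xs B_{K_f}(B_A)$ with $f(A)=\bigsqcup\xs B_A\subset B_A$, as required.

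The crucial point to get right is that the same $K_f$ must work for every interior $A\subset A_0$ at every level~$l$. This succeeds because both the separation bound $L^{-1}r_\ms^l|E_\ms|$ and the candidate-block-size bound $\varpi r_\mt^{j_A+K_f}|E_\mt|$ scale by the same factor $r_\ms^l$, so the required inequality reduces to one involving only $K_f$, $L$, $\varpi$, $r_\mt$. No algebraic, ideal-theoretic, or commensurability input enters; the argument is purely geometric and uses only the bi-Lipschitz hypothesis on~$f$ together with the intrinsic separation properties of blocks.
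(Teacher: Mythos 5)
Your proof is correct, and since the paper omits the argument (citing only \cite{FalMa92} and the similarity to the SSC case), there is no in-paper proof to compare against; yours is exactly the standard geometric argument adapted from cylinders to blocks. The key steps are all sound: $\dist(A,A_0\setminus A)\ge r_\ms^l|E_\ms|$ because $A_0\setminus A$ is a union of level-$l$ blocks (nesting of block partitions plus Remark~\ref{r:block}); the bi-Lipschitz map transfers this to a lower bound on $\dist(f(A),B_0\setminus f(A))$ and pins $|f(A)|$ to within a multiplicative constant of $r_\ms^l|E_\ms|$; the maximal block $B_A\supset f(A)$ has level $j_A$ bounded above and below with the same $r_\ms^l$ scaling; and the choice of $K_f$ makes every level-$(j_A+K_f)$ block strictly smaller than the separation distance, so none can straddle $f(A)$ and $B_0\setminus f(A)$. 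Your closing observation — that both the separation bound and the candidate block diameter scale as $r_\ms^l$, so the needed inequality is level-independent — is precisely the point that makes a uniform $K_f$ possible, and it is good that you flagged it explicitly. Two minor clean-ups: (1) you write $\varpi$ for a constant that in the paper is IFS-dependent (Remark~\ref{r:diameter}); take $\varpi=\max(\varpi_\ms,\varpi_\mt)$ to be precise. (2) The degenerate case $A=A_0$ makes $\dist(f(A),B_0\setminus f(A))$ a distance to the empty set; with the convention $\dist(\cdot,\emptyset)=+\infty$ everything goes through, but it is worth a remark. Neither affects correctness.
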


We omit the proofs of the above two lemmas, since the arguments are similar
to those used in~\cite{CooPi88,FalMa92}. We only remark that the proof of
Lemma~\ref{l:ml} requires the finiteness of measure polynomials
(Proposition~\ref{p:finiteCP}).

Now we turn to the proof of the necessity. Condition~(i) is obviously
necessary since $\hdim A_0=\hdim E_\ms$ and $\hdim B_0=\hdim E_\mt$.

For condition~(ii), let $\xs B_A$ be as in Lemma~\ref{l:image} for all
interior blocks $A\subset A_0$. By Proposition~\ref{p:finiteCP}, there are
only finitely many measure polynomials. It follows that the set
\[\biggl\{\sum_{B\in\xs B_A}Q_B(q)\colon
\text{$A\subset A_0$ is an interior block}\biggr\}\]
is finite, where $Q_B$ is the measure polynomial of~$B$. Together with
Lemma~\ref{l:CB(k)}, we know that there are two interior blocks $A_1,A_2$
with the same measure polynomial $P$ such that $A_2\subsetneq A_1\subsetneq
A_f$ and
\[\sum_{B\in\xs B_{A_1}}Q_B(q)=\sum_{B\in\xs B_{A_2}}Q_B(q),\]
where $A_f$ is as in Lemma~\ref{l:ml} and $\xs B_{A_1},\xs B_{A_2}$ are as in
Lemma~\ref{l:image}. Suppose that $A_1$ is of level~$k_1$, $A_2$ of
level~$k_2$ and $\xs B_{A_1}\subset\xs B^\circ_{l_1}$, $\xs B_{A_2}\subset\xs
B^\circ_{l_2}$, then by Remark~\ref{r:muB} and Lemma~\ref{l:ml},
\[\frac{p^{k_1}P(p)}{q^{l_1}\sum_{B\in\xs B_{A_1}}Q_B(q)}
=\frac{\mu(A_1)}{\nu(f(A_1))}=\frac{\mu(A_2)}{\nu(f(A_2))}
=\frac{p^{k_2}P(p)}{q^{l_2}\sum_{B\in\xs B_{A_2}}Q_B(q)}.
\]
This reduces to $p^{k_2-k_1}=q^{l_2-l_1}$. We have $\log r_\ms/\log
r_\mt\in\Q$ since $k_1\ne k_2$, $l_1\ne  l_2$ and $p=r_\ms^s$, $q=r_\mt^s$,
where $s=\hdim E_\ms=\hdim E_\mt$.

For condition~(iii), let $A_f$ be as in Lemma~\ref{l:ml}, set
\[\frac{\mu(A_f)}{\nu(f(A_f))}=a,\]
we need to show that $I_\ms=a I_\mt$. By Lemma~\ref{l:idealP} and symmetry,
it suffices to prove that $p^lP(p)\in a I_\mt$ for all $l\ge0$ and all
$P\in\xc P^\circ$. By condition~(ii), we can assume that $p^m=q^n$ for two
positive integers $m$ and~$n$. It follows from Lemma~\ref{l:CB(k)} that there
are positive integer $k$ and interior block $A\in\xs A^\circ_{km+l}$ such
that $A\subset A_f$ and $P_A=P$. Thus
\[p^lP(p)=p^{-km}\mu(A)=a\cdot q^{-kn}\nu(f(A))\in a I_\mt\]
since $q^{-1}\in\Z[q]$ and $f(A)$ is an interior separated set.

\subsection{Sufficiency}

In this subsection, we will prove the sufficient part.
\begin{lem}\label{l:Is>0}
  Let $\alpha\in I_\ms$ be a positive number, then there exist integer $l$
  and $\xs C\subset\xs B^\circ_k$ for some $k\ge0$ such that
  \[\alpha=p^l\sum_{B\in\xs C}\mu(B).\]
\end{lem}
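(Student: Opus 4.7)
The plan is to turn the algebraic decomposition of $\alpha$ provided by Lemma~\ref{l:idealP} into a geometric one, by realizing each occurrence of a measure polynomial $P\in\xc P^\circ$ as the measure polynomial of an actual interior block, all taken at a common sufficiently large level $k$.

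First, I would apply Lemma~\ref{l:idealP} to the positive number $\alpha\in I_\ms$ to obtain a sufficiently large integer $\ell$ and nonnegative integers $\gamma_{\ell,P}$, indexed by $P\in\xc P^\circ$, such that
\[\alpha=p^\ell\sum_{P\in\xc P^\circ}\gamma_{\ell,P}P(p).\]
By Proposition~\ref{p:finiteCP}, the index set $\xc P^\circ$ is finite, so only finitely many $\gamma_{\ell,P}$ are nonzero, and they are fixed once $\ell$ is chosen.

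Next, I would invoke Lemma~\ref{l:CP(k)}, which implies $\zeta_P(k)\to\infty$ as $k\to\infty$ for each $P\in\xc P^\circ$. Thanks to the finiteness of $\xc P^\circ$, I can choose a single integer $k\ge0$ large enough that $\zeta_P(k)\ge\gamma_{\ell,P}$ simultaneously for every $P\in\xc P^\circ$. For each such $P$, I would then select $\gamma_{\ell,P}$ distinct level-$k$ interior blocks whose measure polynomial is $P$, and let $\xs C\subset\xs B^\circ_k$ be the union of all these selections over the different $P$; blocks picked for different measure polynomials are automatically distinct.

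Finally, by Remark~\ref{r:muB}, every $B\in\xs B^\circ_k$ with $P_B=P$ satisfies $\mu(B)=p^k P(p)$. Summing over $\xs C$ gives
\[\sum_{B\in\xs C}\mu(B)=p^k\sum_{P\in\xc P^\circ}\gamma_{\ell,P}P(p)=p^{k-\ell}\alpha,\]
and hence $\alpha=p^{\ell-k}\sum_{B\in\xs C}\mu(B)$. Setting $l=\ell-k$ completes the proof; note that $l$ may be negative, but this is permitted since $p^{-1}\in\Z[p]$ by Lemma~\ref{l:Z[p]}(a). There is no serious obstacle here: the lemma is essentially the geometric translation of Lemma~\ref{l:idealP} afforded by the block-counting estimate of Lemma~\ref{l:CP(k)} together with the finiteness of $\xc P^\circ$.
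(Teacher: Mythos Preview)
Your proposal is correct and follows essentially the same argument as the paper: apply Lemma~\ref{l:idealP} to write $\alpha=p^\ell\sum_P\gamma_P P(p)$, then use Lemma~\ref{l:CP(k)} to pick $k$ with $\zeta_P(k)\ge\gamma_P$ for all $P\in\xc P^\circ$, select the corresponding blocks in $\xs B^\circ_k$, and set $l=\ell-k$. The only cosmetic difference is that the paper writes $k_1,k_2$ for your $\ell,k$ and requires $k_2>k_1$, but this is immaterial.
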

\begin{proof}
  By Lemma~\ref{l:idealP}, there exist integers $k_1$ and $\gamma_P\ge0$ such
  that
  \[\alpha=p^{k_1}\sum_{P\in\xc P^\circ}\gamma_PP(p).\]
  By Lemma~\ref{l:CP(k)}, there exists an integer $k_2>k_1$ such that
  \[\zeta_P(k_2)\ge\gamma_P\quad\text{for all $P\in\xc P^\circ$}.\]
  So there exists $\xs C\in\xs B^\circ_{k_2}$ such that
  \[\sum_{B\in\xs C}\mu(B)=p^{k_2}\sum_{P\in\xc P^\circ}
  \gamma_PP(p)=p^{k_2-k_1}\alpha.\]
  The proof is completed by taking $l=k_1-k_2$.
\end{proof}

It follows from the conditions and Lemma~\ref{l:Is>0} that there exist $\xs
C\subset\xs A^\circ_l$ and $\xs C'\in\xs B^\circ_{l'}$ for some positive
integer $l,l'$ such that $p^l=q^{l'}$ and
\begin{equation}\label{eq:meq}
  \sum_{A\in\xs C}\mu(A)=a\sum_{B\in\xs C'}\nu(B).
\end{equation}
We will show that $\bigsqcup\xs C\simeq\bigsqcup\xs C'$, this conclusion
together with Lemma~\ref{l:blockslip} implies $A_0\simeq B_0$. The proof of
$\bigsqcup\xs C\simeq\bigsqcup\xs C'$ is similar to the proof of
Lemma~\ref{l:4blockslip}.

Let $F=\bigsqcup\xs C$ and $F'=\bigsqcup\xs C'$. We shall show that $F$ and
$F'$ have the same cylinder structure, then this lemma follows from
Lemma~\ref{l:cylin}. For this, we make use of Lemma~\ref{l:suitde} to define
a cylinder mapping satisfying~\eqref{eq:ML}. In Lemma~\ref{l:suitde}, take
the IFS to be~$\ms$ and the constants $\{\alpha_i\}$ to be $\{a Q(q)\colon
Q\in\xc Q^\circ\}$, suppose that the corresponding integer is~$K$. Use
Lemma~\ref{l:suitde} again by taking the IFS to be~$\mt$ and the constants
$\{\alpha_i\}$ to be $\{a^{-1}P(p)\colon P\in\xc P^\circ\}$, suppose that the
corresponding integer is~$K'$. By Remark~\ref{r:suit} and $\log r_\ms/\log
r_\mt\in\Q$, we can further require that $p^K=q^{K'}$.

The cylinder families $\xs C_k$ and $\xs C'_k$ are defined as follows. Define
$\xs C_1=\xs C$ and
  \[\begin{cases}
    \xs C_k=\xs A_{(k-1)K}(\xs C)&\text{for $k$ is odd};\\
    \xs C'_k=\xs B_{(k-1)K'}(\xs C')&\text{for $k$ is even}.
  \end{cases}\]
\begin{center}
  \begin{tabular}{*8c}
    \hline
    & $k=1$ & $k=2$ & $k=3$ & $k=4$ & $k=5$ & $k=6$ & \dots
    \rule[-1ex]{0pt}{3.5ex} \\
    \hline
    $\xs C_k$:  & $\xs C$ &   & $\xs A_{2K}(\xs C)$ &
      & $\xs A_{4K}(\xs C)$ &   & \dots \rule[-1ex]{0pt}{3.5ex} \\
    $\xs C'_k$: &   & $\xs B_{K'}(\xs C')$ &   & $\xs B_{3K'}(\xs C')$
      &   & $\xs B_{5K'}(\xs C')$ & \dots \rule[-1ex]{0pt}{3.5ex} \\
    \hline
  \end{tabular}
\end{center}
It remains to define $\xs C_k$ for $k$ is even, $\xs C'_k$ for $k$ is odd and
the cylinder mapping~$\tilde f$.

We begin with the definition of~$\xs C'_1$ by making use of
Lemma~\ref{l:suitde}. By~\eqref{eq:meq} and $p^l=q^{l'}$,
\[a^{-1}\sum_{A\in\xs C_1}P_A(p)=a^{-1}\sum_{A\in\xs C}P_A(p)
=\sum_{B\in\xs C'}Q_B(q).\]
Since $P_A\in\xc P^\circ$ for all $A\in\xs C_1=\xs C$, we know that the
sequence $\{a^{-1}P_A(p)\colon A\in\xs C_1\}$ are $(\xs
C';\alpha_1,\dots,\alpha_\eta)$-suitable by Definition~\ref{d:suit}, where
\[\{\alpha_1\dots,\alpha_\eta\}=\{a^{-1}P(p)\colon P\in\xc P^\circ\}.\]
Since $\xs C'\subset\xs B_{l'}$, by Lemma~\ref{l:suitde}, for positive
numbers $\{q^{l'}a^{-1}P_A(p)\colon A\in\xs C_1\}$, there is a suitable
decomposition
\[\xs B_{K'}(\xs C')=\bigcup_{A\in\xs C_1}\xs C'_A\]
such that
\[\sum_{B\in\xs C'_A}\nu(B)=q^{l'}a^{-1}P_A(p)\quad
\text{for all $A\in\xs C_1=\xs C$}.\]
Define
\[\xs C'_1=\left\{\bigsqcup\xs C'_A\colon A\in\xs C_1\right\},\]
and $\tilde f\colon\xs C_1\to\xs C'_1$ by $\tilde f(A)=\bigsqcup\xs C'_A$. It
is easy to see that $\xs C'_1\prec\xs C'_2=\xs B_{K'}(\xs C')$. We also have
that
\[\mu(A)=a\nu(\tilde f(A))\quad\text{for all $A\in\xs C_1$},\]
since $a\nu(\tilde f(A))=a\sum_{B\in\xs C'_A}\nu(B)=q^{l'}P_A(p)
=p^lP_A(p)=\mu(A)$.

Now suppose that the cylinder families $\xs C_1$, \dots, $\xs C_{k-1}$, $\xs
C'_1$, \dots, $\xs C'_{k-1}$ and the cylinder mapping $\tilde f$ have been
defined such that $\tilde f$ maps $\xs C_j$ onto $\xs C'_j$ for $1\le j\le
k-1$ and
\begin{equation}\label{eq:muCanu}
  \mu(C)=a\nu(\tilde f(C))\quad
  \text{for all $C\in\bigcup_{j=1}^{k-1}\xs C_j$}.
\end{equation}
We shall define $\xs C_k$, $\xs C'_k$ and $\tilde f\colon\xs C_k\to\xs C'_k$.
Suppose without loss of generality that $k$ is even, then $\xs C'_k=\xs
B_{(k-1)K'}(\xs C')$. We consider the suitable decomposition of $\xs
A_K(A_0)$ for each $A_0\in\xs C_{k-1}=\xs A_{(k-2)K}(\xs C)$. Recall that
$p^l=q^{l'}$ and $p^K=q^{K'}$, by~\eqref{eq:muCanu},
\[\sum_{\substack{B\subset\tilde f(A_0)\\B\in\xs C'_k}}aQ_B(q)
  =aq^{-l'-(k-1)K'}\nu(\tilde f(A_0))=p^{-l-(k-1)K}\mu(A_0)
  =\sum_{A\in\xs A_K(A_0)}P_A(p).\]
And so the sequence $\{aQ_B(q)\colon B\subset\tilde f(A_0),B\in\xs C'_k\}$
are $\{\xs A_K(A_0);\alpha_1,\dots,\alpha_\eta\}$-suitable by
Definition~\ref{d:suit}, where $\{\alpha_1\dots,\alpha_\eta\}=\{a Q(q)\colon
Q\in\xc Q^\circ\}$. By Lemma~\ref{l:suitde}, for positive numbers
\[\Bigl\{p^{l+(k-1)K}aQ_B(q)\colon
B\subset\tilde f(A_0),B\in\xs C'_k\Bigr\},\]
there is a suitable decomposition
\begin{equation}\label{eq:finer}
  \xs A_K(\xs A_K(A_0))=\xs A_{2K}(A_0)=
  \bigcup_{\substack{B\subset\tilde f(A_0)\\B\in\xs C'_k}}\xs C_B
\end{equation}
such that
\[\sum_{A\in\xs C_B}\mu(A)=p^{l+(k-1)K}aQ_B(q)\quad
\text{for all $B\in\xs C'_k$ and $B\subset\tilde f(A_0)$}.\]
Indeed, we obtain $\xs C_B$ for all $B\in\xs C'_k$ by above argument since
for every $B\in\xs C'_k$, there is a unique $A_0\in\xs C_{k-1}$ such that
$B\subset\tilde f(A_0)$. Then we define
\[\xs C_k=\left\{\bigsqcup\xs C_B\colon B\in\xs C'_k\right\}\]
and $\tilde f\colon\xs C_k\to\xs C'_k$ by $\tilde f\left(\bigsqcup\xs
C_B\right)=B$. By~\eqref{eq:finer}, we know that
\[\xs A_{(k-2)K}(\xs C)=\xs C_{k-1}\prec\xs C_k\prec\xs C_{k+1}
=\xs A_{kK}(\xs C).\]
We also have $\mu(C)=a\nu(\tilde f(C))$ for all $C\in\xs C_k$. If $k$ is odd,
we can define $\xs C_k$, $\xs C'_k$ and $\tilde f\colon\xs C_k\to\xs C'_k$ by
a similar argument. Thus, by induction on~$k$, we finally obtain all the
cylinder families $\xs C_k$, $\xs C'_k$ and the cylinder mapping~$\tilde f$.

To prove $F=\bigsqcup\xs C$ and $F'=\bigsqcup\xs C'$ have the same cylinder
structure, it remains to compute the constants $\varrho$ and $\iota$. Since
$\xs C$ contains at least one level-$l$ interior block, by
Remark~\ref{r:diameter}, we have
\[\varpi_\ms^{-1}r_\ms^l|E_\ms|\le|F|\le|E_\ms|.\]
Let $C,C_1,C_2\in\xs C_k$, where $C_1$ and $C_2$ are distinct. If $k$ is odd,
then $\xs C_k=\xs A_{(k-1)K}(\xs C)\subset\xs A_{l+(k-1)K}^\circ$, by
Remark~\ref{r:diameter} and the definition of blocks
(Definition~\ref{d:block}), we have
\begin{align*}
  \varpi_\ms^{-1}r_\ms^{l+(k-1)K}|E_\ms|&\le|C|
  \le\varpi_\ms r_\ms^{l+(k-1)K}|E_\ms|;\\
  r_\ms^{l+(k-1)K}|E_\ms|&\le\dist(C_1,C_2).
\end{align*}
If $k$ is even, then by the definition of $\xs C_k$, we know that $\xs
A_{(k-2)K}(\xs C)=\xs C_{k-1}\prec\xs C_k\prec\xs C_{k+1}=\xs A_{kK}(\xs C)$.
And so
\begin{align*}
  \varpi_\ms^{-1}r_\ms^{l+kK}|E_\ms|&\le|C|
  \le\varpi_\ms r_\ms^{l+(k-2)K}|E_\ms|;\\
  r_\ms^{l+kK}|E_\ms|&\le\dist(C_1,C_2).
\end{align*}
As a summary, $F$ has the $(\varrho_1,\iota_1)$-cylinder structure for
$\varrho_1=r_\ms^K$ and $\iota_1=\varpi_\ms^2r_\ms^{-l-2K}$. A similar
argument shows that $F'$ also has the $(\varrho_2,\iota_2)$-cylinder
structure for $\varrho_2=r_\mt^{K'}$ and
$\iota_2=\varpi_\mt^2r_\ms^{-l'-2K'}$. Recall that $r_\ms^K=r_\mt^{K'}$. Then
by the cylinder mapping~$\tilde f$, we know that $F$ and $F'$ have the same
$(\varrho,\iota)$-cylinder structure for $\varrho=\varrho_1=\varrho_2$ and
$\iota=\max(\iota_1,\iota_2)$.

Finally, by Lemma~\ref{l:cylin}, we have $\bigsqcup\xs C=F\simeq
F'=\bigsqcup\xs C'$.

\section{The Non-Commensurable Case}\label{sec:NC}

\subsection{Proof of Theorem~\ref{t:Z+}}

Let $\ms=\{S_1,S_2\dots,S_N\}$ of ratios $r_1,r_2\dots,r_N$ and
$\mt=\{T_1,T_2\dots,T_M\}$ of ratios $t_1,t_2\dots,t_M$ be two IFSs
satisfying the SSC. For convenience, write $E=E_\ms$ and $F=E_\mt$. We also
use the following notations:
\begin{alignat*}{2}
  E_{\bm i}&=S_{i_1}\circ\dots\circ S_{i_n}(E),\qquad &
  r_{\bm i}&=r_{i_1}r_{i_2}\cdots r_{i_n}, \\
  F_{\bm j}&=T_{j_1}\circ\dots\circ T_{j_m}(F),\qquad &
  t_{\bm j}&=t_{j_1}t_{j_2}\cdots t_{j_m},
\end{alignat*}
where $\bm i=i_1i_2\dots i_n\in\{1,\dots,N\}^n$ and $\bm j=j_1j_2\dots
j_m\in\{1,\dots, M\}^m$.

Suppose that $E\simeq F$ and $\hdim E=\hdim F=s$. Let $f$ be a bi-Lipschitz
mapping from $E$ onto $F$. We need two known lemmas.

\begin{lem}[measure linear, \cite{CooPi88}]\label{l:ML}
  There is an $E_{\xb i}$ such that $f|_{E_{\xb i}}$ is measure linear, i.e.,
  for all Borel subsets $A\subset E_{\xb i}$ with $\xc H^s(A)>0$, we have
  \[\frac{\xc H^s(A)}{\xc H^s(f(A))}=\frac{\xc H^s(E_{\xb i})}
  {\xc H^s(f(E_{\xb i}))}.\]
\end{lem}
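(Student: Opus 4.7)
The plan is to study the cylinder ratio function
\[\phi(\bm i) = \xc H^s(f(E_{\bm i}))/\xc H^s(E_{\bm i}),\]
which measures how $f$ distorts $\xc H^s$-measure at scale $r_{\bm i}$; measure linearity of $f$ on $E_{\bm i}$ is equivalent to $\phi$ being constant on the entire subtree rooted at~$\bm i$. First I would record two basic facts: the $L$-bi-Lipschitz property of~$f$ gives $\phi \in [L^{-s}, L^s]$, and SSC combined with injectivity of~$f$ yields disjoint decompositions $E_{\bm i} = \bigsqcup_k E_{\bm i k}$ and $f(E_{\bm i}) = \bigsqcup_k f(E_{\bm i k})$; together with $\xc H^s(E_{\bm i k}) = r_k^s\, \xc H^s(E_{\bm i})$ these produce the weighted-average recurrence
\[\phi(\bm i) = \sum_{k=1}^N r_k^s\, \phi(\bm i k).\]

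Next I would exploit this recurrence near the supremum $M = \sup_{\bm i} \phi(\bm i) \le L^s$. Picking cylinders $\bm i_n$ with $\phi(\bm i_n) \to M$, the weighted average together with the upper bound $\phi \le M$ forces $\phi(\bm i_n k) \to M$ for every child~$k$; iterating, $\phi(\bm i_n \bm j) \to M$ for every fixed finite suffix~$\bm j$. To convert this asymptotic constancy into an exact cylinder-level statement I would rescale each pair $(E_{\bm i_n}, f(E_{\bm i_n}))$ to unit diameter and apply Arzela--Ascoli to the resulting $L$-bi-Lipschitz maps, extracting a subsequential uniform limit $f_\infty : E \to E^*$ that remains $L$-bi-Lipschitz. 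The convergence $\phi(\bm i_n \bm j) \to M$ for every~$\bm j$ then translates into $\xc H^s(f_\infty(E_{\bm j}))/\xc H^s(E_{\bm j}) = M$ for all~$\bm j$, so $f_\infty$ is measure linear on all of~$E$ with ratio~$M$.

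The last and main obstacle is transferring this limit-level measure linearity back to a genuine finite cylinder of the original map~$f$. Here I would exploit the strict geometric separation of $F$-cylinders under SSC, namely $\dist(f(E_{\bm i_n k}), f(E_{\bm i_n k'})) \gtrsim r_{\bm i_n}$ for $k \ne k'$, which keeps the decomposition of $f(E_{\bm i_n})$ clean at every finite scale; combined with Ahlfors $s$-regularity of~$F$, the Hausdorff-closeness of rescaled images to $f_\infty(E)$ should upgrade to eventual equality at some finite depth, yielding the required cylinder $E_{\bm i}$. The core difficulty is precisely this rigidity step: the weighted-average machinery by itself only produces cylinders on which $\phi$ is arbitrarily close to~$M$, and moving from ``close'' to ``equal'' requires genuine use of the self-similar structure of~$F$ rather than purely soft analytic reasoning.
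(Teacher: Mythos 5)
Your setup is on the right track but the argument does not close, and the gap you flag at the end is not a technicality---it is the entire point.

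The weighted-average recurrence $\phi(\bm i)=\sum_k r_k^s\,\phi(\bm i k)$ is correct, and it is true that if $\phi(\bm i_n)\to M:=\sup\phi$ then $\phi(\bm i_n\bm j)\to M$ for every fixed $\bm j$. But this only produces cylinders on which $\phi$ is \emph{approximately} constant. The Arzel\`a--Ascoli step then builds a limit map $f_\infty$ which is not a restriction of $f$ at all: it is a limit of rescaled restrictions, mapping $E$ onto some Hausdorff-limit set $E^*$ of rescaled $F$-cylinders. Measure linearity of $f_\infty$ does not transfer to $f$ on any genuine cylinder, and ``Hausdorff-closeness should upgrade to eventual equality'' is precisely the assertion that is missing a proof. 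In the absence of further structure, the supremum $M$ need not be attained, and Ahlfors regularity of $F$ alone does not give the rigidity you invoke.

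What actually makes the argument work---and what the paper signals by citing both \cite{CooPi88} and \cite{FalMa92}, and by remarking that the OSC analogue requires Proposition~\ref{p:finiteCP}---is that the function $\phi$ takes only \emph{finitely many values}. This is a consequence of Lemma~\ref{l:fE}: for every $\bm i$ the image $f(E_{\bm i})$ equals a finite union of $F$-cylinders sitting at a uniformly bounded depth $K$ below some base cylinder $F_{\bm j}$, so $\phi(\bm i)=\sum_{\bm j^*\in\Lambda} t_{\bm j^*}^s$ ranges over the finite set of subset-sums of $\{t_{\bm j^*}^s:\bm j^*\in\{1,\dots,M\}^K\}$. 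Once the range of $\phi$ is finite, the supremum $M$ is attained at some $\bm i_0$, and the recurrence together with $\phi(\bm i_0 k)\le M$ and $\sum_k r_k^s=1$ forces $\phi(\bm i_0 k)=M$ for every $k$. Iterating down the tree gives $\phi\equiv M$ on the entire subtree rooted at $\bm i_0$, which is exactly measure linearity of $f$ on $E_{\bm i_0}$ with every Borel $A\subset E_{\bm i_0}$ approximated by finite unions of sub-cylinders. So the essential ingredient your proposal is missing is the bounded-depth image structure of Lemma~\ref{l:fE}; the compactness machinery you bring in neither substitutes for it nor, by itself, completes the final step.
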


\begin{lem}[\cite{FalMa92}]\label{l:fE}
  There exists a positive integer~$K$ dependent on~$f$ such that for each
  word $\bm i$ of finite length, there exist a subset
  $\Lambda\subset\{1,\dots,M\}^K$ and a word $\bm j$ of finite length such
  that
  \[f(E_{\bm i})=\bigcup_{\bm j^*\in\Lambda}F_{\bm{jj}^*}.\]
\end{lem}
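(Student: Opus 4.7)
The plan is to mimic the Falconer--Marsh strategy: exploit the SSC for both $\ms$ and $\mt$ to show that, relative to the smallest cylinder of $F$ containing $f(E_{\bm i})$, the maximal subcylinders making up $f(E_{\bm i})$ have depth bounded by a constant independent of $\bm i$.

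First I would extract two quantitative separation facts. Writing $L$ for the bi-Lipschitz constant of $f$, setting $\delta_F=\min_{j\ne j'}\dist(T_j(F),T_{j'}(F))>0$ and $t^*=\max_j t_j<1$, the SSC for $\mt$ gives $\dist(F_{\bm j j'},F_{\bm j j''})\ge\delta_F\, t_{\bm j}$ for any distinct children of $F_{\bm j}$. Analogously, with $\delta_E=\min_{i\ne i'}\dist(S_i(E),S_{i'}(E))>0$, walking through the sibling decomposition $E\setminus E_{\bm i}=\bigcup_{k=1}^{n}\bigcup_{j\ne i_k}E_{i_1\dots i_{k-1}j}$ and applying the appropriate similarity yields $\dist(E_{\bm i},E\setminus E_{\bm i})\ge\delta_E\, r_{\bm i^-}$ (the tightest term coming from the level-$n$ siblings).

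Next, assuming $|\bm i|\ge 1$ (the empty case is trivial since $f(E)=F$), I let $\bm j$ be the \emph{longest} word with $f(E_{\bm i})\subset F_{\bm j}$; such a maximal $\bm j$ exists by compactness of $f(E_{\bm i})$ and the fact $|F_{\bm j}|\to 0$. By maximality, $f(E_{\bm i})$ meets at least two distinct children of $F_{\bm j}$, giving $|f(E_{\bm i})|\ge\delta_F t_{\bm j}$; combined with $|f(E_{\bm i})|\le L|E_{\bm i}|=Lr_{\bm i}|E|$ this yields the key upper bound $t_{\bm j}\le L|E|r_{\bm i}/\delta_F$. Since SSC makes $E_{\bm i}$ clopen in $E$ and $f$ is a homeomorphism, $f(E_{\bm i})$ is clopen in the totally disconnected set $F_{\bm j}$, and a standard compactness argument on the tree of descendant cylinders writes it as a finite disjoint union $f(E_{\bm i})=\bigsqcup_{k}F_{\bm j\bm j^{(k)}}$ chosen maximally (no parent $F_{\bm j\bm j^{(k)-}}$ lies inside $f(E_{\bm i})$).

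The core estimate bounds the lengths $|\bm j^{(k)}|$. For $|\bm j^{(k)}|\ge 1$, maximality provides $y\in F_{\bm j\bm j^{(k)-}}\setminus f(E_{\bm i})$ sitting in a sibling cylinder of $F_{\bm j\bm j^{(k)}}$. Picking any $x\in F_{\bm j\bm j^{(k)}}$, both $x,y$ lie in $F_{\bm j\bm j^{(k)-}}$, so $|x-y|\le t_{\bm j\bm j^{(k)-}}|F|$; simultaneously $f^{-1}(x)\in E_{\bm i}$ and $f^{-1}(y)\notin E_{\bm i}$ give $|f^{-1}(x)-f^{-1}(y)|\ge\delta_E r_{\bm i^-}$. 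The bi-Lipschitz inequality forces $t_{\bm j\bm j^{(k)-}}|F|\ge L^{-1}\delta_E r_{\bm i^-}$, and dividing by the already-established upper bound on $t_{\bm j}$ kills the dependence on $\bm i$:
\[t_{\bm j^{(k)-}}=\frac{t_{\bm j\bm j^{(k)-}}}{t_{\bm j}}\ge\frac{\delta_E\delta_F\,r_{\bm i^-}}{L^2|E||F|\,r_{\bm i}}=\frac{\delta_E\delta_F}{L^2|E||F|\,r_{i_n}}\ge c_0,\]
with $c_0:=\delta_E\delta_F/(L^2|E||F|)>0$ depending only on $\ms,\mt,L$. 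Since $t_{\bm j^{(k)-}}\le(t^*)^{|\bm j^{(k)-}|}$, this produces a uniform upper bound $|\bm j^{(k)}|\le K_0$ where $K_0$ is a constant determined by $\ms,\mt,L$. Finally, setting $K=K_0$ and refining each $F_{\bm j\bm j^{(k)}}$ via its partition $\bigsqcup_{\bm w\in\{1,\dots,M\}^{K-|\bm j^{(k)}|}}F_{\bm j\bm j^{(k)}\bm w}$ delivers the required $\Lambda\subset\{1,\dots,M\}^K$. The main obstacle is engineering the two-sided bound on $t_{\bm j^{(k)-}}$ so that $r_{\bm i}$ and $r_{\bm i^-}$ cancel up to the trivially bounded ratio $1/r_{i_n}$; choosing $\bm j$ to be \emph{maximal} (not merely any containing cylinder) is essential, since only then does the SSC gap of $\mt$ give the matching upper bound on $t_{\bm j}$ that makes the dependence on $\bm i$ disappear.
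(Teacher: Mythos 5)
Your argument is correct and is essentially the Falconer--Marsh argument that the paper cites (the paper explicitly omits the proof, referring to~\cite{FalMa92}). The two quantitative ingredients you isolate are exactly the right ones: the SSC gap $\delta_E r_{\bm i^-}\le\dist(E_{\bm i},E\setminus E_{\bm i})$ on the domain side, and the maximality of~$\bm j$ (which forces $f(E_{\bm i})$ to straddle two children of $F_{\bm j}$, giving the matching upper bound $t_{\bm j}\le L|E|r_{\bm i}/\delta_F$) on the range side. Dividing these makes $r_{\bm i}$ cancel up to the harmless factor $1/r_{i_n}\ge1$, which is precisely the reason the depth bound is uniform in~$\bm i$. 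The clopen/compactness argument extracting the finitely many maximal subcylinders is also standard and correct, as is the final padding step that refines every $F_{\bm j\bm j^{(k)}}$ to a common level~$K$. One small point worth making explicit when you write this up: if $f(E_{\bm i})=F_{\bm j}$, the single maximal subcylinder has $|\bm j^{(k)}|=0$ and the depth bound is vacuous there, but padding to level~$K$ still works since $K\ge1$; and the case $|\bm j^{(k)}|=1$ gives $t_{\bm j^{(k)-}}=1$, consistent with your inequality since $c_0<1$ (as $\delta_E<|E|$, $\delta_F<|F|$, $L\ge1$).
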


\begin{proof}[Proof of Theorem~\ref{t:Z+}]
  By symmetry, it suffices to prove that $t_j^s\in\Z^+[r_1^s,\dots,r_N^s]$
  for each~$j$. Let $E_{\xb i}$ be as in Lemma~\ref{l:ML}. By
  Lemma~\ref{l:fE}, we have
  \[f(E_{\xb i})=\bigcup_{\bm j^*\in\Lambda}F_{\xb j\bm j^*}
  \quad\text{for some $\Lambda\subset\{1,\dots,M\}^K$}.\]
  For each $j\in\{1,\dots,M\}$, there is a set $\Lambda_j$ consisting of finite
  many words of finite length such that
  \[f^{-1}\biggl(\bigcup_{\bm j^*\in\Lambda}F_{\xb j\bm j^*j}\biggr)=
  \bigcup_{\bm i^*\in\Lambda_j}E_{\xb i\bm i^*}.\]
  Applying Lemma~\ref{l:ML} with $A=\bigcup_{\bm i^*\in\Lambda_j}E_{\xb i\bm
  i^*}$, we have
  \[\frac{\xc H^s(E_{\xb i})}{\xc H^s(f(E_{\xb i}))}=
    \frac{\xc H^s(A)}{\xc H^s(f(A))}=\frac{\xc H^s
    \bigl(\bigcup_{\bm i^*\in\Lambda_j}E_{\xb i\bm i^*}\bigr)}
    {\xc H^s\bigl(\bigcup_{\bm j^*\in\Lambda}F_{\xb j\bm j^*j}\bigr)}
    =\frac{\xc H^s(E_{\xb i})\cdot\sum_{\bm i^*\in\Lambda_j}
    r_{\bm i^*}^s}{\xc H^s(f(E_{\xb i}))\cdot t_j^s}.
  \]
  This means $t_j^s=\sum_{\bm i^*\in\Lambda_j}r_{\bm i^*}^s$, and so
  $t_j^s\in\Z^+[r_1^s,\dots,r_N^s]$.
\end{proof}

\subsection{Proof of Theorem~\ref{t:sublip}}

With notations as in the proof of Theorem~\ref{t:Z+}. We need a lemma
obtained by Rao, Ruan and Wang~\cite{RaRuW12}, which is a corollary of
Lemma~\ref{l:ML} and~\ref{l:fE}. Fix a bi-Lipschitz mapping~$f$ of~$E$
onto~$F$. Let $E_{\xb i}$ be as in Lemma~\ref{l:ML}. According to
Lemma~\ref{l:fE}, for each word~$\bm i$ of finite length, there are a subset
$\Lambda\subset\{1,\dots,M\}^K$ and a word~$\bm j$ of finite length such that
\begin{equation}\label{eq:fEii}
  f(E_{\xb i\bm i})=\bigcup_{\bm j^*\in\Lambda}F_{\bm j\bm j^*}.
\end{equation}
\begin{lem}[\cite{RaRuW12}]\label{l:Mfnt}
  The set
  $\bigl\{\xc H^s(E_{\xb i\bm i})/\xc H^s(F_{\bm j})\colon
  \text{$\bm i$ and $\bm j$ satisfy~\eqref{eq:fEii}}\bigr\}$
  is finite.
\end{lem}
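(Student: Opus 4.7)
The lemma follows by a direct computation combining measure-linearity (Lemma~\ref{l:ML}) with the bounded-depth cylinder decomposition (Lemma~\ref{l:fE}). Namely, by Lemma~\ref{l:ML} there is a constant $c:=\xc H^s(E_{\bm i})/\xc H^s(f(E_{\bm i}))$ such that $\xc H^s(A)=c\,\xc H^s(f(A))$ for every Borel $A\subset E_{\bm i}$ of positive $s$-measure. Applying this to the cylinder $A=E_{\bm i\bm i'}\subset E_{\bm i}$, and using the decomposition $f(E_{\bm i\bm i'})=\bigcup_{\bm j^*\in\Lambda}F_{\bm j\bm j^*}$ from~\eqref{eq:fEii} (whose union is disjoint since $\mt$ satisfies the SSC, so that distinct level-$K$ cylinders $F_{\bm j\bm j^*}$ are pairwise disjoint), one obtains
\[
\xc H^s(E_{\bm i\bm i'})
=c\sum_{\bm j^*\in\Lambda}\xc H^s(F_{\bm j\bm j^*})
=c\,\xc H^s(F_{\bm j})\sum_{\bm j^*\in\Lambda}t_{\bm j^*}^{s},
\]
from which
\[
\frac{\xc H^s(E_{\bm i\bm i'})}{\xc H^s(F_{\bm j})}
=c\sum_{\bm j^*\in\Lambda}t_{\bm j^*}^{s}.
\]

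Thus the ratio depends on the pair $(\bm i',\bm j)$ only through the subset $\Lambda$. Crucially, Lemma~\ref{l:fE} supplies the integer $K$ \emph{independently of $\bm i'$}: the dependence is only on $f$. Consequently $\Lambda$ always lies in the power set of the finite set $\{1,\dots,M\}^K$, which has cardinality $2^{M^{K}}$; therefore $c\sum_{\bm j^*\in\Lambda}t_{\bm j^*}^{s}$ attains at most $2^{M^{K}}$ distinct values, proving the lemma. There is no substantive obstacle beyond this observation; the main point to verify is simply the $K$-uniformity built into Lemma~\ref{l:fE}, which converts the a priori infinite range of $(\bm i',\bm j)$ into a finite combinatorial parameter space for $\Lambda$.
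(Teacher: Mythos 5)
Your proof is correct and is exactly the expected derivation the paper has in mind when it calls this lemma a corollary of Lemma~\ref{l:ML} and Lemma~\ref{l:fE} (the paper itself delegates to~\cite{RaRuW12} rather than writing it out). You correctly use measure-linearity on the cylinder $A=E_{\bm i\bm i'}$, the disjointness of the level-$K$ cylinders $F_{\bm j\bm j^*}$ under the SSC to turn the decomposition into a sum of measures, the identity $\xc H^s(F_{\bm j\bm j^*})=t_{\bm j^*}^s\xc H^s(F_{\bm j})$, and the $\bm i'$-uniformity of $K$ from Lemma~\ref{l:fE} to bound the ratios by $\bigl\{c\sum_{\bm j^*\in\Lambda}t_{\bm j^*}^s\colon\Lambda\subset\{1,\dots,M\}^K\bigr\}$, a set of cardinality at most $2^{M^K}$.
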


Let $G\subset(0,1)$ be a multiplicative semigroup. For $\bm i=i_1i_2\dots
i_n\in\{1,\dots,N\}^n$ and $\bm j=j_1j_2\dots j_m\in\{1,\dots,M\}^m$, define
\[\card_G\bm i=\card\{k\colon S_{i_k}\notin\ms^G\}\quad\text{and}\quad
\card_G\bm j=\card\{k\colon T_{j_k}\notin\mt^G\}.\]
As a corollary of Lemma~\ref{l:Mfnt}, we have
\begin{lem}\label{l:cardG}
  $\sup\bigl\{\card_G\bm j\colon\text{$\bm i$ and $\bm j$
  satisfy~\eqref{eq:fEii} and $\card_G\bm i=0$}\bigr\}<\infty$.
\end{lem}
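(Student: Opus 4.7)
The plan is to convert the bi-Lipschitz constraint into a rigid algebraic identity and extract the desired bound by projecting into a quotient of a finite-dimensional $\Q$-vector space. First I would use Lemma~\ref{l:Mfnt} together with the SSC identities $\xc H^s(E_{\xb i\bm i})=r_{\xb i}^s r_{\bm i}^s\xc H^s(E)$ and $\xc H^s(F_{\bm j})=t_{\bm j}^s\xc H^s(F)$ to conclude that the ratio $r_{\bm i}/t_{\bm j}$ takes only finitely many values as $(\bm i,\bm j)$ ranges over pairs satisfying~\eqref{eq:fEii}; let $\Gamma\subset\R_+$ denote this finite set. Since permuting the letters of $\bm j$ preserves the product $t_{\bm j}$, I may rewrite $\bm j=\bm j'\bm j''$ where $\bm j'$ collects the letters $T_{j_k}\in\mt^G$ and $\bm j''$ collects the letters $T_{j_k}\notin\mt^G$, so that $|\bm j''|=\card_G\bm j$; by hypothesis, $\bm i$ uses only letters of $\ms^G$.

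Next I would invoke the Falconer--Marsh theorem quoted earlier in this section to fix positive integers $u,v$ with $r_i^u\in\sgp\mt$ and $t_j^v\in\sgp\ms$ for all $i,j$, and for each $j$ choose a word $\bm d_j$ with $t_j^v=r_{\bm d_j}$. The key combinatorial step is the dichotomy: if $T_j\in\mt^G$ then every letter of $\bm d_j$ lies in $\ms^G$, while if $T_j\notin\mt^G$ then $\bm d_j$ contains at least one letter outside $\ms^G$. The forward direction follows by writing $t_j\beta_j\in G$ with $\beta_j\in\sgp\mt$, taking $v$-th powers and using $\beta_j^v\in\sgp\ms$ to obtain $r_{\bm d_j\bm e_j}\in G$ for some word $\bm e_j$, whence each letter of $\bm d_j\bm e_j$ satisfies the $\ms^G$-defining property. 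The backward direction is by contradiction: if every letter of $\bm d_j$ lay in $\ms^G$ one would get $r_{\bm d_j}\alpha\in G$ for some $\alpha\in\sgp\ms$; raising to the $u$-th power and using $\alpha^u\in\sgp\mt$ one extracts $t_j\cdot(t_j^{uv-1}\alpha^u)\in G$ with $t_j^{uv-1}\alpha^u\in\sgp\mt$, forcing $T_j\in\mt^G$.

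With the dichotomy in hand, I would raise $r_{\bm i}=\gamma t_{\bm j}$ to the $v$-th power to get $r_{\bm i}^v=\gamma^v r_{\bm d}$, where $\bm d$ is the concatenation of the $\bm d_{j_k}$ over the letters of $\bm j$; the dichotomy ensures $\bm d$ contains at least $|\bm j''|$ letters outside $\ms^G$. Let $V$ be the $\Q$-span of $\{\log r_i\}_{i=1}^N$ in $\R$ (which equals the $\Q$-span of $\{\log t_j\}_{j=1}^M$ by Falconer--Marsh), and let $V^G\subset V$ be the $\Q$-span of $\{\log r_i:S_i\in\ms^G\}$. Since $\log\gamma=\log r_{\bm i}-\log t_{\bm j}\in V$ and $\log r_{\bm i}\in V^G$, projecting the identity $\log r_{\bm d}=v\log r_{\bm i}-v\log\gamma$ into $V/V^G$ gives
\[
\sum_{S_l\notin\ms^G}n_l\,\overline{\log r_l}\;=\;-v\,\overline{\log\gamma}\qquad\text{in }V/V^G,
\]
where $n_l\in\Z_{\ge 0}$ is the multiplicity of $r_l$ in $\bm d$ and $\sum_l n_l\ge|\bm j''|$; the right-hand side ranges over a finite subset of $V/V^G$.

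It will then remain to show that $\{\overline{\log r_l}:S_l\notin\ms^G\}$ is cone-independent in $V/V^G$, i.e., no nonnegative-integer relation $\sum\xi_l\,\overline{\log r_l}=0$ holds with some $\xi_{l_0}\ge1$. If such a relation existed, clearing denominators would give $\prod_{S_l\notin\ms^G}r_l^{\xi_l}=\prod_{S_i\in\ms^G}r_i^{\eta_i}$ for integers $\eta_i$; moving the negative $\eta_i$ to the left and multiplying both sides by $\prod_{S_i\in\ms^G}\alpha_i^{\eta_i^+}$ (with $r_i\alpha_i\in G$ the witnesses from the definition of $\ms^G$) would place the right-hand side in $G$, while the left-hand side would retain a factor $r_{l_0}$ times an element of $\sgp\ms$, giving $r_{l_0}\sgp\ms\cap G\ne\emptyset$ and contradicting $S_{l_0}\notin\ms^G$. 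Cone-independence together with finiteness of the right-hand side forces $\sum_l n_l$ to be uniformly bounded, yielding the desired bound on $\card_G\bm j$. The hard part will be the combinatorial transfer in paragraph two: establishing the clean link between words over $\ms^G$ and words over $\mt^G$ under the Falconer--Marsh correspondence, which is what ultimately delivers the crucial inequality $\sum_l n_l\ge|\bm j''|$ that closes the argument.
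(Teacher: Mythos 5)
Your argument is correct, but it takes a genuinely different route from the paper's. The paper proves the lemma by contradiction: a failure of the bound would produce pairs $(\bm i_k,\bm j_k)$ with $\card_G\bm j_k$ strictly increasing; after a Dickson-style subsequence extraction (making every single-letter count $\card_l\bm i_k$, $\card_l\bm j_k$ nondecreasing in~$k$), it shows that the measure ratios $\xc H^s(E_{\xb i\bm i_k})/\xc H^s(F_{\bm j_k})$ are pairwise distinct along the subsequence --- the crux being that, for $a<b$ with $\varphi/\phi$ the resulting $t$-over-$r$ quotient, one has $\phi^ug^u\in G$ while $\varphi^ug^u\in t_1\cdot\sgp\mt$, and $(t_1\cdot\sgp\mt)\cap G=\emptyset$ --- contradicting the finiteness of Lemma~\ref{l:Mfnt}. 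Your proof is direct rather than by contradiction: you use Lemma~\ref{l:Mfnt} only to extract the finite set~$\Gamma$ of values of $r_{\bm i}/t_{\bm j}$, transport $\mt$-letters to $\ms$-words via the Falconer--Marsh exponent~$v$, isolate the dichotomy ``$T_j\in\mt^G$ iff $\bm d_j$ lies entirely in $\ms^G$'' (a clean structural fact about the Falconer--Marsh correspondence that the paper never names), and close with a pointed-cone argument in the quotient $V/V^G$. The algebraic core --- detecting $G$-membership by multiplying in witnesses from $\sgp\ms$ and raising to the Falconer--Marsh power~$u$ --- is essentially the same manipulation in both proofs, just packaged differently: the paper gets its quantitative finiteness from Dickson plus pairwise distinctness of ratios, you from cone-pointedness. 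Your version is arguably more transparent about \emph{why} the bound holds; the one thing to spell out in a write-up is that cone-independence over~$\Z_{\ge0}$ does force pointedness of the cone spanned by $\{\overline{\log r_l}:S_l\notin\ms^G\}$ (the relation variety is a rational polyhedral cone, so it has a nonzero rational point iff it has any nonzero point), after which a strictly positive linear functional bounds $\sum_l n_l\ge\card_G\bm j$ uniformly.
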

\begin{proof}
  Write $\card_l\bm j=\card\{k\colon j_k=l\}$ for $1\le l\le M$, $\card_l\bm
  i=\card\{k\colon i_k=l\}$ for $1\le l\le N$. If this lemma is not true, we
  can find a sequence $(\bm i_k,\bm j_k)_{k\ge1}$ such that for all $k\ge1$,
  \begin{itemize}
    \item $\bm i_k$ and $\bm j_k$ satisfy~\eqref{eq:fEii} and $\card_G\bm
        i_k=0$;
    \item $\card_G\bm j_k<\card_G\bm j_{k+1}$;
  \end{itemize}
  If $\sup_k\card_1\bm i_k=\infty$, by choosing a subsequence, we can assume
  that $\card_1\bm i_k<\card_1\bm i_{k+1}$; otherwise, $\sup_k\card_1\bm
  i_k<\infty$, by choosing a subsequence, we can assume that $\card_1\bm i_k$
  is equal to a constant for all~$k$. In both cases, we can require that
  $\card_1\bm i_k\le\card_1\bm i_{k+1}$. Repeating the same argument, we can
  further require that for all $k\ge1$,
  \begin{itemize}
    \item $\card_l\bm i_k\le\card_l\bm i_{k+1}$ for $1\le l\le N$.
    \item $\card_l\bm j_k\le\card_l\bm j_{k+1}$ for $1\le l\le M$.
  \end{itemize}

  We shall show that
  \[\frac{\xc H^s(E_{\xb i\bm i_a})}{\xc H^s(F_{\bm j_a})}\ne
  \frac{\xc H^s(E_{\xb i\bm i_b})}{\xc H^s(F_{\bm j_b})}\quad \text{whenever
  $a\ne b$}.\] This contradicts Lemma~\ref{l:Mfnt}, and so the lemma
  follows. To verify the inequality, suppose $a<b$, we have
  \[\left(\frac{\xc H^s(E_{\xb i\bm i_a})}{\xc H^s(F_{\bm j_a})}\biggm/
  \frac{\xc H^s(E_{\xb i\bm i_b})}{\xc H^s(F_{\bm j_b})}\right)^{1/s}=
  \frac{t_{\bm j_b}/t_{\bm j_a}}{r_{\bm i_b}/r_{\bm i_a}}=
  \frac{t_1^{\beta_1}t_2^{\beta_2}\cdots t_M^{\beta_M}}
  {r_1^{\alpha_1}r_2^{\alpha_2}\cdots r_N^{\alpha_N}}:=\frac\varphi\phi,\]
  where $\alpha_l=\card_l\bm i_b-\card_l\bm i_a\ge0$ ($1\le l\le N$) and
  $\beta_l=\card_l\bm j_b-\card_l\bm j_a\ge0$ ($1\le l\le M$). Suppose that
  $T_1,\dots,T_\ell\notin\mt^G$ and $T_{\ell+1},\dots,T_M\in\mt^G$, then
  \[\card_G\bm j=\card_1\bm j+\card_2\bm j+\dots+\card_\ell\bm j.\]
  Since
  \[\beta_1+\dots+\beta_\ell=\sum_{l=1}^\ell(\card_l\bm j_b-\card_l\bm j_a)
  =\card_G\bm j_b-\card_G\bm j_a>0,\]
  we may assume that $\beta_1>0$. Then $\varphi/t_1\in\sgp\mt$. It follows
  from $\card_G\bm i_a=\card_G\bm i_b=0$ that there exists a $g\in\sgp\ms$
  such that $\phi g\in G$. Since $\sgp\ms\sim\sgp\mt$, there exists a
  positive integer~$u$ such that $g^u\in\sgp\mt$. Therefore,
  \[\varphi^ug^u=t_1\cdot\bigl(t_1^{u-1}(\varphi/t_1)^ug^u\bigr)\in
  t_1\cdot\sgp\mt.\]
  Notice that $(t_1\cdot\sgp\mt)\cap G=\emptyset$ since $T_1\notin\mt^G$.
  Therefore, $\varphi^ug^u\notin G$. Together with $\phi^ug^u\in G$, we have
  $\phi\ne\varphi$. The desired inequality follows.
\end{proof}

To prove Theorem~\ref{t:sublip}, we also need the following theorem obtained
independently by Llorente and Mattila~\cite{LloMa10} and Deng and
Wen~et.~al.~\cite{DeWXX11}.
\begin{thm*}[\cite{DeWXX11,LloMa10}]
  Let $E$ and $F$ be two self-similar sets satisfying the SSC. Then $E\simeq
  F$ if and only if there exist bi-Lipschitz mappings $f_1$ and $f_2$ such
  that $f_1\colon E\to f_1(E)\subset F$ and $f_2\colon F\to f_2(F)\subset
  E$.
\end{thm*}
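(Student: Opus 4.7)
The ``only if'' direction is immediate: if $g\colon E\to F$ is a bi-Lipschitz bijection, set $f_1=g$ and $f_2=g^{-1}$. For the substantive ``if'' direction, suppose $f_1\colon E\to F$ and $f_2\colon F\to E$ are bi-Lipschitz embeddings. Since bi-Lipschitz embeddings preserve Hausdorff dimension in both directions, $\hdim E=\hdim f_1(E)\le\hdim F$ and symmetrically $\hdim F\le\hdim E$, hence $\hdim E=\hdim F=s$; by the SSC both $\xc H^s(E)$ and $\xc H^s(F)$ are positive and finite. The plan is to reduce the global equivalence $E\simeq F$ to the existence of a single ``matching pair'' of small copies.

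\textbf{Key Claim.} There exist finite words $\bm u\in\{1,\dots,N\}^*$ and $\bm v\in\{1,\dots,M\}^*$ and a bi-Lipschitz bijection between the small copies $E_{\bm u}$ and $F_{\bm v}$.

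Once the Key Claim is established, the theorem follows immediately: $E_{\bm u}$ is the image of $E$ under a similarity of ratio $r_{\bm u}$ and $F_{\bm v}$ is the image of $F$ under a similarity of ratio $t_{\bm v}$, so any bi-Lipschitz equivalence $E_{\bm u}\simeq F_{\bm v}$ conjugates (after rescaling) to a bi-Lipschitz equivalence $E\simeq F$.

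To prove the Key Claim I would work with $f_1$ directly. Apply Lemma~\ref{l:ML} to obtain a small copy $E_{\bm i_0}$ on which $f_1$ is measure-linear with some constant ratio $c=\xc H^s(E_{\bm i_0})/\xc H^s(f_1(E_{\bm i_0}))$. Apply Lemma~\ref{l:fE} to get an integer $K$ such that for every finite word $\bm i'$ there are a prefix $\bm j_{\bm i'}$ and a finite family $\Lambda_{\bm i'}\subset\{1,\dots,M\}^K$ with
\[f_1(E_{\bm i_0\bm i'})=\bigcup_{\bm j^*\in\Lambda_{\bm i'}}F_{\bm j_{\bm i'}\bm j^*}.\]
The target is a word $\bm i'$ for which $|\Lambda_{\bm i'}|=1$: then $f_1|_{E_{\bm i_0\bm i'}}$ is a bi-Lipschitz bijection onto the single small copy $F_{\bm j_{\bm i'}\bm j^*}$, yielding the Key Claim with $\bm u=\bm i_0\bm i'$ and $\bm v=\bm j_{\bm i'}\bm j^*$.

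\textbf{Main obstacle.} Forcing some $|\Lambda_{\bm i'}|=1$ is the technical heart. The plan is to combine three ingredients. First, measure-linearity gives
\[\sum_{\bm j^*\in\Lambda_{\bm i'}}t_{\bm j^*}^s=c\cdot r_{\bm i'}^s\,\xc H^s(E_{\bm i_0})/\xc H^s(F_{\bm j_{\bm i'}}),\]
so the possible ``total weights'' on the right are controlled by the ratios appearing in Lemma~\ref{l:Mfnt}. Second, Lemma~\ref{l:Mfnt} asserts that the set of ratios $\xc H^s(E_{\bm i_0\bm i'})/\xc H^s(F_{\bm j_{\bm i'}})$ is \emph{finite}. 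Third, by refining $\bm i'\mapsto\bm i'\bm i''$ and using the SSC on $F$, the refinement relations on the families $\Lambda_{\bm i'\bm i''}$ organize into a finitely-branching tree whose leaves at each level are constrained by the finite set of ratios. A pigeonhole argument along this tree, combined with the ``only finitely many normalized measure polynomials can appear'' principle, should force the cardinalities $|\Lambda_{\bm i'}|$ to decrease along some nested sequence of refinements and ultimately hit $1$. If this direct approach stalls, the fallback is to run the parallel analysis with $f_2$ in place of $f_1$ and exploit the interplay: at any scale where $f_1$ splits a small copy of $E$ into several small copies of $F$, the reverse embedding $f_2$ must, by measure-linearity and counting, re-glue compatible pieces, and tracking this interaction through a common refinement level produces the matching pair $(\bm u,\bm v)$ required by the Key Claim.
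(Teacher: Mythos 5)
The paper cites this theorem from \cite{DeWXX11,LloMa10} and does not reprove it, so there is no in-paper argument for direct comparison. On its own terms, your proposal has a genuine gap at its crux.

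Your reduction to the Key Claim is formally correct but tautological: since $E_{\bm u}$ is similar to $E$ and $F_{\bm v}$ to $F$, the Key Claim is logically equivalent to $E\simeq F$, so all the work has to happen in establishing it. Your plan is to force $|\Lambda_{\bm i'}|=1$, i.e.\ to find a cylinder of $E$ which $f_1$ maps onto a \emph{single} cylinder of $F$. That is a strictly stronger assertion, and nothing in your outline proves it. The cardinalities $|\Lambda_{\bm i'}|$ are uniformly bounded (by $M^{K}$) but there is no monotonicity forcing them to drop under refinement: if $f_1(E_{\bm i})$ lies over two cylinders $F_{\bm j\bm a}\cup F_{\bm j\bm b}$, each image $f_1(E_{\bm i k})$ of a child can again lie over two cylinders, and this pattern can repeat at every level. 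Lemma~\ref{l:Mfnt} gives finiteness of the ratio set, so a pigeonhole along the tree yields a \emph{recurring configuration}, not a collapse to a singleton---you would have to argue that the recurring configuration must itself be trivial, and you give no mechanism for that. Your fallback with $f_2$ (``re-glue compatible pieces'', ``tracking this interaction'') is a description of a hoped-for outcome, not an argument. A secondary issue is that Lemma~\ref{l:ML}, Lemma~\ref{l:fE} and Lemma~\ref{l:Mfnt} are stated and proved in the cited sources for bi-Lipschitz \emph{bijections}; you apply them to the embedding $f_1$ with $f_1(E)\subsetneq F$ without checking that they still hold in that setting.

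The actual proofs in \cite{LloMa10} and \cite{DeWXX11} do not seek a cylinder-to-cylinder match. Roughly, one uses measure linearity together with the two-sided embeddings to re-partition $E$ and $F$ into measure-compatible \emph{collections} of cylinders and then matches these collections along a common tree of refinements---in the spirit of the ``same cylinder structure'' idea formalized in Lemma~\ref{l:cylin} and Definition~\ref{d:smcyl} of the present paper. That construction never requires $|\Lambda|$ to collapse to $1$, and avoids your bottleneck entirely.
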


\begin{proof}[Proof of Theorem~\ref{t:sublip}]
First note that we have $\ms^G=\ms$ and $\mt^G=\mt$ when $G=(0,1)$. So we
only need to show that $\ms\simeq\mt$ implies $\ms^G\simeq\mt^G$. Now fix a
multiplicative sub-semigroup $G\subset(0,1)$. By the above theorem and
symmetry, it remains to find a bi-Lipschitz mapping $f_1$ such that
$f_1\colon E_{\ms^G}\to f_1(E_{\ms^G})\subset E_{\mt^G}$.

According to Lemma~\ref{l:cardG}, we can find $\bm i_0,\bm j_0$
satisfying~\eqref{eq:fEii} and $\card_G\bm i_0=0$ such that
\[\card_G\bm j_0=\sup\bigl\{\card_G\bm j\colon\text{$\bm i$ and $\bm j$
  satisfy~\eqref{eq:fEii} and $\card_G\bm i=0$}\bigr\}<\infty.\]
We shall show that $f\circ S_{\xb i\bm i_0}(E_{\ms^G})\subset T_{\bm
j_0}(E_{\mt^G})$, where $f$ is the fixed bi-Lipschitz mapping of~$E$
onto~$F$. Then taking $f_1=T_{\bm j_0}^{-1}\circ f\circ S_{\xb i\bm i_0}$, we
have $f_1(E_{\ms^G})\subset E_{\mt^G}$ and the proof is complete.

Let $x\in E_{\ms^G}$ and $\bm i_k$ be the unique word of length~$k$
satisfying $x\in S_{\bm i_k}(E_{\ms^G})$ for each $k\ge1$, then $\card_G\bm
i_k=0$. For each $k\ge1$, let $\bm j_k$ satisfies
\[f(E_{\xb i\bm i_0\bm i_k})=\bigcup_{\bm j^*\in\Lambda_k}F_{\bm j_k\bm j^*},
\quad\text{where $\Lambda_k\subset\{1,\dots,M\}^K$.}\]
Note that
\[f(E_{\xb i\bm i_0})=\bigcup_{\bm j^*\in\Lambda}F_{\bm j_0\bm j^*}
\quad\text{for some $\Lambda\subset\{1,\dots,M\}^K$}\]
since $\bm i_0$ and $\bm j_0$ satisfy~\eqref{eq:fEii}. So we can write $\bm
j_k=\bm j_0\bm j'_k$ for each $k\ge1$. Since $\card_G\bm i_0\bm i_k=0$ and
$\card_G\bm j_0$ is maximal, we have $\card_G\bm j'_k=0$ for each $k\ge1$.
This means $f\circ S_{\xb i\bm i_0}(x)\in T_{\bm j_0}(E_{\mt^G})$, and so
$f\circ S_{\xb i\bm i_0}(E_{\ms^G})\subset T_{\bm j_0}(E_{\mt^G})$.
\end{proof}

\subsection*{Acknowledgement}

The authors are grateful to Ka-Sing Lau for his active interest about our
results and his enthusiasm to help us improve the manuscript. We also thank
Zhiying Wen and Zhixiong Wen for their valuable comments and suggestions. The
second author is indebted to Dejun Feng for his invitation to the Chinese
University of Hong Kong. Part of this work was done during the visit. The
second author is also glad to express his deep gratitude to Min Wu and Jihua
Ma for their constant encouragement.


\begin{thebibliography}{10}

\bibitem{Baker68} A.~Baker.
\newblock Linear forms in the logarithms of algebraic numbers. {IV}.
\newblock {\em Mathematika}, 15:204--216, 1968.

\bibitem{BanGr92} Christoph Bandt and Siegfried Graf.
\newblock Self-similar sets. {VII}. {A} characterization of self-similar
  fractals with positive {H}ausdorff measure.
\newblock {\em Proc. Amer. Math. Soc.}, 114(4):995--1001, 1992.

\bibitem{BaHuR06} Christoph Bandt, Nguyen~Viet Hung, and Hui Rao.
\newblock On the open set condition for self-similar fractals.
\newblock {\em Proc. Amer. Math. Soc.}, 134(5):1369--1374, 2006.

\bibitem{BanRa07} Christoph Bandt and Hui Rao.
\newblock Topology and separation of self-similar fractals in the plane.
\newblock {\em Nonlinearity}, 20(6):1463--1474, 2007.

\bibitem{Bonk06} Mario Bonk.
\newblock Quasiconformal geometry of fractals.
\newblock In {\em International {C}ongress of {M}athematicians. {V}ol. {II}},
  pages 1349--1373. Eur. Math. Soc., Z\"urich, 2006.

\bibitem{CooPi88} Daryl Cooper and Thea Pignataro.
\newblock On the shape of {C}antor sets.
\newblock {\em J. Differential Geom.}, 28(2):203--221, 1988.

\bibitem{CurRe62} Charles~W. Curtis and Irving Reiner.
\newblock {\em Representation theory of finite groups and associative
  algebras}.
\newblock Pure and Applied Mathematics, Vol. XI. Interscience Publishers, a
  division of John Wiley \& Sons, New York-London, 1962.

\bibitem{DasEd11} M.~Das and G.~A. Edgar.
\newblock Finite type, open set conditions and weak separation conditions.
\newblock {\em Nonlinearity}, 24(9):2489--2503, 2011.

\bibitem{DavSe97} Guy David and Stephen Semmes.
\newblock {\em Fractured fractals and broken dreams}, volume~7 of {\em Oxford
  Lecture Series in Mathematics and its Applications}.
\newblock The Clarendon Press Oxford University Press, New York, 1997.
\newblock Self-similar geometry through metric and measure.

\bibitem{DenHe12} GuoTai Deng and XingGang He.
\newblock Lipschitz equivalence of fractal sets in {${\mathbb R}$}.
\newblock {\em Sci. China Math.}, 55(10):2095--2107, 2012.

\bibitem{DeWXX11} Juan Deng, Zhi-Ying Wen, Ying Xiong, and Li-Feng Xi.
\newblock Bilipschitz embedding of self-similar sets.
\newblock {\em J. Anal. Math.}, 114:63--97, 2011.

\bibitem{DenLa} Qi-Rong Deng and Ka-Sing Lau.
\newblock On the equivalence of homogeneous iterated function systems.
\newblock preprint.

\bibitem{FalMa89} K.~J. Falconer and D.~T. Marsh.
\newblock Classification of quasi-circles by {H}ausdorff dimension.
\newblock {\em Nonlinearity}, 2(3):489--493, 1989.

\bibitem{FalMa92} K.~J. Falconer and D.~T. Marsh.
\newblock On the {L}ipschitz equivalence of {C}antor sets.
\newblock {\em Mathematika}, 39(2):223--233, 1992.

\bibitem{Falco97} Kenneth Falconer.
\newblock {\em Techniques in fractal geometry}.
\newblock John Wiley \& Sons Ltd., Chichester, 1997.

\bibitem{Falco03} Kenneth Falconer.
\newblock {\em Fractal geometry}.
\newblock John Wiley \& Sons Inc., Hoboken, NJ, second edition, 2003.
\newblock Mathematical foundations and applications.

\bibitem{FarMo98} Benson Farb and Lee Mosher.
\newblock A rigidity theorem for the solvable {B}aumslag-{S}olitar groups.
\newblock {\em Invent. Math.}, 131(2):419--451, 1998.
\newblock With an appendix by Daryl Cooper.

\bibitem{Feder69} Herbert Federer.
\newblock {\em Geometric measure theory}.
\newblock Die Grundlehren der mathematischen Wissenschaften, Band 153.
  Springer-Verlag New York Inc., New York, 1969.

\bibitem{FenWa09} De-Jun Feng and Yang Wang.
\newblock On the structures of generating iterated function systems of {C}antor
  sets.
\newblock {\em Adv. Math.}, 222(6):1964--1981, 2009.

\bibitem{Goldf85} Dorian Goldfeld.
\newblock Gauss's class number problem for imaginary quadratic fields.
\newblock {\em Bull. Amer. Math. Soc. (N.S.)}, 13(1):23--37, 1985.

\bibitem{Gromo07} Misha Gromov.
\newblock {\em Metric structures for {R}iemannian and non-{R}iemannian spaces}.
\newblock Modern Birkh\"auser Classics. Birkh\"auser Boston Inc., Boston, MA,
  english edition, 2007.
\newblock Based on the 1981 French original, With appendices by M. Katz, P.
  Pansu and S. Semmes, Translated from the French by Sean Michael Bates.

\bibitem{Hochm12} M.~{Hochman}.
\newblock {On self-similar sets with overlaps and inverse theorems for
  entropy}.
\newblock {\em ArXiv e-prints}, December 2012.

\bibitem{Hutch81} John~E. Hutchinson.
\newblock Fractals and self-similarity.
\newblock {\em Indiana Univ. Math. J.}, 30(5):713--747, 1981.

\bibitem{Kenyo97} Richard Kenyon.
\newblock Projecting the one-dimensional {S}ierpinski gasket.
\newblock {\em Israel J. Math.}, 97:221--238, 1997.

\bibitem{Lang94} Serge Lang.
\newblock {\em Algebraic number theory}, volume 110 of {\em Graduate Texts in
  Mathematics}.
\newblock Springer-Verlag, New York, second edition, 1994.

\bibitem{LauNg99} Ka-Sing Lau and Sze-Man Ngai.
\newblock Multifractal measures and a weak separation condition.
\newblock {\em Adv. Math.}, 141(1):45--96, 1999.

\bibitem{LloMa10} Marta Llorente and Pertti Mattila.
\newblock Lipschitz equivalence of subsets of self-conformal sets.
\newblock {\em Nonlinearity}, 23:875--882, 2010.

\bibitem{LuoLa12} J.~J. {Luo} and K.-S. {Lau}.
\newblock {Lipschitz equivalence of self-similar sets and hyperbolic
  boundaries}.
\newblock {\em ArXiv e-prints}, May 2012.

\bibitem{MatSa09} Pertti Mattila and Pirjo Saaranen.
\newblock Ahlfors-{D}avid regular sets and bilipschitz maps.
\newblock {\em Ann. Acad. Sci. Fenn. Math.}, 34(2):487--502, 2009.

\bibitem{MauWi88} R.~Daniel Mauldin and S.~C. Williams.
\newblock Hausdorff dimension in graph directed constructions.
\newblock {\em Trans. Amer. Math. Soc.}, 309(2):811--829, 1988.

\bibitem{MolWi91} R.~A. Mollin and H.~C. Williams.
\newblock On a determination of real quadratic fields of class number one and
  related continued fraction period length less than {$25$}.
\newblock {\em Proc. Japan Acad. Ser. A Math. Sci.}, 67(1):20--25, 1991.

\bibitem{MolWi92} R.~A. Mollin and H.~C. Williams.
\newblock On real quadratic fields of class number two.
\newblock {\em Math. Comp.}, 59(200):625--632, 1992.

\bibitem{Moran46} P.~A.~P. Moran.
\newblock Additive functions of intervals and {H}ausdorff measure.
\newblock {\em Proc. Cambridge Philos. Soc.}, 42:15--23, 1946.

\bibitem{Neuki99} J{\"u}rgen Neukirch.
\newblock {\em Algebraic number theory}, volume 322 of {\em Grundlehren der
  Mathematischen Wissenschaften [Fundamental Principles of Mathematical
  Sciences]}.
\newblock Springer-Verlag, Berlin, 1999.
\newblock Translated from the 1992 German original and with a note by Norbert
  Schappacher, With a foreword by G. Harder.

\bibitem{RaRuW12} Hui Rao, Huo-Jun Ruan, and Yang Wang.
\newblock Lipschitz equivalence of {C}antor sets and algebraic properties of
  contraction ratios.
\newblock {\em Trans. Amer. Math. Soc.}, 364(3):1109--1126, 2012.

\bibitem{RaRuX06} Hui Rao, Huo-Jun Ruan, and Li-Feng Xi.
\newblock Lipschitz equivalence of self-similar sets.
\newblock {\em C. R. Math. Acad. Sci. Paris}, 342(3):191--196, 2006.

\bibitem{RaoWe98} Hui Rao and Zhi-Ying Wen.
\newblock A class of self-similar fractals with overlap structure.
\newblock {\em Adv. in Appl. Math.}, 20(1):50--72, 1998.

\bibitem{RuWaX12} H.-J. {Ruan}, Y.~{Wang}, and L.-F. {Xi}.
\newblock {Lipschitz equivalence of self-similar sets with touching
  structures}.
\newblock {\em ArXiv e-prints}, July 2012.

\bibitem{Schie94} Andreas Schief.
\newblock Separation properties for self-similar sets.
\newblock {\em Proc. Amer. Math. Soc.}, 122(1):111--115, 1994.

\bibitem{Stark67} H.~M. Stark.
\newblock A complete determination of the complex quadratic fields of
  class-number one.
\newblock {\em Michigan Math. J.}, 14:1--27, 1967.

\bibitem{Stark07} H.~M. Stark.
\newblock The {G}auss class-number problems.
\newblock In {\em Analytic number theory}, volume~7 of {\em Clay Math. Proc.},
  pages 247--256. Amer. Math. Soc., Providence, RI, 2007.

\bibitem{SVe02} Grzegorz {\'S}wi{\c{a}}tek and J.~J.~P. Veerman.
\newblock On a conjecture of {F}urstenberg.
\newblock {\em Israel J. Math.}, 130:145--155, 2002.

\bibitem{Watki04} Mark Watkins.
\newblock Class numbers of imaginary quadratic fields.
\newblock {\em Math. Comp.}, 73(246):907--938 (electronic), 2004.

\bibitem{WenXi03} Zhi-Ying Wen and Li-Feng Xi.
\newblock Relations among {W}hitney sets, self-similar arcs and quasi-arcs.
\newblock {\em Israel J. Math.}, 136:251--267, 2003.

\bibitem{Xi04} Li-Feng Xi.
\newblock Lipschitz equivalence of self-conformal sets.
\newblock {\em J. London Math. Soc. (2)}, 70(2):369--382, 2004.

\bibitem{Xi07} Li-Feng Xi.
\newblock Quasi-{L}ipschitz equivalence of fractals.
\newblock {\em Israel J. Math.}, 160:1--21, 2007.

\bibitem{Xi10} Li-Feng Xi.
\newblock Lipschitz equivalence of dust-like self-similar sets.
\newblock {\em Math. Z.}, 266(3):683--691, 2010.

\bibitem{XiRu07} Li-feng Xi and Huo-jun Ruan.
\newblock Lipschitz equivalence of generalized {$\{1,3,5\}$}-{$\{1,4,5\}$}
  self-similar sets.
\newblock {\em Sci. China Ser. A}, 50(11):1537--1551, 2007.

\bibitem{XiXi10} Li-Feng Xi and Ying Xiong.
\newblock Self-similar sets with initial cubic patterns.
\newblock {\em C. R. Math. Acad. Sci. Paris}, 348:15--20, 2010.

\bibitem{XiXi12} Li-Feng Xi and Ying Xiong.
\newblock Lipschitz equivalence of fractals generated by nested cubes.
\newblock {\em Math. Z.}, 271(3):1287--1308, 2012.

\bibitem{XioXi12} Ying Xiong and Li~Feng Xi.
\newblock Lipschitz equivalence of self-similar sets satisfying the open set
  condition.
\newblock {\em Chinese J. Contemp. Math.}, 33(1):1--16, 2012.

\bibitem{XioXi09} Ying Xiong and Lifeng Xi.
\newblock Lipschitz equivalence of graph-directed fractals.
\newblock {\em Studia Math.}, 194(2):197--205, 2009.

\bibitem{Zerner1996} Martin P.~W. Zerner.
\newblock Weak separation properties for self-similar sets.
\newblock {\em Proc. Amer. Math. Soc.}, 124(11):3529--3539, 1996.

\end{thebibliography}
\end{document}